\newcommand{\kdifform}[2]{#1^{(#2)}} % differential k-forms with rank
\newcommand{\difform}[1]{#1} % differential k-forms without rank, in case we wish to change to greek later
\newcommand{\kdifformh}[2]{#1^{(#2)}_{h}} % approximated differential k-forms with rank
\newcommand{\kchain}[2]{\mathbf{#1}_{(#2)}} % k-chain with rank
\newcommand{\kcochain}[2]{\mathbf{#1}^{(#2)}} % k-chain with rank
\newcommand{\ccomplex}[1]{{#1}} % cell complex
\newcommand{\reconstruction}{\mathcal{I}} %the reconstruction operator, Whitney map or  interpolation operator
\newcommand{\reduction}{\mathcal{R}} %the reduction operator
\newcommand{\kformspace}[1]{\Lambda^{#1}} % the space of k-forms, with k specified
\newcommand{\kformspaceh}[1]{\Lambda_h^{#1}} % the space of k-forms, with k specified
\newcommand{\kformspacedomain}[2]{\Lambda^{#1}(#2)} % the space of k-forms, with k and the domain specified.
\newcommand{\kformspacedomainh}[2]{\Lambda_h^{#1}(#2)} % the space of k-forms, with k and the domain specified.
\newcommand{\kchainspace}[1]{C_{#1}} % the space of k-chains, with k specified
\newcommand{\kchainspacedomain}[2]{C_{#1}(#2)} % the space of k-chains, with k and the domain specified.
\newcommand{\kcochainspace}[1]{C^{#1}} % the space of k-cochains, with k specified
\newcommand{\kcochainspacedomain}[2]{C^{#1}(#2)} % the space of k-cochains, with k and the domain specified.
\newcommand{\incidenceboundary}[2]{\mathsf{E}_{(#1,#2)}} % Incidence matrix on k-chains, implements the boundary operator
\newcommand{\incidencederivative}[2]{\mathsf{E}^{(#1,#2)}} % Incidence matrix on k-cochains, implements the  exterior derivative operator, or dual of the boundary operator
\newcommand{\projection}{\pi_{h}} % projection operator
\newcommand{\dualprojection}{\tilde{\pi}_{h}} % projection operator on dual grid
\newcommand{\coprojection}{\pi^\star_{h}} % projection operator on dual grid
\newcommand{\dualcoprojection}{\tilde{\pi}^\star_{h}} % projection operator on dual grid
\newcommand{\mapping}{\Phi} % mapping
\newcommand{\pullback}{\Phi^{\star}} % pullback
\newcommand{\pullbacks}[1]{#1^{\star}} % pullbacks
\newcommand{\pushforward}{\Phi_{\star}} % pushforward
\newcommand{\manifold}[1]{\mathcal{#1}} % manifold
\newcommand{\tangentspace}[1]{T_{p}\manifold{#1}} % tangent space of manifold M
\newcommand{\boundary}{\partial} % boundary operator continuous
\newcommand{\ederiv}{\mathrm{d}} % exterior derivative continuous
\newcommand{\dederiv}{\delta} % exterior derivative discrete, or coboundary operator
\newcommand{\coderiv}{\mathrm{d}^{*}} % codifferential operator
\newcommand{\spacemap}{\rightarrow} % arrow that maps from one space to another
\newcommand{\pderivative}[2]{\frac{\partial #1}{\partial #2}} % for partial derivatives 
\newcommand{\inner}[2]{\left( #1, #2\right)} % inner product
\newcommand{\innerspace}[3]{\left(#1,#2\right)_{#3}} % inner product integrated
\newcommand{\duality}[2]{\langle #1, #2\rangle} % duality pairing
\renewcommand{\eqref}[1]{(\ref{#1})} % reference an equation
\newcommand{\figref}[1]{Figure~\ref{#1}} % reference a figure
\newcommand{\secref}[1]{Section~\ref{#1}} % reference a section
\newcommand{\defref}[1]{Definition~\ref{#1}} % reference a definition
\newcommand{\propref}[1]{Proposition~\ref{#1}} % reference a proposition
\newcommand{\theoremref}[1]{Theorem~\ref{#1}} % reference a theorem
\newcommand{\lemmaref}[1]{Lemma~\ref{#1}} % reference a lemma
\newcommand{\exampleref}[1]{Example~\ref{#1}} % reference an example
\newcommand{\remarkref}[1]{Remark~\ref{#1}} % reference a remark
\newtheorem{theorem}{Theorem}
\theoremstyle{plain}
\newtheorem{corollary}{Corollary}
\newtheorem{definition}{Definition}
\newtheorem{example}{Example}
\newtheorem{lemma}{Lemma}
\newtheorem{proposition}{Proposition}
\newtheorem{remark}{Remark}
\numberwithin{equation}{section}
\newcommand{\define}{\mathrel{\mathop:}=}
\newcommand{\tr}{\mathrm{tr}\:}
\newcommand{\ve}{\varepsilon}
\begin{document}
\title[Mimetic framework]{Mimetic framework on curvilinear quadrilaterals of arbitrary order }
\author{Jasper Kreeft}
\address[]
	{Delft University of Technology, Faculty of Aerospace Engineering, \newline%
	\indent  Kluyverweg 2, 2629 HT Delft, The Netherlands.}%
\email[]{J.J.kreeft@TUDelft.nl, A.Palha@TUDelft.nl, M.I.Gerritsma@TUDelft.nl}%
% \urladdr{http://www.authorone.uni-aone.de}
\author{Artur Palha}
% \curraddr[A.~Two]{Author Two current address, line 1\newline%
% \indent Author Two current address, line 2}%
% \email[Pahla]{}%
% \urladdr{http://www.authortwo.uni-atwo.hu}
\author{Marc Gerritsma}
% \address[A. Three]{Author Three address, line 1\newline%
% \indent Author Three address, line 2}
% \email[Gerritsma]{M.I.Gerritsma@TUDelft.nl}%
% \urladdr{http://www.authorthree.uni-athree.edu}
\thanks{Jasper Kreeft is funded by STW Grant 10113.}
\thanks{Artur Palha is funded by FCT Grant SFRH/ BD/36093 / 2007.}
\thanks{This paper is in final form and no version of it will be submitted for
publication elsewhere.}
\date{\textcolor{red}{\today}}
\subjclass{ Primary 12Y05, 65M70; Secondary 13P20, 68W40} %
\keywords{Mimetic discretizations, spectral methods, differential geometry, algebraic topology, Hodge decomposition}%
%\dedicatory{Dedicated to ourselves.}

\begin{abstract}
In this paper higher order mimetic discretizations are introduced which are firmly rooted in the geometry in which the variables are defined. The paper shows how basic constructs in differential geometry have a discrete counterpart in algebraic topology. Generic maps which switch between the continuous differential forms and discrete cochains will be discussed and finally a realization of these ideas in terms of mimetic spectral elements is presented, based on projections for which operations at the finite dimensional level commute with operations at the continuous level.

% Orientation and t
The two types of orientation (inner- and outer-orientation) will be introduced at the continuous level, the discrete level and the preservation of orientation will be demonstrated for the new mimetic operators. The one-to-one correspondence between the continuous formulation and the discrete algebraic topological setting, provides a characterization of the oriented discrete boundary of the domain. The Hodge decomposition at the continuous, discrete and finite dimensional level will be presented. It appears to be a main ingredient of the structure in this framework.
\end{abstract}

\maketitle
\tableofcontents

% Introduction
\section{Introduction}
\subsection{Motivation}
The starting point of physics is the science of measurable, quantifiable objects and the relation among these objects. Any measurable quantity is associated with a spatial and a temporal geometric object. For instance, the measurement of velocity in air flow can be performed by particle image velocimetry (PIV), where tracer particles are released in the flow, two pictures are taken at consecutive time instants and the average velocity is calculated from the distance a particle has traveled divided by the time interval between the two snapshots. Velocity, measured in this way, is therefore associated with a curve in space (the trajectory of the particle between the two time instants) and a time interval (the time interval between the two snapshots). This association of velocity with a curve and a time interval does not depend on the particular way in which the measurement of velocity is performed. Note that
\[ \int_{t^1}^{t^2} \mathbf{v}\,dt = \int_{t^1}^{t^2} \frac{d\mathbf{r}}{dt}\,dt = \mathbf{r}(t^2) - \mathbf{r}(t^1) \;,\]
is an {\em exact} relation between the velocity in the time interval $[t^1,t^2]$ and the position of the tracer particles at $t^1$ and $t^2$, irrespective of the particular path traced out by the particle. So, if we could measure these two positions with infinite accuracy, we would have the time integral of the velocity correct. The approximation enters into the measurement by {\em assuming} that the particle moves at a constant velocity in a straight line from $\mathbf{r}(t^1)$ to $\mathbf{r}(t^2)$, which allows us to equate the velocity in the interval $[t^1,t^2]$ to
\[ v(s) \approx \frac{1}{t^2-t^1} \int_{t^1}^{t^2} \mathbf{v}\,dt  = \frac{\mathbf{r}(t^2) - \mathbf{r}(t^1)}{t^2-t^1} \;,\;\;\; s\in [t^1,t^2]\;.\]
We can decompose this velocity measurement in two consecutive steps: 1.) A reduction step, where we sample the position of the tracer particles at discrete time instants 2.) A reconstruction map, where we {\em assume} a certain behaviour of the particles between the two time instants.

These two steps, reduction and reconstruction, which implicitly play an important role in any measurement, will also turn out to be the two key ingredients in setting up mimetic discretizations.

Strictly speaking, the velocity measured in the PIV experiment is only the time-averaged velocity, but by assuming that the trajectory of the tracer particle is sufficiently smooth as a function of time, we can reduce the time interval such that we can quite accurately determine `the velocity' at a given position and at a given time instant. An alternative approach would be to sample at a larger number of time instants, say $t^1,\dots,t^n$ and reconstruct the trajectory based on the measured positions. We will call the first approach (reducing the time interval) {\em $h$-refinement} whereas the second approach (reconstruction of the trajectory using more time instants) will be called {\em $p$-refinement}. Similar ideas are used in discretization where refinement of the mesh is denoted by $h$-refinement while a reconstruction based on more samples is referred to as $p$-refinement.

Ultimately, in many physical theories, one takes the limit for all lengths and time intervals to zero, which enables physicists and engineers to talk about the velocity {\em in a point} at a {\em certain time instant}, $\mathbf{v}(t,x,y,z)$. Any connection with a distance and a time interval is lost after this limiting process. Another well-known example is mass contained in a volume, ${\mathcal V}$. The average density is the mass divided by the volume. By taking the limit for ${\mathcal V} \rightarrow 0$ we obtain the density in a point, $\rho(t,x,y,z)$. Again, the connection with the volume is lost after taking this limit.

Bear in mind that this limiting process is purely mathematical. If we consider the PIV experiment again to measure the local velocity, we always need a finite time interval in order to evaluate the average velocity. If we would reduce the time interval to zero, no velocity measurement could be made. So despite the fact that we can accurately determine the velocity in a flow at a certain location and at a certain time, this measured velocity will always be connected to a time interval and the displacement along a curve.

This association of physical variables with spatial and temporal geometric elements can be done for all physical variables. This is, however, beyond the scope of this paper, but the interested reader is referred to the work of \cite{tonti1975formal,bossavit:japanese_02,mattiussi2000finite} and especially the forthcoming book by Tonti, \cite{bookTonti}.

Once we acknowledge that there is such an association between physical variables and geometric objects, we need to take orientation of the geometric object into account. A curve with endpoints $A$ and $B$ possesses two orientations. Either the curve from $A \rightarrow B$ is taken as the positively oriented curve, or the curve $B\rightarrow A$ is taken to be positively oriented. The same holds for the orientation of surfaces (oriented clockwise or counter-clockwise) and three dimensional volumes (right-hand rule or left-hand rule). Note that the notion of orientation does not prompt itself when we only consider physical variables defined at time instants and in points, although it is useful to consider the orientation of points as well.

Physical variables are associated to geometric objects and geometric objects have an orientation, however, the physical quantity is {\em independent of the orientation of the associated geometric object}. If we choose the direction of time to be positive when pointing in the past, the integral value of the velocity changes sign
\[ \int_{t^2}^{t^1} \mathbf{v}\,dt = - \int_{t^1}^{t^2} \mathbf{v}\,dt \;.\]
So integral values (the ones that are measurable) are intimately connected to the orientation of geometry, while average values -- and in the limit densities -- are insensitive to the orientation of space and time, because
\[ \frac{1}{t^1-t^2} \int_{t^2}^{t^1} \mathbf{v}\,dt = \frac{1}{t^2-t^1} \int_{t^1}^{t^2} \mathbf{v}\,dt \;.\]
There also exist physical variables which do change when the orientation is reversed.
Therefore, we need to distinguish between two types of orientation: {\em inner-} and {\em outer} orientation. With {\em inner} orientation, we mean the orientation {\em in} the geometric object such as for instance the electric current {\em in} a wire or the rotation {\em in} a plane, whereas {\em outer} refers to the orientation {\em outside} the geometric object such as the Biot-Savart law {\em around} the wire and or the flux {\em through} the plane. 

It turns out that the association with oriented geometric objects is a vital ingredient in the description of physics and when we perform the limiting process to define all physical quantities in points and at time instants without reference to the associated geometric objects much of this rich structure of the physical model will be lost.

In this paper, therefore, we want to set up a framework in which we {\em mimic} the association of physical variables with oriented geometric objects for computational analysis. The aim is to develop families of numerical discretizations which work on general quadrilateral grids of arbitrary order. By `families' we mean that we formulate the basic requirements a numerical discretization needs to possess in order to be compatible with its associated geometry. This leads, among others, to exact discrete representations for the gradient operator, the curl operator and the divergence operator. Also, the explicit  distinction between inner- and outer-orientation with their associated cell complexes, will anatomy of the boundary of the domain. This, in turn, will clarify the issue of where and how to prescribe boundary values. 

The mimetic structure that will be introduced in this paper ensures that the reduced physical model behaves in the same way as the full infinite dimensional system. Let $A$ and $B$ be two physical quantities and $T$ a continuous operator which maps $A$ onto $B$, $T(A)=B$, then the following diagram commutes
\[
\begin{CD}
A @>T>> B\\
@V\pi VV @V\pi VV\\
A_h @>T>> B_h
\end{CD}
\]
Here is $\pi$ a suitably constructed projection operator which maps continuous variables in finite dimensional representations. So $\pi \circ T = T \circ \pi$, i.e. we can perform the operation $T$ at the continuous level and then discretize or first discretize and then apply the operator $T$. In that sense, operations at the discrete level truly mimic the behaviour of the operators at the continuous level.

These properties have quite some consequences for practical applications, but we have chosen not to present an extensive gallery of applications. Only a few simple examples will be given which serve to illustrate some of the claims in this paper. Applications of ideas presented in this paper can be found in \cite{bouman::icosahom2009,gerritsma:lssc2009,kreeft::stokes,jasper::eccomas2010,palha::icosahom2009,palha:lssc2009}.

\subsection{Prior and related work}
Over the years numerical analysts have developed numerical schemes which preserve some of the structure of the differential models they aim to approximate, so in that respect the whole mimetic idea is not new. A recent development is that the proper language in which to encode these structures/symmetries is the language of differential geometry. Another novel aspect of mimetic discretizations is the identification of the metric-free part of differential models, which can be conveniently described in terms of algebraic topology which employs the strong analogy between differential geometry and algebraic topology.

The relation between differential geometry and algebraic topology in physical theories was first established by Tonti, \cite{tonti1975formal}. Around the same time Dodziuk, \cite{Dodziuk76}, set up a finite difference framework for harmonic functions based on Hodge theory. Both Tonti and Dodziuk introduce differential forms and cochain spaces as the building blocks for their theory. The relation between differential forms and cochains is established by the Whitney map ($k$-cochains $\rightarrow$ $k$-forms) and the De Rham map ($k$-forms $\rightarrow$ $k$-cochains). The interpolation of cochains to differential forms on a triangular grid was already established by Whitney, \cite{Whitney57}. These interpolatory forms are now known as the {\em Whitney forms}.

Hyman and Scovel, \cite {HymanScovel90}, set up the discrete framework in terms of cochains, which are the natural building blocks of finite volume methods. Later Bochev and Hyman, \cite{bochev2006principles} extended this work and derived discrete operators such as the discrete wedge product, the discrete codifferential, the discrete inner product, etc. These operators are all cochain operators. 

In a finite difference/volume context Robidoux, Hyman, Steinberg and Shashkov, \cite{HymanShashkovSteinberg97,HymanShashkovSteinberg2002,HYmanSteinberg2004,RobidouxAdjointGradients1996,RobidouxThesis,bookShashkov,Steinberg1996,SteibergZingano2009} used symmetry considerations to discretize diffusion problems on rough grids and with non-smooth non-isotropic diffusion coefficients. In a recent paper by Robidoux and Steinberg \cite{RobidouxSteinberg2011} a discrete vector calculus in a finite difference setting is presented. It satisfies the discrete differential operators grad, curl and div exactly and the numerical approximations are all contained in the constitutive relations, which are already polluted by modeling and experimental error. This paper also contains an extensive list of references to mimetic methods.
For mimetic finite differences, see also Brezzi et al., \cite{BrezziBuffaLipnikov2009,brezzi2010}.

The application of mimetic ideas to unstructured staggered grids has been extensively studied by Perot, \cite{Perot2000,ZhangSchmidtPerot2002,perot2006mimetic,PerotSubramanian2007a,PerotSubramanian2007}. Especially the recent paper, \cite{perot43discrete}, lucidly describes the rationale of preserving symmetries in numerical algorithms.

Mattiussi, \cite{mattiussi1997analysis,mattiussi2000finite,Mattiussi02} puts the geometric ideas proposed by Tonti in an finite volume, finite difference and finite element context. The idea of switching between cochains and differential forms is also prominent in the work of Hiptmair, for instance \cite{hiptmair2001discrete}. This work also displays the close connection between finite volume methods and finite element methods.

Mimetic methods show a clear connection between the variables (differential forms) and the geometry in which these variables are defined. The most `geometric approach' is described in the work by Desbrun et al., \cite{desbrun2005discrete,ElcottTongetal2007,MullenCraneetal2009,PavlovMullenetal2010} and the thesis by Hirani, \cite{Hirani_phd_2003}. 

The `Japanese papers' by Bossavit, \cite{bossavit:japanese_01,bossavit:japanese_02}, serve as an excellent introduction and motivation for the use of differential forms in the description of physics and the use in numerical modeling. The field of application is electromagnetism, but these papers are sufficiently general to extend all concepts to other fields of expertise. 

In a series of papers by Arnold, Falk and Winther, \cite{arnold:Quads,arnold2006finite,arnold2010finite}, a finite element exterior calculus framework is developed. Just like in this paper, Arnold, Falk and Winther consider methods of arbitrary order. Higher order methods are also described by Rapetti, \cite{Rapetti2007,Rapetti2009} and Hiptmair, \cite{hiptmair2001}. Possible extensions to spectral methods were described by Robidoux, \cite{robidoux-polynomial} and applications of mimetic spectral methods can be found in \cite{bouman::icosahom2009,gerritsma:lssc2009,kreeft::stokes,jasper::eccomas2010,palha::icosahom2009,palha:lssc2009}.

Isogeometric reconstruction was used by Buffa et al., \cite{BuffaDeFalcoSangalli2011}, Evans, \cite{thesis_Evans} and Hiemstra, \cite{Hiemstra::grad_curl_div,Hiemstra::Harmonic}.

Although the cited literature is far from complete, the above references serve as excellent introduction into the field of mimetic discretization techniques.

\subsection{Scope and outline of this paper}

In the introduction and work cited above it has been revealed that geometry plays an important role in mimetic methods. In computational engineering one usually works with fields and densities, i.e. the variables obtained after the limiting process. The main reason fields have emerged as the preferred way of encoding physics is because physical laws can then be stated in terms of differential equations.

An alternative description is in terms of integral equations. The appeal of an integral approach lies in the fact that the physical laws can be stated without any limiting process involved, rendering them closer to the physical measurement process and more suited for a discrete treatment. Integration can be interpreted as duality pairing between geometry and variables connected to this geometry (differential forms).

An important differential operator in differential geometry  is the exterior derivative. The exterior derivative can be defined in terms of geometric concepts, i.c. the boundary operator, through the generalized Stokes Theorem. When written in terms of conventional vector calculus, the exterior derivative is either the gradient, the curl or the divergence, depending on the context. The introduction of the exterior derivative allows one to uniquely decompose the space of differential forms into a direct sum of sub-spaces. This {\em Hodge decomposition} generalizes the classical Helmholtz decomposition for non-contractible domains. So, if we want to incorporate geometry into our physical description, differential geometry is a concise and potent way to do so.
Therefore, in Section~\ref{sec:DifferentialGeometry} a brief introduction into differential geometry will be presented. Although this material can be found in any book on differential geometry, \cite{burke1985applied,flanders::diff_forms,frankel}, we include this section to introduce our notation and in the remainder of the paper we want to highlight which properties from differential geometry are retained at the discrete level.

Despite the fact that physics requires metric concepts like length, angles and area, many structures in physics are completely independent of the metric. These non-metric concepts are called {\em topological}. 
In Section~\ref{sec:AlgebraicTopology} elements from algebraic topology will be discussed which is required for the development of the mimetic spectral element framework. It will be shown that the structure of algebraic topology resembles the structure of differential geometry and therefore algebraic topology could serve as the discrete setting for our numerical framework.

In Section~\ref{mimeticoperators} the connection between algebraic topology and differential geometry will be established. Based on the existence of a suitable reduction map, $\reduction$, which maps $k$-forms onto $k$-cochains and a reconstruction map, $\reconstruction$, which converts $k$-cochains to $k$-forms, a general mimetic framework will be set up using the projection operator $\projection = \reconstruction \circ \reduction$, which maps the space of differential forms, $\Lambda^k$, to a finite dimensional space of differential forms, $\Lambda_h^k$. This section resembles the paper by Bochev and Hyman, \cite{bochev2006principles}, but the main difference is that in Section~\ref{mimeticoperators} the finite dimensional discrete space consists of differential forms while Bochev and Hyman take the cochains as their discrete variables. 

In Section~\ref{sec:MSEM} the actual polynomial reduction and reconstruction maps are presented which satisfy the requirements described in Section~\ref{mimeticoperators}. Their composition forms a bounded linear projection as proven in this section. This section is accompanied with many examples of the actions of the various operators.

In Section~\ref{sec:Conclusions} we will review the tools developed in this paper and look back to the introduction and see how this approach may enable us to faithfully simulate problems in physical sciences. Furthermore, potential future directions will be identified.

Although this outline suggests a collection of seemingly unrelated scientific fields, several ideas/concepts permeate throughout the paper. Ultimately, all concepts contribute to mimetic, numerical concepts:
\begin{enumerate}
\item The exterior derivative $\longrightarrow$ coboundary operator $\longrightarrow$ discrete gradient, curl and divergence;
\item The Hodge decomposition $\longrightarrow$ cohomology group $\longrightarrow$ discrete Helmholtz decomposition;
\item The wedge product $\longrightarrow$ tensor products $\longrightarrow$ basis functions on quadrilateral elements;
\item The behaviour under mappings: the pullback operator $\longrightarrow$ the cochain map $\longrightarrow$ mimetic discretization on highly deformed meshes;
\item Inner- and outer orientation $\longrightarrow$ the double De Rham complex $\longrightarrow$ boundary of the domain $\longrightarrow$ the trace operator/boundary values $\longrightarrow$ spectral method on a staggered grid;
\item The generalized Stokes Theorem $\longrightarrow$ discrete generalized Stokes $\longrightarrow$ exact conservation and existence of scalar and vector potentials.
\end{enumerate}

% Introduction to Differential Geometry
\section{Differential Geometric Concepts}\label{sec:DifferentialGeometry}
Differential geometry, along with algebraic topology presented in the next section, constitutes the basis of the numerical framework presented in this paper. In contrast to conventional vector calculus, which is a topic well known to all the readers differential geometry is not a familiar topic to most of the readers. Since differential geometry is essential, we include a short introduction in order to make this work as much as possible self-contained and to be able to draw analogies between differential geometry, algebraic topology and the mimetic scheme we will present. Only those concepts from differential geometry which will play a role in the remainder of this article are introduced.
We start by introducing the concept of manifold, which is the playground in which everything is defined, the geometry, and the concept of orientation is presented. Next, differential forms, their definition, the operators defined on them and their transformation under mappings are introduced subsequently. Differential form spaces will be introduced, including the concept of Hodge decomposition.
The matter presented here constitutes the mathematical tools with which the physical quantities and the physical laws will be represented in the continuous world, and what will be mimicked in the numerical framework presented.
For a more in depth treatment of differential geometry in physics, we refer to \cite{burke1985applied,flanders::diff_forms,frankel,hou::differential_geometry_physicists,isham,morita2001geometry,schutz1980geometrical,spivak1998calculus}.
	
	\subsection{Manifolds}\label{sec:manifolds}
		The concepts which will be introduced all exist associated to sets endowed with enough structure so that one can ``do calculus'' and which are denoted by manifolds. In $\mathbb{R}^{3}$ these are commonly referred to as points, lines, surfaces and volumes. Generalizing to any dimension a manifold can be defined in the following way.
%	\begin{definition}[\textbf{Manifold}]\label{def::manifold}
%		\cite{spivak1998calculus} A manifold $\manifold{M}$, of dimension $k$, is a space which is locally like $\mathbb{R}^{k}$. More precisely, a real $k$-dimensional manifold is a space in which any point has a neighborhood diffeomorphic to $\mathbb{R}^{k}$. That is:
%		For each point $p\in\manifold{M}$, there is an open set $W$ with $p \in W$, an open set $S\subset\mathbb{R}^{n}$, with $n\geq k$, and a diffeomorphism (called a {\em chart}) $\varphi:W\spacemap S$, such that
%			\begin{align}
%				\varphi(W\cap\manifold{M}) &= S\cap(\mathbb{R}^{k}\times \{0\}) \nonumber \\
%					                                           &= \{x\in S:x^{k+1}=\cdots=x^{n}=0\} \;. \nonumber
%			\end{align}
%	\end{definition}	
	
		\begin{figure}[htp]
				\centering
				\includegraphics[width=0.4\textwidth]{./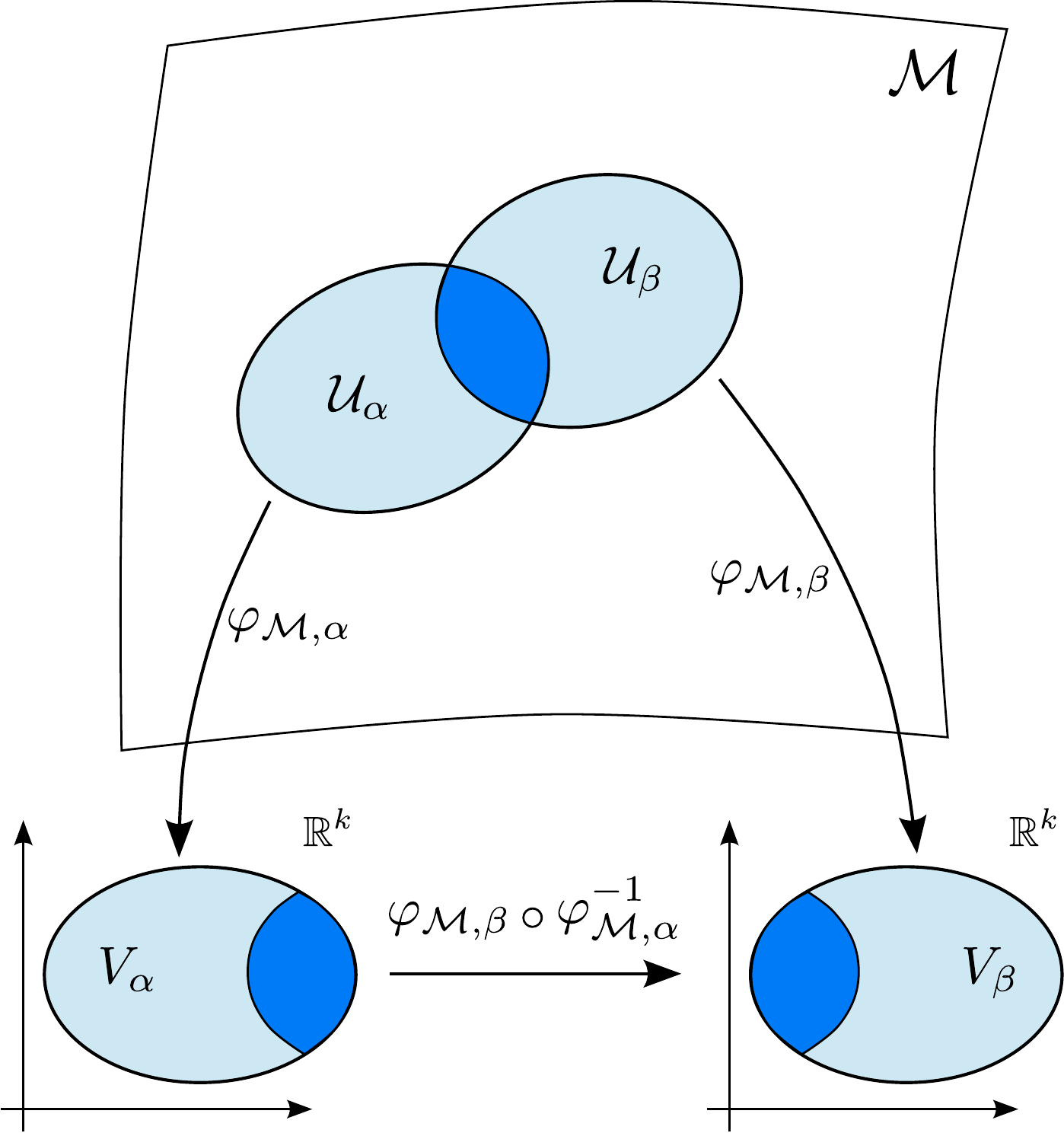}
					\caption{Coordinate charts on a manifold.}
					\label{fig::diffGeometry_mapping}
		\end{figure}	
		
%\textcolor{red}{alternative definition manifold}
\begin{definition}[\textbf{Manifold}]\cite{Olver}\label{def:manifold}
A {\em $k$-dimensional manifold} is a set $\manifold{M}$, together with a countable collection of subset $\manifold{U}_\alpha \subset \manifold{M}$, called {\em coordinate charts}, and one-to-one functions $\varphi_{\manifold{M},\alpha}\,:\,\manifold{U}_\alpha \rightarrow V_\alpha$ onto connected open subsets $V_\alpha$ of $\mathbb{R}^k$, called {\em local coordinate maps}, as in \figref{fig::diffGeometry_mapping}, which satisfy the following properties:
\begin{itemize}
\item[(1)] The coordinate charts cover $\manifold{M}$:
\[ \bigcup_\alpha \manifold{U}_\alpha = \manifold{M} \;.\]
\item[(2)] On the overlap of any pair of coordinate charts $\manifold{U}_\alpha \cap \manifold{U}_\beta$, the composite map
\[ \varphi_{\manifold{M},\beta} \circ \varphi_{\manifold{M},\alpha}^{-1}\,:\, \varphi_{\manifold{M},\alpha} ( \manifold{U}_\alpha \cap \manifold{U}_\beta ) \rightarrow \varphi_{\manifold{M},\beta} ( \manifold{U}_\alpha \cap \manifold{U}_\beta ) \;,\]
is a smooth (infinitely differentiable) function.
\item[(3)] If $x \in \manifold{U}_\alpha$ and $y \in \manifold{U}_\beta$ are distinct points in $\manifold{M}$, then there exist open subsets $W_\alpha$ of $\varphi_{\manifold{M},\alpha}(x)$ in $V_\alpha$ and $W_\beta$ of $\varphi_{\manifold{M},\beta}(x)$ in $V_\beta$ such that
\[ \varphi_{\manifold{M},\alpha}^{-1}(W_\alpha) \cap \varphi_{\manifold{M},\beta}^{-1}(W_\beta) = \emptyset \;.\]
\end{itemize}
\end{definition}
Since the image of each point $p\in (\manifold{U}_\alpha\cap\manifold{M})$ by $\varphi_{\manifold{M},\alpha}$ is a point in $\mathbb{R}^{k}$, it can be written as a $k$-tuple of real numbers: $\varphi_{\mathcal{M},\alpha}(p)=(x^{1}(p),\hdots,x^{n}(p))$. This $k$-tuple is called the local coordinates of $p$ and $\manifold{U}_\alpha\cap\manifold{M}$ the coordinate neighborhood. The pair $(\manifold{U}_\alpha,\varphi_{\manifold{M},\alpha})$ is called a {\em local chart} or {\em local coordinate system}. An {\em atlas} on a manifold $\manifold{M}$ is a collection $\mathcal{A}=\{(\manifold{U}_\alpha,\varphi_{\manifold{M},\alpha})\}$ of charts of $\manifold{M}$, such that $\bigcup_{\alpha}\manifold{U}_\alpha=\manifold{M}$; the collection of open sets $\{\manifold{U}_\alpha\}$ constitutes an open covering of the manifold $\manifold{M}$.
%	\begin{figure}[ht]
%		\centering
%			\includegraphics[width=6cm]{./figures/differentialGeometry/missing.pdf}
%			\label{fig::diffGeometry_manifold_diffeomorphism}
%			\caption{Example two-dimensional manifold in $\mathbb{R}^{3}$ and example local diffeomorphism with $\mathbb{R}^{2}$}.
%	\end{figure}	

\begin{definition}[\textbf{Maps between manifolds}]\label{def:map_between_manifolds}
Let $\manifold{M}$ be a $k$-dimensional smooth manifold and $\manifold{N}$ an $l$-dimensional smooth manifolds. The map $\Phi:\manifold{M} \rightarrow \manifold{N}$ maps between manifolds, if for every coordinate chart $\varphi_{\manifold{M},\alpha}:\manifold{U}_\alpha \rightarrow V_\alpha \subset \mathbb{R}^k$ on $\manifold{M}$ and every chart $\varphi_{\manifold{N},\beta}: \manifold{U}_\beta \rightarrow V_\beta \subset \mathbb{R}^l$ on $\manifold{N}$, the composite map
\[ \varphi_{\manifold{N},\beta} \circ \Phi \circ \varphi_{\manifold{M},\alpha}^{-1}\,:\,\mathbb{R}^k \rightarrow \mathbb{R}^l \;,\]
is a smooth map wherever it is defined. See \figref{fig::Change_of_coordinates} for a pictorial representation of the mapping between manifolds.
\end{definition}
\begin{figure}[htp]
\centering
\includegraphics[width=0.6\textwidth]{./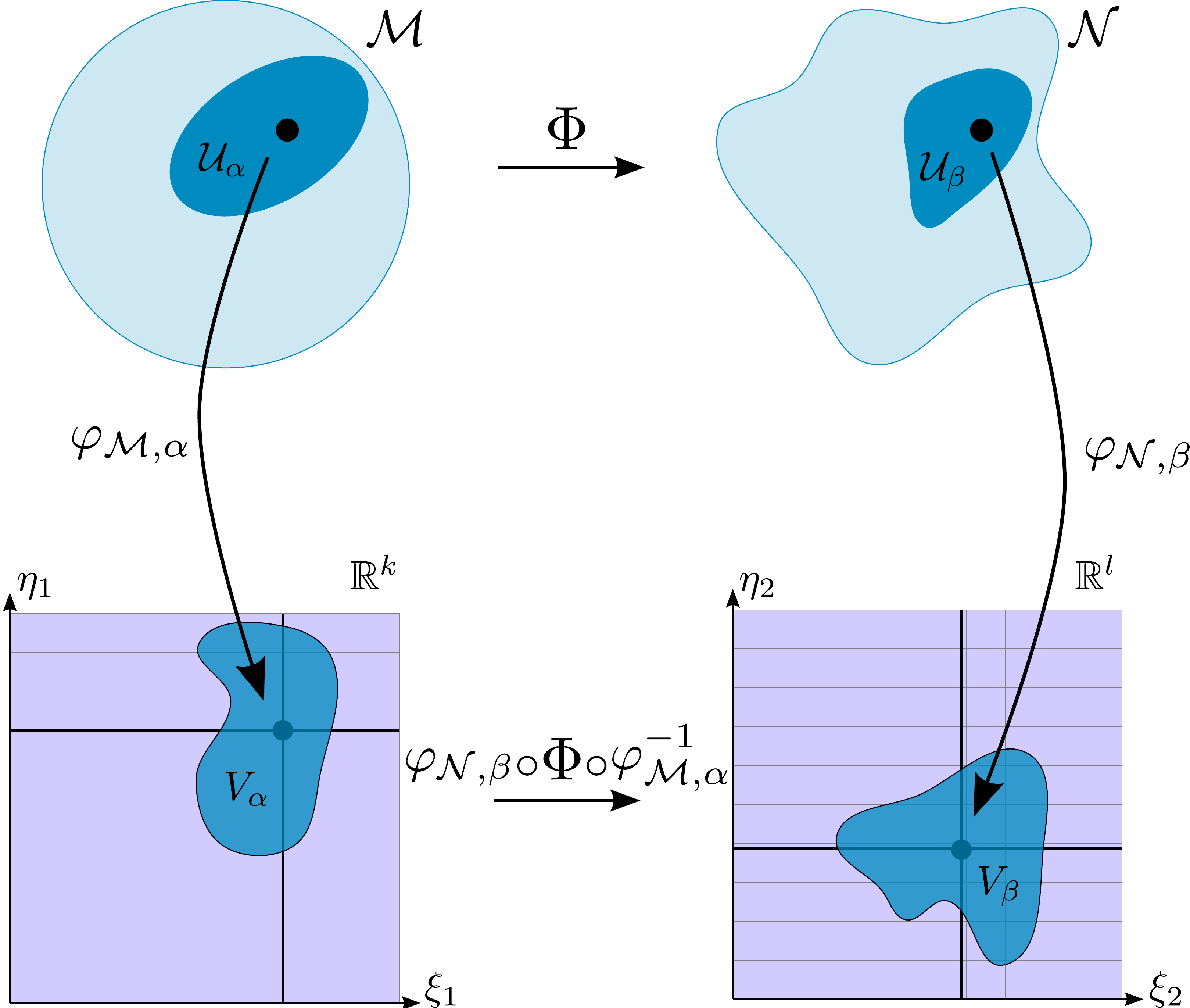}
\caption{Mapping between two manifolds,  $\manifold{M}$ and $\manifold{N}$.}
\label{fig::Change_of_coordinates}
\end{figure}

If the map $\varphi_{\manifold{N},\beta} \circ \Phi \circ \varphi_{\manifold{M},\alpha}^{-1}$ is of maximal rank at $p \in \manifold{M}$, then there are local coordinates $x=(x^1,\ldots,x^k)$ near $p$ and $y=(y^1,\ldots,y^l)$ near $q=\Phi(p)\in \manifold{N}$ such that these coordinates have the simple form
\[ y = (x^1,\ldots,x^k,0,\ldots,0)\;,\quad\quad \mbox{if } l>k \;,\]
or
\[ y = (x^1,\ldots,x^l)\;,\quad\quad \mbox{if } l\leq k \;,\]

\begin{definition}[\textbf{Submanifold}]\label{def:submanifold}
Let $\manifold{M}$ be a smooth manifold. A {\em submanifold} of $\manifold{M}$ is a subset $\manifold{S} \subset \manifold{M}$, together with a smooth one-to-one mapping $\Phi:\tilde{\manifold{S}} \rightarrow \manifold{S} \subset \manifold{M}$ satisfying the maximal rank condition everywhere, where the {\em parameter space} $\tilde{\manifold{S}}$ is some other manifold and $\manifold{S}=\Phi(\tilde{\manifold{S}})$ is the image of $\Phi$. In particular, the dimension of $\manifold{S}$ is the same as that of $\tilde{\manifold{S}}$, and does not exceed the dimension of $\manifold{M}$.
\end{definition}

An important concept is the boundary of a manifold. This concept plays an essential role in the generalized Stokes theorem, \theoremref{theor::difGeom_stokes_theorem}, to be introduced later on.

	\begin{definition}[\textbf{Complement, interior point, exterior point, boundary point, open set and closed set}]\label{def:interior,exterior,boundary}
		\cite{isham}  Given a subset $\manifold{S}$ of a manifold $\manifold{M}$, the complement of $\manifold{S}$ in $\manifold{M}$ is the set of points $\manifold{S}^{c}:=\{p\in \manifold{M}\,|\, p \notin \manifold{S}\}$. Let the ball $B_{\epsilon}(p) = \left \{ x \in \manifold{M} \,|\, d(x,p) < \epsilon \right \}$, then
		\begin{enumerate}
			\item A point $p$ is an interior point of $\manifold{S}$ if there exists $\epsilon > 0$ such that the neighborhood $B_{\epsilon}(p)$ around $p$ has the property that $B_{\epsilon}(p)\subset \manifold{S}$. One writes $p\in\mathrm{int}(\manifold{S})$.
			\item A point $p$ is an exterior point of $\manifold{S}$ if there exists $\epsilon > 0$ such that the neighborhood $B_{\epsilon}(p)$ around $p$ has the property that $B_{\epsilon}(p)\cap \manifold{S}=\emptyset$. One writes $p\in\mathrm{ext}(\manifold{S})$.
			\item A point $p$ is a boundary point of $\manifold{S}$ if every neighborhood $B_{\epsilon}(p)$ around $p$ with $\epsilon > 0$ has the property that $B_{\epsilon}(p)\cap \manifold{S}\neq 0$ and $B_{\epsilon}(p)\cap \manifold{S}^{c}\neq 0$. One write $p \in \partial \manifold{S}$.
		\end{enumerate}
		
		A set $\manifold{S}$ is open if, and only if, $\manifold{S}=\mathrm{int}(\manifold{S})$. A set $\manifold{S}$ is closed if, and only if, its complement $\manifold{S}^{c}$ is open.
	\end{definition}
	
	In order to introduce the concept of boundary one needs to introduce the closed upper half-space of dimension $n$:
	\[
		\mathbb{H}^{n} = \{ (x^{1}, \ldots, x^{n}) \in \mathbb{R}^{n} \,|\, x^{n} \geq 0 \},
	\]
	with the subspace topology of $\mathbb{R}^{n}$. From Definition~\ref{def:interior,exterior,boundary} it follows that points with $x^{n} > 0$ are the interior points of $\mathbb{H}^{n}$, the points with $x^{n}<0$ are the exterior points of $\mathbb{H}^{n}$ and the points with $x^{n}=0$ are the boundary points of $\mathbb{H}^{n}$. 
	
	\begin{proposition}\label{prop::diffGeom_interior_to_interior}
		\cite{tu2010introduction} Let $P$ and $Q$ be subsets of $\mathbb{H}^{n}$ and $\Phi: P\rightarrow Q$ a diffeomorphism. Then $\Phi$ maps interior points to interior points and boundary points to boundary points.
	\end{proposition}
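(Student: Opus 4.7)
The plan is to prove the two claims in sequence, with the second following from the first by applying $\Phi^{-1}$.

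First I would address interior points. Let $p \in P$ be an interior point of $\mathbb{H}^{n}$, i.e.\ $p$ has $x^{n}(p) > 0$, so some ball $B_\varepsilon(p)$ lies in the open half $\{x^{n} > 0\} \subset \mathbb{R}^{n}$. Because $\Phi$ is a diffeomorphism on subsets of $\mathbb{H}^{n}$, it smoothly extends to a diffeomorphism between open neighbourhoods of $p$ and $\Phi(p)$ in $\mathbb{R}^{n}$, and its Jacobian is invertible at $p$. The inverse function theorem then produces an open neighbourhood $U \subset B_\varepsilon(p)$ of $p$ in $\mathbb{R}^{n}$ such that $\Phi(U)$ is open in $\mathbb{R}^{n}$. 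Since $\Phi(U) \subset Q \subset \mathbb{H}^{n}$ and $\Phi(U)$ is open in $\mathbb{R}^{n}$, it must lie entirely in the open upper half $\{x^{n} > 0\}$; otherwise it would contain a point with $x^{n} = 0$ together with an $\mathbb{R}^{n}$-neighbourhood, violating $\Phi(U) \subset \mathbb{H}^{n}$. Hence $\Phi(p)$ is an interior point of $\mathbb{H}^{n}$.

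Next I would handle boundary points by contrapositive/symmetry. Suppose $p \in P$ is a boundary point, i.e.\ $x^{n}(p) = 0$, and assume for contradiction that $\Phi(p)$ is an interior point. Applying the first part to the diffeomorphism $\Phi^{-1} : Q \to P$ yields that $\Phi^{-1}(\Phi(p)) = p$ is an interior point of $\mathbb{H}^{n}$, contradicting $x^{n}(p) = 0$. Therefore $\Phi(p)$ must be a boundary point. Together with the first part, this establishes that interior points and boundary points are preserved; since $P = \mathrm{int}(P) \cup \partial P$ (as subsets of $\mathbb{H}^{n}$), these two statements are exhaustive and complementary.

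The main obstacle I anticipate is technical rather than conceptual: making precise what "diffeomorphism of subsets of $\mathbb{H}^{n}$" means when $P$ is not open in $\mathbb{R}^{n}$. One typically defines smoothness on such sets via existence of smooth extensions to open $\mathbb{R}^{n}$-neighbourhoods, so the inverse function theorem can be invoked on those extensions. Once that convention is made explicit, the rest of the argument is a clean application of the inverse function theorem plus the elementary topological observation that an $\mathbb{R}^{n}$-open subset of $\mathbb{H}^{n}$ cannot meet the hyperplane $\{x^{n} = 0\}$.
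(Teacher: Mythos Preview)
Your argument is correct and is the standard textbook proof: the inverse function theorem applied to a smooth extension gives an $\mathbb{R}^n$-open image, which forces interiority, and the boundary case follows by applying the same reasoning to $\Phi^{-1}$. The paper itself does not supply a proof for this proposition but simply cites \cite{tu2010introduction}, so there is no in-paper argument to compare against; your proof is precisely the one found in that reference.
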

% 	\begin{proof}
% 		See \cite{tu2010introduction}.
%		Let $U$ and $V$ be open subsets of $\mathbb{H}^{n}$ and $p\in U$ be an interior point. Then there exists an $\epsilon$ such that $p$ is contained in an neighborhood (an open set) $B_{\epsilon}(p)\subset U \subset\mathbb{H}^{n}$ (which is also an open set in $\mathbb{R}^{n}$). By the invariance of domain theorem, see \cite{tom2008algebraic}, $f(B_{\epsilon})$ is open in $\mathbb{R}^{n}$ (again not just in $\mathbb{H}^{n}$). Since:
%		\[
%			f(p)\in f(B_{\epsilon})\subset V \subset \mathbb{H}^{n},
%		\]
%		$f(p)$ is an interior point of $\mathbb{H}^{n}$.
%		
%		Now, if $p$ is a boundary point in $P\cap\partial\mathbb{H}^{n}$, then $f^{-1}\circ f(p) = p$ is a boundary point. Since $f^{-1}:V\mapsto U$ is a diffeomorphism, by what has just been proved, $f(p)$ cannot be an interior point. Thus, $f(p)$ is a boundary point. 
% 	\end{proof}
	 
	 One can then define an $n$-manifold with boundary:
	 
	 \begin{definition}[\textbf{$n$-Manifold with boundary, interior point and boundary point of an $n$-manifold with boundary}]
 	\cite{tu2010introduction} An $n$-manifold with boundary, $\manifold{M}$, is a topological space which is locally $\mathbb{H}^{n}$. A point $p$ of $\manifold{M}$ is an interior point if there is a chart $\varphi_{\manifold{M},\alpha}$ in which $\varphi_{\manifold{M},\alpha}(p)$ is an interior point of $\mathbb{H}^{n}$. In the same way, a point $p$ is a boundary point of $\manifold{M}$ if $\varphi_{\manifold{M},\alpha}(p)$ is a boundary point of $\mathbb{H}^{n}$. The set of boundary points of $\manifold{M}$ is denoted by $\partial\manifold{M}$.
	 \end{definition}
	 
	 \begin{definition}[\textbf{Boundary operator}]\label{def:boundary_manifold}
	 	Given an $n$-manifold with boundary, $\manifold{M}$, the boundary operator $\partial$ is a map $\partial: \manifold{M}\rightarrow\partial\manifold{M}$.
	 \end{definition}
	 
	 \begin{corollary}[\textbf{The boundary of a submanifold}]\label{cor:boundary_submanifold}
	 Since any submanifold $\manifold{S} \subset \manifold{M}$ is a manifold in its own right, the boundary of a submanifold is defined as in \defref{def:boundary_manifold}.
	 \end{corollary}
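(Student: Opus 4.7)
The statement is essentially a direct consequence of unpacking definitions rather than an argument requiring substantive work, so my plan is short. The claim is that, given a submanifold $\manifold{S} \subset \manifold{M}$ in the sense of \defref{def:submanifold}, we may speak of $\partial \manifold{S}$ using \defref{def:boundary_manifold} applied to $\manifold{S}$ itself.

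The plan is as follows. First I would observe that, by \defref{def:submanifold}, a submanifold comes equipped with a parameter space $\tilde{\manifold{S}}$ which is itself a manifold, together with a smooth one-to-one map $\Phi:\tilde{\manifold{S}} \to \manifold{S}$ of maximal rank. This map transports the atlas of $\tilde{\manifold{S}}$ onto $\manifold{S}$: for each chart $\varphi_{\tilde{\manifold{S}},\alpha}: \tilde{\manifold{U}}_\alpha \to V_\alpha \subset \mathbb{H}^{\dim \manifold{S}}$, the composite $\varphi_{\tilde{\manifold{S}},\alpha} \circ \Phi^{-1}$, defined on $\Phi(\tilde{\manifold{U}}_\alpha) \subset \manifold{S}$, yields a chart on $\manifold{S}$. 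By the chain rule and the maximal rank assumption, the transition maps between any two such induced charts are smooth, so this collection forms a valid atlas. Hence $\manifold{S}$ carries the structure of a manifold (with boundary) in its own right.

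Next I would invoke \defref{def:boundary_manifold} for the manifold-with-boundary $\manifold{S}$: a point $p \in \manifold{S}$ is declared a boundary point precisely when one (hence every, by \propref{prop::diffGeom_interior_to_interior}) of the induced charts sends $p$ to a boundary point of $\mathbb{H}^{\dim \manifold{S}}$. This gives the definition of $\partial \manifold{S}$, and the boundary operator of \defref{def:boundary_manifold} then specializes to a map $\partial : \manifold{S} \to \partial \manifold{S}$.

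The only subtlety, and the one worth drawing attention to, is the well-definedness of the notion of boundary point: it must be independent of the chart used. This is exactly the content of \propref{prop::diffGeom_interior_to_interior}, which guarantees that diffeomorphisms of subsets of $\mathbb{H}^n$ preserve the interior/boundary dichotomy. Apart from this routine verification, there is no obstacle; the corollary simply records that the boundary construction does not require $\manifold{S}$ to be the ambient manifold.
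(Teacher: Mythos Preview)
Your proposal is correct and, in fact, more detailed than the paper itself: the paper states this corollary without proof, treating it as immediate from the phrasing ``since any submanifold $\manifold{S}\subset\manifold{M}$ is a manifold in its own right.'' Your argument---transporting the atlas of the parameter space $\tilde{\manifold{S}}$ via $\Phi$ to obtain charts on $\manifold{S}$, then applying \defref{def:boundary_manifold} and invoking \propref{prop::diffGeom_interior_to_interior} for chart-independence---is precisely the unpacking the paper takes for granted.
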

	 %\textcolor{red}{We can make this more general: Let $\Phi\,:\,\manifold{M} \rightarrow \manifold{N}$, then $\partial\Phi(\manifold{M}) = \Phi(\partial\manifold{M})$.}
	 \begin{proposition}[\textbf{Boundary is mapped into a boundary and the boundary is independent of chart}]\label{prop:boundary_indep_chart}
	 	Given two $n$-dimensional manifolds with boundary, $\manifold{M}$ and $\manifold{N}$, and a mapping (diffeomorphism) between them, $\Phi:\manifold{M} \rightarrow \manifold{N}$, then the interior points and boundary points of $\manifold{M}$ are mapped onto interior points and boundary points of $\manifold{N}$, respectively. That is:
		\begin{equation}
			\partial\Phi(\manifold{M}) = \Phi(\partial\manifold{M}). \label{eq::difGeom_mapping_boundary}
		\end{equation}
		Moreover, interior and boundary points are independent of the choice of chart.
 	 \end{proposition}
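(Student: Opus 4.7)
The plan is to reduce both statements to the local Euclidean result already established in Proposition \ref{prop::diffGeom_interior_to_interior}, which handles diffeomorphisms between subsets of the closed upper half-space $\mathbb{H}^{n}$. The only genuinely manifold-theoretic content of the proposition is that the notions of ``interior point'' and ``boundary point'' were defined in terms of a choice of chart, and we must now show this choice is immaterial and that such points are preserved under diffeomorphisms. Everything is already true in $\mathbb{H}^{n}$, so the strategy is to transport the question to $\mathbb{H}^{n}$ via charts, apply the known half-space result, and transport back.

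First I would establish chart-independence. Take $p\in\manifold{M}$ and suppose we have two charts $(\manifold{U}_{\alpha},\varphi_{\manifold{M},\alpha})$ and $(\manifold{U}_{\beta},\varphi_{\manifold{M},\beta})$ whose domains both contain $p$. By Definition~\ref{def:manifold} the transition map $\varphi_{\manifold{M},\beta}\circ\varphi_{\manifold{M},\alpha}^{-1}$ is a smooth bijection between the open subsets $\varphi_{\manifold{M},\alpha}(\manifold{U}_{\alpha}\cap\manifold{U}_{\beta})$ and $\varphi_{\manifold{M},\beta}(\manifold{U}_{\alpha}\cap\manifold{U}_{\beta})$ of $\mathbb{H}^{n}$, and its inverse is also smooth, so it is a diffeomorphism in the sense required by Proposition~\ref{prop::diffGeom_interior_to_interior}. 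That proposition then immediately gives that $\varphi_{\manifold{M},\alpha}(p)$ lies on $\{x^{n}=0\}$ if and only if $\varphi_{\manifold{M},\beta}(p)$ does, and similarly for interior points. Hence the classification of $p$ as interior or boundary point of $\manifold{M}$ does not depend on the chart, and in particular $\partial\manifold{M}$ is a well-defined subset of $\manifold{M}$.

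Next I would prove that $\Phi$ sends interior points to interior points and boundary points to boundary points. Pick $p\in\manifold{M}$, let $q=\Phi(p)$, and choose charts $(\manifold{U}_{\alpha},\varphi_{\manifold{M},\alpha})$ around $p$ and $(\manifold{V}_{\beta},\varphi_{\manifold{N},\beta})$ around $q$. Shrinking $\manifold{U}_{\alpha}$ if necessary, the composite $\psi\define\varphi_{\manifold{N},\beta}\circ\Phi\circ\varphi_{\manifold{M},\alpha}^{-1}$ is, by Definition~\ref{def:map_between_manifolds} applied to $\Phi$ and to $\Phi^{-1}$, a smooth bijection with smooth inverse between open subsets of $\mathbb{H}^{n}$. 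Proposition~\ref{prop::diffGeom_interior_to_interior} then tells us that $\psi$ carries interior points of $\mathbb{H}^{n}$ to interior points and boundary points to boundary points. Translating this statement back through the charts, and using the chart-independence proved in the previous step, yields $\Phi(\mathrm{int}(\manifold{M}))\subset\mathrm{int}(\manifold{N})$ and $\Phi(\partial\manifold{M})\subset\partial\manifold{N}$. Applying the same argument to the diffeomorphism $\Phi^{-1}:\manifold{N}\to\manifold{M}$ gives the reverse inclusions, and therefore the equality $\partial\Phi(\manifold{M})=\Phi(\partial\manifold{M})$ of \eqref{eq::difGeom_mapping_boundary}.

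The main obstacle is really a bookkeeping one: one must be careful that when restricting $\Phi$ to a coordinate neighborhood, the image lands inside a single chart of $\manifold{N}$ so that the composite $\psi$ is actually defined on an open subset of $\mathbb{H}^{n}$; this is handled by shrinking $\manifold{U}_{\alpha}$ to $\manifold{U}_{\alpha}\cap\Phi^{-1}(\manifold{V}_{\beta})$, which is open since $\Phi$ is continuous. Once this is done, there is no further difficulty, because all of the substantive geometric content has been isolated into Proposition~\ref{prop::diffGeom_interior_to_interior}, and the manifold-level statement is essentially a functorial restatement of it.
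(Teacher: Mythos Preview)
Your proposal is correct and follows essentially the same approach as the paper: both reduce to Proposition~\ref{prop::diffGeom_interior_to_interior} by passing through charts, using the transition map $\varphi_{\manifold{M},\beta}\circ\varphi_{\manifold{M},\alpha}^{-1}$ for chart-independence and the composite $\varphi_{\manifold{N},\beta}\circ\Phi\circ\varphi_{\manifold{M},\alpha}^{-1}$ for the diffeomorphism statement. Your version is somewhat more careful (you explicitly shrink domains and invoke $\Phi^{-1}$ for the reverse inclusions), but the underlying argument is the same.
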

	 \begin{proof}
	 For the first statement, let $\varphi_{\manifold{M},\alpha}:\manifold{U}_{\alpha}\subset\manifold{M}\rightarrow\mathbb{R}^k$ and $\varphi_{\manifold{N},\beta}:\manifold{U}_{\beta}\subset \manifold{N} \rightarrow \mathbb{R}^k$, then $\varphi_{\manifold{N},\beta} \circ \Phi \circ \varphi_{\manifold{M},\alpha}^{-1}:\mathbb{R}^k \rightarrow \mathbb{R}^k$ is a diffeomorphism and according to \propref{prop::diffGeom_interior_to_interior} this maps boundary points onto boundary points and interior points onto interior points.

	 As for the second statement, one takes $\manifold{M}=\manifold{N}$ and in this case the two charts will be $\varphi_{\manifold{M},\alpha}$ and ${\varphi}_{\manifold{M},\beta}$ and the mapping between the two charts (change of coordinates) will be given by ${\varphi}_{\manifold{M},\beta} \circ \Phi \circ \varphi_{\manifold{M},\alpha}^{-1} = {\varphi}_{\manifold{M},\beta} \circ \varphi_{\manifold{M},\alpha}^{-1}$, since $\Phi$ in this case is the identity map, and again ${\varphi}_{\manifold{M},\beta} \circ \varphi_{\manifold{M},\alpha}^{-1}:\mathbb{R}^k \rightarrow \mathbb{R}^k$ is a diffeomorphism and according to \propref{prop::diffGeom_interior_to_interior} this maps boundary points onto boundary points and interior points onto interior points.
%	 \textcolor{red}{The proof needs to be revised and the Figure 2.1 does not correspond to the proposition. We have $\Phi_{\mathcal M}$ and $\tilde{\Phi}_{\mathcal M}$. Two different charts for the {\em same} manifold ${\mathcal M}$. Since the chart induces a (local) coordinate system, different charts induce different coordinate systems and the proposition says that the boundary is independent of the coordinate system}
%	 	It is simple to verify the independence of charts since for another chart $\Phi_{\manifold{N}}$ the diffeomorphism $\Phi_{\manifold{N}}^{-1}\circ\Phi_{\manifold{M}}$ maps $\Phi_{\manifold{M}}(p)$ to $\Phi_{\manifold{N}}(p)$, see \figref{fig::diffGeometry_mapping}. Hence, by \propref{prop::diffGeom_interior_to_interior}, if $\Phi_{\manifold{M}}(p)$ is an interior point (boundary point) then $\Phi_{\manifold{N}}(p)$ is also an interior point (boundary point).
	\end{proof}
	 
	In $n$-dimensional space it is possible to define $n+1$ sub-manifolds of dimension 0, 1, \ldots, $n$, respectively. For the case $n=3$ one can define, points, lines, surfaces and volumes. Moreover, it is also possible to orient these objects. By orientation one means the generalization of concepts such as left and right, front and back, clockwise or counterclockwise, outward and inward, etc. The different kinds of orientation will be presented for all the geometric objects that exist in 3-manifolds (points, lines, surfaces and volumes), as well as generalizations for geometric objects of arbitrary dimension. The distinction between inner orientation (solely related to the geometric object) and outer orientation (related to both the object and the embedding space) will be given.
%	For instance, to write the equation of conservation of mass on a volume one must specify the mass flowing out of the volume to be positive and the one flowing into it negative. This corresponds to specifying a preferred direction in space, a preferred direction through a surface. This choice corresponds to assigning an outer orientation to the surface. On the other hand, to write the equation for Faraday's law of induction, one needs to specify a circulation on a surface to be positive and the opposite one to be negative. This choice corresponds to assigning an inner orientation to the surface. The terms inner and outer stem from the fact that these two different types of orientation are solely related to the geometrical object (inner orientation) or related to both the object and the embedding space (outer orientation). A pictorial idea of outer orientation is of an arrow piercing the surface, that is, an arrow that does not lie in the surface itself. As for inner orientation, one can specify an orientation that lies within the surface, by means of an arrow lying in the surface that specifies a sense of rotation: inner orientation. 
For a more detailed discussion on orientation we recommend \cite{tonti1975formal,mattiussi2000finite,abraham_diff_geom, burke1985applied, schutz1980geometrical}.

	The concept of orientation on manifolds is a generalization of the one for vector spaces and hence, by extension, to $\mathbb{R}^{k}$. One starts with the notion of orientation in vector spaces and the charts $\varphi$ will induce an orientation on the manifold.
	
	In $\mathbb{R}^{1}$ an orientation is one of the two possible directions, see \figref{fig:diffGeom_orientation1DVectorSpace}. In $\mathbb{R}^{2}$ an orientation is one of the two possible rotations, clockwise or counterclockwise, see \figref{fig:diffGeom_orientation2DVectorSpace}. In $\mathbb{R}^{3}$ an orientation is one of the two possible screw senses, upward clockwise or upward counterclockwise, see \figref{fig:diffGeom_orientation3DVectorSpace}.

\begin{figure}[htp]
\leftskip-0.8cm
% \begin{center}
\begin{tabular}{ccc}
\subfigure[1D vector space]{\includegraphics[width=0.27\textwidth]{./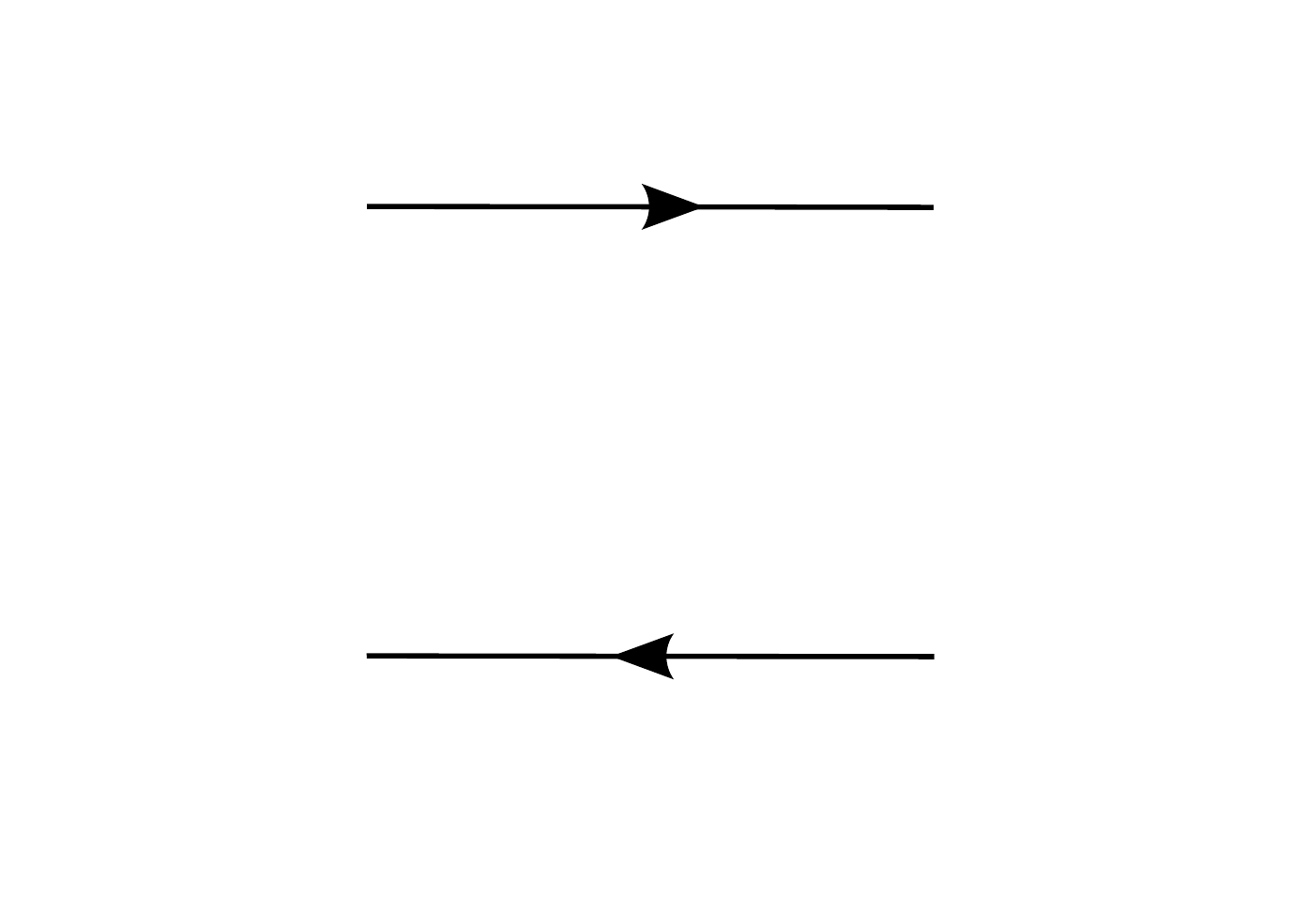}\label{fig:diffGeom_orientation1DVectorSpace}} & 
\subfigure[2D vector space]{\includegraphics[width=0.36\textwidth]{./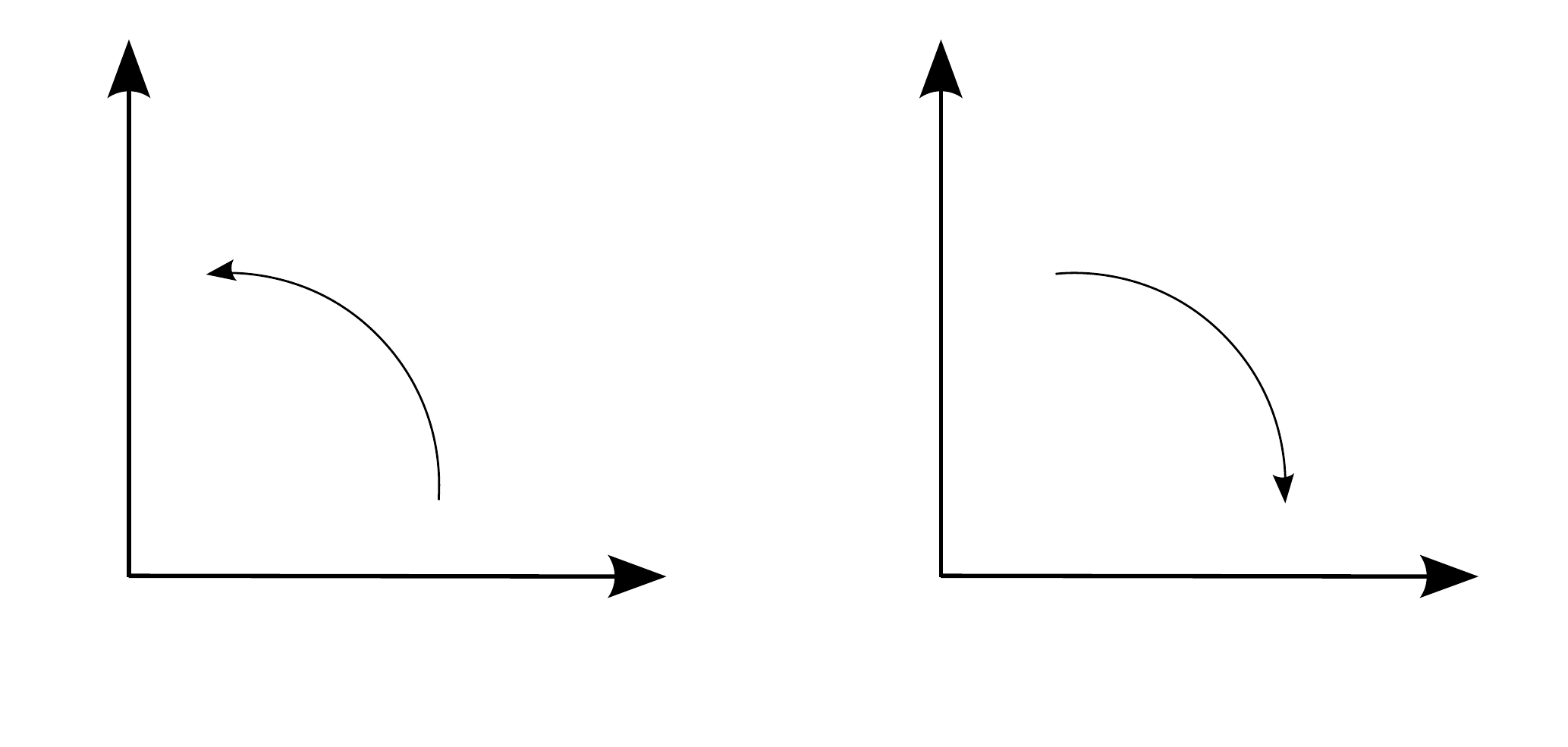}\label{fig:diffGeom_orientation2DVectorSpace}} & 
\subfigure[3D vector space]{\includegraphics[width=0.36\textwidth]{./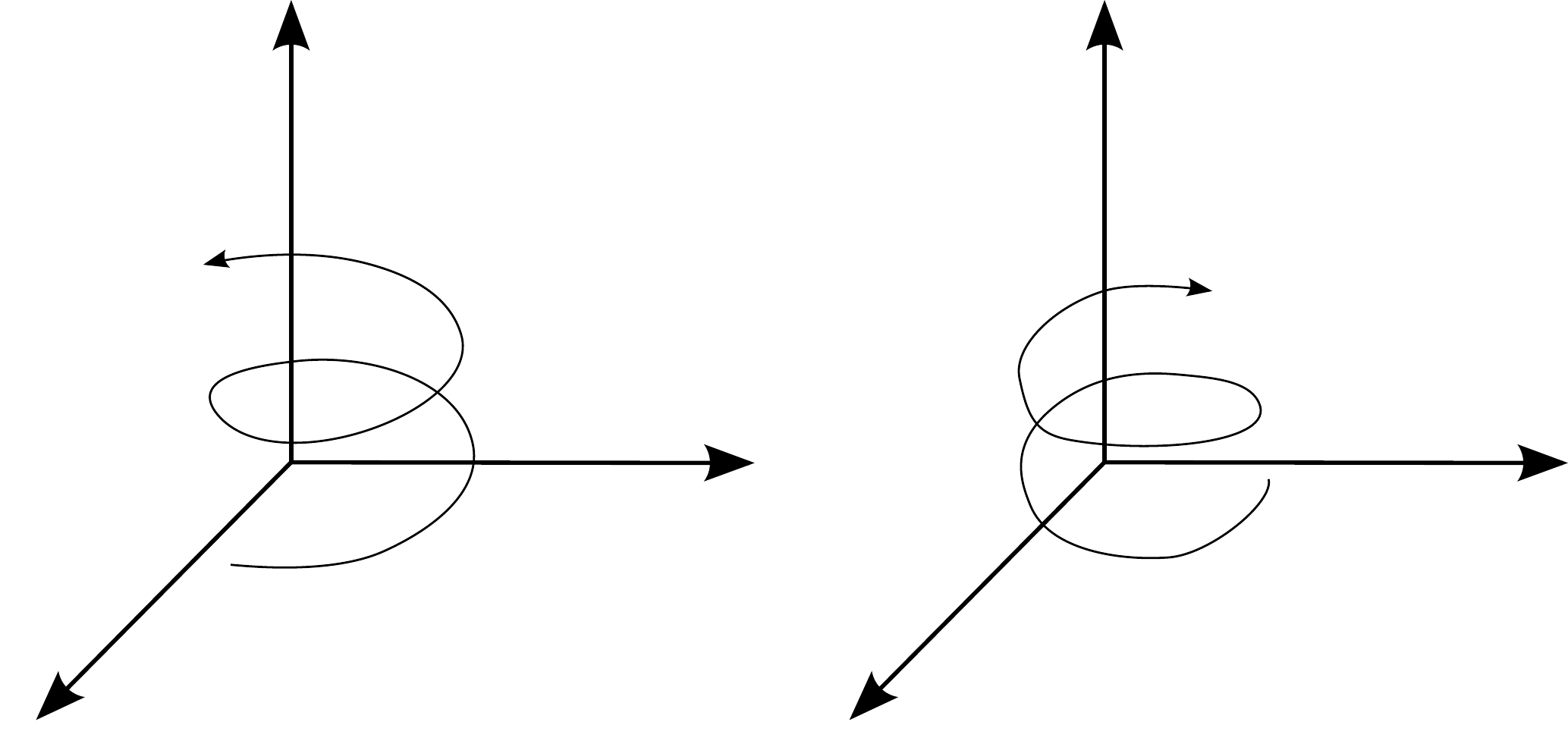}\label{fig:diffGeom_orientation3DVectorSpace}}\\
\end{tabular}
% \end{center}
\caption{Possible orientations of vector spaces.}
% \label{fig:heliumbel2}
\end{figure}

% \begin{figure}
% 
% \begin{tabular}{cc}
% \subfigure[density]{\includegraphics[width=8cm]{figures/densityinterface.pdf}} &
% \subfigure[velocity]{\includegraphics[width=8cm]{figures/velocityinterface.pdf}} \\ 
% \subfigure[pressure]{\includegraphics[width=8cm]{figures/pressureinterface.pdf}} & 
% \subfigure[volume fraction \& mass fractions]{\includegraphics[width=8cm]{figures/fractionsinterface.pdf}} \\
% \end{tabular}
% \caption{Numerical ``\textcolor{blue}{$\mathbf{\circ}$}" and exact ``\textcolor{red}{---}" state distribution of the translating interface problem at $t=0.10$, for 200 cells and $C=0.25$.}\label{fig:translatinginterface}
% \end{figure}

	The question is how to generalize this heuristic definition to higher dimensions. In a vector space of dimension $k$ one can transform one set of basis vectors, $\{\vec{v}_{1},\vec{v}_{2},\cdots,\vec{v}_{k}\}$, into another one, $\{\vec{u}_{1},\vec{u}_{2},\cdots,\vec{u}_{k}\}$, in the following way:
	\[
		\vec{u}_{i} = \sum_{j}S_{ij} \vec{v}_{j}\,,
	\]
	where $S_{ij}$ are the coefficients of the transformation matrix $S$ with $\mathrm{det}(S)\neq 0$.
	
	\begin{definition}[\textbf{Orientation}]\label{def:orientation}
		In a vector space of dimension $k$, orientation is an equivalence class of ordered sets of basis vectors whose equivalence relation states that two sets of basis vectors belong to the same equivalence class if the transformation matrix, $S$, between them has $\mathrm{det}(S)>0$.
	\end{definition}
	
	Since the determinant of a change of basis is either positive or negative, there are only two such classes. Hence, any vector space has only two orientations. If one chooses one of them, arbitrarily, the other one is said to be the opposite orientation. This also corresponds to assigning one of them as positive and the other as negative. It is simple to see that this definition of orientation is equivalent for the three cases presented above and generalizes this concept to any dimension $k$.
	
	As seen in \defref{def:manifold}, $k$-manifolds are locally like $T_p\mathbb{R}^k \cong\mathbb{R}^{k}$, and therefore, locally like a $k$-dimensional vector space. In this way, orientation on a manifold can be defined as:
	\begin{definition}[\textbf{Inner orientation on a manifold}]
		A manifold $\manifold{M}$ is said to be inner oriented if for any two overlapping charts, $(\mathcal{U}_{\alpha},\varphi_{\manifold{M},\alpha})$ and $(\mathcal{U}_{\beta},\varphi_{\manifold{M},\beta})$, of its atlas, $\mathcal{A}$, the Jacobian determinant of the transformation $\varphi_{\manifold{M},\beta}^{-1}\circ\varphi_{\manifold{M},\alpha}$ is positive. The inner orientation being the one of the equivalence classes of the sets of basis vectors of the tangent space at each point, $T_{p}\manifold{M}$, associated to these charts.
	\end{definition}
	
	If one considers the space in which the $k$-manifold $\manifold{M}$ is embedded to be $\mathbb{R}^{n}$, with dimension $n\geq k$, then at each point on the manifold there is a space perpendicular to the tangent space, $T_{p}\manifold{M}$, denoted by  $T^{\perp}_{p}\manifold{M}$ whose dimension is $(n-k)$; the normal bundle to the submanifold $\manifold{M}$ embedded in $\mathbb{R}^n$. Then $T_p\mathbb{R}^{n} = T_{p}\manifold{M}\oplus T^{\perp}_{p}\manifold{M}$. This allows one to define an outer orientation of a manifold as:
	\begin{definition}[\textbf{Outer orientation on a manifold}]
		Consider an oriented $k$-manifold $\manifold{M}$, with inner orientation $\{\vec{u}_{1},\vec{u}_{2},\cdots,\vec{u}_{k}\}$ at $T_p\manifold{M}$, embedded in an $n$-dimensional Euclidean space. An outer orientation of $\manifold{M}$ is an orientation for the perpendicular vector space at each point in the manifold, $T^{\perp}_{p}\manifold{M}$, $\{\vec{u}_{k+1},\vec{u}_{k+2},\cdots,\vec{u}_{n}\}$ such that all the oriented basis $\{\vec{u}_{1},\vec{u}_{2},\cdots,\vec{u}_{k}, \vec{u}_{k+1},\cdots,\vec{u}_{n}\}$ are in one of the two equivalence classes of the embedding space $T_p \mathbb{R}^n$.
	\end{definition}
	
	The particular cases of inner orientation of a 0-manifold (point) and outer orientation of a $n$-manifold embedded in a $n$-dimensional space are treated in a similar way. In both cases, the tangent space (points) and the perpendicular space ($n$-manifolds) have dimension zero. Therefore, the points and the $n$-manifolds are simply considered as sources or sinks and their orientation can be seen as simply induced by the inner orientation of the lines stemming out of them (points) or by the outer orientation of it's faces ($n$-manifold), see \cite{tonti1975formal,mattiussi2000finite}.
	
\begin{example}[\textbf{Outer orientation of points}]\label{ex:outer_orientation_points}
The outer orientation of points depends on the embedding space $\mathbb{R}^n$. In Figure~\ref{fig:diffGeom_orientationOuterPoint} a graphical representation of the outer orientation of a point in $\mathbb{R}^n$ is given for $n=0,\dots,3$
	\begin{figure}[!h]
		\centering
			\includegraphics[width=.6\textwidth]{./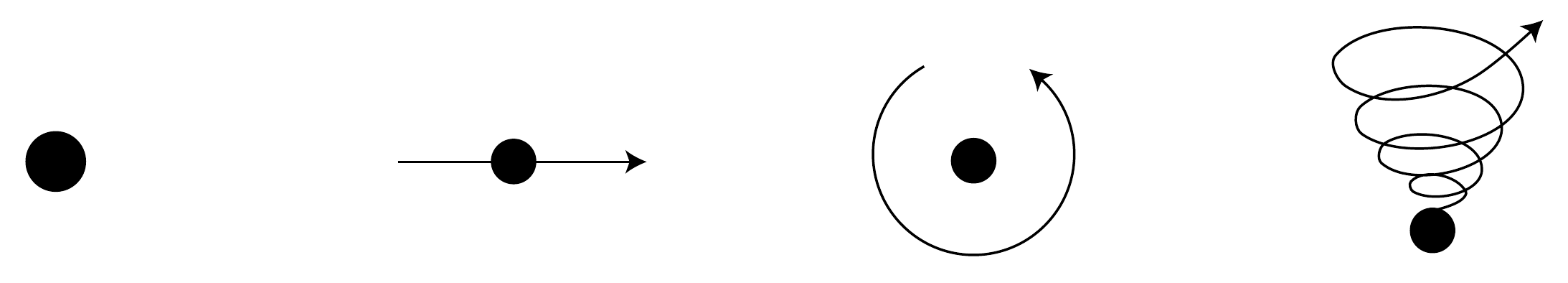}
			\caption{Outer orientation of a point embedded in $\mathbb{R}^{0}$, $\mathbb{R}^{1}$, $\mathbb{R}^{2}$ and $\mathbb{R}^{3}$ (from left to right).}
			\label{fig:diffGeom_orientationOuterPoint}
	\end{figure}
\end{example}	

\begin{example}[\textbf{Outer orientation of line segments}]\label{ex:outer_orientation_lines}
In Figure~\ref{fig:diffGeom_orientationOuterLine} the outer orientation of a line segment embedded in $\mathbb{R}^n$ is shown for $n=1,\ldots,3$.
\begin{figure}[!h]
		\centering
			\includegraphics[width=.6\textwidth]{./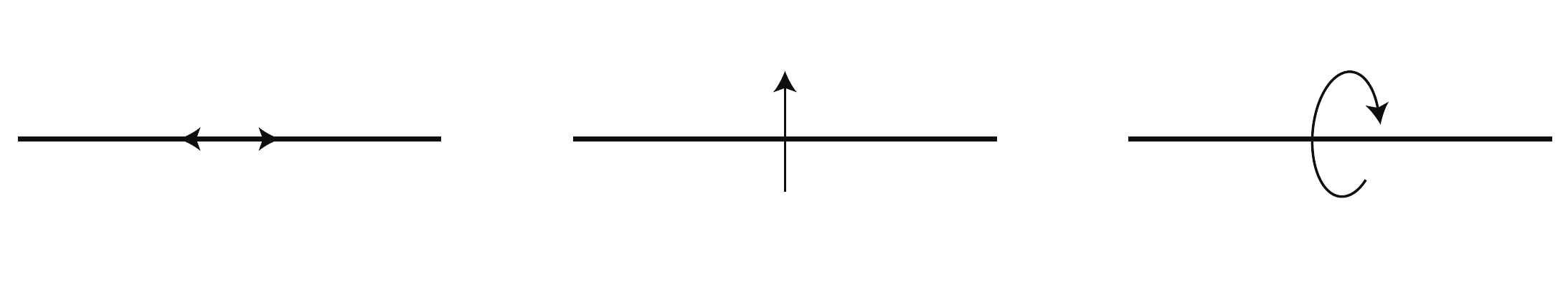}
			\caption{Outer orientation of a line segment embedded in $\mathbb{R}^{1}$, $\mathbb{R}^{2}$ and $\mathbb{R}^{3}$ (from left to right).}
			\label{fig:diffGeom_orientationOuterLine}
	\end{figure}
\end{example}

\begin{example}[\textbf{Outer orientation of surfaces}]	\label{ex:outer_orientation_surface}
Figure~\ref{fig:diffGeom_orientationOuterSurface} shows the embedding of a surface in $\mathbb{R}^n$, $n=2$ and $n=3$.
\begin{figure}[!h]
		\centering
			\includegraphics[height=.1\textheight]{./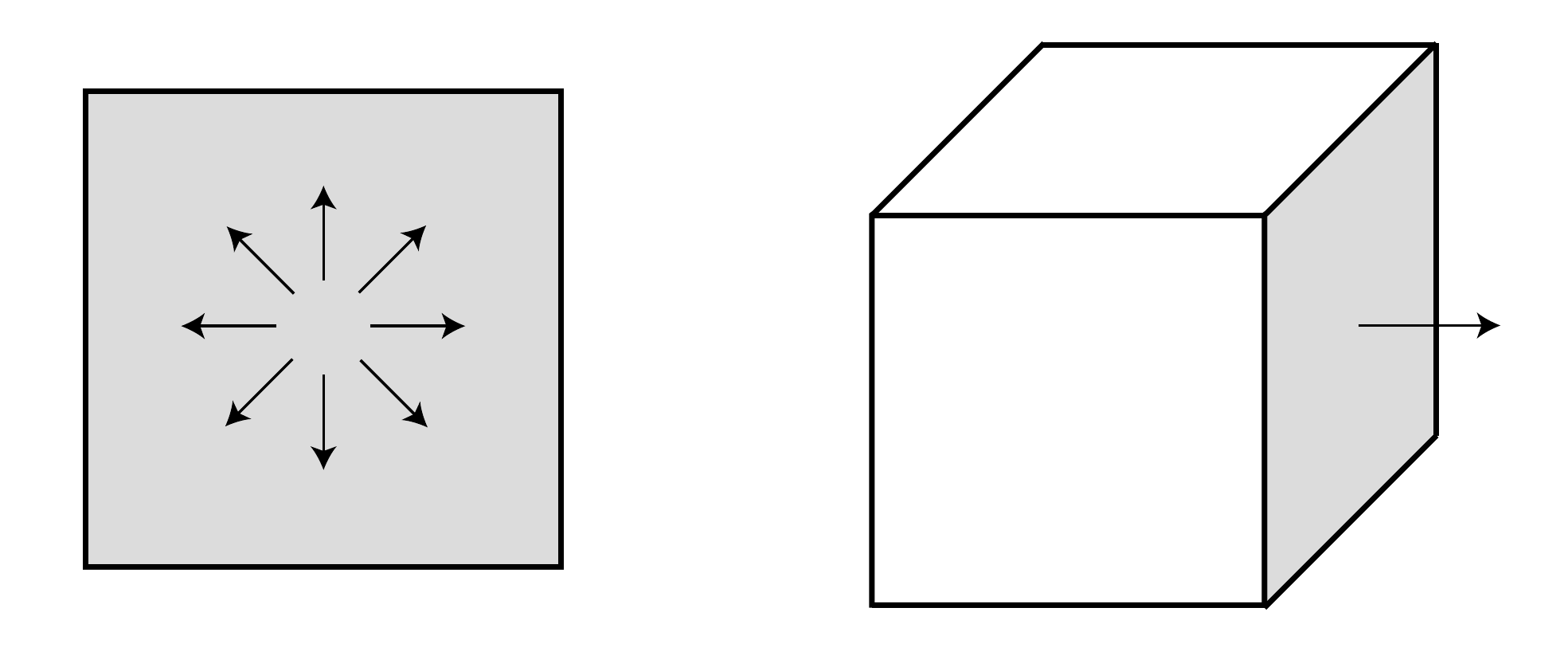}
			\caption{Outer orientation of a surface embedded in $\mathbb{R}^{2}$ and $\mathbb{R}^{3}$ (from left to right).}
			\label{fig:diffGeom_orientationOuterSurface}
	\end{figure}
\end{example}

\begin{example}[\textbf{Outer orientation of volumes}]	\label{ex:outer_orientation_volume}
Figure~\ref{fig:diffGeom_orientationOuterVolume} shows the outer orientation of a volume in $\mathbb{R}^3$.
\begin{figure}[!h]
		\centering
			\includegraphics[height=.1\textheight]{./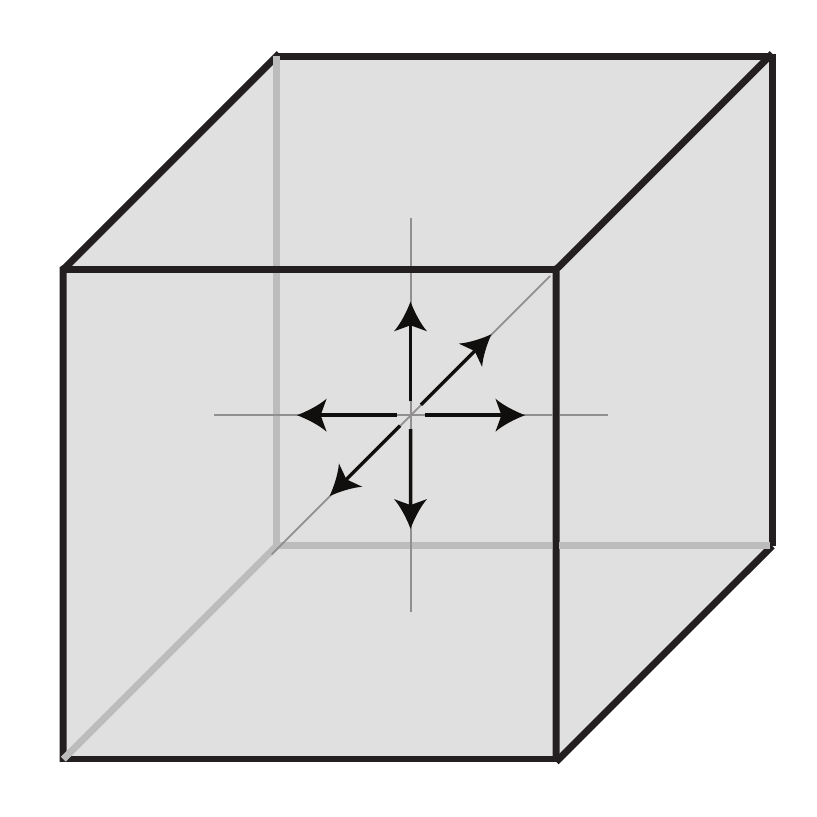}
			\caption{Outer orientation of a volume embedded in $\mathbb{R}^{3}$.}
			\label{fig:diffGeom_orientationOuterVolume}
	\end{figure}
\end{example}

%\begin{example}[\textbf{Outer orientation of surfaces}]	\label{ex:outer_orientation_surface}
%	An important aspect is the dependence  of the outer orientation on the embedding space. Take as an example the outer orientation of a 2-manifold embedded in $\mathbb{R}^{2}$ and embedded in $\mathbb{R}^{3}$, as shown in \figref{fig:diffGeom_orientationOuterSurfaceDifferences}.
%	\begin{figure}[!ht]
%		\centering
%			\includegraphics[width=.9\textwidth]{./figures/differentialGeometry/surface_outer_orientation_differences_2.pdf}
%			\caption{Outer orientation of a surface embedded in $\mathbb{R}^{2}$ (left) and embedded in $\mathbb{R}^{3}$ (right).}
%			\label{fig:diffGeom_orientationOuterSurfaceDifferences}
%	\end{figure}	
%	Note that the ``+'' and ``-'' signs used are arbitrary, there is no intrinsic positivity or negativity in orientation. The symbols are used simply to distinguish the two possible orientations. Using a ``$+$'' for a sink and a ``$-$'' for a source is simply a convention, the opposite convention is also used.  
%\end{example}
	
	\figref{fig:diffGeom_orientation} presents above a sequence of outer-oriented geometric objects of increasing dimension, and below a sequence of inner-oriented geometric objects of decreasing order. The objects are aranged in such a way that it reveals the similarities with the double de Rham complex and action of the Hodge-$\star$ operator. Both are  introduced in this section later on.
	
	\begin{figure}[htp]
		\centering
			\includegraphics[width=0.7\textwidth]{./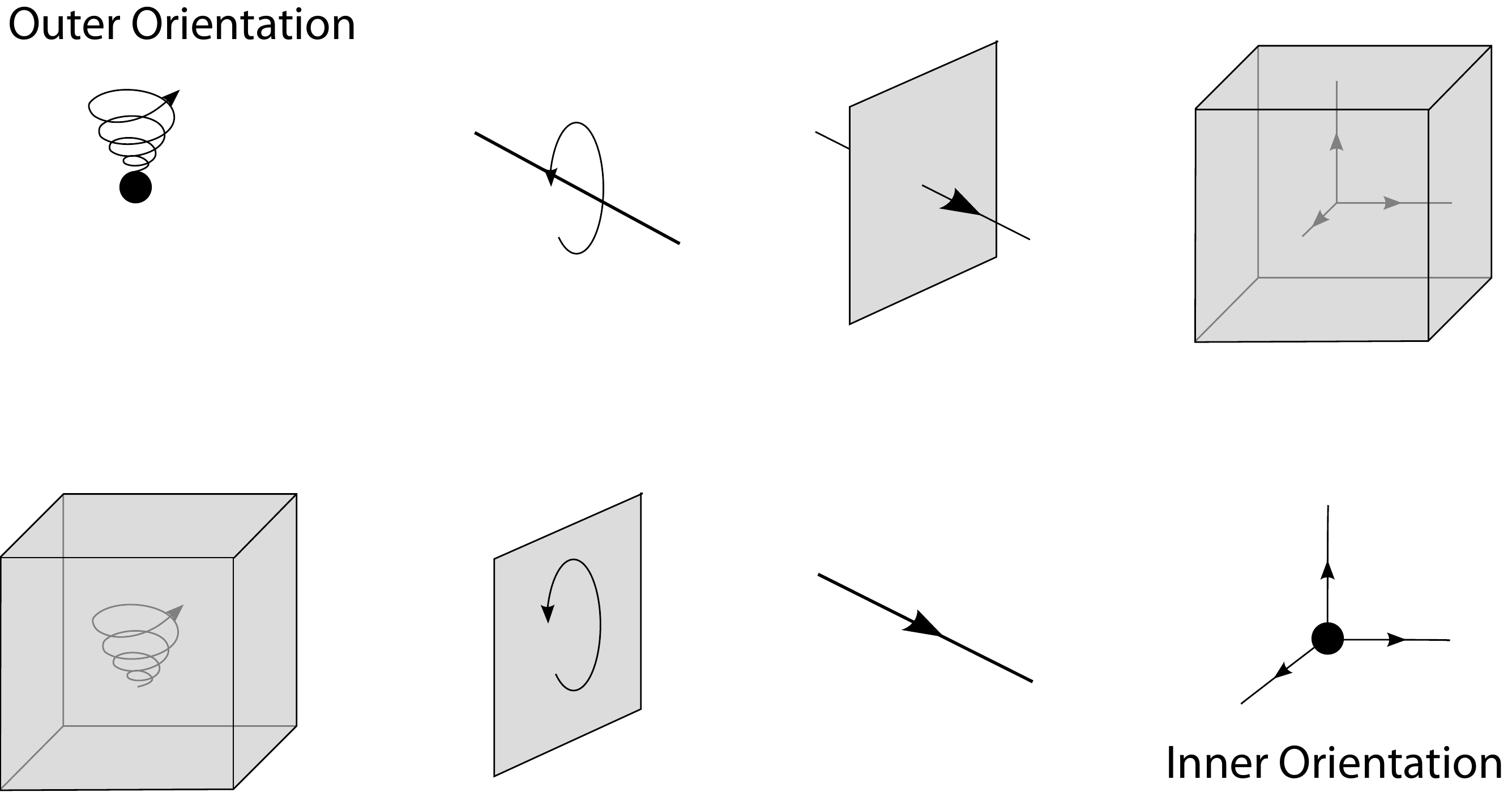}
			\caption{Inner and outer orientation of $k$-manifolds, $k=\{0,1,2,3\}$, in $\mathbb{R}^{3}.$}
			\label{fig:diffGeom_orientation}
	\end{figure}	
	
	Following the ideas pointed in \cite{bookTonti}, it is important to stress that orientation plays an essential role in integration. Although not explicitly expressed in the more common differential formulation, it appears implicitly by the usage of the right hand rule, for example, or when Stokes' theorem is considered, \theoremref{theor::difGeom_stokes_theorem}. In this case, the orientation of a $k$-manifold and its boundary have to be compatible when integrating, for the theorem to hold. Moreover, the distinction between inner and outer oriented manifolds will play a central role in the discretization to be presented in this paper. This distinction motivates the use of dual grids.
	
	This work is restricted to manifolds for which a global consistent orientation can be defined, called {\em orientable manifolds}.

      \subsection{Differential forms}
      		Differential forms will play a central role in the development of our numerical framework.
		\begin{definition}[\textbf{Differential forms}]
			\cite{abraham_diff_geom, isham,  spivak1998calculus} A differential $k$-form, $\kdifform{a}{k}$, $k\geq 1$ is a mapping:
			 \[
			 	\kdifform{a}{k}: \underbrace{\tangentspace{M}\times\cdots\times\tangentspace{M}}_{k}\spacemap\mathbb{R}\;,
			 \]
			 which is skew symmetric. That is, for any permutation $P$ of the indices $1,2,\cdots,k$:
			\[
				\kdifform{a}{k}(\vec{v}_{1},\cdots,\vec{v}_{k}) = \mathrm{sgn}(P)\,\kdifform{a}{k}(\vec{v}_{P(1)},\cdots,\vec{v}_{P(k)}) \;,
			\]
			for any $\vec{v}_{1},\cdots,\vec{v}_{k}\in\tangentspace{M}$, %the usual space of tangent vectors of the manifold $\manifold{M}$, 
			see \cite{abraham_diff_geom}. A 0-form, $\kdifform{a}{0}$, is defined simply as a standard scalar function on $\manifold{M}$. The space of $k$-forms on the manifold $\manifold{M}$ is denoted by $\Lambda^{k}(\manifold{M})$. $\kdifform{a}{k} = 0$ when $k<0$ or $k>n$.
		\end{definition}
      		
		%An important operation for $k$-differential forms is the wedge product, $\wedge$.
		
		\begin{definition}[\textbf{Wedge product}]
			\cite{abraham_diff_geom, spivak1998calculus} The wedge product, $\wedge$, of two differential forms $\kdifform{a}{k}\in\Lambda^{k}(\manifold{M})$ and $\kdifform{b}{l}\in\Lambda^{l}(\manifold{M})$ is a mapping $\wedge : \Lambda^{k}(\manifold{M})\times\Lambda^{l}(\manifold{M})\spacemap\Lambda^{k+l}(\manifold{M})$, such that:
			\begin{subequations}
			\begin{align}
				(\kdifform{a}{k} + \kdifform{b}{l})\wedge\kdifform{c}{m} = \kdifform{a}{k}\wedge\kdifform{c}{m} + \kdifform{b}{l}\wedge\kdifform{c}{m} & \quad \text{(Distributivity)} \\
				(\kdifform{a}{k}\wedge\kdifform{b}{l})\wedge\kdifform{c}{m} = \kdifform{a}{k}\wedge(\kdifform{b}{l}\wedge\kdifform{c}{m}) & \nonumber \\
				 = \kdifform{a}{k}\wedge\kdifform{b}{l}\wedge\kdifform{c}{m} & \quad \text{(Associativity)} \label{wedgeassociativity}\\
				\alpha \kdifform{a}{k}\wedge\kdifform{b}{l} = \kdifform{a}{k}\wedge \alpha\kdifform{b}{l} = \alpha (\kdifform{a}{k}\wedge\kdifform{b}{l}) & \quad \text{(Multiplication by scalars)} \\
				\kdifform{a}{k}\wedge\kdifform{b}{l} = (-1)^{kl}\kdifform{b}{l}\wedge\kdifform{a}{k}& \quad \text{(Skew symmetry)} \label{wedge::skew_symmetry}
			\end{align}
			\end{subequations}
		\end{definition}
		From property \eqref{wedge::skew_symmetry}
		%and \propref{diffGeometry::basis_elements}, 
		we we have
		\begin{equation}
			\kdifform{a}{k}\wedge\kdifform{a}{k} = 0,\quad \forall \kdifform{a}{k}\in\Lambda^{k}, \quad k \text{ is odd or } k > \frac{n}{2} \;. \label{awedgea}
		\end{equation}
				\begin{proposition}\cite{abraham_diff_geom}\label{diffGeometry::basis_elements}
			On a manifold $\manifold{M}$ of dimension $n$ the space of $1$-forms $\Lambda^{1}(\manifold{M})$ is a linear vector space of dimension $n$ that is spanned by $n$ basis elements. A canonical basis, given a local coordinate system $(x^{1}, \cdots, x^{n})$ is $\{\ederiv x^{1}, \cdots, \ederiv x^{n}\}$.
			The space of $k$-forms $\Lambda^{k}(\manifold{M})$ is a linear vector space of dimension $\frac{n!}{(n-k)!k!}$ and has a canonical basis given by:
			\[
				\{\ederiv x^{i_{1}}\wedge\cdots\wedge\ederiv x^{i_{k}} | 1\leq i_{1} < \cdots < i_{k}\leq n\} \;.
			\]	
		\end{proposition}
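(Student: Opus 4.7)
The plan is to verify both statements by showing that the proposed basis is both spanning and linearly independent, using the skew-symmetry and multilinearity of $k$-forms together with the duality between $\{\basisi{1},\ldots,\basisi{n}\}$ and $\{\ederiv x^{1},\ldots,\ederiv x^{n}\}$ at each point of $\manifold{M}$. The claim for $k=1$ is essentially a restatement of the fact that the cotangent space $\cotangentspace{M}$ is the algebraic dual of $\tangentspace{M}$, so once we exhibit $\{\ederiv x^{i}\}$ as the dual basis to the coordinate frame $\{\basisi{i}\}$ via $\ederiv x^{i}(\basisi{j}) = \delta^{i}_{j}$, the $k=1$ case follows from standard linear algebra.

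For general $k$, first I would establish the evaluation formula
\[
	(\ederiv x^{i_{1}}\wedge\cdots\wedge \ederiv x^{i_{k}})(\basisi{j_{1}},\ldots,\basisi{j_{k}}) = \det\left[\delta^{i_{a}}_{j_{b}}\right]_{a,b=1}^{k} \;,
\]
which is a direct consequence of the associativity \eqref{wedgeassociativity} and skew-symmetry \eqref{wedge::skew_symmetry} properties of the wedge product together with the duality of the preceding paragraph. For strictly increasing multi-indices $I=(i_{1}<\cdots<i_{k})$ and $J=(j_{1}<\cdots<j_{k})$, this determinant reduces to the Kronecker delta $\delta^{I}_{J}$.

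Next, to prove spanning, I would take an arbitrary $\difform{\omega}\in\kformspacedomain{k}{\manifold{M}}$ and define coefficients $c_{I} \define \difform{\omega}(\basisi{i_{1}},\ldots,\basisi{i_{k}})$ for every strictly increasing $I$. Multilinearity and skew-symmetry then force $\difform{\omega}$ to agree with $\sum_{I}c_{I}\,\ederiv x^{i_{1}}\wedge\cdots\wedge \ederiv x^{i_{k}}$ on every $k$-tuple of coordinate basis vectors, and multilinearity in each slot extends the equality to all of $\tangentspace{M}^{k}$. For linear independence, assume a vanishing combination $\sum_{I}c_{I}\,\ederiv x^{i_{1}}\wedge\cdots\wedge \ederiv x^{i_{k}}=0$, then evaluate on $(\basisi{j_{1}},\ldots,\basisi{j_{k}})$ for each strictly increasing $J$: by the determinant formula this yields $c_{J}=0$. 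Counting the number of strictly increasing multi-indices of length $k$ drawn from $\{1,\ldots,n\}$ gives the dimension $\tfrac{n!}{(n-k)!k!}$.

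The only genuinely delicate point is the evaluation formula, since the paper defines the wedge product axiomatically rather than through an explicit antisymmetrization formula; one therefore has to show that the axioms of distributivity, associativity and graded skew-symmetry, combined with the action of $\ederiv x^{i}$ on a single tangent vector, suffice to deduce the determinant identity. This is the main obstacle, but it is resolved by an induction on $k$: expanding $\ederiv x^{i_{1}}\wedge(\ederiv x^{i_{2}}\wedge\cdots\wedge \ederiv x^{i_{k}})$ and using skew-symmetry to permute entries produces exactly the cofactor expansion of the determinant along the first row. The rest of the argument is then purely combinatorial.
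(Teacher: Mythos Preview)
Your argument is correct and is the standard textbook proof of this fact. Note, however, that the paper does not supply its own proof of this proposition: it is simply quoted from \cite{abraham_diff_geom} and stated without argument, so there is nothing in the paper to compare your approach against. Your write-up is exactly the sort of proof one finds in the cited reference, and the only point that deserves a little more care is the one you already flag: since the paper introduces the wedge product axiomatically (via distributivity, associativity, scalar multiplication, and graded skew-symmetry) rather than through an explicit antisymmetrization formula, the determinant evaluation identity is not immediate and your inductive cofactor-expansion argument is the right way to close that gap.
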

		
		\begin{example}
			Examples of 0-forms, 1-forms, 2-forms and 3-forms are given by:
			\begin{equation}
				\left\{
					\begin{array}{l}
						\kdifform{a}{0} = a(x,y,z) \\
						\kdifform{b}{1} = b_1(x,y,z) \ederiv x + b_2(x,y,z) \ederiv y + b_3(x,y,z) \ederiv z \\
						\kdifform{c}{2} = c_1(x,y,z) \ederiv y\!\wedge\!\ederiv z + c_2(x,y,z) \ederiv z\!\wedge\!\ederiv x + c_3(x,y,z) \ederiv x\!\wedge\!\ederiv y \\
						\kdifform{w}{3} = w(x,y,z) \ederiv x\!\wedge\!\ederiv y\!\wedge\!\ederiv z
					\end{array}
				\right.
			\end{equation}
		\end{example}

		\begin{example}[\textbf{$\kdifform{a}{1} \wedge \kdifform{a}{1}$ in $\mathbb{R}^3$}] $\ederiv x \wedge\ederiv x = \ederiv y \wedge\ederiv y = \ederiv z \wedge\ederiv z = 0 \;$.
		\end{example}
		\begin{example}[\textbf{$\kdifform{a}{2} \wedge \kdifform{a}{2}$ in $\mathbb{R}^4$}]
		In $\mathbb{R}^{4}$,
		 %$(x,y,z,t)$, given 
		 let $\kdifform{\alpha}{2} = \ederiv x \wedge \ederiv y + \ederiv z \wedge \ederiv t$, then
		\[
			\kdifform{a}{2} \wedge \kdifform{a}{2} = (\ederiv x \wedge \ederiv y + \ederiv z\wedge \ederiv t) \wedge (\ederiv x \wedge \ederiv y + \ederiv z\wedge \ederiv t) = 2\, \ederiv x \wedge \ederiv y \wedge \ederiv z\wedge \ederiv t \neq 0 \;.
		\]
		\end{example}
		
		%Differential $k$-forms constitute a linear vector space of dimension $\frac{n!}{(n-k)!k!}$ and can be represented as a linear combination of basis elements.
		
		%The inner product between $k$-forms can be defined in the following way:
		\begin{definition}[\textbf{Inner product $k$-forms}] \label{def:inner_product_diff_forms}
			\cite{morita::geometry_diff_forms} The space of $k$-forms, $\Lambda^{k}(\manifold{M})$, can be equipped with a pointwise positive-definite inner product, $\inner{\cdot}{\cdot}:\Lambda^{k}(\manifold{M})\times\Lambda^{k}(\manifold{M})\spacemap\mathbb{R}$, such that:
			\[
				\inner{\kdifform{a}{1}_{1}\wedge\cdots\wedge\kdifform{a}{1}_{k}}{\kdifform{b}{1}_{1}\wedge\cdots\wedge\kdifform{b}{1}_{k}} := 
				\left|
					\left[
						\begin{array}{ccc} 
							\inner{\kdifform{a}{1}_{1}}{\kdifform{b}{1}_{1}} & \cdots & \inner{\kdifform{a}{1}_{k}}{\kdifform{b}{1}_{1}} \\
							\vdots & \vdots & \vdots \\
							\inner{\kdifform{a}{1}_{1}}{\kdifform{b}{1}_{k}} & \cdots & \inner{\kdifform{a}{1}_{k}}{\kdifform{b}{1}_{k}}
						\end{array}
					\right]
				\right| \;,
			\]
			where $\kdifform{a}{1}_{i},\kdifform{b}{1}_{j}\in\Lambda^{1}(\manifold{M})$, $i,j=1,\cdots,k$ and $\inner{\kdifform{a}{1}_{i}}{\kdifform{b}{1}_{j}}$ is the inner product of 1-forms, induced by the inner product on tangent vectors and by the duality pairing between vectors and 1-forms, see \cite{abraham_diff_geom,isham, spivak1998calculus}, and given by:
			\[
				\inner{\kdifform{a}{1}}{\kdifform{b}{1}} := \sum_{i,j}a_{i}b_{j}g^{ij} \;,
			\]
			where $\kdifform{a}{1} = \sum_{i}a_{i}\mathrm{d}x^{i}$, $\kdifform{b}{1} = \sum_{j}b_{j}\mathrm{d}x^{j}$ and $g^{ij}$ are the coefficients of the inverse of the metric tensor of rank two, see \cite{abraham_diff_geom,isham, spivak1998calculus}.
		\end{definition}
		
	\subsection{Differential forms under mappings}
	It is also important to determine how differential forms transform under a mapping between two manifolds, see \figref{fig::Change_of_coordinates} for an example for  $n=2$ and where $(x,y)=\mapping(\xi,\eta)$, and how integration of $k$-forms over manifolds is defined.
		
      \begin{definition}[\textbf{Pullback operator}] \label{def:pullback_kform}
      		\cite{abraham_diff_geom,flanders::diff_forms} Consider two manifolds of dimension $n$, $\manifold{M}$ and $\manifold{N}$ and a mapping between them, $\mapping: \manifold{M}\spacemap\manifold{N}$, such that local coordinates $\xi^{i}$ in $\manifold{M}$ are mapped into local coordinates $x^{i}=\mapping^{i}(\xi^{1},\cdots,\xi^{n})$ in $\manifold{N}$. Then the pullback, $\pullback$, of a $k$-form, $k\geq1$, $\kdifform{a}{k}$ is given by:
		\[
			\pullback(\kdifform{a}{k})(\vec{v}_{1}, \cdots, \vec{v}_{k}) := \kdifform{a}{k}(\pushforward(\vec{v}_{1}), \cdots, \pushforward(\vec{v}_{k})) \;,
		\]
		where $\pushforward(\vec{v})$ is the usual pushforward of a vector $\vec{v}$, see \cite{abraham_diff_geom,flanders::diff_forms}. The pullback of a 0-form, $\kdifform{a}{0}\in\Lambda^{0}(\manifold{N})$, is given simply by the composition of the maps
		\[
			\pullback (\kdifform{a}{0}) := a^{(0)}\circ\mapping \;.
		\]
      \end{definition}
      
      For a 1-form we get the following.
      \begin{proposition}\cite{flanders::diff_forms}
      		If $\kdifform{a}{1}\in\Lambda^{1}(\manifold{N})$ is given in a local coordinate system as $\kdifform{a}{1}=\sum_{i}\alpha_{i}\ederiv x^{i}$ then the pullback $\pullback:\Lambda^{1}(\manifold{N})\rightarrow\Lambda^{1}(\manifold{M})$ is given by:
		\begin{equation}
			\pullback(\kdifform{a}{1}) = \sum_{i,k}a_{i}\pderivative{\Phi^{i}}{\xi^{k}}\ederiv \xi^{k} \;. \label{eq::diffGeometry_pullback_one_form}
		\end{equation}
		
      \end{proposition}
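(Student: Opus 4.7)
The plan is to evaluate both sides of the claimed identity on an arbitrary tangent vector at a point $p\in\manifold{M}$ and show they agree, then conclude equality of the 1-forms. By \defref{def:pullback_kform}, the pullback is characterized by the relation $\pullback(\kdifform{a}{1})(\vec{v}) = \kdifform{a}{1}(\pushforward(\vec{v}))$ for every $\vec{v}\in\tangentspace{M}$, so it suffices to reduce both sides to the same coefficient expression in local coordinates.

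First I would expand an arbitrary $\vec{v}\in\tangentspace{M}$ in the coordinate basis of $T_p\manifold{M}$ associated with the local chart $\xi = (\xi^1,\ldots,\xi^n)$, writing $\vec{v} = \sum_{k} v^{k}\,\pderivative{}{\xi^{k}}$. Next I would invoke the coordinate formula for the pushforward, namely $\pushforward\!\left(\pderivative{}{\xi^{k}}\right) = \sum_{i} \pderivative{\mapping^{i}}{\xi^{k}}\,\pderivative{}{x^{i}}$, which follows from applying the chain rule to the derivation $f\mapsto \pderivative{(f\circ\mapping)}{\xi^{k}}$ acting on any smooth test function $f$ in a neighborhood of $\mapping(p)\in\manifold{N}$. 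Combined with linearity, this gives $\pushforward(\vec{v}) = \sum_{i,k} v^{k}\,\pderivative{\mapping^{i}}{\xi^{k}}\,\pderivative{}{x^{i}}$.

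Then I would plug this into $\kdifform{a}{1}=\sum_{i} a_{i}\,\ederiv x^{i}$, using the canonical duality pairing $\ederiv x^{i}\!\left(\pderivative{}{x^{j}}\right)=\delta^{i}_{j}$ between the basis of $\Lambda^{1}$ from \propref{diffGeometry::basis_elements} and the tangent basis. This yields
\[
\kdifform{a}{1}(\pushforward(\vec{v})) \;=\; \sum_{i,k} a_{i}\,\pderivative{\mapping^{i}}{\xi^{k}}\,v^{k}\;.
\]
On the other hand, evaluating the proposed right-hand side directly on $\vec{v}$ gives
\[
\Bigl(\sum_{i,k} a_{i}\,\pderivative{\mapping^{i}}{\xi^{k}}\,\ederiv\xi^{k}\Bigr)(\vec{v}) \;=\; \sum_{i,k} a_{i}\,\pderivative{\mapping^{i}}{\xi^{k}}\,v^{k}\;,
\]
using $\ederiv\xi^{k}\!\left(\pderivative{}{\xi^{j}}\right)=\delta^{k}_{j}$. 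The two expressions coincide for every $\vec{v}$, hence the two 1-forms are equal pointwise and therefore as elements of $\kformspacedomain{1}{\manifold{M}}$.

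There is no real obstacle here; the only point requiring care is identifying the pushforward of a coordinate basis vector with the Jacobian of $\mapping$, which is the derivation-theoretic content of the chain rule. Everything else is a bookkeeping application of the duality pairing and linearity. This proof pattern will also be the template for pulling back higher-degree $k$-forms, where the same argument is applied slot-by-slot and combined through multilinearity of $\wedge$.
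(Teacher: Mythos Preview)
Your argument is correct and is the standard coordinate verification of the pullback formula via the defining relation $\pullback(\kdifform{a}{1})(\vec{v}) = \kdifform{a}{1}(\pushforward(\vec{v}))$ together with the Jacobian expression for $\pushforward$. The paper itself does not supply a proof of this proposition; it simply cites Flanders, so there is nothing to compare against beyond noting that your derivation is exactly the argument one finds in standard references.
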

%       \begin{proof}
%       		See .
%      		The pullback of each element of the basis $\{\ederiv y^{i}\}$ of $T^{*}_{\varphi(p)}\manifold{N}$ can be represented as a linear combination of the elements of the basis basis  $\{\ederiv x^{i}\}$ of $\cotangentspace{\manifold{M}}$:
%		\[
%			\pullback(\ederiv y^{i}) = \sum_{j}c_{j}^{i}\,\ederiv x^{j}
%		\]
%		Using \defref{def:pullback_kform}:
%		\[
%			\pullback(\ederiv y^{i})(\vec{e}_{k}^{\,x}) = \sum_{j}c_{j}^{i}\,\ederiv x^{j}(\vec{e}_{k}^{\,x}) := \ederiv y^{i}[\Phi_{*}(\vec{e}_{k}^{\,x})]
%		\]
%		Now, using \eqref{eq:dual_canonical} and \propref{prop:pushforwad_vector}:
%		\[
%			\sum_{j}c_{j}^{i}\,\delta^{j}_{k} =  \sum_{m}\pderivative{\varphi^{m}}{x^{k}}\,\delta^{i}_{m}
%		\]
%		And hence,
%		\[
%			c^{i}_{k} = \pderivative{\varphi^{i}}{x^{k}}
%		\]
%		And we get for the basis elements:
%		\[
%			\pullback(\ederiv y^{i}) = \sum_{j}\pderivative{\varphi^{i}}{x^{j}}\,\ederiv x^{j}
%		\]
%		For a 1-form $\kdifform{\alpha}{1}=\sum_{i}\alpha_{i}\ederiv y^{i}$ it follows directly:
%		\[
%			\pullback(\kdifform{\alpha}{1}) = \pullback(\sum_{i}\alpha_{i}\ederiv y^{i}) = \sum_{i}\alpha_{i}\pullback(\ederiv y^{i}) =  \sum_{i,j}\alpha_{i}\pderivative{\varphi^{i}}{x^{j}}\,\ederiv x^{j}
%		\]
%       \end{proof}
	
	\begin{proposition}\cite{hou::differential_geometry_physicists}
		The pullback $\pullback$ has the following properties:
		\begin{subequations}
		\begin{align}
			\pullback(\kdifform{a}{k}+\kdifform{b}{k}) = \pullback(\kdifform{a}{k})+\pullback(\kdifform{b}{k}) & \quad \text{(Linearity)} \label{diffGeom::pullback_linearity} \\
			\pullback(\kdifform{a}{k}\wedge\kdifform{b}{k}) = \pullback(\kdifform{a}{k})\wedge\pullback(\kdifform{b}{k}) & \quad \text{(Algebra homomorphism)} \label{diffGeom::pullback_homomorphism} \\
			\pullbacks{(\mapping_{2}\circ\mapping_{1})} = \pullback_{1}\circ\pullback_{2} & \quad\text{(Composition)} \label{diffGeom::pullback_composition}
		\end{align}
		\end{subequations}
	\end{proposition}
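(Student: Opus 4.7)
The plan is to unpack the definition of the pullback (\defref{def:pullback_kform}) and rely on the fact that pushforward is a well-defined linear map on tangent vectors obeying the chain rule. Each of the three statements then reduces to a pointwise identity on $k$-tuples of tangent vectors.

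For linearity, I would simply evaluate both sides on an arbitrary $k$-tuple $(\vec{v}_{1},\ldots,\vec{v}_{k})\in\tangentspace{M}\times\cdots\times\tangentspace{M}$. The left-hand side becomes $(\kdifform{a}{k}+\kdifform{b}{k})(\pushforward\vec{v}_{1},\ldots,\pushforward\vec{v}_{k})$, which by the pointwise definition of addition of $k$-forms splits as $\kdifform{a}{k}(\pushforward\vec{v}_{1},\ldots)+\kdifform{b}{k}(\pushforward\vec{v}_{1},\ldots)$, i.e.\ $\pullback(\kdifform{a}{k})(\vec{v}_{1},\ldots)+\pullback(\kdifform{b}{k})(\vec{v}_{1},\ldots)$. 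The $k=0$ case is equally immediate from $\pullback(\kdifform{a}{0})=\kdifform{a}{0}\circ\mapping$.

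For the algebra homomorphism (which is the step I expect to demand the most care, since it is where the combinatorial structure of $\wedge$ enters), I would use the standard representation of the wedge product as an alternating sum over $S_{k+l}$, namely
\[
(\kdifform{a}{k}\wedge\kdifform{b}{l})(\vec{w}_{1},\ldots,\vec{w}_{k+l})=\frac{1}{k!\,l!}\sum_{\sigma\in S_{k+l}}\mathrm{sgn}(\sigma)\,\kdifform{a}{k}(\vec{w}_{\sigma(1)},\ldots,\vec{w}_{\sigma(k)})\,\kdifform{b}{l}(\vec{w}_{\sigma(k+1)},\ldots,\vec{w}_{\sigma(k+l)}).
\]
Substituting $\vec{w}_{i}=\pushforward\vec{v}_{i}$ and recognising each factor as $\pullback(\kdifform{a}{k})$ and $\pullback(\kdifform{b}{l})$ evaluated on the $\vec{v}_{i}$ yields exactly $(\pullback\kdifform{a}{k})\wedge(\pullback\kdifform{b}{l})$ applied to $(\vec{v}_{1},\ldots,\vec{v}_{k+l})$. (I would also note in passing that the statement should read $\kdifform{a}{k}\wedge\kdifform{b}{l}$ rather than two $k$-forms, in agreement with the signature of $\wedge$ stated earlier.)

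For composition, let $\mapping_{1}:\manifold{M}\rightarrow\manifold{N}$ and $\mapping_{2}:\manifold{N}\rightarrow\manifold{P}$ and let $\kdifform{a}{k}\in\Lambda^{k}(\manifold{P})$. The chain rule applied to the smooth composition $\mapping_{2}\circ\mapping_{1}$ gives $(\mapping_{2}\circ\mapping_{1})_{\star}=(\mapping_{2})_{\star}\circ(\mapping_{1})_{\star}$ at each point. Then
\[
\pullbacks{(\mapping_{2}\circ\mapping_{1})}(\kdifform{a}{k})(\vec{v}_{1},\ldots,\vec{v}_{k})=\kdifform{a}{k}\bigl((\mapping_{2})_{\star}(\mapping_{1})_{\star}\vec{v}_{1},\ldots,(\mapping_{2})_{\star}(\mapping_{1})_{\star}\vec{v}_{k}\bigr),
\]
and folding $(\mapping_{2})_{\star}$ into $\pullback_{2}$ and then $(\mapping_{1})_{\star}$ into $\pullback_{1}$ gives $\pullback_{1}(\pullback_{2}(\kdifform{a}{k}))$ evaluated on the same $k$-tuple. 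For $0$-forms the identity reduces to associativity of function composition: $\kdifform{a}{0}\circ(\mapping_{2}\circ\mapping_{1})=(\kdifform{a}{0}\circ\mapping_{2})\circ\mapping_{1}$. All three identities thus boil down to pushing the definition of $\pullback$ through elementary algebraic manipulations; the only substantive ingredient beyond definitions is the chain rule for $\pushforward$.
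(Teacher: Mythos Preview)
The paper does not actually prove this proposition: it is stated with a citation to \cite{hou::differential_geometry_physicists} and no proof is given. Your argument is the standard textbook verification---unfolding the definition of $\pullback$ in terms of $\pushforward$, using the alternating-sum formula for $\wedge$, and invoking the chain rule for the pushforward---and it is correct. Your observation that the second line should read $\kdifform{a}{k}\wedge\kdifform{b}{l}$ is also well taken.
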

% 	\begin{proof}
% 		See  .
% 	\end{proof}
	
	Integration of differential forms can now be defined in the following way.
	\begin{definition}[\textbf{Pullback integration formulation}] \label{def:pullback} \cite{spivak1998calculus}
	The integral of a differential $k$-form, $\kdifform{a}{k}$, over a manifold $\manifold{M}$ of dimension $k$ is:
		\begin{equation}
			\int_{\manifold{M}}\kdifform{a}{k} := \int_{S}\left ( \varphi^{-1} \right )^\star (\kdifform{a}{k}) \;, \label{eq:change_coordinates_integration}
		\end{equation}
		where $\varphi^{-1}: S \subset \mathbb{R}^{k}\spacemap\manifold{M}$ is the inverse of the global chart from $\manifold{M}$ to $S \subset \mathbb{R}^k$.  On the right hand side of (\ref{eq:change_coordinates_integration}) one has the usual integral in $S \subset \mathbb{R}^{k}$. %\textcolor{red}{The map from $\mathbb{R}^k$ to $\manifold{M}$ is the inverse of a global chart. So I think we should use $\phi^{-1}:S\subset \mathbb{R}^k \rightarrow \manifold{M}$. The pull-back is then $\left ( \phi^{-1}\right )^\star = \left ( \phi^\star \right )^{-1}$.}
	\end{definition}
		
		%It is relevant to note that under integration it is possible to establish a duality pairing between a differential $k$-form and a manifold of dimension $k$ in the following way:
		Integration can be considered a duality pairing between a differential $k$-form and a $k$-dimensional manifold in the following way
		\begin{equation}
			\duality{\kdifform{a}{k}}{\manifold{M}} := \int_{\manifold{M}}\kdifform{a}{k} \;. \label{diffgeom::duality_pairing}
		\end{equation}
		
		%Given this, it is also possible to show the following property of the mapping and pullback.
		If integration is interpreted as the duality pairing between differential forms and geometry, then the relation between a mapping and the associated pullback satisfies
		\begin{proposition}\label{prop::pullback_integral}
			Given a mapping $\mapping:\manifold{M}\spacemap\manifold{N}$, its associated pullback $\pullback$ and a differential form $\kdifform{a}{k} \in \Lambda^{k}(\manifold{N})$ then the following holds:
			\begin{equation}
				\int_{\mapping(\manifold{M})}\kdifform{a}{k} = \int_{\manifold{M}}\pullback(\kdifform{a}{k})  \quad \Leftrightarrow \quad \duality{\kdifform{a}{k}}{\mapping(\manifold{M})} = \duality{\pullback\kdifform{a}{k}}{\manifold{M}}\;. \label{eq::diffGeom_pullback_integral_manifolds}
			\end{equation}
			So the pullback is the {\em formal adjoint} of the map $\Phi$ in this duality pairing.
			\end{proposition}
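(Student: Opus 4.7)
The plan is to reduce both sides of \eqref{eq::diffGeom_pullback_integral_manifolds} to the same ordinary integral over a region in $\mathbb{R}^{k}$ by making a matched choice of charts on $\manifold{M}$ and on $\Phi(\manifold{M})$, and then invoking the composition property \eqref{diffGeom::pullback_composition} of the pullback. This turns the identity into a purely bookkeeping statement about how the pullback commutes with the evaluation map that defines integration through \defref{def:pullback}.

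First, let $\varphi_{\manifold{M}}:\manifold{M}\rightarrow S\subset \mathbb{R}^{k}$ be a global chart of $\manifold{M}$ so that, by \defref{def:pullback},
\[
 \int_{\manifold{M}} \pullback (\kdifform{a}{k}) = \int_{S} \pullbacks{(\varphi_{\manifold{M}}^{-1})}\pullback (\kdifform{a}{k}).
\]
Since $\Phi$ is a diffeomorphism onto $\Phi(\manifold{M})$, the composite $\Phi\circ \varphi_{\manifold{M}}^{-1}: S\rightarrow \Phi(\manifold{M})$ is itself the inverse of a global chart of $\Phi(\manifold{M})$, so it qualifies as the distinguished chart used in \defref{def:pullback} to compute the integral on the left-hand side. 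With this choice,
\[
 \int_{\Phi(\manifold{M})} \kdifform{a}{k} = \int_{S} \bigl(\Phi\circ \varphi_{\manifold{M}}^{-1}\bigr)^{\!\star}\kdifform{a}{k}.
\]

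Next, apply the composition rule \eqref{diffGeom::pullback_composition} for pullbacks, which gives
\[
 \bigl(\Phi\circ \varphi_{\manifold{M}}^{-1}\bigr)^{\!\star} = \pullbacks{(\varphi_{\manifold{M}}^{-1})}\circ\pullback .
\]
Plugging this into the previous display and comparing with the first one yields
\[
 \int_{\Phi(\manifold{M})} \kdifform{a}{k} = \int_{S} \pullbacks{(\varphi_{\manifold{M}}^{-1})}\pullback (\kdifform{a}{k}) = \int_{\manifold{M}} \pullback (\kdifform{a}{k}),
\]
which is \eqref{eq::diffGeom_pullback_integral_manifolds}. The reformulation in terms of the duality pairing \eqref{diffgeom::duality_pairing} is then an immediate rewriting.

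The main obstacle is really not an obstacle in the setting of \defref{def:pullback}, which tacitly assumes a global chart: one has to verify that $\Phi\circ\varphi_{\manifold{M}}^{-1}$ may indeed be used as an admissible global chart-inverse on $\Phi(\manifold{M})$ and that the orientations induced on the two parameter domains agree, so that no spurious sign is introduced. More subtle is the extension to manifolds that do not admit a single global chart; in that case one must patch the argument together with a partition of unity subordinate to an oriented atlas, verifying that the same cancellation via \eqref{diffGeom::pullback_composition} takes place chart-by-chart and that the partition of unity functions themselves pull back correctly. However, since the definition we are using already presupposes a global chart, the proof reduces to the three-line calculation above.
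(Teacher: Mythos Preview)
The paper states this proposition without proof, so there is nothing to compare against directly. Your argument is correct and is precisely the natural one given the tools the paper has set up: Definition~\ref{def:pullback} reduces integration to a Euclidean integral via a chart, and the composition property \eqref{diffGeom::pullback_composition} then does all the work. Your observation that $\Phi\circ\varphi_{\manifold{M}}^{-1}$ serves as an admissible chart-inverse on $\Phi(\manifold{M})$ is the key step, and your caveats about orientation and the need for $\Phi$ to be a diffeomorphism onto its image (so that this composite is genuinely a chart-inverse) are appropriate. The partition-of-unity remark is also well placed, though, as you note, superfluous under the global-chart assumption built into \defref{def:pullback}.
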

			
%			\begin{proof}
%			
%			
%			
%			\textcolor{red}{I commented the next few lines: If $\Phi$ is a mapping from $\manifold{M}$ to $\manifold{N}$ then we can write this in coordinates as $\Phi = \phi_{\manifold{N}}^{-1}\circ \phi_{\manifold{M}}$}
%		  \end{proof}
%			Given the mappings $\mapping_{1}:\mathbb{R}^{k}\spacemap\manifold{M}$ and $\mapping_{2}:\manifold{M}\spacemap\manifold{N}$, their associated pullbacks $\pullback_{1}$ and $\pullback_{2}$ and a differential form $\kdifform{\alpha}{k}$. We also have that $\varphi_{1}(S)=\manifold{M}$ with $S\in\mathbb{R}^{k}$ and $\varphi_{2}(\manifold{M})=\manifold{N}$. The composition of the two mappings is $\mapping_{2}\circ\mapping_{1}$ and the composition of the two pullbacks is $\pullback_{1}\circ\pullback_{2}$. Hence,
%			\[
%				\int_{\mapping_{2}(\manifold{M})}\kdifform{a}{k} = \int_{\mapping_{2}\circ\mapping_{1}(S)}\kdifform{a}{k} := \int_{S}(\mapping_{2}\circ\mapping_{1})^{*}(\kdifform{a}{k}) = \int_{S}\pullback_{1}[\pullback_{2}(\kdifform{a}{k})] \;,
%			\]
%			which, again using the definition of integration of $k$-forms yields
%			\[
%				\int_{S}\pullback_{1}[\pullback_{2}(\kdifform{a}{k})] := \int_{\mapping_{1}(S)}\pullback_{2}(\kdifform{a}{k}) =  \int_{\manifold{M}}\pullback_{2}(\kdifform{a}{k}) \;.
%			\]
%		\end{example}
		
		\begin{definition}[\textbf{Inclusion map}] \label{def:inclusion_map}
			\cite{kubrusly2001elements} Let $\manifold{A}$ be a subset of $\manifold{M}$. The function $\iota: \manifold{A}\spacemap \manifold{M}$ defined by $\iota(x) = x$ for every $x\in \manifold{A}$, is the the inclusion map (or the embedding, or the injection) of $\manifold{A}$ into $\manifold{M}$. In other words, the inclusion map of a subset of $\manifold{M}$ is the restriction to that subset of the identity map on $\manifold{M}$.
		\end{definition}
		
		%An operator that can be seen as a special case of the pullback operator is the trace operator.
		
		\begin{definition}[\textbf{Trace operator}]\label{def::diffGeometry_trace_operator}
			\cite{arnold2009geometric} Given two manifolds $\manifold{M}$ and $\manifold{M}'$ such that $\manifold{M}'\subset\manifold{M}$, the trace operator, $\mathrm{tr}$:
			\[
				\mathrm{tr}_{\manifold{M},\manifold{M}'}:\Lambda^{k}(\manifold{M})\rightarrow\Lambda^{k}(\manifold{M}') \;,
			\]
			is the pullback of the inclusion $\manifold{M}'\hookrightarrow\manifold{M}$. If the manifold $\manifold{M}$ is clear from the context, one may write $\mathrm{tr}_{\manifold{M}'}$ instead of $\mathrm{tr}_{\manifold{M},\manifold{M}'}$. If $\manifold{M}'$ is the boundary of $\manifold{M}$, $\partial\manifold{M}$, one just writes $\mathrm{tr}$.
		\end{definition}
		
		\begin{example}
			Consider an inclusion map $\iota$ and its associated pullback $\iota^{\star}$, such as the one depicted in \figref{fig::diffGeometry_inclusionMap} that generates the inclusion of the manifold $\mathcal{M}'$, a circle, on the manifold $\manifold{M}$, a disk. In local polar coordinates $\theta'$ and $(\theta, r)$ the inclusion map takes the form:
			\[
				\left\{
					\begin{array}{l}
						\theta = \iota_{\theta}(\theta') = \theta' \\
						r =  \iota_{r}(\theta') = 1 \\
					\end{array}
				\right. \;.
			\]
			
			\begin{figure}[htp]
				\centering
					\includegraphics[width=0.3\textwidth]{./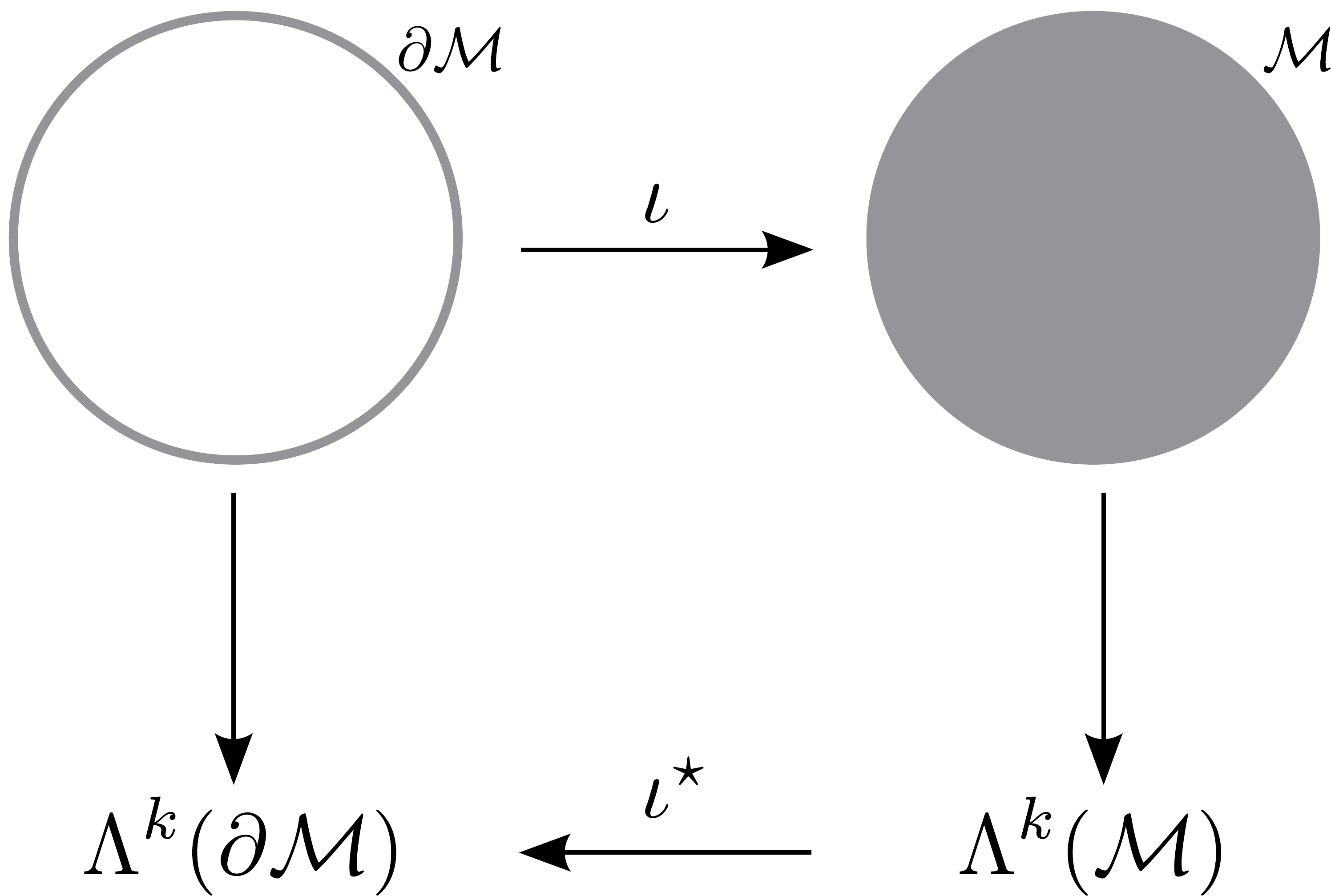}
					\caption{Pictorial view of the inclusion map $\iota$ and its associated pullback $\iota^{\star}$.}
					\label{fig::diffGeometry_inclusionMap}
			\end{figure}	
			The trace of the 1-form $\kdifform{a}{1} = a_{\theta}\mathrm{d}\theta + a_{r}\mathrm{d}r$ is given by:
			\[
				\mathrm{tr}\,\kdifform{a}{1} \stackrel{\mathrm{def.}~\ref{def::diffGeometry_trace_operator}}{=} \iota^{\star}\kdifform{a}{1} \stackrel{(\ref{eq::diffGeometry_pullback_one_form})}{=}(a_{\theta}\circ\iota)(\theta')\mathrm{d}\theta' \;.
			\]
		\end{example}
			
	\subsection{Exterior derivative}
		The exterior derivative, $\ederiv$, plays an important role in differential geometry and is defined in the following way:
		\begin{definition}[\textbf{Exterior derivative}]\label{def:exterior_derivative}
		\cite{abraham_diff_geom,flanders::diff_forms} The exterior derivative on a $n$-dimensional manifold $\manifold{M}$ is a mapping $\ederiv : \Lambda^{k}(\manifold{M})\rightarrow\Lambda^{k+1}(\manifold{M})$, $0\leq k\leq n-1$, which satisfies:
		      \begin{equation}
		          \ederiv\left(\kdifform{a}{k}\wedge\kdifform{b}{l}\right) = \ederiv\kdifform{a}{k}\wedge\kdifform{b}{l} + (-1)^{k}\kdifform{a}{k}\wedge \ederiv\kdifform{b}{l}, \qquad k+l<n \;, \label{eq::dif_and_wedge}
		      \end{equation}
		      and is nilpotent,
		      \begin{equation}
		      		\ederiv\ederiv\kdifform{a}{k}:=0,\quad\forall\kdifform{a}{k}\in\Lambda^{k}(\mathcal{M}) \;. \label{eq:double_dif}
		      \end{equation}
		      For $0$-forms, $a^{(0)}\in\Lambda^{0}(\manifold{M})$, the exterior derivative in a local coordinate system $(x^{1},\cdots,x^{n})$, is given by:
		      \begin{equation}
		      		\ederiv\kdifform{a}{0} :=  \sum_{i}\pderivative{a}{x^{i}}\,\ederiv x^{i} \label{eq::dzero_forms} \;.
		      \end{equation}
		      \end{definition}
		For a local coordinate system one has:
		\begin{example}
			In a 3-dimensional Euclidean space and in a local coordinate system $(x^{1}, x^{2},x^{3})$ the exterior derivative of a 1-form, $\kdifform{a}{1}=\sum_{i=1}^{3}a_{i}\ederiv x^{i}$, is given by:
			\begin{equation}
				\ederiv \kdifform{a}{1} = \left(\pderivative{a_{2}}{x^{3}} - \pderivative{a_{3}}{x^{2}}\right)\ederiv x^{2}\wedge\ederiv x^{3} + \left(\pderivative{a_{1}}{x^{3}} - \pderivative{a_{3}}{x^{1}}\right)\ederiv x^{3}\wedge\ederiv x^{1} + \left(\pderivative{a_{2}}{x^{1}} - \pderivative{a_{1}}{x^{2}}\right)\ederiv x^{1}\wedge\ederiv x^{2} \;.\label{eq::done_forms}
			\end{equation}
			And the exterior derivative of a 2-form, $\kdifform{b}{2}=b_{1}\ederiv x^{2}\wedge\ederiv x^{3} + b_{2}\ederiv x^{3}\wedge\ederiv x^{1} + b_{3}\ederiv x^{1}\wedge\ederiv x^{2}$, is given by:
			\begin{equation}
				\ederiv \kdifform{b}{2} = \left(\pderivative{b_{1}}{x^{1}}+\pderivative{b_{2}}{x^{2}}+\pderivative{b_{3}}{x^{3}}\right)\ederiv x^{1}\wedge\ederiv x^{2}\wedge\ederiv x^{3} \;. \label{eq::dtwo_forms}
			\end{equation}
			
			Recalling the vector calculus operators, gradient, curl, and divergence, one can see the similarity between these and the expressions \eqref{eq::dzero_forms}, \eqref{eq::done_forms} and \eqref{eq::dtwo_forms}, respectively. Indeed there exists a clear relation between the two sets of equations, for more details see \cite{abraham_diff_geom,flanders::diff_forms}.
		\end{example}
		
		It is possible to show, \cite{abraham_diff_geom,spivak1998calculus}, that the exterior derivative and the integration of a differential form are related by the following result:
		\begin{theorem}[\textbf{Generalized Stokes' Theorem}]\label{theor::difGeom_stokes_theorem}
			\cite{abraham_diff_geom,spivak1998calculus} Given a $k$-form $\kdifform{a}{k}$ on a (sub)-manifold $\manifold{M}$ of dimension $k+1$ that is paracompact and has a boundary, then
			\begin{equation}
				\int_{\manifold{M}}\ederiv\kdifform{a}{k} = \int_{\partial\manifold{M}}\kdifform{a}{k}. \label{eq::difGeom_stokes_theorem}
			\end{equation}
		\end{theorem}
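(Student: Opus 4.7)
The plan is to reduce the statement, via partition of unity and pullbacks, to a direct calculation on the half-space $\mathbb{H}^{k+1}$, where the identity follows from Fubini's theorem and the fundamental theorem of calculus. Since $\manifold{M}$ is paracompact, an atlas $\{(\manifold{U}_\alpha,\varphi_{\manifold{M},\alpha})\}$ admits a subordinate smooth partition of unity $\{\rho_\alpha\}$, and the decomposition $\kdifform{a}{k}=\sum_\alpha \rho_\alpha \kdifform{a}{k}$ together with linearity of both $\ederiv$ and integration reduces the claim to the case of a $k$-form compactly supported inside a single chart.

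For such a localized form, I would use \propref{prop::pullback_integral} to translate both sides of \eqref{eq::difGeom_stokes_theorem} to the Euclidean model: $\int_{\manifold{U}_\alpha}\ederiv(\rho_\alpha\kdifform{a}{k}) = \int_{V_\alpha}(\varphi_{\manifold{M},\alpha}^{-1})^\star \ederiv(\rho_\alpha\kdifform{a}{k})$, and similarly on the boundary, using \propref{prop:boundary_indep_chart} so that the image of $\partial\manifold{U}_\alpha$ is $\partial V_\alpha \subset \partial\mathbb{H}^{k+1}$. The key structural ingredient needed here is the commutation of the exterior derivative with pullback, $\pullback\ederiv = \ederiv\pullback$, which I would establish by verifying it on $0$-forms (where it reduces to the chain rule applied to \eqref{eq::dzero_forms}) and extending to arbitrary $k$-forms by the Leibniz rule \eqref{eq::dif_and_wedge}, the algebra-homomorphism property \eqref{diffGeom::pullback_homomorphism}, and the canonical basis description of \propref{diffGeometry::basis_elements}.

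On $\mathbb{H}^{k+1}$ with coordinates $(x^1,\ldots,x^{k+1})$, a compactly supported $k$-form may be written as $\omega = \sum_{i=1}^{k+1} (-1)^{i-1} f_i\,\ederiv x^1\wedge\cdots\widehat{\ederiv x^i}\cdots\wedge \ederiv x^{k+1}$, so that $\ederiv\omega = \bigl(\sum_i \partial f_i/\partial x^i\bigr)\,\ederiv x^1\wedge\cdots\wedge \ederiv x^{k+1}$. Fubini's theorem isolates each coordinate integration: for $i\le k$ the inner integral $\int_{-\infty}^{\infty}\partial f_i/\partial x^i\,\ederiv x^i$ vanishes by compact support, while for $i=k+1$ the fundamental theorem of calculus on $[0,\infty)$ produces precisely the boundary integral $\int_{\partial\mathbb{H}^{k+1}} \mathrm{tr}\,\omega$, matching the trace operator of \defref{def::diffGeometry_trace_operator}. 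Summing over $\alpha$ through the partition of unity yields the global identity.

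The main obstacle is not the analysis but the bookkeeping of orientation. One must fix an induced orientation on $\partial\manifold{M}$ that is compatible, chart by chart, with the orientation of $\manifold{M}$, and then verify that the sign produced by the half-space calculation is consistent with this convention under positive-Jacobian transition maps; this uses precisely the orientability assumption and the coordinate-independence guaranteed by \propref{prop:boundary_indep_chart}. Once the induced orientation on $\partial\manifold{M}$ is pinned down so that the outward-pointing normal completes the boundary frame to the ambient frame, the local signs assemble coherently and the proof concludes.
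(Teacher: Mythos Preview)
Your proposal is the standard partition-of-unity reduction to the half-space, and it is correct. The paper, however, does not supply its own proof of this theorem: it simply states the result with citations to \cite{abraham_diff_geom,spivak1998calculus} and then discusses its consequences. So there is no in-paper argument to compare against; your outline is essentially the proof one finds in Spivak.

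One point worth flagging: you were right to propose an independent verification of $\pullback\ederiv = \ederiv\pullback$ via the chain rule and the Leibniz identity, rather than invoking the paper's \propref{prop:commutation_pullback_ederiv}. In this paper that proposition is proved \emph{using} Stokes' theorem (via the duality pairing \eqref{eq::diffGeometry_stokes_duality}), so citing it here would be circular. Your direct coordinate argument avoids that trap.
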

		Two important aspects of this theorem can be stated. One is the fact that it condenses and generalizes three key theorems of vector calculus: the fundamental theorem of calculus, Stokes' theorem and Gauss' theorem, for 3-dimensional space, when applied to $0$-forms, $1$-forms and $2$-forms, respectively. The second is the fact that the exterior derivative is the formal adjoint of the boundary operator:
		\begin{equation}
			\ederiv = \partial^{\dag}  \;.\label{eq:exterior_derivative_transpose}
		\end{equation}
		This property can be obtained by \eqref{diffgeom::duality_pairing} and Stokes's Theorem:
		\begin{equation}
			\duality{\ederiv\kdifform{a}{k}}{\manifold{M}} \stackrel{(\ref{diffgeom::duality_pairing})}{=} \int_{\manifold{M}}\ederiv\kdifform{a}{k} \stackrel{(\ref{eq::difGeom_stokes_theorem})}{=} \int_{\partial\manifold{M}}\kdifform{a}{k} \stackrel{(\ref{diffgeom::duality_pairing})}{=} \duality{\kdifform{a}{k}}{\partial\manifold{M}} \;. \label{eq::diffGeometry_stokes_duality}
		\end{equation}
		This is one of the basic identities of the numerical framework to be presented.
		
		\begin{remark}\label{maifold_independence_generalized_Stokes}
Another important property of the generalized Stokes' Theorem is that if ${\mathcal M}$ and $\tilde{\mathcal M}$ are two manifolds of dimension $k+1$ with the same boundary, i.e. $\partial {\mathcal M} \equiv \partial \tilde{\mathcal M}$, we have
\begin{equation}
\int_{\mathcal M} \ederiv \kcochain{a}{k} = \int_{\partial {\mathcal M}} \kcochain{a}{k} = \int_{\partial \tilde{\mathcal M}} \kcochain{a}{k} = \int_{\tilde{\mathcal M}} \ederiv \kcochain{a}{k} \;,\;\;\; \forall \kcochain{a}{k} \in (\Lambda^k(\manifold{M})\cup\Lambda^k(\tilde{\manifold{M}}))\;.
\end{equation}
Another way of expressing this particular independence of the manifold is that we can always add to the manilfold ${\mathcal M}$ a manifold $\bar{\mathcal M}$ with $\partial \bar{\mathcal M}=0$ without effecting any change in the generalized Stokes' Theorem. For $k=0$, this corresponds to the gradient theorem which states that if two points $A$ and $B$ are connected by a curve ${\mathcal C}$, then
\begin{equation} \int_{\mathcal C} \nabla \phi \cdot ds = \phi(B) - \phi(A) \;.\label{classical_gradient_theorem}\end{equation}
This theorem holds for any curve connecting the points $A$ and $B$, provided that the domain is contractible. There is no preferred curve and therefore is seems reasonable to identify all curves ${\mathcal C}$ which satisfy (\ref{classical_gradient_theorem}). Such an identification will be formalized in the next section.
\end{remark}
		
		The exterior derivative satisfies the following commuting property:
		\begin{proposition}\label{prop:commutation_pullback_ederiv}
			Given a mapping $\mapping:\manifold{M}\spacemap\manifold{N}$ and the associated pullback, $\pullback$, the exterior derivative commutes with the pullback:
			\begin{equation}
				\pullback(\ederiv \kdifform{a}{k}) = \ederiv\pullback (\kdifform{a}{k}),\quad\forall \kdifform{a}{k}\in\Lambda^{k}(\mathcal{M}).
			\end{equation}
		and is illustrated as
		\[\begin{CD}
		\Lambda^k(\mathcal{M})@>\ederiv >>\Lambda^k(\mathcal{M})\\
		@V\pullback VV @V\pullback VV\\
		\Lambda^k(\mathcal{N})@>\ederiv >>\Lambda^k(\mathcal{N}).
		\end{CD}\]
		\end{proposition}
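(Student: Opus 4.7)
The plan is to exploit two adjointness relations already established in this section. First, by the generalized Stokes Theorem~\ref{theor::difGeom_stokes_theorem}, the exterior derivative $\ederiv$ is the formal adjoint of the boundary operator $\partial$ in the duality pairing~\eqref{diffgeom::duality_pairing}. Second, by Proposition~\ref{prop::pullback_integral}, the pullback $\pullback$ is the formal adjoint of the map $\mapping$. Combined with the fact (Proposition~\ref{prop:boundary_indep_chart}) that $\mapping$ commutes with $\partial$ in the sense $\partial\mapping(\manifold{S}) = \mapping(\partial\manifold{S})$, this should give the commutation first in an integrated form, and then pointwise.

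Concretely, I would fix an arbitrary oriented $(k+1)$-dimensional submanifold $\manifold{S}$ (in whichever manifold is the source of $\pullback$) and compute each side by integrating against $\manifold{S}$. Using~\eqref{eq::diffGeom_pullback_integral_manifolds} and then Stokes~\eqref{eq::difGeom_stokes_theorem},
\begin{equation*}
\int_{\manifold{S}} \pullback(\ederiv \kdifform{a}{k}) \;=\; \int_{\mapping(\manifold{S})} \ederiv \kdifform{a}{k} \;=\; \int_{\partial \mapping(\manifold{S})} \kdifform{a}{k}.
\end{equation*}
For the other side, applying Stokes first and then~\eqref{eq::diffGeom_pullback_integral_manifolds},
\begin{equation*}
\int_{\manifold{S}} \ederiv \pullback \kdifform{a}{k} \;=\; \int_{\partial \manifold{S}} \pullback \kdifform{a}{k} \;=\; \int_{\mapping(\partial \manifold{S})} \kdifform{a}{k}.
\end{equation*}
Proposition~\ref{prop:boundary_indep_chart} identifies $\partial\mapping(\manifold{S})$ with $\mapping(\partial\manifold{S})$, so both right-hand members agree and the integrals of $\pullback(\ederiv\kdifform{a}{k})$ and $\ederiv\pullback\kdifform{a}{k}$ over every $\manifold{S}$ coincide.

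The main obstacle is then passing from equality of integrals over all test submanifolds to pointwise equality of the $(k+1)$-forms themselves. This is the standard density fact that a smooth differential form is determined by its integrals over arbitrarily small oriented submanifolds of matching dimension: at any point $p$ one recovers the value of the integrand on a given tangent $(k+1)$-plane by integrating over a shrinking parallelepiped spanned by that plane and normalizing by volume. If one prefers to avoid invoking this density argument, there is a completely routine alternative in local coordinates: writing $\kdifform{a}{k} = \sum_I a_I\,\ederiv x^{i_1}\wedge\cdots\wedge\ederiv x^{i_k}$, the algebra-homomorphism property~\eqref{diffGeom::pullback_homomorphism} together with the one-form formula~\eqref{eq::diffGeometry_pullback_one_form}, the Leibniz rule~\eqref{eq::dif_and_wedge}, and nilpotency $\ederiv\ederiv = 0$ reduce both sides to the same expression, the identity ultimately boiling down to the chain rule applied to $a_I \circ \mapping$. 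The duality argument is conceptually cleaner and fits the theme of the paper, so it is the one I would present in the main text.
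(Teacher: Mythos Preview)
Your proof is correct and follows essentially the same route as the paper: integrate both sides against an arbitrary submanifold, apply the pullback--integral relation~\eqref{eq::diffGeom_pullback_integral_manifolds}, Stokes~\eqref{eq::difGeom_stokes_theorem}, and the boundary commutation~\eqref{eq::difGeom_mapping_boundary}, then conclude from arbitrariness of the test submanifold. You are in fact more careful than the paper about the passage from integral equality to pointwise equality, which the paper simply asserts; the paper also cites the coordinate computation you sketch as an alternative.
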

		\begin{proof}
			For all sub-manifolds $\manifold{A}$ in $\manifold{M}$, we have
%			\[
%				\int_{\manifold{M}}\pullback\ederiv\kdifform{a}{k} \stackrel{(\ref{diffgeom::duality_pairing})}{=} \duality{\pullback\ederiv\kdifform{a}{k}}{\manifold{M}} \;.
%			\]
%			Now
			\[
				\duality{\pullback\ederiv\kdifform{a}{k}}{\manifold{A}} \stackrel{(\ref{eq::diffGeom_pullback_integral_manifolds})}{=} \duality{\ederiv\kdifform{a}{k}}{\mapping \left ( \manifold{A} \right ) } \stackrel{(\ref{eq::diffGeometry_stokes_duality})}{=} \duality{\kdifform{a}{k}}{\partial\mapping(\manifold{A})} \stackrel{(\ref{eq::difGeom_mapping_boundary})}{=} \duality{\kdifform{a}{k}}{\mapping(\partial\manifold{A})} = \duality{\ederiv\pullback\kdifform{a}{k}}{\manifold{A}} \;.
			\]
%			Using the duality again gives
%			\[
%				\duality{\ederiv\pullback\kdifform{a}{k}}{\manifold{M}} = \int_{\manifold{M}}\ederiv\pullback\kdifform{a}{k} \;.
%			\]
			Since $\manifold{A}$ was completely arbitrary and it needs to hold for all $\kdifform{a}{k}\in\Lambda^{k}(\mathcal{M})$, we have $\pullback \ederiv = \ederiv\pullback$. See \cite{flanders::diff_forms,tu2010introduction} for alternative proofs. %\textcolor{red}{The fact that $\pullback(\ederiv \kdifform{a}{k})$ integrated over a domain equals $\ederiv\pullback (\kdifform{a}{k})$ integrated over the same domain doesn't necessarily imply that $\pullback(\ederiv \kdifform{a}{k}) = \ederiv\pullback (\kdifform{a}{k})$}
			\end{proof}
			%\textcolor{red}{In the next definition we denote the range by ${\mathcal R}$. We use the same symbol later for the reconstruction operator. We use ${\mathcal N}$ for the null space, but we used this earlier to denote a manifold. How about changing it into $\mathrm{R}$ and $\mathrm{N}$?.}
			\begin{definition}
		A differential form $\kdifform{a}{k}$ is called {\em exact} if there exists a differential form $\kdifform{b}{k-1}$ such that $\kdifform{a}{k}=\ederiv\kdifform{b}{k-1}$. The space of exact $k$-forms is the range of the exterior derivative, i.e. \[\mathcal{B}(\ederiv;\Lambda^{k-1}):=\ederiv\Lambda^{k-1}(\mathcal{M})\subset\Lambda^k(\mathcal{M}).\]
		 A differential form $\kdifform{a}{k}$ is called {\em closed} if $\ederiv\kdifform{a}{k}=0$. The space of closed $k$-forms is the nullspace or kernel of the exterior derivative, i.e. 
		 \[ \mathcal{Z}(\ederiv;\Lambda^k):=\{\forall a^{(k)}\in\Lambda^k(\mathcal{M})\;|\;\ederiv a^{(k)}=0\}\subset\Lambda^k(\mathcal{M}).\]
		 \end{definition}
		  It follows from \eqref{eq:double_dif} that all exact differential forms are closed. The reverse is only true on contractible manifolds, and is known as Poincar\'e lemma.
		\begin{lemma}[\textbf{Poincar\'{e} Lemma}] \label{lemma:poincare} \cite{abraham_diff_geom}
		On a contractible manifold all closed differential forms are exact. That is:
		\[
		\text{For all}\ \kdifform{a}{k}\in\mathcal{Z}(\ederiv;\Lambda^k),\ \text{there exists}\ \kdifform{b}{k-1}\in\Lambda^{k-1}(\mathcal{M}) \text{ such that } \kdifform{a}{k} = \ederiv\kdifform{b}{k-1} \;.
		\]
		\end{lemma}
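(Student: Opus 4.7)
The plan is to prove the lemma via a chain homotopy: I would construct a linear operator $K : \kformspacedomain{k}{\manifold{M}} \to \kformspacedomain{k-1}{\manifold{M}}$ satisfying
\begin{equation*}
\ederiv K + K\ederiv \;=\; \mathrm{id} - \pullbacks{c},
\end{equation*}
where $c: \manifold{M} \to \manifold{M}$ is the constant map to some base point $p_0 \in \manifold{M}$. Since the pullback by a constant map annihilates every form of positive degree, this identity immediately gives $\kdifform{a}{k} = \ederiv K\kdifform{a}{k}$ whenever $\ederiv\kdifform{a}{k}=0$ and $k\geq 1$, so that $\kdifform{b}{k-1} := K\kdifform{a}{k}$ is the desired primitive.

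To build $K$ I would exploit contractibility. By hypothesis there is a smooth homotopy $H: \manifold{M}\times[0,1]\to \manifold{M}$ with $H(\cdot,1)=\mathrm{id}_{\manifold{M}}$ and $H(\cdot,0)=c$. Any $k$-form on the cylinder decomposes uniquely as
\begin{equation*}
\pullbacks{H}\kdifform{a}{k} \;=\; \alpha^{(k)} + \ederiv t \wedge \beta^{(k-1)},
\end{equation*}
where $\alpha^{(k)}$ and $\beta^{(k-1)}$ contain no $\ederiv t$ factor. I then define
\begin{equation*}
(K\kdifform{a}{k})(p) \;:=\; \int_0^1 \beta^{(k-1)}(p,t)\,\ederiv t,
\end{equation*}
regarded as a $(k-1)$-form on $\manifold{M}$. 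Note that the slice inclusions $\iota_0,\iota_1:\manifold{M}\to\manifold{M}\times[0,1]$ satisfy $H\circ\iota_1 = \mathrm{id}_{\manifold{M}}$ and $H\circ\iota_0=c$, so their pullbacks recover $\kdifform{a}{k}$ and $\pullbacks{c}\kdifform{a}{k}$, respectively. These are the only candidates that can appear on the right-hand side of the chain homotopy identity.

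The crux of the argument is verifying
\begin{equation*}
\ederiv K\kdifform{a}{k} + K\ederiv\kdifform{a}{k} \;=\; \kdifform{a}{k} - \pullbacks{c}\kdifform{a}{k}.
\end{equation*}
This follows by splitting the exterior derivative on $\manifold{M}\times[0,1]$ into its ``spatial'' and ``temporal'' pieces, using \propref{prop:commutation_pullback_ederiv} to move $\ederiv$ through $\pullbacks{H}$, and differentiating under the integral sign over $[0,1]$. This verification is the main obstacle: one must carefully track the $(-1)^k$ sign coming from \eqref{eq::dif_and_wedge}, and the contributions from the $\ederiv t$-component of $\ederiv$ acting on $\beta^{(k-1)}$ must be telescoped via the fundamental theorem of calculus, which is exactly what produces the boundary terms $\pullbacks{\iota_1}-\pullbacks{\iota_0}=\mathrm{id}-\pullbacks{c}$. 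For $k\geq 1$ the constant-map pullback annihilates $\kdifform{a}{k}$ and the lemma follows. The degenerate case $k=0$ reduces to the observation that $\ederiv\kdifform{a}{0}=0$ combined with the path-connectedness supplied by the contraction forces $\kdifform{a}{0}$ to be constant, consistent with the convention $\kformspace{-1}=\{0\}$.
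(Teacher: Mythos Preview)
Your proposal is correct and is essentially the standard chain-homotopy proof of the Poincar\'e Lemma (as found, e.g., in the cited reference \cite{abraham_diff_geom}). The paper itself does not supply a proof of this lemma; it merely states the result and attributes it to the literature, so there is nothing to compare against beyond noting that your argument is the classical one.
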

		% \begin{proof}
		% See \cite{abraham_diff_geom}.
		% \end{proof}
		
		\begin{remark}[\textbf{Potentials}]
		The relevance of Poincar\'{e}'s lemma has far-reaching implications. In electromagnetics it guarantees the existence of a solution to $\mathrm{curl}\,\vec{A} = \vec{B}$, where $\vec{B}$ is a given magnetic flux density and $\vec{A}$ is the magnetic potential. It also guarantees the existence of functions which give rise to the equations of motion of a mechanical system in Hamiltonian form. Although the Poincar\'{e} lemma is stated in terms of differential forms, the natural isomorphism between differential forms and vector fields allows one to state that the Poincar\'{e} lemma carries over to all the existence theorems of potential fields of vector calculus. That is, the existence of solutions for all the following equations, for $V$, $\vec{A}$ and $\vec{C}$: $\mathrm{grad}\,V = f$, $\mathrm{curl}\,\vec{A} = \vec{B}$ and $\mathrm{div}\,\vec{C} = f$.
		\end{remark}

%				\textcolor{red}{For an exact sequence
%				\[ \ldots G_{i-1} \stackrel{f_{i-1}}{\longrightarrow} G_i \stackrel{f_{i}}{\longrightarrow} G_{i+1} \ldots \]
%				Range$(f_{i-1})=$Null$(f_i)$. This is the `official' definition of an exact sequence. We use something different. Check this. }
		From \defref{def:exterior_derivative} the $(n+1)$-spaces of differential forms in an $n$-dimensional manifold $\manifold{M}$ satisfy the following sequence, called the {\em de Rham complex}, denoted by $(\Lambda,\ederiv))$:
\begin{equation}
\begin{array}{cccccccccccc}
\mathbb{R}\; \longrightarrow&\Lambda^0(\manifold{M})
&\stackrel{\ederiv}{\longrightarrow}& \Lambda^1(\manifold{M})
&\stackrel{\ederiv}{\longrightarrow} & \cdots \; &\stackrel{\ederiv}{\longrightarrow}\; &\Lambda^n(\manifold{M}) \;
&\stackrel{\ederiv}{\longrightarrow}\; &0 \;.
\end{array}
\label{singlecomplex}
\end{equation}
This sequence is {\em exact} on contractible domains, in which case $\mathcal{B}(\ederiv;\Lambda^{k-1})= \mathcal{Z}(\ederiv;\Lambda^{k})$. In general we have that $\mathcal{B}(\ederiv;\Lambda^{k-1})\subset \mathcal{Z}(\ederiv;\Lambda^{k})$, as depicted in \figref{fig::diffGeometry_deRhamComplex}.
\begin{figure}[htp]
\centering
\includegraphics[width=1\textwidth]{./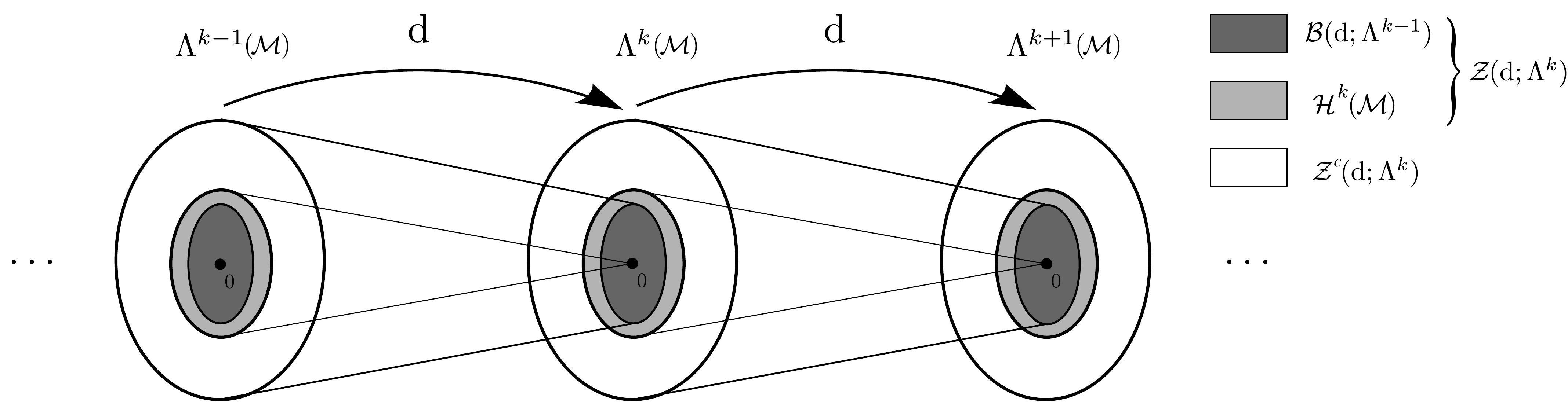}
\caption{Pictorial view of de Rham complex on differential forms. The exterior derivative $\ederiv$ maps the elements of $\Lambda^{k}$ into the kernel of $\ederiv$ of $\Lambda^{k+1}$, $\mathcal{B}(\ederiv;\Lambda^{k})\subset \mathcal{Z}(\ederiv;\Lambda^{k+1})$.}
\label{fig::diffGeometry_deRhamComplex}
\end{figure}
Hence, one can decompose the space of $k$-forms, $\Lambda^{k}(\manifold{M})$, as:
\begin{equation}
\Lambda^{k}(\manifold{M}) = \mathcal{B}(\ederiv; \Lambda^{k-1}) \oplus \mathcal{B}^{c}(\ederiv; \Lambda^{k-1}),
\label{hdranges}
\end{equation}
where $\mathcal{B}^{c}(\ederiv; \Lambda^{k-1})$ is the algebraic complement of $\mathcal{B}(\ederiv; \Lambda^{k-1})$ in $\Lambda^k(\manifold{M})$. One can write any $k$-form $\kdifform{a}{k}$ as:
\[
\kdifform{a}{k} = \ederiv\kdifform{b}{k-1} + \kdifform{c}{k}, \quad \kdifform{a}{k}\in\Lambda^{k}(\manifold{M}),\quad\kdifform{b}{k-1}\in\Lambda^{k-1}(\manifold{M})\quad\mathrm{and}\quad \kdifform{c}{k}\in\mathcal{B}^c(\ederiv;\Lambda^k).
\]
% This is an important structure will be mimicked at the discrete level.
		
	\subsection{Hodge-$\star$ operator}
		An important concept for the definition of the Hodge-$\star$ operator is the the volume form.
		\begin{definition}[\textbf{Standard volume form}] \label{def::hodge_star}
			\cite{abraham_diff_geom} In an $n$-dimensional manifold $\manifold{M}$ with metric $g_{ij}$, a volume form $\kdifform{w}{n}$ is defined in a local coordinate system $x^{i}$ as:
			\[
				\kdifform{w}{n} := \sqrt{|\det(g_{ij})|}\,\ederiv x^{1}\wedge\cdots\wedge\ederiv x^{n} \;,
			\]
			where $\kdifform{w}{n}$ is the {\em standard volume form}.
		\end{definition}
		
		The Hodge-$\star$ operator is then defined in the following way.
		\begin{definition}[\textbf{Hodge-$\star$ operator}]\label{def::diffGeom_hodge}
			\cite{abraham_diff_geom,flanders::diff_forms} The Hodge-$\star$ operator in an $n$-dimensional manifold $\manifold{M}$ is an operator, $\star : \Lambda^{k}(\manifold{M})\spacemap\Lambda^{n-k}(\manifold{M})$, defined by
			\[
				\kdifform{a}{k}\wedge\star\kdifform{b}{k} := \inner{\kdifform{a}{k}}{\kdifform{b}{k}}\kdifform{w}{n} \;,
			\]
			where the inner product on the right is defined in Definition~\ref{def:inner_product_diff_forms}. Application of the Hodge-$\star$ to the unit 0-form yields	$\star 1:= \kdifform{w}{n}$.
		\end{definition}
		
%		It is possible to show, \cite{morita2001geometry}, that the Hodge-$\star$ operator, on a manifold $\manifold{M}$ with dim$(\manifold{M})=n$, $f, g\in C^{\infty}(\manifold{M})$ and $\kdifform{a}{k},\kdifform{b}{k}\in\Lambda^{k}(\mathcal{M})$,  satisfies
		For $f, g\in C^{\infty}(\manifold{M})$ and $\kdifform{a}{k},\kdifform{b}{k}\in\Lambda^{k}(\mathcal{M})$ the Hodge-$\star$ operator satisfies, \cite{morita2001geometry} 
		\begin{subequations}
		\begin{align}
			&\star (f\kdifform{a}{k} + g \kdifform{b}{k}) = f \star \kdifform{a}{k} + g \star \kdifform{b}{k} \;,\\
			&\star\star\kdifform{a}{k} = (-1)^{k(n-k)}\kdifform{a}{k} \label{eq::diffGeom_double_hodge} \;,\\
			&\kdifform{a}{k}\wedge\star\kdifform{b}{k}=\kdifform{b}{k}\wedge\star\kdifform{a}{k}=\inner{\kdifform{a}{k}}{\kdifform{b}{k}}w^{n} \;,\\
			&\star(\kdifform{a}{k}\wedge\star\kdifform{b}{k}) = \star(\kdifform{b}{k}\wedge\star\kdifform{a}{k}) = \inner{\kdifform{a}{k}}{\kdifform{b}{k}} \;, \\
			&\inner{\star\kdifform{a}{k}}{\star \kdifform{b}{k}} = \inner{\kdifform{a}{k}}{\kdifform{b}{k}} \;.
		\end{align}
		\end{subequations}
		
		%In an Euclidean manifold, like $\mathbb{R}^{2}$ for example, given  \defref{def::hodge_star} of the Hodge-$\star$ operator it is straightforward to compute its action on the basis forms:
		On $\mathbb{R}^2$ we have the following relations for the basis forms
		\[
			\star 1 = \ederiv x\!\wedge\!\ederiv y, \quad \star\ederiv x = \ederiv y, \quad \star\ederiv y = -\ederiv x, \quad \star\ederiv x \!\wedge\! \ederiv y = 1 \;.
		\]
		
		It is possible to define an integral inner product of $k$-forms in the following way:
		\begin{definition}
			\cite{abraham_diff_geom} The space of $k$-forms, $\Lambda^{k}(\manifold{M})$ can be equipped with an $L^{2}$ inner product, $\inner{\cdot}{\cdot}_{L^2\Lambda^k(\manifold{M})}: \Lambda^{k}(\manifold{M})\times\Lambda^{k}(\manifold{M})\spacemap \mathbb{R}$, given by:
			\begin{equation}
				\inner{\kdifform{a}{k}}{\kdifform{b}{k}}_{L^2\Lambda^k(\manifold{M})} := \int_{\manifold{M}} \kdifform{a}{k}\wedge\star\kdifform{b}{k}  \;.\label{eq::diffGeom_L2_inner}
			\end{equation}
		\end{definition}

%		\textcolor{red}{This seems to be a suitable place to define $\|\cdot \|_{L^2\Lambda^k}$ and $\|\cdot \|_{H\Lambda^k}$}
		
%		The question that raises is how to compute the Hodge-$\star$ of the basis forms of an $n$-dimensional manifold $\manifold{M}$ related to $\Omega\subseteq\mathbb{R}^{n}$ in the following way:
    Suppose we have a map $\mapping: \manifold{M}\spacemap\manifold{N}$. How does the Hodge-$\star$ transform under this mapping? Let us call the transformed Hodge-$\star$, $\hat{\star}$.
		
%		The action of $\star$ on the basis forms in $\Omega\subseteq\mathbb{R}^{n}$ is of simple computation, the problem lies in computing its action in $\manifold{M}$, say $\hat{\star}$. 
		\begin{proposition}[\textbf{Transformation of the Hodge-$\star$ operator}]
			If $\kdifform{a}{k}\in\Lambda^{k}(\manifold{N})$, $\pullback\kdifform{a}{k}\in\Lambda^{k}(\manifold{M})$ and $\mapping: \manifold{M}\spacemap\manifold{N}$, then the Hodge operator in $\manifold{M}$, denoted $\hat{\star}:\Lambda^{k}(\manifold{M})\spacemap\Lambda^{n-k}(\manifold{M})$, is given by:
			\[
				\hat{\star} = \pullback\star(\pullback)^{-1} \;.
			\]
		\end{proposition}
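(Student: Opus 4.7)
The plan is to characterize $\hat{\star}$ through its defining property (Definition~\ref{def::diffGeom_hodge}) and then push this relation through the pullback. Given $\beta\in\Lambda^k(\manifold{M})$, the Hodge star $\hat{\star}\beta$ is uniquely determined by requiring that $\alpha\wedge\hat{\star}\beta=\inner{\alpha}{\beta}_{\manifold{M}}\,\hat{w}^{(n)}$ holds for every $\alpha\in\Lambda^k(\manifold{M})$, so it suffices to verify this equation with the candidate $\hat{\star}\beta:=\pullback\star(\pullback)^{-1}\beta$.

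First I would set $\tilde{\alpha}:=(\pullback)^{-1}\alpha\in\Lambda^k(\manifold{N})$ and $\tilde{\beta}:=(\pullback)^{-1}\beta\in\Lambda^k(\manifold{N})$ and apply the defining relation of the Hodge star on $\manifold{N}$, namely $\tilde{\alpha}\wedge\star\tilde{\beta}=\inner{\tilde{\alpha}}{\tilde{\beta}}\,w^{(n)}$. Next I would pull this identity back to $\manifold{M}$ and use the algebra-homomorphism property of the pullback, Eq.~\eqref{diffGeom::pullback_homomorphism}, to split the wedge: $\pullback\bigl(\tilde{\alpha}\wedge\star\tilde{\beta}\bigr)=\pullback\tilde{\alpha}\wedge\pullback\star\tilde{\beta}=\alpha\wedge\pullback\star(\pullback)^{-1}\beta$. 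This already places the candidate operator $\pullback\star(\pullback)^{-1}$ on the right-hand side of the defining equation.

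It remains to identify the pulled-back right-hand side $\pullback\bigl(\inner{\tilde{\alpha}}{\tilde{\beta}}\,w^{(n)}\bigr)$ with $\inner{\alpha}{\beta}_{\manifold{M}}\,\hat{w}^{(n)}$. The hard (and really the only substantive) part is the metric bookkeeping: the statement implicitly presumes that the metric on $\manifold{M}$ is the one induced by $\mapping$, i.e.\ $\hat{g}=\pullback g$. Under that convention the pullback preserves the inner product on $1$-forms ($\inner{\tilde{\alpha}^{(1)}}{\tilde{\beta}^{(1)}}\circ\mapping=\inner{\pullback\tilde{\alpha}^{(1)}}{\pullback\tilde{\beta}^{(1)}}_{\manifold{M}}$), and by the Gram-determinant formula of Definition~\ref{def:inner_product_diff_forms} this lifts to $k$-forms; likewise, the transformation law $\sqrt{|\det\hat{g}|}=|\det D\mapping|\sqrt{|\det g|\circ\mapping}$ together with Eq.~\eqref{eq::diffGeometry_pullback_one_form} yields $\pullback w^{(n)}=\hat{w}^{(n)}$.

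Combining these two ingredients gives $\alpha\wedge\pullback\star(\pullback)^{-1}\beta=\inner{\alpha}{\beta}_{\manifold{M}}\,\hat{w}^{(n)}$ for every $\alpha\in\Lambda^k(\manifold{M})$, and uniqueness of the Hodge star in Definition~\ref{def::diffGeom_hodge} forces $\hat{\star}\beta=\pullback\star(\pullback)^{-1}\beta$. I would finish by noting the commutative-diagram interpretation: $\hat{\star}$ is simply $\star$ conjugated by the pullback, which is the natural statement that the Hodge star is intertwined by isometric diffeomorphisms.
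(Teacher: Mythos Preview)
Your argument is correct, but it goes considerably further than what the paper does. The paper's proof is essentially a one-line diagram chase: it \emph{defines} $\hat{\star}$ as the operator on $\manifold{M}$ that makes the square
\[
\begin{CD}
\Lambda^{k}(\manifold{N}) @>\star>> \Lambda^{n-k}(\manifold{N}) \\
@VV\pullback V  @VV\pullback V \\
\Lambda^{k}(\manifold{M}) @>\hat{\star}>> \Lambda^{n-k}(\manifold{M})
\end{CD}
\]
commute, i.e.\ requires $\pullback\star=\hat{\star}\pullback$, and then reads off $\hat{\star}=\pullback\star(\pullback)^{-1}$. No metric computation is performed; the commuting diagram is taken as the \emph{definition} of the transformed Hodge operator.

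Your route instead takes Definition~\ref{def::diffGeom_hodge} seriously on $\manifold{M}$: you equip $\manifold{M}$ with the pullback metric $\hat g=\pullback g$, show that $\pullback$ then preserves the pointwise inner product and carries $w^{(n)}$ to $\hat w^{(n)}$, and conclude via the uniqueness in Definition~\ref{def::diffGeom_hodge} that $\pullback\star(\pullback)^{-1}$ really is the Hodge star for $(\manifold{M},\hat g)$. This is a genuinely stronger statement --- it explains \emph{why} the conjugated operator deserves to be called a Hodge star rather than merely an abstract intertwiner --- at the cost of the extra metric bookkeeping you flag. For the purposes of the paper, the diagram-chase suffices; your argument would be the right one if one needed to know that $\hat{\star}$ is the Hodge star of a specific Riemannian structure on $\manifold{M}$.
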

		\begin{proof} %\textcolor{red}{This proof is rather long. Doesn't the commutation relation already give you the proof?}
%			The following equality holds:
%			\[
%				\int_{\manifold{N}}\kdifform{a}{k}\wedge\star\kdifform{b}{k} \stackrel{(\ref{eq::diffGeom_pullback_integral_manifolds})}{=} \int_{\manifold{M}}\pullback(\kdifform{a}{k}\wedge\star\kdifform{b}{k}) \stackrel{(\ref{diffGeom::pullback_homomorphism})}{=} \int_{\manifold{M}}\pullback\kdifform{a}{k}\wedge\pullback(\star\kdifform{b}{k}) \;.
%			\]
%			Since $\pullback$ is an algebra homomorphism, one can write:
%			\[
%				\int_{\Omega}\kdifform{\alpha}{k}\wedge\star\kdifform{\beta}{k} = \int_{\manifold{M}}\pullback\kdifform{\alpha}{k}\wedge\pullback(\star\kdifform{\beta}{k})
%			\]
			The objective is to find a $\hat{\star}$ such that the following diagram commutes, \cite{bouman::icosahom2009}:
% 			\begin{diagram}
% 				\Lambda^{k}(\manifold{N}) & \rTo^{\star} & \Lambda^{n-k}(\manifold{N}) \\
% 				\dTo_{\pullback} & & \dTo_{\pullback} \\
% 				\Lambda^{k}(\manifold{M}) & \rTo^{\hat{\star}} & \Lambda^{n-k}(\manifold{M})
% 			\end{diagram}
			\[\begin{CD}
				\Lambda^{k}(\manifold{N}) @>\star>> \Lambda^{n-k}(\manifold{N}) \\
				@VV\pullback V  @VV\pullback V \\
				\Lambda^{k}(\manifold{M}) @>\hat{\star}>> \Lambda^{n-k}(\manifold{M}).
			\end{CD}\]
			Hence $\hat{\star}$ should be such that $\pullback\star = \hat{\star}\pullback$. Therefore $\hat{\star} = \pullback\star(\pullback)^{-1}$.
%			\]
%			\[
%				\int_{\manifold{N}}\kdifform{a}{k}\wedge\star\kdifform{b}{k} = \int_{\manifold{M}}\pullback\kdifform{a}{k}\wedge\hat{\star}\pullback\kdifform{b}{k} \;.
%			\]
%			Starting with the expression on the right hand side:
%			\[
%				\int_{\manifold{M}}\pullback\kdifform{a}{k}\wedge\hat{\star}\pullback\kdifform{b}{k} \stackrel{(\ref{eq::diffGeom_pullback_integral_manifolds})}{=} \int_{\manifold{N}}(\pullback)^{-1}(\pullback\kdifform{a}{k}\wedge\hat{\star}\pullback\kdifform{b}{k}) \stackrel{(\ref{diffGeom::pullback_homomorphism})}{=} \int_{\manifold{N}}\kdifform{a}{k}\wedge(\pullback)^{-1}\hat{\star}\pullback\kdifform{b}{k} \;.
%			\]
%%			Since $(\pullback)^{-1}$ is also a pullback it is also an algebra homomorphism, hence:
%%			\[
%%				\int_{\Omega}\kdifform{\alpha}{k}\wedge\star\kdifform{\beta}{k} = \int_{\Omega}\kdifform{\alpha}{k}\wedge(\pullback)^{-1}\hat{\star}\pullback\kdifform{\beta}{k}
%%			\]
%			For this equality to hold we need to have:
%			\[
%				\star\kdifform{b}{k} = (\pullback)^{-1}\hat{\star}\pullback\kdifform{b}{k}, \quad \forall\kdifform{b}{k}\in\Lambda^{k}(\manifold{N}) \;,
%			\]
%%			Which is the same as:
%%			\[
%%				(\star - (\pullback)^{-1}\hat{\star}\pullback)\kdifform{\beta}{k} = 0, \quad \forall\kdifform{\beta}{k}\in\Lambda^{k}(\Omega)
%%			\]
%			which is true if:
%%			\[
%%				\star = (\pullback)^{-1}\hat{\star}\pullback
%%			\]
%%			Or, as stated:
%			\[
%				\hat{\star} = \pullback\star(\pullback)^{-1} \;.
%			\]
		\end{proof}
		
%		This result is important since, together with the previous results for the pullback, enables one to perform all calculations in an Euclidean domain, as long as a mapping can be constructed.		

		The Hodge-$\star$ operator enables one to extend the single de Rham complex \eqref{singlecomplex} to a double de Rham complex connected by the Hodge-$\star$ operator
		\begin{equation}
			\begin{matrix}
				\mathbb{R} \longrightarrow&\Lambda^{0}(\manifold{M})
				&\stackrel{\ederiv}{\longrightarrow}& \Lambda^{1}(\manifold{M})
				&\stackrel{\ederiv}{\longrightarrow}& \hdots \;
				&\stackrel{\ederiv}{\longrightarrow}\; &\Lambda^{n}(\manifold{M}) \;
				&\stackrel{\ederiv}{\longrightarrow}\; &0  \\
				&\star\updownarrow & & \star\updownarrow &&  &
				&\star\updownarrow & &   \\
				0 \stackrel{\ederiv}{\longleftarrow}&{\Lambda}^{n}(\manifold{M})
				&\stackrel{\ederiv}{\longleftarrow}& {\Lambda}^{n-1}(\manifold{M})
				&\stackrel{\ederiv}{\longleftarrow}& \hdots \;
				&\stackrel{\ederiv}{\longleftarrow}\; &{\Lambda}^{0}(\manifold{M}) \;
				&\stackrel{}{\longleftarrow}\; &\mathbb{R}.
			\end{matrix} \label{eq::diffGeom_deRham_Complex}
		\end{equation}
%				\begin{equation}
%			\begin{matrix}
%				\mathbb{R} \longrightarrow&\Lambda^{0}(\manifold{M})
%				&\stackrel{\ederiv}{\longrightarrow}& \Lambda^{1}(\manifold{M})
%				&\stackrel{\ederiv}{\longrightarrow}& \hdots \;
%				&\stackrel{\ederiv}{\longrightarrow}\; &\Lambda^{n}(\manifold{M}) \;
%				&\stackrel{\ederiv}{\longrightarrow}\; &0  \\
%				&\star\updownarrow & & \star\updownarrow &&  &
%				&\star\updownarrow & &   \\
%				0 \stackrel{\ederiv}{\longleftarrow}&\tilde{\Lambda}^{n}(\manifold{M})
%				&\stackrel{\ederiv}{\longleftarrow}& \tilde{\Lambda}^{n-1}(\manifold{M})
%				&\stackrel{\ederiv}{\longleftarrow}& \hdots \;
%				&\stackrel{\ederiv}{\longleftarrow}\; &\tilde{\Lambda}^{0}(\manifold{M}) \;
%				&\stackrel{}{\longleftarrow}\; &\mathbb{R}.
%			\end{matrix} \label{eq::diffGeom_deRham_Complex}
%		\end{equation}

%		\begin{diagram}
%			\mathbb{R} & \rTo & \Lambda^{0}(\manifold{M}) & \rTo^{\ederiv} & \Lambda^{1}(\manifold{M}) & \rTo^{\ederiv} & \cdots & \rTo^{\ederiv} & \Lambda^{n-1}(\manifold{M}) & \rTo^{\ederiv} & \Lambda^{n}(\manifold{M}) & \rTo^{\ederiv} & 0 \\
%			 &  & \dTo^{\star} \uTo_{\star} &  & \dTo^{\star} \uTo_{\star} &  & &  & \dTo^{\star} \uTo_{\star} &  & \dTo^{\star} \uTo_{\star}\\
%			0 & \lTo^{\ederiv} & \Lambda^{n}(\manifold{M}) & \lTo^{\ederiv} & \Lambda^{n-1}(\manifold{M}) & \lTo^{\ederiv} & \cdots & \lTo^{\ederiv} & \Lambda^{1}(\manifold{M}) & \lTo^{\ederiv} & \Lambda^{0}(\manifold{M}) & \lTo & \mathbb{R}
%		\end{diagram}
		
		Note the similarity between this diagram and \figref{fig:diffGeom_orientation}.
		
		% Using the Hodge-$\star$ operator it is also possible to define two new operators 
		\begin{definition}[\textbf{The codifferential operator}]
			\cite{frankel} The codifferential operator, $\coderiv: \Lambda^{k}(\manifold{M})\spacemap\Lambda^{k-1}(\manifold{M}) $, is an operator that is  formal Hilbert adjoint of the exterior derivative, with respect to the inner product \eqref{eq::diffGeom_L2_inner}:
			\begin{equation}
				\inner{\ederiv\kdifform{a}{k-1}}{\kdifform{b}{k}}_{L^2 \Lambda^k(\manifold{M})} = \inner{\kdifform{a}{k-1}}{\coderiv\kdifform{b}{k}}_{L^2 \Lambda^k(\manifold{M})} \;. \label{eq::diffGeom_codiff_adjoint}
			\end{equation}
% 			For $\kdifform{a}{k-1}\in\Lambda^{k-1}(\manifold{M})$, $\kdifform{b}{k}\in\Lambda^{k}(\manifold{M})$ which satisfy
% 			\begin{equation}
% 				\int_{\manifold{M}}\ederiv(\kdifform{a}{k-1}\wedge\star\kdifform{b}{k}) = \int_{\partial\manifold{M}}\kdifform{a}{k-1}\wedge\star\kdifform{b}{k} = 0 \;. \label{eq::diffGeometry_boundary_conditions_codifferential}
% 			\end{equation}
		\end{definition}
		\begin{proposition}\label{prop::diffgeom_codifferential_adjoint}
			If $\kdifform{a}{k-1}\in\Lambda^{k-1}(\mathcal{M})$, $\kdifform{b}{k}\in\Lambda^{k}(\mathcal{M})$ and
			\begin{equation}
				\int_{\manifold{M}}\ederiv(\kdifform{a}{k-1}\wedge\star\kdifform{b}{k}) = \int_{\partial\manifold{M}}\kdifform{a}{k-1}\wedge\star\kdifform{b}{k} = 0 \;, \label{eq::difGeom::boundary_zero_codifferential}
			\end{equation}
			then
			\begin{equation}
				\coderiv\kdifform{b}{k} := (-1)^{n(k+1)+1}\star\ederiv\star\kdifform{b}{k},\quad\forall\kdifform{b}{k}\in\Lambda^{k}(\manifold{M}) \;. \label{eq::diffGeom_codifferential_definition}
			\end{equation}
		\end{proposition}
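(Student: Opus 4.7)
The plan is to combine the Leibniz rule for $\ederiv$ on the wedge product with the hypothesis on the boundary integral, then convert the resulting expression into an $L^2$ pairing using the double Hodge identity, and finally read off the formula for $\coderiv$ by comparing with its defining adjoint property \eqref{eq::diffGeom_codiff_adjoint}. The main work is really bookkeeping of signs.

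First I would apply \eqref{eq::dif_and_wedge} to $\kdifform{a}{k-1}\wedge\star\kdifform{b}{k}$ to get
\[
\ederiv(\kdifform{a}{k-1}\wedge\star\kdifform{b}{k}) = \ederiv\kdifform{a}{k-1}\wedge\star\kdifform{b}{k} + (-1)^{k-1}\kdifform{a}{k-1}\wedge\ederiv\star\kdifform{b}{k} \;,
\]
integrate over $\manifold{M}$, and invoke hypothesis \eqref{eq::difGeom::boundary_zero_codifferential} (together with Stokes' Theorem, if desired) so that the left-hand side vanishes. Rearranging yields
\[
\int_{\manifold{M}}\ederiv\kdifform{a}{k-1}\wedge\star\kdifform{b}{k} = (-1)^{k}\int_{\manifold{M}}\kdifform{a}{k-1}\wedge\ederiv\star\kdifform{b}{k} \;.
\]

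Next I would rewrite the right-hand integrand as a Hodge pairing. Since $\ederiv\star\kdifform{b}{k}\in\Lambda^{n-k+1}(\manifold{M})$ and $\star\star$ acts as $(-1)^{(k-1)(n-k+1)}$ on $(k-1)$-forms (equivalently, on $(n-k+1)$-forms) by \eqref{eq::diffGeom_double_hodge}, I can write
\[
\ederiv\star\kdifform{b}{k} = (-1)^{(k-1)(n-k+1)}\star\bigl(\star\ederiv\star\kdifform{b}{k}\bigr) \;,
\]
so that $\kdifform{a}{k-1}\wedge\ederiv\star\kdifform{b}{k}$ becomes $(-1)^{(k-1)(n-k+1)}\kdifform{a}{k-1}\wedge\star\bigl(\star\ederiv\star\kdifform{b}{k}\bigr)$. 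Using the definition \eqref{eq::diffGeom_L2_inner} of the $L^{2}$ inner product on both sides, the identity above reads
\[
\innerspace{\ederiv\kdifform{a}{k-1}}{\kdifform{b}{k}}{L^{2}\Lambda^{k}(\manifold{M})} = (-1)^{k+(k-1)(n-k+1)}\,\innerspace{\kdifform{a}{k-1}}{\star\ederiv\star\kdifform{b}{k}}{L^{2}\Lambda^{k-1}(\manifold{M})} \;.
\]
Comparing with \eqref{eq::diffGeom_codiff_adjoint}, which must hold for all $\kdifform{a}{k-1}$, forces $\coderiv\kdifform{b}{k} = (-1)^{k+(k-1)(n-k+1)}\star\ederiv\star\kdifform{b}{k}$.

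The final step, and the only place where care is needed, is checking the sign identity $k+(k-1)(n-k+1) \equiv n(k+1)+1 \pmod{2}$, which reconciles the form above with the stated expression \eqref{eq::diffGeom_codifferential_definition}. Expanding $(k-1)(n-k+1) = kn - n - k^{2} + 2k - 1$ and reducing modulo $2$ using $k^{2}\equiv k$, one obtains $k+(k-1)(n-k+1) \equiv nk + n + 1 \pmod{2}$, which is exactly $n(k+1)+1$ modulo $2$. This matches the claimed sign, and the proposition follows. The only real obstacle, then, is this sign computation; every other step is a direct application of the Leibniz rule, Stokes' Theorem, and the defining properties of the Hodge-$\star$.
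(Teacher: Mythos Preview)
Your proof is correct and follows essentially the same route as the paper's: apply the Leibniz rule \eqref{eq::dif_and_wedge}, use the boundary hypothesis \eqref{eq::difGeom::boundary_zero_codifferential} to kill the exact term, then insert $\star\star$ via \eqref{eq::diffGeom_double_hodge} to recognize the $L^2$ pairing and read off $\coderiv$ from \eqref{eq::diffGeom_codiff_adjoint}. The only difference is that you carry out the sign bookkeeping explicitly, whereas the paper simply asserts the final exponent $n(k+1)+1$.
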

		\begin{proof}
			The proof follows directly from the integrals:
			\[
				\inner{\ederiv\kdifform{a}{k-1}}{\kdifform{b}{k}}_{L^2 \Lambda^k(\manifold{M})} \stackrel{(\ref{eq::diffGeom_L2_inner})}{=} \int_{\manifold{M}}\ederiv\kdifform{a}{k-1}\wedge\star\kdifform{b}{k} \stackrel{(\ref{eq::dif_and_wedge})}{=} \int_{\manifold{M}}\ederiv(\kdifform{a}{k-1}\wedge\star\kdifform{b}{k}) - (-1)^{k-1}\int_{\manifold{M}}\kdifform{a}{k-1}\wedge\ederiv\star\kdifform{b}{k} \;.
			\]
			The first term on the right is zero, \eqref{eq::difGeom::boundary_zero_codifferential}, and the second one is simply:
			\[
				- (-1)^{k-1}\int_{\manifold{M}}\kdifform{a}{k-1}\wedge\ederiv\star\kdifform{b}{k} \stackrel{(\ref{eq::diffGeom_double_hodge})}{=} \int_{\manifold{M}}\kdifform{a}{k-1}\wedge\star[(-1)^{n(k+1)+1}\star\ederiv\star]\kdifform{b}{k} \stackrel{(\ref{eq::diffGeom_L2_inner})}{=} \inner{\kdifform{a}{k-1}}{\coderiv\kdifform{b}{k}}_{L^2 \Lambda^k(\manifold{M})} \;.
			\]
			
%			\[
%				\inner{\ederiv\kdifform{\alpha}{k}}{\kdifform{\beta}{k+1}}_{\manifold{M}} \stackrel{(\ref{eq::diffGeom_L2_inner})}{=} \int_{\manifold{M}}\ederiv\kdifform{\alpha}{k}\wedge\star\kdifform{\beta}{k+1} \stackrel{(\ref{eq::dif_and_wedge})}{=} \int_{\manifold{M}}\ederiv(\kdifform{\alpha}{k}\wedge\star\kdifform{\beta}{k+1}) - (-1)^{k}\int_{\manifold{M}}\kdifform{\alpha}{k}\wedge\ederiv\star\kdifform{\beta}{k+1} 
%			\]
%			The first term on the right is zero, \eqref{eq::difGeom::boundary_zero_codifferential}, and the second one is simply:
%			\[
%				- (-1)^{k}\int_{\manifold{M}}\kdifform{\alpha}{k}\wedge\ederiv\star\kdifform{\beta}{k+1} \stackrel{(\ref{eq::diffGeom_double_hodge})}{=} \int_{\manifold{M}}\kdifform{\alpha}{k}\wedge\star[(-1)^{n(k+1)+1}\star\ederiv\star]\kdifform{\beta}{k+1} \stackrel{(\ref{eq::diffGeom_L2_inner})}{=} \inner{\kdifform{\alpha}{k}}{\coderiv\kdifform{\beta}{k+1}}_{\manifold{M}}
%			\]
		\end{proof}
		
\begin{proposition}[\textbf{Nilpotency of codifferential}] \label{prop:nilpotency_codifferential}
The codifferential satisfies
\[
\coderiv\coderiv \kdifform{a}{k} = 0, \quad \forall\kdifform{a}{k}\in\Lambda^{k}(\manifold{M}) \;.
\]
\end{proposition}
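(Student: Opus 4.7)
The plan is to proceed by direct computation, applying the explicit formula for $\coderiv$ given in \eqref{eq::diffGeom_codifferential_definition} twice and exploiting the nilpotency of the exterior derivative, $\ederiv\ederiv = 0$, from \eqref{eq:double_dif}.

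First I would apply the definition of $\coderiv$ to $\kdifform{a}{k}$, obtaining a $(k-1)$-form $\coderiv\kdifform{a}{k} = (-1)^{n(k+1)+1}\star\ederiv\star\kdifform{a}{k}$. Then I would apply $\coderiv$ again, this time using the formula with $k$ replaced by $k-1$, so that an overall sign of $(-1)^{n(k+1)+1}\cdot(-1)^{nk+1}$ appears in front of the composition $\star\ederiv\star\star\ederiv\star$. The inner $\star\star$ acts on $\ederiv\star\kdifform{a}{k}$, which is an $(n-k+1)$-form; by \eqref{eq::diffGeom_double_hodge} this produces an additional factor $(-1)^{(n-k+1)(k-1)}$ and collapses the two Hodge stars. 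What remains is precisely $\star\ederiv\ederiv\star\kdifform{a}{k}$, multiplied by a sign whose exact value is irrelevant. Invoking \eqref{eq:double_dif} makes the inner $\ederiv\ederiv$ vanish, and hence $\coderiv\coderiv\kdifform{a}{k} = 0$.

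A conceptually cleaner alternative, assuming boundary terms vanish as in \eqref{eq::difGeom::boundary_zero_codifferential}, is to use the adjoint relation \eqref{eq::diffGeom_codiff_adjoint}: for every $\kdifform{c}{k-2}\in\Lambda^{k-2}(\manifold{M})$ one has $\inner{\kdifform{c}{k-2}}{\coderiv\coderiv\kdifform{a}{k}}_{L^2\Lambda^{k-2}(\manifold{M})} = \inner{\ederiv\ederiv\kdifform{c}{k-2}}{\kdifform{a}{k}}_{L^2\Lambda^{k}(\manifold{M})} = 0$, and non-degeneracy of the inner product yields the claim. However, since Proposition~\ref{prop::diffgeom_codifferential_adjoint} only delivers the formula under vanishing-trace hypotheses, the direct algebraic derivation is preferable because it holds for arbitrary $\kdifform{a}{k}\in\Lambda^{k}(\manifold{M})$ once $\coderiv$ is defined by \eqref{eq::diffGeom_codifferential_definition}.

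The only mildly delicate step is bookkeeping the sign: one must track the three separate contributions coming from the two applications of $\coderiv$ and from collapsing the interior $\star\star$ on an $(n-k+1)$-form. I expect no real obstacle — the signs combine into an overall $\pm 1$ which multiplies $\star\ederiv\ederiv\star\kdifform{a}{k} = 0$, so the exact exponent need not even be simplified to conclude.
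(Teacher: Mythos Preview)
Your proof is correct and follows exactly the same approach as the paper, which simply states that the result follows directly from \eqref{eq::diffGeom_codifferential_definition}, \eqref{eq::diffGeom_double_hodge}, and \eqref{eq:double_dif}. Your detailed sign bookkeeping and the remark that the overall sign is irrelevant since it multiplies zero are both sound.
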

\begin{proof}
It follows directly from \eqref{eq::diffGeom_codifferential_definition}, \eqref{eq::diffGeom_double_hodge} and \eqref{eq:double_dif}.
% \[
% \coderiv\coderiv\kdifform{a}{k} \stackrel{\eqref{eq::diffGeom_codifferential_definition}}{=}  (-1)^{n(k+1)+1}\star\ederiv\star (-1)^{n(k+1)+1}\star\ederiv\star \kdifform{a}{k} \stackrel{\eqref{eq::diffGeom_double_hodge}}{=} (-1)^{3nk-k^{2}+2k+1}\star\ederiv\ederiv\star \kdifform{a}{k} \stackrel{\eqref{eq:double_dif}}{=} 0 \;.
% \]
\end{proof}
Due to the \propref{prop:nilpotency_codifferential}, the $(n+1)$-spaces of differential forms satisfy the following sequence:
\begin{equation}
\begin{array}{cccccccccccc}
0\; \longleftarrow&\Lambda^0(\manifold{M})
&\stackrel{\coderiv}{\longleftarrow}& \Lambda^1(\manifold{M})
&\stackrel{\coderiv}{\longleftarrow} & \cdots \; &\stackrel{\coderiv}{\longleftarrow}\; &\Lambda^n(\manifold{M}) \;
&\stackrel{\coderiv}{\longleftarrow}\; &\mathbb{R} \;.
\end{array}
\label{diffsinglecomplex}
\end{equation}

\begin{definition}
A differential form $\kdifform{a}{k}$ is called {\em co-exact} if there exists a differential form $\kdifform{b}{k+1}$ such that $\kdifform{a}{k} = \coderiv \kdifform{b}{k+1}$. The space of co-exact $k$-forms is given by 
\[ \mathcal{B}(\coderiv;\Lambda^{k+1}):=\coderiv\Lambda^{k+1}(\mathcal{M})\subset\Lambda^k(\mathcal{M}).\]
A differential form $\kdifform{a}{k}$ is called {\em coclosed} if $\coderiv \kdifform{a}{k} = 0$ The space of coclosed $k$-forms is given by
\[ \mathcal{Z}(\coderiv;\Lambda^k):=\{\forall \kdifform{a}{k}\in\Lambda^k(\mathcal{M})\;|\;\coderiv \kdifform{a}{k}=0\}\subset\Lambda^k(\mathcal{M})\;.\] 
%Then sequence \eqref{diffsinglecomplex} is \textcolor{red}{{\em exact}} because the codifferential $\coderiv$ maps the elements of $\Lambda^k(\mathcal{M})$ into the kernel $\mathcal{N}(\coderiv;\Lambda^{k-1}(\mathcal{M}))$, i.e., $\mathcal{R}(\coderiv;\Lambda^{k}(\manifold{M}))\subset \mathcal{N}(\coderiv;\Lambda^{k-1}(\manifold{M}))$, as can be seen in \figref{fig::diffGeometry_deRhamComplex_codifferential}.
\end{definition}
\propref{prop:nilpotency_codifferential} implies that $\mathcal{B}(\coderiv;\Lambda^{k})\subset \mathcal{Z}(\coderiv;\Lambda^{k-1})$. Only when $\mathcal{B}(\coderiv;\Lambda^{k})= \mathcal{Z}(\coderiv;\Lambda^{k-1})$ is the sequence in (\ref{diffsinglecomplex}) exact.
\begin{figure}[htp]
\centering
\includegraphics[width=1.\textwidth]{./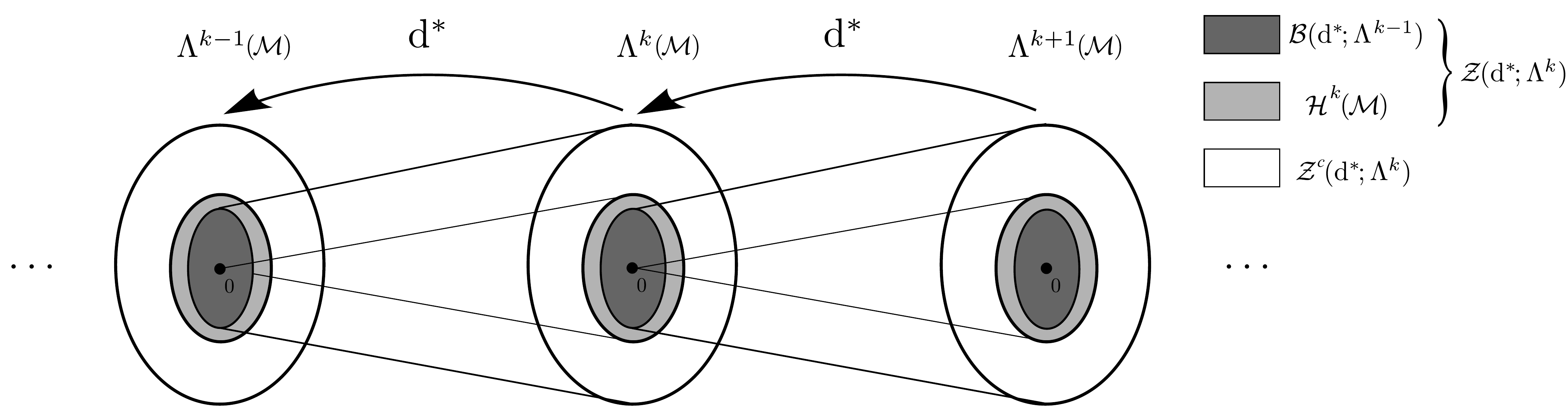}
\caption{Pictorial view of the complex of differential forms acted upon by the codifferential operator. The codifferential $\coderiv$ maps the elements of $\Lambda^{k}$ into the kernel of $\coderiv$ of $\Lambda^{k-1}$, $\mathcal{B}(\coderiv;\Lambda^{k})\subset \mathcal{Z}(\coderiv;\Lambda^{k-1})$.}
\label{fig::diffGeometry_deRhamComplex_codifferential}
\end{figure}	
One can decompose the space of $k$-forms, $\Lambda^{k}(\manifold{M})$, as 
\[
\Lambda^{k}(\manifold{M}) = \mathcal{B}(\coderiv; \Lambda^{k+1}) \oplus \mathcal{B}^{c}(\coderiv; \Lambda^{k+1}),
\]
where $\mathcal{B}^{c}(\coderiv; \Lambda^{k+1})$ is the algebraic complement of $\mathcal{B}(\coderiv; \Lambda^{k+1})$ in $\Lambda^k(\manifold{M})$. And one can write any $k$-form $\kdifform{a}{k}$ as:
\[
\kdifform{a}{k} = \coderiv\kdifform{b}{k+1} + \kdifform{c}{k}, \quad \kdifform{a}{k}\in\Lambda^{k}(\manifold{M}),\quad\kdifform{b}{k+1}\in\Lambda^{k+1}(\manifold{M})\quad\mathrm{and}\quad\kdifform{c}{k}\in\mathcal{B}^c(\coderiv;\Lambda^{k+1}).
\]
%Using the exterior derivative and the codifferential, we can define the Laplace-DeRham operator.
\begin{definition}[\textbf{Laplace-DeRham operator}]
\cite{abraham_diff_geom, burke1985applied} The Laplace-de Rham operator, $\Delta: \Lambda^{k}(\manifold{M})\spacemap\Lambda^{k}(\manifold{M})$, is a map given by
\begin{equation}
\Delta\kdifform{a}{k} := (\coderiv\ederiv + \ederiv\coderiv)\kdifform{a}{k},\quad\forall\kdifform{a}{k}\in\Lambda^{k}(\manifold{M}) \;. \label{eq::diffGeom_LaplacedeRham_definition}
\end{equation}
\end{definition}

\begin{proposition}[\textbf{Self-adjointness of Laplace-de Rham operator}]
Under the conditions of \eqref{eq::difGeom::boundary_zero_codifferential} the Laplace-de Rham operator satisfies:
\[
\inner{\Delta\kdifform{a}{k}}{\kdifform{b}{k}}_{L^2\Lambda^k(\manifold{M})} = \inner{\kdifform{a}{k}}{\Delta\kdifform{b}{k}}_{L^2\Lambda^k(\manifold{M})} \;,
\]
i.e. $\Delta^{*} = \Delta$. The Laplace-de Rham operator is self adjoint.
\end{proposition}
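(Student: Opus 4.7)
The plan is to unfold the definition $\Delta = \coderiv\ederiv + \ederiv\coderiv$ inside the inner product and then push the operators from one slot to the other using the adjointness relation \eqref{eq::diffGeom_codiff_adjoint}, applied twice. Concretely, I would write
\[
\inner{\Delta\kdifform{a}{k}}{\kdifform{b}{k}}_{L^2\Lambda^k(\manifold{M})} = \inner{\coderiv\ederiv\kdifform{a}{k}}{\kdifform{b}{k}}_{L^2\Lambda^k(\manifold{M})} + \inner{\ederiv\coderiv\kdifform{a}{k}}{\kdifform{b}{k}}_{L^2\Lambda^k(\manifold{M})},
\]
and then move $\coderiv$ across the first pairing (it becomes $\ederiv$ acting on the right) and move $\ederiv$ across the second pairing (it becomes $\coderiv$ acting on the right), reducing everything to the symmetric expression $\inner{\ederiv\kdifform{a}{k}}{\ederiv\kdifform{b}{k}}_{L^2\Lambda^{k+1}(\manifold{M})} + \inner{\coderiv\kdifform{a}{k}}{\coderiv\kdifform{b}{k}}_{L^2\Lambda^{k-1}(\manifold{M})}$, from which self-adjointness is immediate by retracing the steps in reverse with the roles of $\kdifform{a}{k}$ and $\kdifform{b}{k}$ swapped.

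Each transfer of an operator across the inner product requires the relation \eqref{eq::diffGeom_codiff_adjoint}, which itself was derived in \propref{prop::diffgeom_codifferential_adjoint} under the boundary-vanishing hypothesis \eqref{eq::difGeom::boundary_zero_codifferential}. The main subtlety is therefore bookkeeping: the hypothesis of the proposition provides the needed vanishing for the pairs $(\kdifform{a}{k},\kdifform{b}{k})$ of degrees actually appearing, namely a vanishing boundary integral of $\ederiv\kdifform{a}{k-1}\wedge\star\kdifform{b}{k}$ type for both moves. I would state clearly that we assume the boundary condition holds for all the intermediate pairs $(\ederiv \kdifform{a}{k},\kdifform{b}{k})$ and $(\kdifform{a}{k},\coderiv \kdifform{b}{k})$ (which is in essence what the authors mean by ``under the conditions of \eqref{eq::difGeom::boundary_zero_codifferential}'').

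Once that is granted, the core calculation is a two-line computation:
\[
\inner{\Delta\kdifform{a}{k}}{\kdifform{b}{k}}_{L^2\Lambda^k(\manifold{M})} = \inner{\ederiv\kdifform{a}{k}}{\ederiv\kdifform{b}{k}}_{L^2\Lambda^{k+1}(\manifold{M})} + \inner{\coderiv\kdifform{a}{k}}{\coderiv\kdifform{b}{k}}_{L^2\Lambda^{k-1}(\manifold{M})},
\]
and this expression is manifestly symmetric in $\kdifform{a}{k}$ and $\kdifform{b}{k}$. Reversing the roles and reassembling yields $\inner{\kdifform{a}{k}}{\Delta\kdifform{b}{k}}_{L^2\Lambda^k(\manifold{M})}$. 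The only real obstacle, then, is not the algebra but carefully stating that the boundary conditions ensuring \eqref{eq::diffGeom_codiff_adjoint} apply to both terms of $\Delta$; the nilpotency \eqref{eq:double_dif} and \propref{prop:nilpotency_codifferential} are not needed here, since $\ederiv\ederiv$ and $\coderiv\coderiv$ do not appear in the computation.
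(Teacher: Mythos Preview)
Your proposal is correct and follows exactly the approach the paper indicates: expand $\Delta$ via its definition \eqref{eq::diffGeom_LaplacedeRham_definition} and apply the adjointness relation \eqref{eq::diffGeom_codiff_adjoint} to each term. The paper's own proof is in fact just a one-line pointer to these two equations, so your write-up is a fleshed-out version of the same argument, including the useful intermediate symmetric form $\inner{\ederiv\kdifform{a}{k}}{\ederiv\kdifform{b}{k}}_{L^2\Lambda^{k+1}(\manifold{M})} + \inner{\coderiv\kdifform{a}{k}}{\coderiv\kdifform{b}{k}}_{L^2\Lambda^{k-1}(\manifold{M})}$.
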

\begin{proof}
The proof follows directly from the definition of the Laplace-de Rham operator \eqref{eq::diffGeom_LaplacedeRham_definition} and \eqref{eq::diffGeom_codiff_adjoint}.
% \begin{eqnarray*}
% \inner{\Delta\kdifform{a}{k}}{\kdifform{b}{k}}_{\manifold{M}} & \stackrel{(\ref{eq::diffGeom_LaplacedeRham_definition})}{=} & \inner{(\coderiv\ederiv + \ederiv\coderiv)\kdifform{a}{k}}{\kdifform{b}{k}}_{\manifold{M}} \\
% & \stackrel{(\ref{eq::diffGeom_codiff_adjoint})}{=}  & \inner{\kdifform{a}{k}}{(\coderiv\ederiv + \ederiv\coderiv)\kdifform{b}{k}}_{\manifold{M}} \\
% & \stackrel{(\ref{eq::diffGeom_LaplacedeRham_definition})}{=} & \inner{\kdifform{a}{k}}{\Delta\kdifform{b}{k}}_{\manifold{M}}
% \end{eqnarray*}
\end{proof}

\subsection{Hodge decomposition} \label{subsec:HodgeDecomposition}
A $k$-form is called closed if $\ederiv\kdifform{a}{k}=0$ and exact if there exists a $(k-1)$-form $\kdifform{b}{k-1}$ such that $\kdifform{a}{k}=\ederiv\kdifform{b}{k-1}$. Since $\ederiv\circ\ederiv=0$, \eqref{eq:double_dif}, every  exact form is closed. For manifolds which are non-contractible the Poincar\'{e} \lemmaref{lemma:poincare} states that not all closed forms are exact.

One can define the space of \emph{harmonic forms} as those differential forms which are closed, but not exact,
\begin{equation}
\mathcal{H}^k:=\mathcal{Z}(\ederiv;\Lambda^k)\cap\mathcal{B}^c(\ederiv;\Lambda^{k-1}) \;\;\; \Longrightarrow \;\;\; \mathcal{Z}(\ederiv;\Lambda^k) = \mathcal{H}^k(\mathcal{M}) \cup \mathcal{B}(\ederiv;\Lambda^{k-1})\;.
\label{harmonicspaceed}
\end{equation}
For contractible manifolds $\mathcal{H}^k=\emptyset$. Similar to \eqref{hdranges} we can decompose $\Lambda^k(\mathcal{M})$ into its nullspace and its complement,
\[
\Lambda^k(\mathcal{M})=\mathcal{Z}(\ederiv;\Lambda^k)\oplus\mathcal{Z}^c(\ederiv;\Lambda^k).
\]
If we combine this decomposition with  \eqref{harmonicspaceed}, we obtain the following \emph{Hodge decomposition},
\begin{equation}
\Lambda^k(\mathcal{M})=\mathcal{B}(\ederiv;\Lambda^{k-1})\oplus\mathcal{H}^k\oplus\mathcal{Z}^c(\ederiv;\Lambda^k).
\label{eq:generic_Hodge_decomposition}
\end{equation}

\begin{proposition}\label{prop:adjoint_spaces_null_range}
If $\tr\kdifform{a}{k}=0$ or $\partial \manifold{M} = \emptyset$, then
\begin{equation} \mathcal{Z}(\coderiv;\Lambda^k)=\mathcal{B}^c(\ederiv;\Lambda^{k-1}) \;\;\; \mbox{and}\;\;\;
\mathcal{Z}^c(\ederiv;\Lambda^k)=\mathcal{B}(\coderiv;\Lambda^{k+1}) \;.
\label{eq:adjoint_relation_null_range}
\end{equation}
\end{proposition}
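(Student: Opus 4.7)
Under either hypothesis, the boundary integral in Proposition~\ref{prop::diffgeom_codifferential_adjoint} vanishes, so the integration--by--parts identity \eqref{eq::diffGeom_codiff_adjoint} holds unconditionally for the forms involved. Consequently $\coderiv$ acts as the formal $L^{2}$-adjoint of $\ederiv$, and I will interpret the algebraic complements $\mathcal{B}^{c}$ and $\mathcal{Z}^{c}$ as $L^{2}$-orthogonal complements in $\Lambda^{k}(\manifold{M})$ (the natural choice consistent with the Hodge decomposition \eqref{eq:generic_Hodge_decomposition}). Both equalities then become instances of the classical Hilbert-space identity relating the kernel of an adjoint operator to the orthogonal complement of the range of the operator itself.

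For the first equality $\mathcal{Z}(\coderiv;\Lambda^{k})=\mathcal{B}^{c}(\ederiv;\Lambda^{k-1})$ I would prove both inclusions by direct computation. If $\coderiv\kdifform{b}{k}=0$, then for every $\kdifform{c}{k-1}\in\Lambda^{k-1}(\manifold{M})$ the adjointness yields $\inner{\ederiv\kdifform{c}{k-1}}{\kdifform{b}{k}}_{L^{2}\Lambda^{k}(\manifold{M})}=\inner{\kdifform{c}{k-1}}{\coderiv\kdifform{b}{k}}_{L^{2}\Lambda^{k-1}(\manifold{M})}=0$, so $\kdifform{b}{k}\perp\mathcal{B}(\ederiv;\Lambda^{k-1})$. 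Conversely, if $\kdifform{b}{k}$ is orthogonal to every exact $k$-form, then the same adjointness, run in reverse, gives $\inner{\kdifform{c}{k-1}}{\coderiv\kdifform{b}{k}}_{L^{2}\Lambda^{k-1}(\manifold{M})}=0$ for every test form $\kdifform{c}{k-1}$, whence $\coderiv\kdifform{b}{k}=0$ by non-degeneracy of the inner product. This direction uses nothing beyond the adjoint pairing.

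For the second equality $\mathcal{Z}^{c}(\ederiv;\Lambda^{k})=\mathcal{B}(\coderiv;\Lambda^{k+1})$ the inclusion $\mathcal{B}(\coderiv;\Lambda^{k+1})\subseteq\mathcal{Z}^{c}(\ederiv;\Lambda^{k})$ is completely analogous: for $\kdifform{b}{k}=\coderiv\kdifform{c}{k+1}$ and any closed $\kdifform{a}{k}\in\mathcal{Z}(\ederiv;\Lambda^{k})$, adjointness gives $\inner{\coderiv\kdifform{c}{k+1}}{\kdifform{a}{k}}_{L^{2}\Lambda^{k}(\manifold{M})}=\inner{\kdifform{c}{k+1}}{\ederiv\kdifform{a}{k}}_{L^{2}\Lambda^{k+1}(\manifold{M})}=0$. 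The reverse inclusion is the genuinely non-trivial direction and is the main obstacle: it asserts that every $\kdifform{b}{k}$ orthogonal to the closed forms actually admits a co-primitive, i.e.\ lies in the range of $\coderiv$. As a Hilbert-space statement this is $(\ker \ederiv)^{\perp}=\overline{\mathrm{ran}\,\coderiv}$ together with closedness of the range of $\coderiv$; in the smooth-form setting used here it is effectively the content of the Hodge theorem for the Laplace--de Rham operator \eqref{eq::diffGeom_LaplacedeRham_definition}, which one would invoke directly, or alternatively deduce by applying the first equality on the dual complex and transporting the result back through the Hodge-$\star$ operator using \eqref{eq::diffGeom_codifferential_definition}, exploiting the fact that $\star$ is an $L^{2}$-isometry.
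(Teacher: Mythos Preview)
Your approach is essentially the same as the paper's: both use the $L^{2}$-adjointness \eqref{eq::diffGeom_codiff_adjoint} to identify $\mathcal{Z}(\coderiv;\Lambda^{k})$ with the orthogonal complement of $\mathcal{B}(\ederiv;\Lambda^{k-1})$ via the two-inclusion argument you sketch, and then show $\mathcal{B}(\coderiv;\Lambda^{k+1})\perp\mathcal{Z}(\ederiv;\Lambda^{k})$ by the dual computation. You are in fact more careful than the paper on the second equality: the paper proves only the inclusion $\mathcal{B}(\coderiv;\Lambda^{k+1})\subseteq\mathcal{Z}^{c}(\ederiv;\Lambda^{k})$ and then simply asserts equality, whereas you correctly flag that the reverse inclusion amounts to closedness of the range of $\coderiv$ and ultimately relies on Hodge theory.
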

\begin{proof}
For all $\kdifform{a}{k} \in \mathcal{Z}(\coderiv;\Lambda^k)$ and all $\kdifform{b}{k-1} \in \Lambda^k(\manifold{M})$ we have
\[ 0 = \left ( \coderiv \kdifform{a}{k}, \kdifform{b}{k-1} \right )_{L^2\Lambda^{k-1}(\manifold{M})} = \left (  \kdifform{a}{k}, \ederiv \kdifform{b}{k-1} \right )_{L^2\Lambda^k(\manifold{M})} \;.\]
Therefore, $\kdifform{a}{k} \perp \mathcal{B}(\ederiv;\Lambda^{k-1})$, i.e. $\kdifform{a}{k} \in \mathcal{B}^\perp(\ederiv;\Lambda^{k-1}) = \mathcal{B}^c(\ederiv;\Lambda^{k-1})$. Therefore, $\mathcal{Z}(\coderiv;\Lambda^k)\subset \mathcal{B}^c(\ederiv;\Lambda^{k-1})$. Conversely, if $\kdifform{a}{k} \in \mathcal{B}^\perp(\ederiv;\Lambda^{k-1})$, then $\left ( \coderiv \kdifform{a}{k}, \kdifform{b}{k-1} \right )_{L^2\Lambda^k(\manifold{M})}=0$ for all $\kdifform{b}{k-1}$ which implies that $\coderiv \kdifform{a}{k} = 0$, therefore $\mathcal{B}^c(\ederiv;\Lambda^{k-1}) \subset \mathcal{Z}(\coderiv;\Lambda^k)$. So we have $\mathcal{B}^c(\ederiv;\Lambda^{k-1}) = \mathcal{Z}(\coderiv;\Lambda^k)$.

For all $\kdifform{a}{k} \in \mathcal{Z}(\ederiv;\Lambda^k)$ and all $\kdifform{b}{k} \in \mathcal{B}(\coderiv;\Lambda^{k+1})$, i.e. there exists a $\kdifform{c}{k+1}$ such that $\kdifform{b}{k} = \coderiv \kdifform{c}{k+1}$. Then
\[ 0 \stackrel{\ederiv \kdifform{a}{k}=0}{=} \left ( \ederiv \kdifform{a}{k},\kdifform{c}{k+1} \right )_{L^2\Lambda^{k+1}(\manifold{M})} = \left ( \kdifform{a}{k},\coderiv \kdifform{c}{k+1} \right )_{L^2\Lambda^{k}(\manifold{M})} = \left ( \kdifform{a}{k},\kdifform{b}{k} \right )_{L^2\Lambda^{k}(\manifold{M})}\;.\]
This implies that $\mathcal{Z}^c(\ederiv;\Lambda^k)=\mathcal{B}(\coderiv;\Lambda^{k+1})$.
\end{proof}

Using Proposition~\ref{prop:adjoint_spaces_null_range} in (\ref{harmonicspaceed}) yields
%By adjointness of the exterior derivative and the codifferential, we have $\mathcal{Z}(\coderiv;\Lambda^k(\mathcal{M}))=\mathcal{B}^c(\ederiv;\Lambda^{k-1}(\mathcal{M}))$ and $\mathcal{Z}^c(\ederiv;\Lambda^k(\mathcal{M}))=\mathcal{B}(\coderiv;\Lambda^{k+1}(\mathcal{M}))$, and so the space of harmonic forms can alternatively be written as the intersection of the nullspaces of the exterior derivative and the codifferential operators,
\[
\mathcal{H}^k=\mathcal{Z}(\ederiv;\Lambda^k)\cap\mathcal{Z}(\coderiv;\Lambda^k),
\]
or
\begin{equation}
\mathcal{H}^k=\{\;\forall a\in\Lambda^k(\mathcal{M})\;|\;\ederiv a=0,\;\coderiv a=0\;\}.
\end{equation}
Harmonic forms are therefore both closed and coclosed. Using Proposition~\ref{prop:adjoint_spaces_null_range} and (\ref{eq:generic_Hodge_decomposition}) allows us to write the Hodge decomposition as
\begin{equation}
\Lambda^k(\mathcal{M})=\mathcal{B}(\ederiv;\Lambda^{k-1})\oplus\mathcal{H}^k\oplus\mathcal{B}(\coderiv;\Lambda^{k+1}).
\label{eq:Hodge_decomp_without_boundary}
\end{equation}

\begin{remark}
Note that the Hodge decomposition (\ref{eq:generic_Hodge_decomposition}) is true, whether $\manifold{M}$ has a boundary or not, whereas (\ref{eq:Hodge_decomp_without_boundary}) is only true if $\partial \manifold{M} = \emptyset$. The Hodge decomposition in the form (\ref{eq:generic_Hodge_decomposition}) will play an important role in the remainder of this paper.
\end{remark}

\begin{corollary}[\textbf{Hodge decomposition}]\cite{abraham_diff_geom}\label{cor:hodgedecomposition}
Let $\manifold{M}$ be a compact boundaryless oriented Riemannian manifold. Every $\kdifform{e}{k}\in\Lambda^{k}(\manifold{M})$ can be written in terms of $\kdifform{a}{k-1}\in\Lambda^{k-1}(\manifold{M})$, $\kdifform{b}{k+1}\in\Lambda^{k+1}(\manifold{M})$ and $\kdifform{c}{k}\in \mathcal{H}^k$ such that
\begin{equation}
\kdifform{e}{k} = \ederiv \kdifform{a}{k-1} + \coderiv \kdifform{b}{k+1} + \kdifform{c}{k} \;.\label{eq::diffGeom_hodge_decomposition}
\end{equation}
\end{corollary}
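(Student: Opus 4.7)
The plan is to reduce the statement directly to the decomposition already established in (\ref{eq:Hodge_decomp_without_boundary}). Since $\manifold{M}$ is compact and boundaryless, the hypothesis $\partial \manifold{M} = \emptyset$ of Proposition~\ref{prop:adjoint_spaces_null_range} is satisfied, so both identities
\[
\mathcal{Z}(\coderiv;\Lambda^k) = \mathcal{B}^c(\ederiv;\Lambda^{k-1}), \qquad \mathcal{Z}^c(\ederiv;\Lambda^k) = \mathcal{B}(\coderiv;\Lambda^{k+1}),
\]
hold. Combining them with the generic decomposition (\ref{eq:generic_Hodge_decomposition}) immediately produces (\ref{eq:Hodge_decomp_without_boundary}), which is essentially the content of the corollary: the statement of the corollary merely unpacks this direct sum into representative pre-images.

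Concretely, given any $\kdifform{e}{k} \in \Lambda^k(\manifold{M})$, I would use uniqueness of the direct sum to extract components $\kdifform{f}{k}_1 \in \mathcal{B}(\ederiv;\Lambda^{k-1})$, $\kdifform{c}{k} \in \mathcal{H}^k$, and $\kdifform{f}{k}_3 \in \mathcal{B}(\coderiv;\Lambda^{k+1})$ with $\kdifform{e}{k}=\kdifform{f}{k}_1+\kdifform{c}{k}+\kdifform{f}{k}_3$. By definition of the ranges, there exist $\kdifform{a}{k-1} \in \Lambda^{k-1}(\manifold{M})$ and $\kdifform{b}{k+1} \in \Lambda^{k+1}(\manifold{M})$ with $\kdifform{f}{k}_1 = \ederiv \kdifform{a}{k-1}$ and $\kdifform{f}{k}_3 = \coderiv \kdifform{b}{k+1}$. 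Substituting yields the desired representation (\ref{eq::diffGeom_hodge_decomposition}).

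The hard part is not what the outline above executes but what it presupposes: the genuine analytic content of the Hodge theorem has been quietly absorbed into the definitions of $\mathcal{B}^c(\ederiv;\Lambda^{k-1})$ and $\mathcal{B}^c(\coderiv;\Lambda^{k+1})$ as algebraic complements. To make the proof fully rigorous one must show that the ranges of $\ederiv$ and $\coderiv$ are closed in an appropriate Hilbert completion of $\Lambda^k(\manifold{M})$ (e.g.\ an $L^2$ or Sobolev completion), so that orthogonal complements with respect to the inner product (\ref{eq::diffGeom_L2_inner}) exist and coincide with these algebraic complements. On a compact boundaryless oriented Riemannian manifold this is the classical Hodge theorem, whose standard proof uses ellipticity of the self-adjoint operator $\Delta = \coderiv \ederiv + \ederiv \coderiv$ together with Rellich--Kondrachov compactness to yield a finite-dimensional space of harmonic forms $\mathcal{H}^k$ and $L^2$-closed ranges. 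Since this analytical machinery is outside the scope of the present paper, I would cite the classical Hodge theorem (as the authors effectively do by attributing the corollary to \cite{abraham_diff_geom}) and let the algebraic unpacking above constitute the proof proper.
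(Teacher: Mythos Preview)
Your proposal is correct and matches the paper's approach: the paper derives the space-level decomposition (\ref{eq:Hodge_decomp_without_boundary}) from (\ref{eq:generic_Hodge_decomposition}) via Proposition~\ref{prop:adjoint_spaces_null_range}, and then states the corollary with a citation to \cite{abraham_diff_geom} rather than proving the underlying analytic closedness results. Your unpacking of the direct sum into representatives, together with the acknowledgment that the elliptic theory is being cited, is exactly the intended reading.
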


\begin{remark}\label{eq::diffGeom_harmonicForms_equivalence}
Let $\manifold{M}$ be a compact boundaryless oriented Riemannian manifold and $\kdifform{a}{k}\in\mathcal{H}^{k}$. Then it follows that $\kdifform{c}{k}$ is the harmonic solution of the Laplace-De Rham operator,
\begin{equation}
(\;\coderiv\kdifform{c}{k} = 0\ \text{ and }\ \ederiv\kdifform{c}{k} = 0\;) \quad \Longleftrightarrow \quad \Delta\kdifform{c}{k} = 0.
\end{equation}
\end{remark}
% \begin{proof}
% See \cite{abraham_diff_geom}.
% \end{proof}
\begin{remark}
Although $\ederiv \kdifform{a}{k-1}$, $\coderiv \kdifform{b}{k+1}$ and $\kdifform{e}{k}$ are unique, the differential forms $\kdifform{a}{k-1}$ and $\kdifform{b}{k+1}$ are generally not unique, because we can replace $\kdifform{a}{k-1}$ by $\kdifform{a}{k-1}+ \ederiv \kdifform{p}{k-2}$ for any $\kdifform{p}{k-2}$ and $\kdifform{b}{k+1}$ by $\kdifform{b}{k+1}+\coderiv \kdifform{q}{k+2}$ and this will also satisfy the Hodge decomposition.
\end{remark}

\begin{remark}
The $k$-th de Rham cohomology group of $\manifold{M}$, $H^{k}$, is defined as:
\[
H^{k}:=\frac{\mathcal{Z}(\ederiv;\Lambda^k)}{\mathcal{B}(\ederiv;\Lambda^{k-1})}.
\]
It is possible to prove that the space of harmonic $k$-forms, $\mathcal{H}^{k}$, is isomorphic to the cohomology group $H^{k}$, see \cite{abraham_diff_geom}. Moreover, the de Rham theorem, see \cite{thorpe1976lecture}, states that for a finite dimensional compact manifold this group is isomorphic to the homology group defined in algebraic topology, the isomorphism being given by integration. This connection between differential geometry and algebraic topology plays an essential role in the development of the numerical scheme presented here since it enables the representation of differential geometric structures as finite dimensional algebraic topological structures suitable for using in the discretization process, as it will be seen in the following sections.
\end{remark}

For a manifold with boundary, the Hodge decomposition theory takes a somewhat different form. A $k$-form $\kdifform{a}{k}\in\Lambda^{k}(\manifold{M})$ is called parallel or tangent to a manifold $\manifold{Q}\subset\manifold{M}$ if $\mathrm{tr}_{\manifold{M},\manifold{Q}}(\star\kdifform{a}{k})=0$ and is called perpendicular or normal to a manifold $\manifold{Q}\subset\manifold{M}$ if $\mathrm{tr}_{\manifold{M},\manifold{Q}}(\kdifform{a}{k})=0$. In this way the following spaces can be introduced
\begin{align}
\Lambda^{k}_{t}(\manifold{M}) &= \{\kdifform{a}{k}\in\Lambda^{k}(\manifold{M}) \, | \, \kdifform{a}{k} \text{ is tangent to } \partial\manifold{M}\} \;,\\
\Lambda^{k}_{n}(\manifold{M}) &= \{\kdifform{a}{k}\in\Lambda^{k}(\manifold{M}) \, | \, \kdifform{a}{k} \text{ is perpendicular to } \partial\manifold{M}\} \;.
\end{align}
Note that for the space of harmonic forms $\ederiv\kdifform{a}{k} = 0$ and $\coderiv\kdifform{a}{k} = 0$ is a stronger condition than $\Delta\kdifform{a}{k} = 0$ when $\manifold{M}$ has a boundary.
\begin{proposition}[\textbf{Hodge decomposition theorem for manifolds with boundary}]\cite{abraham_diff_geom}
Let $\manifold{M}$ be a compact oriented manifold with boundary. The following decomposition holds
\begin{equation}
\Lambda^{k}(\manifold{M}) = \ederiv\Lambda^{k-1}_{t}(\manifold{M}) \oplus \mathcal{H}^{k}(\manifold{M}) \oplus \coderiv\Lambda^{k+1}_{n}(\manifold{M}). \label{eq::diffGeom_Hodge_decomposition_boundary}
\end{equation}
\end{proposition}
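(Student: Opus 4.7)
The plan is to establish \eqref{eq::diffGeom_Hodge_decomposition_boundary} in two stages. First, verify that the three summands are mutually $L^{2}$-orthogonal, so the sum is direct. Second, produce the decomposition constructively by solving a boundary value problem for the Hodge Laplacian.

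For orthogonality, the workhorse is the integration-by-parts identity obtained by applying \theoremref{theor::difGeom_stokes_theorem} to $\ederiv(\kdifform{\alpha}{k-1}\wedge\star\kdifform{\beta}{k})$ together with the sign relation between $\ederiv\star$ and $\star\coderiv$ that follows from \defref{def::diffGeom_hodge} and \eqref{eq::diffGeom_codifferential_definition}. This yields a Green-type formula relating $\inner{\ederiv\kdifform{\alpha}{k-1}}{\kdifform{\beta}{k}}_{L^{2}\Lambda^{k}(\manifold{M})}$ to $\inner{\kdifform{\alpha}{k-1}}{\coderiv\kdifform{\beta}{k}}_{L^{2}\Lambda^{k-1}(\manifold{M})}$ plus a boundary integral over $\partial\manifold{M}$ of a wedge of the traces of $\kdifform{\alpha}{k-1}$ and $\star\kdifform{\beta}{k}$. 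Each of the three orthogonalities then reduces to moving one derivative across the inner product: the bulk term vanishes either by $\ederiv^{2}=0$, by $\coderiv^{2}=0$ (\propref{prop:nilpotency_codifferential}), or by the harmonic conditions $\ederiv\kdifform{h}{k}=\coderiv\kdifform{h}{k}=0$, while the boundary term is eliminated by the trace conditions defining $\Lambda^{k-1}_{t}$ and $\Lambda^{k+1}_{n}$.

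For the existence step, fix $\kdifform{e}{k}\in\Lambda^{k}(\manifold{M})$ and let $\pi_{\mathcal{H}}$ denote the $L^{2}$-projection onto $\mathcal{H}^{k}(\manifold{M})$. The strategy is to solve
\[
\Delta\kdifform{u}{k} = \kdifform{e}{k} - \pi_{\mathcal{H}}\kdifform{e}{k}
\]
subject to absolute boundary conditions chosen so that $\coderiv\kdifform{u}{k}\in\Lambda^{k-1}_{t}(\manifold{M})$ and $\ederiv\kdifform{u}{k}\in\Lambda^{k+1}_{n}(\manifold{M})$. If a smooth solution $\kdifform{u}{k}$ exists, the definition \eqref{eq::diffGeom_LaplacedeRham_definition} of $\Delta$ immediately gives
\[
\kdifform{e}{k} = \pi_{\mathcal{H}}\kdifform{e}{k} + \ederiv\bigl(\coderiv\kdifform{u}{k}\bigr) + \coderiv\bigl(\ederiv\kdifform{u}{k}\bigr),
\]
which is the required decomposition with $\kdifform{a}{k-1}=\coderiv\kdifform{u}{k}$ and $\kdifform{b}{k+1}=\ederiv\kdifform{u}{k}$.

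The principal obstacle is the solvability and elliptic regularity of this mixed boundary value problem. The right-hand side has been engineered to lie in the $L^{2}$-orthogonal complement of $\mathcal{H}^{k}$, which is precisely the compatibility condition dictated by the Fredholm alternative once one has established, via Gaffney's inequality, that $\Delta$ is Fredholm on the space of forms satisfying these boundary conditions with kernel equal to $\mathcal{H}^{k}(\manifold{M})$. This elliptic machinery is classical but genuinely non-trivial, which is why it is typically invoked as a black box from references such as \cite{abraham_diff_geom} rather than reproduced in detail.
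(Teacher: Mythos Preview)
The paper does not supply a proof of this proposition at all: it is stated with a citation to \cite{abraham_diff_geom} and the text moves immediately to the next subsection. So there is no ``paper's own proof'' to compare against; the result is imported wholesale from the literature.

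Your sketch is the standard route and is sound in outline. The orthogonality step is exactly right: each pairing reduces, via the Green formula coming from \theoremref{theor::difGeom_stokes_theorem} and \eqref{eq::diffGeom_codifferential_definition}, to a bulk term killed by nilpotency or harmonicity plus a boundary term killed by the tangential/normal trace conditions defining $\Lambda^{k-1}_{t}$ and $\Lambda^{k+1}_{n}$. The existence step via solving $\Delta\kdifform{u}{k}=\kdifform{e}{k}-\pi_{\mathcal{H}}\kdifform{e}{k}$ with absolute boundary conditions, then reading off $\kdifform{a}{k-1}=\coderiv\kdifform{u}{k}$ and $\kdifform{b}{k+1}=\ederiv\kdifform{u}{k}$, is also the classical argument. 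You correctly flag that the genuine analytic content --- Gaffney's inequality, Fredholmness, and elliptic regularity up to the boundary --- is being invoked rather than proved, which is precisely why the paper (and most expositions at this level) defers to \cite{abraham_diff_geom}. One small point: you should also note that $\mathcal{H}^{k}(\manifold{M})$ is finite-dimensional (again a consequence of the elliptic theory), since otherwise the $L^{2}$-projection $\pi_{\mathcal{H}}$ onto a possibly non-closed subspace would need justification.
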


\subsection{Hilbert spaces}
On an oriented Riemannian manifold, we can define Hilbert spaces for differential forms. Let all $f_i(x_{i_1},\hdots,x_{i_n})$ be functions in $L^2(\mathcal{M})$, then $a^{(k)}$ be a $k$-form in the Hilbert space $L^2\Lambda^k(\Omega)$ is given by
\[
a^{(k)}=\sum_if_i(x_{i_1},\hdots,x_{i_n})\;\ederiv x_{i_1}\wedge\ederiv x_{i_2}\wedge\hdots\wedge\ederiv x_{i_k}.
\]
The norm corresponding to the space $L^2\Lambda^k(\mathcal{M})$ is $\Vert a^{(k)}\Vert_{L^2\Lambda^k}^2=\left(a^{(k)},a^{(k)}\right)_{L^2\Lambda^k(\mathcal{M})}$. Although extension to higher Sobolev spaces are possible, we focus here to the Hilbert space corresponding to the exterior derivative. The Hilbert space $H\Lambda^k(\mathcal{M})$ is defined by
\[
H\Lambda^k(\mathcal{M})=\{a^{(k)}\in L^2\Lambda^k(\mathcal{M})\;|\;\ederiv a^{(k)}\in L^2\Lambda^{k+1}(\mathcal{M})\}.
\]
and the norm corresponding to $H\Lambda^k(\mathcal{M})$ is defined by
\[
\Vert a^{(k)}\Vert^2_{H\Lambda^k}:=\Vert a^{(k)}\Vert^2_{L^2\Lambda^k}+\Vert\ederiv a^{(k)}\Vert^2_{L^2\Lambda^{k+1}}.
\]
The $L^2$ de Rham complex, or the Hilbert version of the de Rham complex, is the sequence of maps and spaces given by
\[
\mathbb{R}\hookrightarrow H\Lambda^0(\mathcal{M})\stackrel{\ederiv}{\longrightarrow} H\Lambda^1(\mathcal{M})\stackrel{\ederiv}{\longrightarrow}\cdots\stackrel{\ederiv}{\longrightarrow} H\Lambda^n(\mathcal{M})\stackrel{\ederiv}{\longrightarrow}0.
\]
An important inequality in stability analysis, relating both norms, is Poincar\'e inequality.
\begin{lemma}[\textbf{Poincar\'e inequality}]\cite{arnold2010finite}\label{poincareinequality}
Consider the de Rham complex $(\Lambda,\ederiv)$, then the exterior derivative is a bounded bijection from $\mathcal{Z}^{c}(\ederiv,L^2\Lambda^k)$ to $\mathcal{B}(\ederiv,L^2\Lambda^{k-1})$, and hence, by Banach's bounded inverse theorem, there exists a constant $c_P$ such that
\begin{equation}
\Vert a\Vert_{H\Lambda^k}\leq c_P\Vert\ederiv a\Vert_{L^2\Lambda^k},\quad\forall a\in \mathcal{Z}^{c}(\ederiv,L^2\Lambda^k),
\end{equation}
Which we refer to as the \emph{Poincar\'e inequality}. We remark that the condition $\mathcal{B}(\ederiv,L^2\Lambda^{k-1})$ is closed is not only sufficient, but also necessary to obtain this result.
\end{lemma}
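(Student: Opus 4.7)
The plan is to obtain the Poincar\'e inequality as a direct application of Banach's bounded inverse theorem to the exterior derivative restricted to $\mathcal{Z}^c(\ederiv,L^2\Lambda^k)$. The entire argument hinges on three standard Banach-space ingredients: boundedness, bijectivity, and closedness of the range. I would verify the first two quickly and then rely on the closed-range hypothesis stated in the lemma for the third.

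First I would set up the restriction $\ederiv\vert_{\mathcal{Z}^c} : \mathcal{Z}^c(\ederiv,L^2\Lambda^k) \to \mathcal{B}(\ederiv,L^2\Lambda^k) \subset L^2\Lambda^{k+1}$ and check that both spaces are genuine Banach spaces. The domain $\mathcal{Z}^c(\ederiv,L^2\Lambda^k)$, inherits the $H\Lambda^k$-norm; it is a closed subspace of $H\Lambda^k(\manifold{M})$ because it is the orthogonal complement (inside $H\Lambda^k$) of the closed subspace $\mathcal{Z}(\ederiv,L^2\Lambda^k)$, the kernel of the bounded operator $\ederiv$. The target $\mathcal{B}(\ederiv,L^2\Lambda^k)$ is Banach precisely under the closed-range hypothesis, which is the necessary and sufficient condition flagged in the statement.

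Next I would verify bijectivity. Boundedness of $\ederiv\vert_{\mathcal{Z}^c}$ is immediate from the definition of the graph norm: $\|\ederiv a\|_{L^2\Lambda^{k+1}} \leq \|a\|_{H\Lambda^k}$. Injectivity is trivial, because $\ederiv a = 0$ with $a \in \mathcal{Z}^c(\ederiv,L^2\Lambda^k)$ forces $a \in \mathcal{Z}(\ederiv,L^2\Lambda^k) \cap \mathcal{Z}^c(\ederiv,L^2\Lambda^k) = \{0\}$. Surjectivity onto $\mathcal{B}(\ederiv,L^2\Lambda^k)$ follows from the decomposition $L^2\Lambda^k = \mathcal{Z}(\ederiv,L^2\Lambda^k) \oplus \mathcal{Z}^c(\ederiv,L^2\Lambda^k)$: any $w \in \mathcal{B}(\ederiv,L^2\Lambda^k)$ is of the form $w = \ederiv b$ with $b = b_0 + b_1$, $b_0 \in \mathcal{Z}$ and $b_1 \in \mathcal{Z}^c$, and then $w = \ederiv b_1$ with $b_1$ in the restricted domain.

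With both sides Banach spaces and $\ederiv\vert_{\mathcal{Z}^c}$ a bounded linear bijection, Banach's bounded inverse theorem supplies a bounded inverse, i.e.\ a constant $c_P > 0$ such that $\|a\|_{H\Lambda^k} \leq c_P \|\ederiv a\|_{L^2\Lambda^{k+1}}$ for every $a \in \mathcal{Z}^c(\ederiv,L^2\Lambda^k)$. The main (and essentially the only) obstacle is the closedness of $\mathcal{B}(\ederiv,L^2\Lambda^k)$ in $L^2\Lambda^{k+1}$; without it the quotient-space argument collapses, which is exactly why the lemma marks closed range as a necessary condition. In the setting of this paper, closedness is delivered by the $L^2$ Hodge decomposition of Corollary~\ref{cor:hodgedecomposition} and its version for manifolds with boundary, so the hypothesis is consistent with the analytic framework already established.
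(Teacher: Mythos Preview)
Your proposal is correct and follows precisely the strategy that the lemma statement itself outlines: the paper does not give a separate proof of this result but simply cites \cite{arnold2010finite} and embeds the argument (bounded bijection plus Banach's bounded inverse theorem) in the statement. You have filled in the details of why $\ederiv\vert_{\mathcal{Z}^c}$ is a bounded bijection between Banach spaces, which is exactly what is needed; note also that you have quietly corrected the paper's indexing slip, since the codomain should indeed be $\mathcal{B}(\ederiv,L^2\Lambda^{k})\subset L^2\Lambda^{k+1}$ and the norm on $\ederiv a$ should be $L^2\Lambda^{k+1}$.
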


\section{An introduction to Algebraic Topology}\label{sec:AlgebraicTopology}
An important step in the our numerical framework is the discretization of a manifold (physical space) in which the physical laws are embedded. By this we mean the partitioning of a manifold in a collection of non-overlapping subspaces (cells) such that their union is the whole manifold under study. This partitioning into a set of distinct subspaces yields a representation of a manifold in terms of a finite number of points (0-cells), lines (1-cells), surfaces (2-cells), volumes (3-cells) and their analogues of dimension $k$ ($k$-cells). A collection of $k$-cells of such a partition is called a $k$-chain. Several types of geometrical objects ($k$-cells) can be used for the subdivision. In this work we will focus on quadrilaterals and their generalizations to higher dimensions, (singular $k$-cubes).

In this section we intend to formally introduce the representation of these collections of $k$-cells and to associate them with a suitable algebraic structure that enables a correct discrete representation of the original manifold. The branch of mathematics that provides such a formal discrete representation of a compact manifold is algebraic topology. 

Having defined a discrete representation of the manifold in terms of $k$-chains, we can assign values	to the various space elements by defining the dual space of the chains space; the so-called {\em $k$-cochains}. Duality pairing between chains and cochains allows us to introduce operations on cochains as formal adjoint of operations on chains. In this way, the introduction of operations on $k$-cochains mimics the operations on $k$-forms discussed in the previous section.

The main reason this introduction on algebraic topology is given here, is that we will explicitly employ the close relationship between differential geometry in a continuous setting and algebraic topology in the discrete setting in the following sections. This relationship is also employed by many others, for instance, \cite{bochev2006principles,Dodziuk76,mattiussi2000finite,tonti1975formal}. For a more thorough treatment of algebraic topology the reader is referred to \cite{hatcher2002algebraic,Massey1,Massey2,munkres1984elements,thorpe1976lecture}.
	
%It will also be shown that in this setting there is a natural discrete analogue of differential forms that shares similar algebraic properties to the continuous counterpart. For a more detailed discussion of algebraic topology applied to this field the authors recommend \cite{tonti1975formal,mattiussi2000finite}.
%	 \begin{definition}
%	 	A convex polytope, or simply a polytope, is a subset of $\mathbb{R}^{n}$ that is the convex hull of a finite set of points of $\mathbb{R}^{n}$. A $k$-polytope is a polytope of dimension $k$.
%	 \end{definition}

In this paper we assume that the dimension of all manifolds is finite and dim$(\manifold{M})=n$.

\subsection{Cell complexes and the boundary operator}

	\begin{definition}[\textbf{$k$-cell}]\label{def:k-cell}
		\cite{hatcher2002algebraic,munkres1984elements} A $k$-cell, $\tau_{(k)}$, of a manifold $\manifold{M}$ of dimension $n\geq k$ is a set of points of $\manifold{M}$ that is homeomorphic to a closed $k$-ball $B_{k}=\{\mathbf{x}\in\mathbb{R}^{k}:\|\mathbf{x}\|\leq 1\}$.
		
		The boundary of a $k$-cell $\tau_{(k)}$, $\partial\tau_{(k)}$, is the subset of $\manifold{M}$ associated by the above mentioned homeomorphism to the boundary $\partial B_{k}=\{\mathbf{x}\in\mathbb{R}^{k}:\|\mathbf{x}\|=1\}$ of $B_{k}$.
	\end{definition}
	
	The topological description of a manifold can be done in terms of {\em simplices}, see for instance \cite{munkres1984elements,thorpe1976lecture,Whitney57} or in terms of {\em singular $k$-cubes}, see \cite{Massey1,Massey2,bookTonti}. From a topological point of view both descriptions are equivalent, see \cite{Dieudonne}. Despite this equivalence of simplicial complexes and cubical complexes, the reconstruction maps to be discussed in Section~\ref{mimeticoperators} differ significantly. For mimetic methods based on simplices see \cite{arnold2006finite,arnold2010finite,desbrun2005discrete,Hirani_phd_2003,Rapetti2009,Rapetti2007}, whereas for mimetic methods based on singular cubes see \cite{arnold:Quads,bochev2006principles,BeauceSen2004,HYmanSteinberg2004,HymanShashkovSteinberg2002,RobidouxSteinberg2011}.
	
	Here we list the terminology to set a homology theory in terms of $k$-cubes as given by \cite{Massey2}:
	\begin{list}{\quad}{}
	\item $\mathbb{R}$ = real line.
	\item $I$ = closed interval $[-1,1]$.
	\item $\mathbb{R}^k = \mathbb{R} \times \mathbb{R} \times \dots \times \mathbb{R}$ ($k$ factors, $k\geq 0$) Euclidean $k$-space.
	\item $I^k = I \times I \times \dots \times I$ ($k$ factors, $k\geq 0$) unit $k$-cube.
	\end{list}
	By definition $I^0$ is a space consisting of a single point.
	
	\begin{definition}[\textbf{Singular $k$-cube}]\label{def:kcube}
		\cite{Massey2} A {\em singular $k$-cube} in a $n$-dimensional manifold $\manifold{M}$ is a continuous map $\tau_{(k)}\,:\,I^k\,\longrightarrow \, \manifold{M},\; 0\leq k \leq n$.	
	\end{definition}
	
	\begin{definition}[\textbf{Degenerate singular $k$-cube}]
		\cite{Massey2} If for the map $\tau_{(k)}\,:\,I^k\,\longrightarrow \, \manifold{M},\; 0\leq k\leq n$ there exists an $i$, $1\leq i \leq k$ such that	$\tau_{(k)}(x_1,x_2,\dots,x_k)$ does not depend on $x_i$, then the singular $k$-cube is {\em degenerate}.
	\end{definition}
	
	\begin{corollary}
	Any non-degenerate $k$-cube is a $k$-cell as defined by Definition~\ref{def:k-cell}.
	\end{corollary}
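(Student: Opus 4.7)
The plan is to reduce the corollary to the classical homeomorphism between the standard unit cube and the closed $k$-ball, then show that a non-degenerate singular $k$-cube transports this homeomorphism into the manifold. First I would record the well-known fact that $I^k = [-1,1]^k$ is homeomorphic to the closed $k$-ball $B_k$. This can be exhibited explicitly by the radial rescaling map
\[
  \Psi : I^k \longrightarrow B_k, \qquad \Psi(\mathbf{x}) = \frac{\|\mathbf{x}\|_\infty}{\|\mathbf{x}\|_2}\,\mathbf{x} \quad (\mathbf{x}\neq 0), \qquad \Psi(0)=0,
\]
which is continuous, bijective and has a continuous inverse; both $I^k$ and $B_k$ being compact Hausdorff, $\Psi$ is indeed a homeomorphism. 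So establishing the corollary is reduced to showing that the image $\tau_{(k)}(I^k)\subset\manifold{M}$ is homeomorphic to $I^k$.

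For the second step I would argue that non-degeneracy, together with continuity of $\tau_{(k)}$, ensures that $\tau_{(k)}$ is a homeomorphism from $I^k$ onto its image. The key observation is that $I^k$ is compact and $\manifold{M}$ is Hausdorff, so any continuous injection $I^k \to \manifold{M}$ is automatically a topological embedding (a continuous bijection from a compact space to a Hausdorff space is a closed map, and hence a homeomorphism onto its image). Thus it suffices to promote non-degeneracy to injectivity. If one reads \defref{def:k-cell} and the convention underlying singular cubes used to represent cell complexes (rather than purely singular chains) as requiring that the cube actually \emph{parametrize} a $k$-cell, then non-degeneracy in \defref{def:kcube} is the condition that the map does not factor through a coordinate projection, which, in the context of building a cubical cell complex, is taken together with the embedding property. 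Combining this with the compact/Hausdorff argument above, $\tau_{(k)}$ restricts to a homeomorphism $I^k \to \tau_{(k)}(I^k)$.

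Composing the two steps then yields a homeomorphism
\[
  \tau_{(k)} \circ \Psi^{-1} : B_k \longrightarrow \tau_{(k)}(I^k) \subset \manifold{M},
\]
so $\tau_{(k)}(I^k)$ is homeomorphic to $B_k$ and hence is a $k$-cell in the sense of \defref{def:k-cell}. I would additionally verify that the boundary of $\tau_{(k)}$ as a singular cube (the image of $\partial I^k$) corresponds, under $\Psi$, to $\partial B_k$, so the boundary structure asserted by \defref{def:k-cell} is also inherited.

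The main obstacle I anticipate is precisely the injectivity assumption hidden behind ``non-degenerate.'' As literally stated, the condition only forbids the map from being independent of one of its arguments, which does not by itself preclude self-overlapping images (e.g.\ folding $I^k$ onto itself). To make the corollary hold cleanly, the proof must either invoke the convention in this paper that the singular cubes used to assemble cell complexes are embeddings onto their image, or strengthen the non-degeneracy definition accordingly. Once this interpretive point is settled, the topological part of the argument is routine and follows from the compact-to-Hausdorff closed-map lemma plus the explicit cube-to-ball homeomorphism $\Psi$.
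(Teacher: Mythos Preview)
The paper states this corollary without proof, so there is nothing to compare against directly. Your argument is the natural one: exhibit the homeomorphism $I^k\cong B_k$ and then argue that a non-degenerate $\tau_{(k)}$ is a homeomorphism onto its image via the compact-to-Hausdorff lemma.

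You are also right to flag the gap, and it is a genuine one rather than a mere technicality. The paper's definition of non-degenerate only excludes maps that factor through a coordinate projection $I^k\to I^{k-1}$; it does not force injectivity. A map such as $\tau_{(1)}(x)=x^2$ on $[-1,1]$, or a $2$-cube that folds $I^2$ onto itself, is non-degenerate in the paper's sense yet fails to have image homeomorphic to a ball. So the corollary, read literally against Definitions~\ref{def:k-cell} and the degeneracy definition, is false without the additional embedding hypothesis you supply. Your resolution---interpreting the singular cubes used to build the cell complex as embeddings, consistent with Definition~\ref{cellcomplex} and the subsequent remark that a non-singular $k$-cube is a submanifold---is the only way to make the statement hold, and the paper evidently relies on this convention implicitly. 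Once that is granted, your proof is complete and correct.
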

	
	\begin{remark}
	A non-singular $k$-cube is a submanifold of $\manifold{M}$ according to Definition~\ref{def:submanifold}.
	\end{remark}
	
	\begin{remark}\label{rem:inner_orientation_k_cell}
	Select an orientation in $\mathbb{R}^k$, then the determinant of the Jacobian of the maps $\tau_k$ determines the (inner) orientation of the $k$-cell according to Definition~\ref{def:orientation}.
	\end{remark}
	
	\begin{figure}[!ht]
			\centering
				\includegraphics[width=0.6\textwidth]{./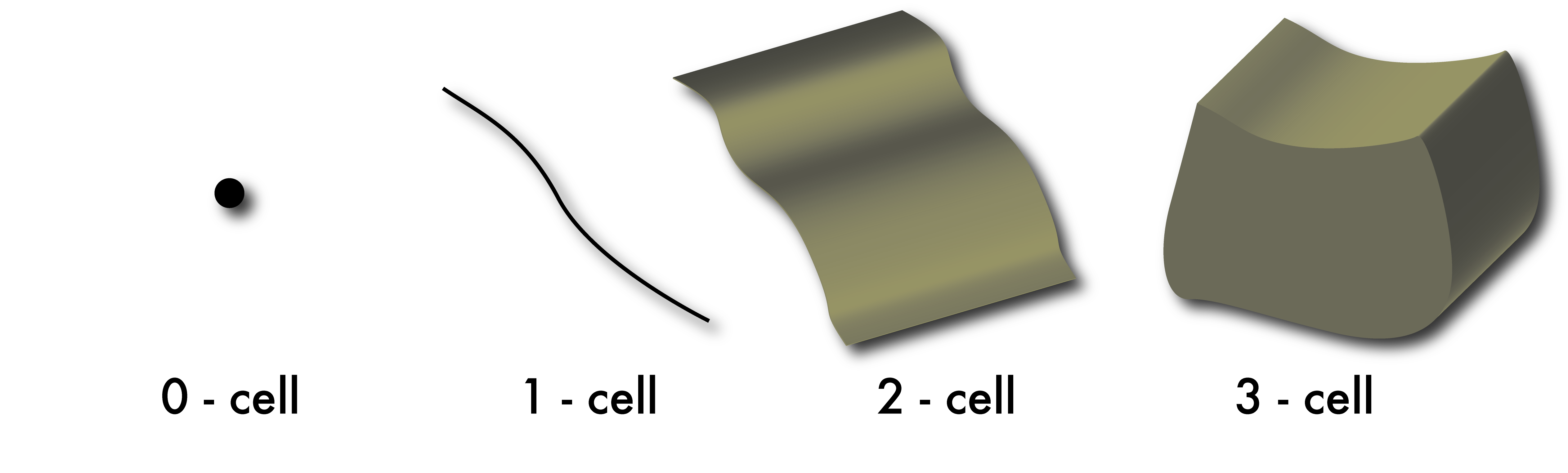}
				\caption{Example of a 0-cell, a 1-cell, a 2-cell and a 3-cell.}
				\label{fig:algTopology_cell_examples}
		\end{figure}	
		\figref{fig:algTopology_cell_examples} depicts some examples of $k$-cells in a manifold $\manifold{M}=\mathbb{R}^{3}$. The $k$-cells are geometric objects which represent the geometric objects shown in Figure~\ref{fig:diffGeom_orientation}.
	Before we formally define the boundary of $k$-cubes, we first define faces of a $k$-cube.
	\begin{definition}[\textbf{The faces of a singular $k$-cube}]\label{def:Boundary_on_k_cube} \cite{Massey2,spivak1998calculus}
		For $0<k \leq n0$ let $\tau_{(k)}$ be a singular $k$-cube in $\manifold{M}$. For $i=1,2,\dots,k$, we define the singular $(k-1)$-cubes $A_i\tau_{(k-1)},\: B_i\tau_{(k-1)} \,:\, I^{k-1}\rightarrow\manifold{M}$, by the formulae (face maps)
	\[ A_i\tau_{(k-1)}(x_1,x_2,\dots,x_{k-1}) = \tau_{(k)}( x_1,\dots,x_{i-1},-1,x_i,\dots,x_{k-1}) \;,\]
	\[ B_i\tau_{(k-1)}(x_1,x_2,\dots,x_{k-1}) = \tau_{(k)}( x_1,\dots,x_{i-1},+1,x_i,\dots,x_{k-1}) \;.\]
	$A_i \tau_{(k-1)}$ is called the {\em front $i$-face} and $B_i\tau_{(k-1)}$ is called the {\em back $i$-face} of $\tau_{(k)}$.
	\end{definition}
	
	\figref{fig:algTopology_cell_faces_examples} depicts some examples of faces of $k$-cells in a manifold $\manifold{M}=\mathbb{R}^{3}$.
		\begin{figure}[!ht]
		\centering
			\includegraphics[width=0.5\textwidth]{./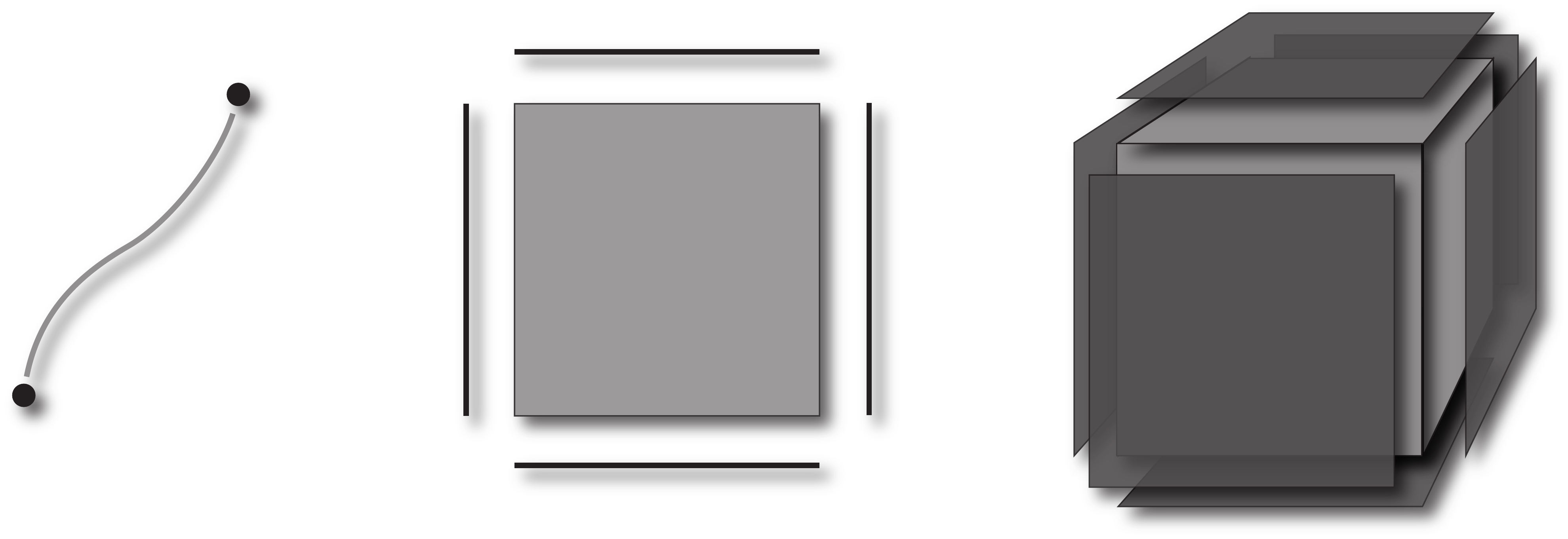}
			\caption{Examples of the faces (in dark) of a 1-cell, a 2-cell and a 3-cell in $\mathbb{R}^{3}$}
			\label{fig:algTopology_cell_faces_examples}
	\end{figure}	
	
	\begin{remark}
	Note that the face maps, $A_i\tau_{(k-1)}$ and $B_i\tau_{(k-1)}$ defined in Definition~\ref{def:Boundary_on_k_cube} are inclusion maps as defined in Definition~\ref{def:inclusion_map}.
	\end{remark}
	
%	\begin{remark} If a singular $k$-cube, $k\leq n$, can be obtained by repeated application of the face operator to an $n$-cube, then we say that $\tau_k \leq \tau_n$. This means that we can write every $k$-cube in terms of the $n$-cube from which it can be obtained by repeated application of the face operator. For instance, given the $n$-cube $\tau_n(x_1,\ldots ,x_n)$, we can create the $(n-1)$-cube $A_1\tau_n(x_2,\ldots,x_n) = \tau_n(-1,x_2,\ldots, x_n)$ from which can derive the $(n-2)$-cube $B_3A_1\tau_n(x_2,x_4,\ldots,x_n) = \tau_n(-1,x_2,1,x_4,\ldots, x_n)$, and so on. In Figure~\ref{fig:face_operator} such a sequence is shown for $n=3$.
%	\begin{figure}[!ht]
%		\centering
%			\includegraphics[width=14cm]{./figures/algebraicTopology/face_operator.pdf}
%			\caption{Repeated application of face operators to a $3$-cube}
%			\label{fig:face_operator}
%	\end{figure}
%	\end{remark}
	
	\begin{definition}[\textbf{The boundary of a singular $k$-cube}]\label{def:boundary_k_cell} \cite{Massey2} The boundary $\partial$ of a singular $k$-cube $\tau_{(k)}$, $k>0$ is given by
	\begin{equation}
	\partial \tau_{(k)} := \sum_{i=1}^k (-1)^i \left [ A_i\tau_{(k-1)} - B_i \tau_{(k-1)} \right ] \;.
	\end{equation}
	\end{definition}
This definition describes the boundary of the submanifold $\tau_{(k)}$, see Corollary~\ref{cor:boundary_submanifold}.

%	\begin{definition}
%		(\cite{mcmullen2002abstract, hatcher2002algebraic}) A face $\gamma_{k-1}$, of a $k$-cell $\tau_{k}$,  is a ($k$-1)-cell that is in the set of ($k$-1)-cells whose union is the boundary of $\tau_{k}$, that is:
%		\[
%			\gamma_{k-1}\in\{\gamma^{i}_{k-1}:\bigcup_{i}\gamma_{k-1}^{i}=\partial\tau_{k}\}
%		\]
%	\end{definition}

%	\begin{definition} (\textbf{The $j$-th coface map}) Let $\tau_{k+1}$ be a singular $(k+1)$-cube . Then the $j$-th coface map $cf_j$ is defined as
%	\begin{align*}
%	cf_j \tau_{k+1} & (x_1,\dots ,x_{k+1} ) = \\
%	 & \left \{ \begin{array}{ll}
%	 \tau_{k+1} (x_1,\dots x_{j-1}, x, x_{j+1}, \dots ,x_{k+1} )\;,\;\;\; x\in [-1,1]\;, \;\;\; & \mbox{if } x_j = 1\\
%	 -\tau_{k+1} (x_1,\dots x_{j-1}, x, x_{j+1}, \dots ,x_{k+1} )\;,\;\;\; x\in [-1,1]\;, \;\;\;& \mbox{if } x_j = -1\\
%	 0_{k+1} &\mbox{if } x_j \in (-1,1)
%	 \end{array} \right .
%	\end{align*}
%	\end{definition}
	
%	\begin{definition}(\textbf{The coboundary of a singular $k$-cube}) The coboundary, $\bar{\partial}$ of a singular $k$-cube $\tau_k$, $k \geq 0$ is given by
%	\begin{equation}
%	\bar{\partial} \tau_k = \sum_{i=1}^k cf_i \tau_k \;.
%	\end{equation}
%	\end{definition}
	
	\begin{definition}[\textbf{Cell complex}]\label{cellcomplex}
		\cite{hatcher2002algebraic,mcmullen2002abstract} A cell complex, $\ccomplex{D}$, in a compact manifold $\manifold{M}$ is a finite collection of cells such that:
		\begin{enumerate}
			\item $\ccomplex(D)$ is a covering of $\manifold{M}$.
			\item Every face of a cell of $\ccomplex{D}$ is in $\ccomplex{D}$.
			\item The intersection of any two $k$-cells, $\tau_{(k)}$ and $\sigma_{(k)}$ in $\ccomplex{D}$ is either
			  \begin{itemize}
			    \item $\tau_{(k)}$ and $\sigma_{(k)}$ share a common face;
			    \item $\tau_{(k)} \cap\sigma_{(k)}=\sigma_{k}=\tau_{(k)}$, or;
			    \item $\tau_{(k)} \cap\sigma_{(k)}= \emptyset$.
			  \end{itemize}
		\end{enumerate}
	\end{definition}
In this framework we will focus on: points (0-cells), curves (1-cells), surfaces (2-cells) and $k$-dimensional generalizations ($k$-cells), considered as non-degenerate $k$-cubes. \figref{fig:algTopology_cell_complex_examples} depicts an example of a cell complex in a compact manifold $\manifold{M} \subset \mathbb{R}^{3}$. The above presented definitions constitute a formalization of the concept of discretization of space. 
	\begin{figure}[!ht]
		\centering
			\includegraphics[width=1.\textwidth]{./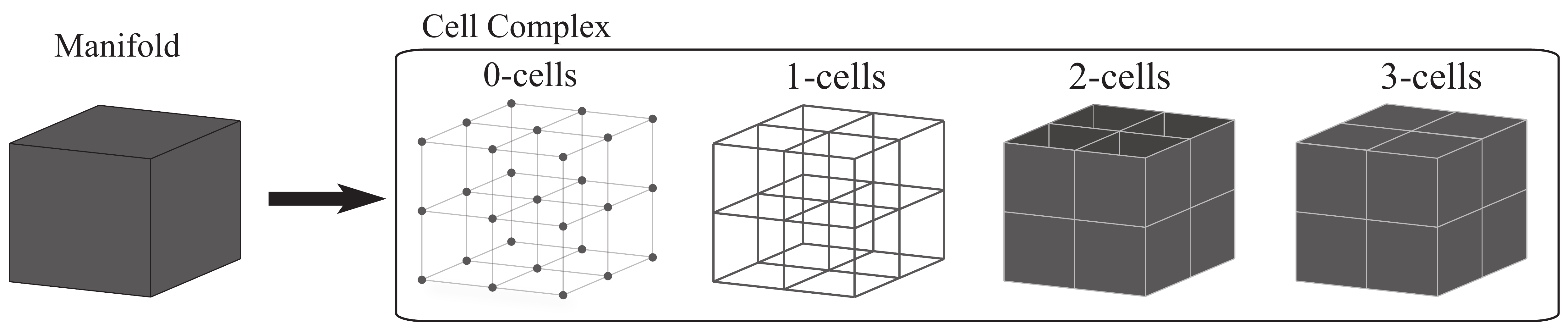}
			\caption{Example of a cell complex. Left:  a three dimensional compact manifold. Right: the $k$-cells that constitute the cell complex.}
			\label{fig:algTopology_cell_complex_examples}
	\end{figure}	
	
%	\begin{definition}
%		The tiling of a manifold $\manifold{M}$, is a cell complex, $\ccomplex{D}$, such that the union of all the cells of $\ccomplex{D}$ recovers $\manifold{M}$, for an example see \figref{fig:algTopology_tilling}.
%	\end{definition}
%	
%	\begin{figure}[!ht]
%		\centering
%			\includegraphics[width=10cm]{./figures/algebraicTopology/tilling.pdf}
%			\caption{Example of a tilling of part of $\mathbb{R}^{3}.$}
%			\label{fig:algTopology_tilling}
%	\end{figure}	

%	A step further in the development of in this framework is the introduction of concepts which will play a key role in the construction of our discrete mimetic framework.
	
%	A cell complex gives a good formal idea of discretization of a manifold. Nevertheless it lacks one important concept for the correct discretization of space: {\em orientation}. %Hence the importance of the introduction of the concept of $k$-chain associated to a complex $\ccomplex{D}$. See \figref{fig:diffGeom_orientation}.
	
	%\textcolor{red}{In order for the next definition to make sense we need to define an {\underline{oriented}} $k$-cell.}
	
	\begin{definition}[\textbf{$k$-chain}]\label{def::kchain}
		\cite{hatcher2002algebraic} Given a cell complex $\ccomplex{D}$, the space of $k$-chains of $D$, $\kchainspacedomain{k}{\ccomplex{D}}$, is the free Abelian group written additivively, (see \cite{Massey2}), generated by a basis consisting of all the oriented, non-degenerate $k$-cells of $\ccomplex{D}$. A $k$-chain $\kchain{c}{k}$ in $\ccomplex{D}$ is an element of $\kchainspacedomain{k}{\ccomplex{D}}$.
	\end{definition}

	A $k$-chain, $\kchain{c}{k}\in\kchainspacedomain{k}{\ccomplex{D}}$, is a formal sum of $k$-cells, $\tau_{(k),i}\in\ccomplex{D}$:
	\[
		\forall \kchain{c}{k}\in\kchainspacedomain{k}{\ccomplex{D}} \Rightarrow  \kchain{c}{k}= \sum_{i}c^{i}\tau_{(k),i}, \quad \tau_{(k),i}\in D \;.
	\]
	Formally, given a set of $k$-cells, it is possible to generate any $k$-chain by specifying the coefficients in the chain. Although this is possible for arbitrary fields, we will, in the description of geometry, restrict ourselves mainly to chains with coefficients in $\mathbb{Z}/3=\{ -1,0,1\}$. 
%	(\textcolor{red}{This is a subtle point: If we restric the coefficients to -1,0, and 1, then the sum of two $k$-cochains is not well-defined. If we use the cyclic group $\mathbb{Z}/3$, we have that for every $k$-chain $2\kchain{c}{k}=-\kchain{c}{k}$ and $3\kchain{c}{k}=0$. Suggestions welcome.}) 
	The meaning of these coefficients is : 1 if the cell is in the chain with the same orientation as its orientation in the cell complex, -1 if the cell is in the chain with the opposite orientation to the orientation in the cell complex and 0 if the cell is not part of the chain. The orientation of the cell is implied from the orientation of $\mathbb{R}^k$ and the map $\tau_k$ as pointed out in Remark~\ref{rem:inner_orientation_k_cell}. For an example of $k$-chains, see \figref{fig:algTopology_chain}.
		\begin{figure}[!ht]
			\centering
				\includegraphics[width=01.\textwidth]{./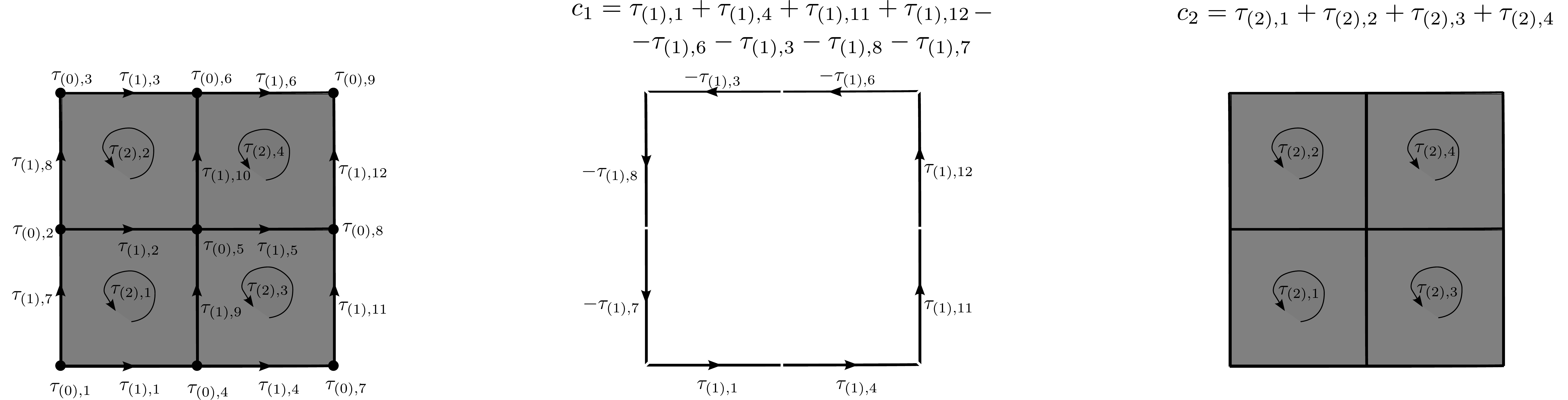}
				\caption{Example of a cell complex, a 1-chain and a 2-chain.}
				\label{fig:algTopology_chain}
		\end{figure}
		
		\begin{definition}[\textbf{Coefficient vector for chains}]\label{def:coeff_vector_for_chains}
%Since the space of $k$-chains, $\kchainspacedomain{k}{\ccomplex{D}}$, is a free Abelian group it is a \textcolor{red}{linear vector space} and as such its elements can be represented by a column vector containing only the coefficients of the chain. That is, there is an isomorphism $\psi$:
The space of $k$-chains, $\kchainspacedomain{k}{\ccomplex{D}}$, can be represented by a column vector containing only the coefficients of the chain. That is, there is an isomorphism $\psi$:
	\begin{equation}
		\psi: \kchainspacedomain{k}{\ccomplex{D}}\mapsto\mathbb{R}^{p}, \quad p=\mathrm{rank}(\kchainspacedomain{k}{\ccomplex{D}})\;,\label{eq::algTop_isomorphism_map}
	\end{equation}
	defined by
	\begin{equation}
		\psi(\kchain{c}{k}) = \psi(\sum_{i}c^{i}\tau_{(k),i}) =  [c^{1} \cdots c^{p}]^{T}, \quad p=\text{rank}(\kchainspacedomain{k}{\ccomplex{D}})\;,\label{eq::algTop_isomorphism_definition}
	\end{equation}
	where the rank of $\kchainspacedomain{k}{\ccomplex{D}}$ is the number of $k$-cells in the cell complex $D$ and the $c^i$ are the coefficients of the $k$-chain $\mathbf{c}_{(k)}$. The $k$-chain, $\kchain{c}{k}$, is printed in boldface, whereas the vector $c_{(k)}$ of coefficients is printed in regular face.
	\end{definition}
%	 \textcolor{red}{Now the cochain and the vector of coefficients are denoted by the same notation. My suggestion: let us write for the vector $c_{(k)}$ instead of $\kchain{c}{k}$. Boldface 'c' for the chain, plain 'c' for the vector. Als o the rank of $C_k(D)$ is to be defined.}
We can now extend the boundary operator applied to a $k$-cell, Definition~\ref{def:boundary_k_cell}, to the boundary of a $k$-chain.
	\begin{definition}[\textbf{Boundary of a $k$-chain}] \label{algebraic::boundary_operator}
		\cite{hatcher2002algebraic,munkres1984elements} The boundary operator, $\partial:\kchainspacedomain{k}{D}\spacemap\kchainspacedomain{k-1}{D}$, is an homomorphism, defined by
		\begin{equation}
			\partial \kchain{c}{k} = \partial \sum_{i}c^{i}\tau_{(k),i} := \sum_{i}c^{i} \partial \left ( \tau_{(k),i} \right ) \;,
		\end{equation}
		where the action of the boundary operator on the $k$-cubes is given in Definition~\ref{def:boundary_k_cell}.
		\end{definition}
The boundary of a $k$-cell $\tau_{(k)}$ will then be a $(k-1)$-chain formed by the faces of $\tau_{(k)}$. The coefficients of this ($k-1$)-chain associated to each of the faces is given by the orientations.
%		\[
%			\partial \tau^{i}_{k} = \sum_{j}e_{ij}\tau_{k-1}^{j} , \quad e_{ij}\in\{-1,1\}\text{ if } \tau^{j}_{k-1}\text{ is a face of } \tau^{i}_{k} \text {and 0 otherwise}
%		\]
		\[
			\partial \tau_{(k),j} = \sum_{i}e^{i}_{j}\tau_{(k-1),i} \;,
		\]
		with
		\[
			\left\{
				\begin{array}{l}
					e^{i}_{j} = 1, \text{ if }  \tau_{(k-1),i}\text{ has the same orientation as the face of } \tau_{(k),j} \;;\\
					e^{i}_{j} = -1, \text{ if }  \tau_{(k-1),i}\text{ has the opposite orientation as the face of } \tau_{(k),j} \;;\\
					e^{i}_{j} = 0, \text{ if }  \tau_{(k-1),i}\text{ is not a face of } \tau_{(k),j} \;.\\
				\end{array}
			\right.
		\]
		And the boundary of a $0$-cell is $\emptyset$. 
		\begin{equation}
			\partial \kchain{c}{k} = \partial(\sum_{j}c^{j}\tau_{(k),j}) = \sum_{j}c^{j}\partial\tau_{(k),j} = \sum_{ij}c^{j}e^{i}_{j}\tau_{(k-1),i} \;.\label{eq::algTop_boundary}
		\end{equation}
	%\end{definition}
	\begin{example}\label{ex:incidence_matrix}
	The boundary of the $2$-cell $\tau_{(2),1}$ in \figref{fig:algTopology_chain} has the following boundary:
	\[
		\partial\tau_{(2),1} = \tau_{(1),1} + \tau_{(1),9} - \tau_{(1),2} - \tau_{(1),7}\;.
	\]
	
One clearly sees that, in this case, we have: $e^{1}_{1} = 1$, $e^{9}_{1} = 1$, $e^{2}_{1} = -1$, $e^{7}_{1} = -1$ and $e^{i}_{1} = 0$ for $i\notin \{1,2,7,9\}$.
	\end{example}
	\begin{definition}[\textbf{Incidence matrix for chains}]\label{def:incidence_matrix_chains}
	The coefficients $e^i_j$ constitute an {\em rank}$(\kchainspace{k-1}) \times${\em rank}$(\kchainspace{k})$ {\em incidence matrix} $\incidenceboundary{k-1}{k}$ with $\left (\incidenceboundary{k-1}{k} \right )_{ij} = e^i_j$.
	\end{definition}
	\begin{proposition}[\textbf{Boundary operator and incidence matrix}]\label{prop:boundary_incidence_matrix}
	Let $\psi(\kchain{c}{k})=c_{(k)}$, then $\psi (\partial \kchain{c}{k} ) = \incidenceboundary{k-1}{k} c_{(k)}$.
	\end{proposition}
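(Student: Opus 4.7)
The plan is to verify the stated equality by unpacking the three relevant definitions in sequence: the isomorphism $\psi$ from \defref{def:coeff_vector_for_chains}, the boundary homomorphism from \defref{algebraic::boundary_operator}, and the incidence matrix from \defref{def:incidence_matrix_chains}. Since the boundary operator is defined on chains by linear extension from its action on cells, and $\psi$ is a linear isomorphism, the entire statement should reduce to a routine index-chasing computation; there is no real geometric obstacle.

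Concretely, I would begin by writing $\kchain{c}{k} = \sum_j c^j \tau_{(k),j}$, so that by the definition of $\psi$ the coordinate vector is $c_{(k)} = [c^1,\dots,c^p]^T$ with $p=\mathrm{rank}(\kchainspace{k})$. Then, applying \eqref{eq::algTop_boundary}, I have
\[
\partial \kchain{c}{k} = \sum_{i,j} c^j e^i_j \, \tau_{(k-1),i} = \sum_i \Bigl( \sum_j e^i_j c^j \Bigr) \tau_{(k-1),i}.
\]
Applying $\psi$ to the right-hand side yields the column vector whose $i$-th entry is $\sum_j e^i_j c^j$. By \defref{def:incidence_matrix_chains}, $e^i_j = \bigl(\incidenceboundary{k-1}{k}\bigr)_{ij}$, so this $i$-th entry is precisely the $i$-th entry of the matrix-vector product $\incidenceboundary{k-1}{k} c_{(k)}$, which establishes the claim.

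The main thing to be careful about is just the bookkeeping: the coefficients $c^j$ index the basis of $\kchainspace{k}$ while the entries $e^i_j$ form a matrix with rows indexed by $(k-1)$-cells and columns indexed by $k$-cells, matching the map $\kchainspace{k} \to \kchainspace{k-1}$. No further structural arguments, nor any appeal to the cell complex axioms of \defref{cellcomplex}, are needed; only the linearity of $\partial$ and $\psi$ together with the explicit expression \eqref{eq::algTop_boundary} enter the proof.
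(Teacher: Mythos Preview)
Your proposal is correct and follows essentially the same route as the paper's proof: both apply \eqref{eq::algTop_boundary} to write $\partial\kchain{c}{k}$ as $\sum_{i,j} c^j e^i_j \tau_{(k-1),i}$, apply $\psi$ via \defref{def:coeff_vector_for_chains}, and then identify the resulting coefficient vector with $\incidenceboundary{k-1}{k} c_{(k)}$ using \defref{def:incidence_matrix_chains}. Your version is simply more explicit about the index bookkeeping.
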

	\begin{proof}
	\[
	\psi( \partial \kchain{c}{k})  \stackrel{(\ref{eq::algTop_boundary})}{=}   \psi \left ( \sum_{ij}c^{j}e^{i}_{j}\tau_{(k-1),i} \right ) \stackrel{\rm Def.~\ref{def:coeff_vector_for_chains}}{=}\sum_{ij}c^{j}e^{i}_{j} \stackrel{\rm Def.~\ref{def:incidence_matrix_chains}}{=}\incidenceboundary{k-1}{k} c_{(k)}\;.
	\]
	\end{proof}
	
%Since the space of $k$-chains is a linear vector space, it follows that the boundary operator can be represented by a matrix acting on the column vectors that represent the $k$-chains. The coefficients $e^{i}_{j}$ are the coefficients of an {\em incidence matrix} $\incidenceboundary{k-1}{k}$ that represents the boundary operator. So Example~\ref{ex:incidence_matrix} can be written as
%	\[
%		\kchain{a}{k-1} = \partial \kchain{c}{k} = \sum_{i,j}c^{j}e^{i}_{j}\tau_{(k-1),i}\;,
%	\]
%	and the definition of the isomorphism,\eqref{eq::algTop_isomorphism_map} and \eqref{eq::algTop_isomorphism_definition}, $\psi$ between $k$-chain elements of $\kchainspacedomain{k}{D}$ and $\mathbb{R}^{p}$, with $p=\mathrm{rank}(\kchainspacedomain{k}{\ccomplex{D}})$, to obtain:
%	\begin{equation}
%		\psi(\kchain{a}{k-1}) \stackrel{\eqref{eq::algTop_isomorphism_definition}}{=} [a^{1} \cdots a^{p_{k-1}}]^{T} = [\sum_{i}e^{1}_{i}c^{i} \cdots \sum_{i}e^{p_{k}}_{i}c^{i}]^{T} \stackrel{\eqref{eq::algTop_isomorphism_definition}}{=} \psi(\sum_{ij}c^{j}e^{i}_{j}\tau_{(k-1),i}) \label{eq::algTop_boundary_vector}
%	\end{equation}
%	Hence, one can obtain the matrix representation of the boundary operator in the following way:
%	\[
%		\partial[c^{1} \cdots c^{p_{k}}]^{T} = [a^{1} \cdots a^{p_{k-1}}]^{T} \stackrel{\eqref{eq::algTop_boundary_vector}}{=} [\sum_{i}e^{1}_{i}c^{i} \cdots \sum_{i}e^{p_{k}}_{i}c^{i}]^{T} =  \incidenceboundary{k-1}{k}[c^{1} \cdots c^{p}]^{T}
%	\]
%	where, the matrix $\incidenceboundary{k-1}{k}$ represents the boundary operator and has as elements the coefficients $e^{j}_{i}$.
	
	An essential result that follows from the definition of the boundary operator is that applying the boundary operator twice results in a zero chain.
	
	\begin{theorem}[\textbf{The boundary of the boundary is empty}] \label{algebraic_topology::boundary_boundary}
		The boundary operator satisfies:
		\begin{equation}
			\partial\partial\kchain{c}{k} = \kchain{0}{k-2}, \quad \forall\kchain{c}{k}\in\kchainspacedomain{k}{D}. \label{eq::algTop_double_boundary}
		\end{equation}
	\end{theorem}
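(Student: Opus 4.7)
The plan is to reduce the statement to a single $k$-cube by linearity and then to exhibit an explicit cancellation among the $(k-2)$-cubes that appear in $\partial\partial \tau_{(k)}$, using the commutation relations of the front/back face maps $A_i,B_i$ introduced in Definition~\ref{def:Boundary_on_k_cube}.

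First, since $\partial$ is a homomorphism on $\kchainspace{k}$ (Definition~\ref{algebraic::boundary_operator}), it suffices to prove that $\partial \partial \tau_{(k)} = 0$ for an arbitrary singular $k$-cube $\tau_{(k)}$. Expanding once using Definition~\ref{def:boundary_k_cell} gives $\partial \tau_{(k)} = \sum_{j=1}^{k}(-1)^{j}\left[A_{j}\tau_{(k-1)} - B_{j}\tau_{(k-1)}\right]$, which is a $(k-1)$-chain, and applying $\partial$ again produces a double sum
\[
\partial \partial \tau_{(k)} = \sum_{j=1}^{k}\sum_{i=1}^{k-1}(-1)^{i+j}\left[A_{i}A_{j} - B_{i}A_{j} - A_{i}B_{j} + B_{i}B_{j}\right]\tau_{(k-2)} \;,
\]
where I abuse notation by writing $A_{i}A_{j}\tau_{(k-2)}$ for the singular $(k-2)$-cube obtained by first restricting $\tau_{(k)}$ to its $j$-th front face and then restricting that $(k-1)$-cube to its $i$-th front face, and similarly for the mixed terms.

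Next I would establish the fundamental combinatorial identities for these face maps by reading off the definitions: inserting a $-1$ at slot $j$ of the $k$-cube and then a $-1$ at slot $i$ of the resulting $(k-1)$-cube equals, in the other order, inserting at slot $i$ first and then at slot $j+1$ whenever $i\leq j$. Concretely, for $1\leq i\leq j\leq k-1$,
\[
A_{i}A_{j} = A_{j-1}A_{i}\;,\qquad A_{i}B_{j}= B_{j-1}A_{i}\;,\qquad B_{i}A_{j} = A_{j-1}B_{i}\;,\qquad B_{i}B_{j} = B_{j-1}B_{i}\;,
\]
valid for the indices with $i<j$ (the case $i=j$ does not occur because the two applications act on cubes of different dimension). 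These identities are pure index bookkeeping from Definition~\ref{def:Boundary_on_k_cube}.

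Finally I would split the double sum into the regions $i<j$ and $i\geq j$ and, in one of them, change variables using the identities above so that every $(k-2)$-cube of the form $A_{p}A_{q}\tau_{(k-2)}$ (with $p<q$) appears exactly twice: once from the pair $(i,j)=(p,q+1)$ with sign $(-1)^{p+q+1}$, and once from the pair $(i,j)=(q,p)$ with sign $(-1)^{p+q}$. Because these two signs are opposite, the contributions cancel in pairs, and the same pairing applies to each of the four combinations $AA,\,AB,\,BA,\,BB$. Hence $\partial\partial\tau_{(k)} = \mathbf{0}_{(k-2)}$, and by linearity $\partial\partial\mathbf{c}_{(k)} = \mathbf{0}_{(k-2)}$ for every $\mathbf{c}_{(k)}\in\kchainspace{k}(\ccomplex{D})$.

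The main obstacle is not conceptual but notational: keeping the four sign-shift identities straight and verifying that the reindexing in the final step really produces a perfect sign-reversed pairing. This is the discrete analogue, via Proposition~\ref{prop:boundary_incidence_matrix}, of the identity $\incidenceboundary{k-2}{k-1}\incidenceboundary{k-1}{k} = 0$, and it mirrors exactly the relation $\ederiv\ederiv = 0$ at the continuous level (cf.\ equation~\eqref{eq:double_dif}), a parallel that the remainder of the paper will exploit through the duality between chains and cochains.
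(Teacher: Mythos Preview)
Your proposal is correct and follows precisely the route the paper indicates: reduce to a single $k$-cube by linearity (Definition~\ref{algebraic::boundary_operator}) and then unwind Definition~\ref{def:boundary_k_cell} twice to obtain the face-map cancellation; the paper itself does not spell this out but simply refers the reader to Spivak and Massey for exactly this computation. One small notational slip: in your final pairing paragraph you speak of $(k-2)$-cubes ``of the form $A_pA_q$ with $p<q$'' but your own pairing actually matches the two occurrences of $A_qA_p$ (second index no larger than the first) coming from $(i,j)=(p,q+1)$ via the identity and from $(i,j)=(q,p)$ directly---the cancellation is right, only the label is swapped.
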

	\begin{proof}
		The proof follows from repeated application of Definition~\ref{def:boundary_k_cell} to $k$-cells which extends by Definition~\ref{algebraic::boundary_operator} to $k$-chains. See also \cite{spivak1998calculus,Massey2} for this derivation. 
	\end{proof}
This simple result will have profound consequences for the discrete operators to be defined. $\partial \circ \partial \equiv 0$ is illustrated in Figure~\ref{fig:boundary_boundary}.
	\begin{figure}[!ht]
			\centering
				\includegraphics[width=0.7\textwidth]{./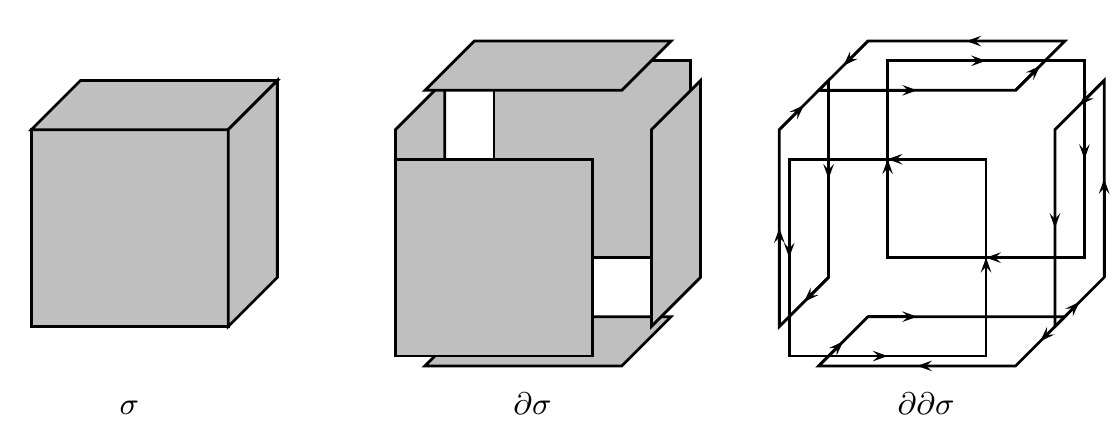}
				\caption{The boundary of the boundary of $2$-cell is zero because all edges have opposite orientations.}
				\label{fig:boundary_boundary}
		\end{figure}

\begin{proposition} \label{prop:Incidence_incidence_is_zero}
\[ \incidenceboundary{k-1}{k} \incidenceboundary{k}{k+1} \equiv 0 \;.\]
\end{proposition}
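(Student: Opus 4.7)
The plan is to reduce the matrix identity to the already-established chain identity $\partial\partial \equiv 0$ (Theorem~\ref{algebraic_topology::boundary_boundary}) by exploiting the isomorphism $\psi$ between the space of $k$-chains and coefficient vectors in $\mathbb{R}^p$. The key observation is that Proposition~\ref{prop:boundary_incidence_matrix} tells us precisely that the incidence matrix $\incidenceboundary{k-1}{k}$ is the matrix representation of the boundary operator $\partial: \kchainspace{k} \to \kchainspace{k-1}$ relative to the basis of cells. Composition of homomorphisms corresponds to multiplication of their matrix representations, so $\incidenceboundary{k-1}{k} \incidenceboundary{k}{k+1}$ must be the matrix representation of $\partial \circ \partial$, which vanishes.

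Concretely, I would proceed as follows. Let $\kchain{c}{k+1} \in \kchainspace{k+1}(\ccomplex{D})$ be arbitrary, with coefficient vector $c_{(k+1)} = \psi(\kchain{c}{k+1})$. First apply Proposition~\ref{prop:boundary_incidence_matrix} once to get
\[
\psi\bigl(\partial \kchain{c}{k+1}\bigr) = \incidenceboundary{k}{k+1}\, c_{(k+1)}.
\]
Then apply it a second time, now to the $k$-chain $\partial \kchain{c}{k+1}$, yielding
\[
\psi\bigl(\partial \partial \kchain{c}{k+1}\bigr) = \incidenceboundary{k-1}{k}\,\psi\bigl(\partial \kchain{c}{k+1}\bigr) = \incidenceboundary{k-1}{k}\,\incidenceboundary{k}{k+1}\, c_{(k+1)}.
\]
By Theorem~\ref{algebraic_topology::boundary_boundary}, the left-hand side equals $\psi(\kchain{0}{k-1}) = 0$. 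Since $\kchain{c}{k+1}$ was arbitrary and $\psi$ is a bijection onto $\mathbb{R}^{\mathrm{rank}(\kchainspace{k+1})}$, the matrix product $\incidenceboundary{k-1}{k}\,\incidenceboundary{k}{k+1}$ annihilates every vector in its domain, hence is the zero matrix.

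There is no real obstacle here; the proposition is essentially a restatement of $\partial \circ \partial = 0$ in coordinates, and the only ingredients used are (i) the linearity of $\psi$ implicit in Definition~\ref{def:coeff_vector_for_chains}, (ii) the matrix-vector identification from Proposition~\ref{prop:boundary_incidence_matrix}, and (iii) the nilpotency of $\partial$ from Theorem~\ref{algebraic_topology::boundary_boundary}. The proof is therefore a one-line invocation of these three facts, and the main purpose of stating it separately is to record, at the matrix level, the discrete analogue of $\ederiv \ederiv = 0$ that will be used repeatedly when the incidence matrices are transposed to produce the discrete gradient, curl, and divergence in later sections.
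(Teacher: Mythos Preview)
Your proof is correct and follows essentially the same approach as the paper: apply Proposition~\ref{prop:boundary_incidence_matrix} twice to an arbitrary $(k+1)$-chain and invoke Theorem~\ref{algebraic_topology::boundary_boundary} to conclude that the matrix product annihilates every coefficient vector. The paper's proof is precisely this one-line chain of equalities.
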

\begin{proof} For all $\kchain{c}{k+1} \in C_{k+1}(D)$ we have
\[ 0\stackrel{\rm Th.~\ref{algebraic_topology::boundary_boundary}}{=}\psi ( \partial \partial \kchain{c}{k+1}) \stackrel{\rm Prop.~\ref{prop:boundary_incidence_matrix}}{=} \incidenceboundary{k-1}{k} \psi( \partial \kchain{c}{k+1} ) \stackrel{\rm Prop.~\ref{prop:boundary_incidence_matrix}}{=} \incidenceboundary{k-1}{k} \incidenceboundary{k}{k+1} \psi(\kchain{c}{k+1}) \;.\]
\end{proof}
\begin{example}	Apply the boundary operator to the $2$-chain in \figref{fig:algTopology_chain}: $\kchain{c}{2} = \tau_{(2),1} + \tau_{(2),2} + \tau_{(2),3} + \tau_{(2),4}$.
	\begin{eqnarray*}
		\partial \kchain{c}{2} & = & \partial(\tau_{(2),1} + \tau_{(2),2} + \tau_{(2),3} + \tau_{(2),4}) \\
		& = & \tau_{(1),1} + \tau_{(1),4} + \tau_{(1),11} + \tau_{(1),12} - \tau_{(1),6} - \tau_{(1),3} - \tau_{(1),8} - \tau_{(1),7} \;.
	\end{eqnarray*}
	This is the $1$-chain depicted in the middle in \figref{fig:algTopology_chain}. Applying the boundary operator again we get:
	\[
		\partial\partial \kchain{c}{2} = \partial(\tau_{(1),1} + \tau_{(1),4} + \tau_{(1),11} + \tau_{(1),12} - \tau_{(1),6} - \tau_{(1),3} - \tau_{(1),8} - \tau_{(1),7}) = 0 \;.
	\]	
	This result could also be obtained by the use of incidence matrices, since they are a matrix representation of the topological boundary operator: $\incidenceboundary{k-2}{k-1}\incidenceboundary{k-1}{k} = 0$. Consider the cell complex in \figref{fig:algTopology_chain}. The incidence matrices $\incidenceboundary{0}{1}$ and $\incidenceboundary{1}{2}$ are
	\[
	\incidenceboundary{0}{1}=
		\tiny{
		\left[
			\begin{array}{cccccccccccc}
				-1 & 0 & 0 & 0 & 0 & 0 & -1 & 0 & 0 & 0 & 0 & 0 \\
				0 & -1 & 0 & 0 & 0 & 0 & 1 & -1 & 0 & 0 & 0 & 0 \\
				0 & 0 & -1 & 0 & 0 & 0 & 0 & 1 & 0 & 0 & 0 & 0 \\
				1 & 0 & 0 & -1 & 0 & 0 & 0 & 0 & -1 & 0 & 0 & 0 \\
				0 & 1 & 0 & 0 & -1 & 0 & 0 & 0 & 1 & -1 & 0 & 0 \\
				0 & 0 & 1 & 0 & 0 & -1 & 0 & 0 & 0 & 1 & 0 & 0 \\
				0 & 0 & 0 & 1 & 0 & 0 & 0 & 0 & 0 & 0 & -1 & 0 \\
				0 & 0 & 0 & 0 & 1 & 0 & 0 & 0 & 0 & 0 & 1 & -1 \\
				0 & 0 & 0 & 0 & 0 & 1 & 0 & 0 & 0 & 0 & 0 & 1 \\
			\end{array}
		\right]
		}, \quad
	\incidenceboundary{1}{2} =
		\tiny{
		\left[
			\begin{array}{cccc}
				1 & 0 & 0 & 0 \\
				-1 & 1 & 0 & 0 \\
				0 & -1 & 0 & 0 \\
				0 & 0 & 1 & 0 \\
				0 & 0 & -1 & 1 \\
				0 & 0 & 0 & -1 \\
				-1 & 0 & 0 & 0 \\
				0 & -1 & 0 & 0 \\
				1 & 0 & -1 & 0 \\
				0 & 1 & 0 & -1 \\
				0 & 0 & 1 & 0 \\
				0 & 0 & 0 & 1
			\end{array}
		\right]
		}
		\;,
	\]
which produces $\incidenceboundary{0}{1}\incidenceboundary{1}{2} = 0$, as expected. 
\end{example}

\begin{definition}[\textbf{Cycles and boundaries}]\label{def:cycles_boundaries} \cite{bookTonti}
A $k$-chain $\kchain{c}{k}$ for which $\boundary \kchain{c}{k} = \kchain{0}{k-1}$ is called a {\em cycle}.
A $k$-chain $\kchain{c}{k}$ which is the boundary of a $(k+1)$-chain $\kchain{b}{k+1}$, i.e. $\kchain{c}{k} = \boundary \kchain{b}{k+1}$ is called a {\em boundary}.\\
%	If $\kchain{d}{k-1}$ is a face of $\kchain{c}{k}$, then $\kchain{c}{k}$ is called a {\em coface} of $\kchain{d}{k-1}$. We will denote this by $\kchain{c}{k} = \bar{\partial} \kchain{d}{k-1}$.
\end{definition}
The space of $k$-cycles in a cell complex $D$ is denoted by $Z_k(D)$ and the space of $k$-boundaries in $D$ is denoted by $B_k(D)$. Theorem~\ref{algebraic_topology::boundary_boundary} implies that every boundary is a cycle, $B_k(D) \subset Z_k(D)$, but the converse is generally not true. Therefore, consider the factor space $H_k(D)$ consisting of those cylces which are not boundaries
\begin{equation}
H_k(D) = \frac{Z_k(D)}{B_k(D)} \;.
\end{equation}
$H_k(D)$ is an equivalence class and any two cycles $\kchain{c}{k}$ and $\kchain{d}{k}$ are associated with the same element whenever the difference $\kchain{c}{k}-\kchain{d}{k}$ is a boundary. $H_k(D)$ is called the {\em homology group} and two elements which differ by a boundary are called {\em homologous}, \cite{tonti1975formal}. In many applications, dim$(H_k(D))$ is finite and this dimension is called the $k$-th {\em Betti number}. Two spaces can only be topologically the same, if the their respective Betti numbers are the same. 

\begin{example}\label{ex:cell_complex_with_hole}
Consider the cell-complex depicted in \figref{fig:hole} which contains a `hole' in the middle.
\begin{figure}[htb]
\includegraphics[width=0.3\textwidth]{./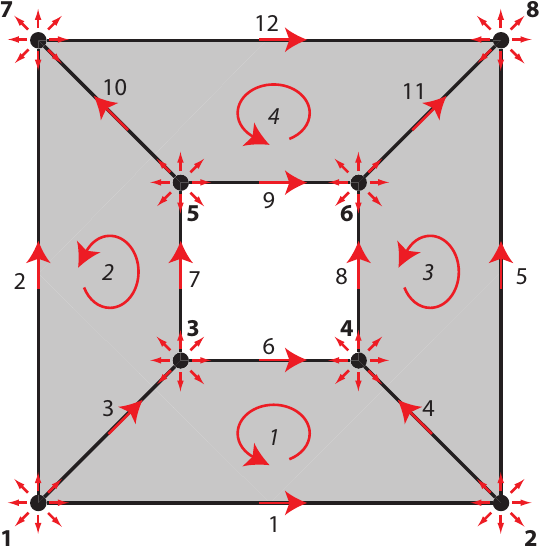}
\caption{A non contractible cell complex.}
\label{fig:hole}
\end{figure}
In \figref{fig:hole} all 0-cells have positive orientation by default. The orientation of the 1- and 2-cells are indicated in the figure. The incidence matrices which relates the 0-cells to the 1-cells and the 1-cells to the 2-cells, are given by
\[
\incidenceboundary{0}{1}=
\tiny{
\left[
\begin{array}{cccccccccccc}
-1&-1&-1&0&0&0&0&0&0&0&0&0\\
1&0&0&-1&-1&0&0&0&0&0&0&0\\
0&0&1&0&0&-1&-1&0&0&0&0&0\\
0&0&0&1&0&1&0&-1&0&0&0&0\\
0 & 0 & 0 & 0 & 0 & 0 & 1 & 0 & -1 & -1 & 0 & 0\\
0&0&0&0&0&0&0&1&1&0&-1&0\\
0&1&0&0&0&0&0&0&0&1&0&-1\\
0&0&0&0&1&0&0&0&0&0&1&1
\end{array}
\right]
}
,\quad\quad
\incidenceboundary{1}{2}=
\tiny{
\left[
\begin{array}{cccc}
1&0&0&0\\
0&-1&0&0\\
-1&1&0&0\\
1&0&-1&0\\
0&0&1&0\\
-1&0&0&0\\
0&1&0&0\\
0&0&-1&0\\
0&0&0&1\\
0&1&0&-1\\
0&0&-1&1\\
0&0&0&-1
\end{array}
\right]
}.
\]
The matrix $\incidenceboundary{0}{1}$ describes the connectivity between the 8 nodes and the 12 line segments. The range of the matrix is spanned by its 12 column vectors. Not all these column vectors are linearly independent. The rank of this matrix is 7, hence the range of this matrix has dimension 7. Since the dimension of the null space of matrix $\incidenceboundary{1}{2}$ is 8, there is one element in the null space of $\incidenceboundary{1}{2}$ that is not in the range of $\incidenceboundary{0}{1}$. This confirms that on a non-contractible domain, not every cycle is a boundary. The dimension of the homology group is in this example equal to 8-7=1. This corresponds to the number of `holes' in the domain (formally known as the Betti number). The harmonic chain corresponding to \figref{fig:hole} is given by
\[
\kchain{h}{1}=(1,-1,0,0,1,1,-1,1,-1,0,0,-1)^T\;,
\]
The topological Hodge decomposition of the space of $k$-chains is given by
\[
C_k=B_k\oplus H_k\oplus Z_k^c\, \;\;\; Z_k^c = C_k \backslash Z_k \;.
\]
The harmonic $k$-chain that belongs to the space $H_k$ is defined as
\[
\kchain{h}{k}=\{\partial\kchain{h}{k}=0\;|\;\nexists \kchain{a}{k+1}\in C_{k+1},\ \mathrm{such\ that}\ \kchain{h}{k}=\partial \kchain{a}{k+1}\}.
\]
\end{example}

\begin{remark}
Note that $H_k$ can only be obtained from {\em global} considerations. Therefore, the decomposition $C_k=B_k\oplus H_k\oplus Z_k^c$ can only be obtained globally. 
\end{remark}

The boundary operator defines a differential graded algebra of degree -1, $\left (\kchainspacedomain{k}{D},\partial \right )$,  from all the $k$-chain spaces of the cell complex, giving rise to a chain complex.
\begin{definition}[\textbf{Graded algebra of degree -1, $\left (\kchainspacedomain{k}{D},\partial \right )$ }] \cite{hatcher2002algebraic} A chain complex is a family $\{\kchainspacedomain{k}{D},\partial\}$ of $k$-chains and boundary operators, that constitutes a sequence
% 		\begin{diagram}
% 			\cdots & \lTo^{\partial} &  \kchainspacedomain{k-1}{D} & \lTo^{\partial} & \kchainspacedomain{k}{D} & \lTo{\partial} & \kchainspacedomain{k+1}{D} & \lTo{\partial} & \cdots 
% 		\end{diagram}
\begin{equation}
\cdots \stackrel{\partial}{\longleftarrow} \kchainspacedomain{k-1}{D} \stackrel{\partial}{\longleftarrow} \kchainspacedomain{k}{D} \stackrel{\partial}{\longleftarrow} \kchainspacedomain{k+1}{D} \stackrel{\partial}{\longleftarrow} \cdots.
\end{equation}
\end{definition}
Only when all $H^k(D) = \emptyset$ is this sequence an {\em exact sequence}.

In the previous section, we discussed how differential forms and operators acting on these differential forms behave under a mapping $\Phi$. Similar constructions are also feasible in algebraic topology. These maps are called {\em chain maps} and {\em cochain maps}.
\begin{definition}[\textbf{Chain maps}] \label{def:chain_map_f_sharp}
Let $\Phi\,:\,\manifold{M} \rightarrow \manifold{N}$ be a homeomorphism between manifolds $\manifold{M}$ and $\manifold{N}$, then this map induces a homomorphism between $k$-cubes in $\manifold{M}$ and $k$-cubes in $\manifold{N}$ given by
\begin{diagram}
I^k &\rTo^{\quad \Phi_\sharp \left ( \tau_{(k)} \right ) \quad}& C_k(D_\manifold{N})\\
\dTo^{\tau_{(k)}} & \ruTo_{\Phi} & \\
C_k(D_\manifold{M}) & &
\end{diagram}
where $\Phi_\sharp \left ( \tau_{(k)} \right ) = \Phi \circ \tau_{(k)}$ for any singular $k$-cube $\tau_{(k)}\,:\,I^k \,\rightarrow\, \manifold{M}$, for $k=0,1,2,\dots,n$. $\Phi_\sharp$ is extended to chains to be the homomorphism $\Phi_\sharp\,:\,C_k(D_\manifold{M}) \rightarrow C_k(D_\manifold{N})$, the {\em induced chain homomorphism}, by putting
\[ \Phi_\sharp (\kchain{c}{k}) = \Phi_\sharp \left ( \sum_i c^i \tau_{(k),i} \right ) = \sum_i c^i \Phi_\sharp (\tau_{(k),i}) \;.\]
\end{definition}

It can be shown, \cite{Massey1}, that $\Phi_\sharp$ maps non-degenerate $k$-cubes in $\manifold{M}$ into non-degenerate $k$-cubes in $\manifold{N}$ and that the diagram
\[
\begin{CD}
C_{k-1}(D_\manifold{N}) @<\partial<< C_k(D_\manifold{N})\\
@A\Phi_\sharp AA @A\Phi_\sharp AA\\
C_{k-1}(D_\manifold{M}) @<\partial<< C_{k}(D_\manifold{M}).
\end{CD}
\]
commutes. This follows directly from the fact that the face maps, Definition~\ref{def:Boundary_on_k_cube}, commute with the chain map: $\Phi_\sharp \left (A_i \tau^k \right ) = A_i \left (\Phi_\sharp \tau^k \right )$ and $\Phi_\sharp \left (B_i \tau^k \right ) = B_i \left (\Phi_\sharp \tau^k \right )$. Therefore, we have that
\begin{equation}
\partial \circ \Phi_\sharp = \Phi_\sharp \circ \partial,
\label{chainmapboundary}
\end{equation}
the chain map $\Phi_\sharp$ commutes with the boundary operator. The image of the boundary is the boundary of the image. This is the discrete equivalent of Proposition~\ref{prop:boundary_indep_chart}.

Since the boundary operator, $\partial$, commutes with the chain map, $\Phi_\sharp$, $\Phi_\sharp$ maps the cycles $Z_k(D_\manifold{M})$ into $Z_k(D_\manifold{N})$ and the boundaries $B_k(D_\manifold{M})$ into $B_k(D_\manifold{N})$, for all $k=0,\ldots,n$. Therefore, a chain map does not alter the topology. In particular, the incidence matrices for the cell complex $D_\manifold{M}$ and $D_\manifold{N}$ are identical.

\subsection{Cochains}
	\begin{definition}[\textbf{Cochains}]\label{def:cochains}
		\cite{hatcher2002algebraic} The space of $k$-cochains, $\kcochainspacedomain{k}{D}$, is the space  dual to the space of $k$-chains, $\kchainspacedomain{k}{D}$, defined as the set of all the linear maps (linear functionals), $\kcochain{c}{k}: \kchainspacedomain{k}{D}\spacemap\mathbb{R}$, and we write
		\begin{equation}
			 \duality{\kcochain{c}{k}}{\kchain{c}{k}} := \kcochain{c}{k}(\kchain{c}{k}) \;, \label{algtop::duality_pairing}
		\end{equation}
		to represent the duality pairing.
	\end{definition}
	
	\begin{remark} \label{rem:similarity_duality_algtop_and_diffgeom}
	Note the resemblance between (\ref{algtop::duality_pairing}) and (\ref{diffgeom::duality_pairing}). This similarity forms the basis of the mimetic framework discussed in this paper.
	\end{remark}
	
	\begin{remark}
	Strictly speaking $k$-cochains are {\em homomorphisms} from $C_k(D)$ into $\mathbb{R}$: $C^k(D) := \mbox{Hom} \left (C_k(D);\mathbb{R} \right )$.
	\end{remark}
	
	\begin{remark}
	Since the cochain spaces are the dual spaces of the chain spaces, we might as well write $C^k := C_k^*$.
	\end{remark}
	
	\begin{proposition}[\textbf{Canonical basis $k$-cochains}]\label{Canonical_basis_k_cochains}
		Given a basis of $\kchainspacedomain{k}{D}$, $\{\tau_{(k),i}\}$ with $i=1,\cdots,p$ with $p=\text{rank}(\kchainspacedomain{k}{\ccomplex{D}})$, there is a dual basis of $\kcochainspacedomain{k}{D}$, $\{\tau^{(k),i}\}$, with $i=1,\cdots,p$ with $p=\text{rank}(\kchainspacedomain{k}{\ccomplex{D}})$, such that:
		\begin{equation}
			\tau^{(k),i}(\tau_{(k),j}) = \delta^{i}_{j}\;.\label{eq:Canonical_basis_cochains}
		\end{equation}
		All linear functionals can be represented as a linear combination of the basis elements:
		\[
			\forall\kcochain{c}{k}\in\kcochainspacedomain{k}{D}\Rightarrow \kcochain{c}{k} = \sum_{i}c_{i}\tau^{(k),i} \;.
		\]
	\end{proposition}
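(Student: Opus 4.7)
The plan is to verify the existence of the dual basis by direct construction, using the fact that $\kchainspacedomain{k}{D}$ is a finitely generated free module of rank $p$ and that $\kcochainspacedomain{k}{D}$ was defined in Definition~\ref{def:cochains} as its space of linear functionals. First I would define each $\tau^{(k),i}$ by specifying its action on the generating set $\{\tau_{(k),j}\}_{j=1}^p$ via the formula \eqref{eq:Canonical_basis_cochains} and then extending by linearity: for an arbitrary $\kchain{c}{k}=\sum_j c^{j}\tau_{(k),j}$ set
\[
\tau^{(k),i}(\kchain{c}{k})\;:=\;\sum_{j}c^{j}\,\tau^{(k),i}(\tau_{(k),j})\;=\;\sum_{j}c^{j}\delta^{i}_{j}\;=\;c^{i}.
\]
Because $\{\tau_{(k),j}\}$ is a basis of $\kchainspacedomain{k}{D}$, this extension is well-defined and produces a genuine element of $\kcochainspacedomain{k}{D}$.

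Next I would establish linear independence of $\{\tau^{(k),i}\}_{i=1}^p$ in $\kcochainspacedomain{k}{D}$. Suppose $\sum_{i}\alpha_{i}\tau^{(k),i}=0$ as a functional; evaluating this identity on each basis chain $\tau_{(k),j}$ and using \eqref{eq:Canonical_basis_cochains} yields $\alpha_{j}=0$ for every $j=1,\ldots,p$. Hence the $\tau^{(k),i}$ are linearly independent.

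To prove that they span $\kcochainspacedomain{k}{D}$, take an arbitrary $\kcochain{c}{k}\in\kcochainspacedomain{k}{D}$ and define the coefficients $c_{i}:=\kcochain{c}{k}(\tau_{(k),i})\in\mathbb{R}$. Consider the functional $\tilde{\kcochain{c}{k}}:=\sum_{i}c_{i}\tau^{(k),i}$. For every basis element $\tau_{(k),j}$,
\[
\tilde{\kcochain{c}{k}}(\tau_{(k),j})\;=\;\sum_{i}c_{i}\tau^{(k),i}(\tau_{(k),j})\;=\;\sum_{i}c_{i}\delta^{i}_{j}\;=\;c_{j}\;=\;\kcochain{c}{k}(\tau_{(k),j}).
\]
Since both $\kcochain{c}{k}$ and $\tilde{\kcochain{c}{k}}$ are linear functionals agreeing on a basis of $\kchainspacedomain{k}{D}$, they coincide on all of $\kchainspacedomain{k}{D}$, giving $\kcochain{c}{k}=\sum_{i}c_{i}\tau^{(k),i}$ as claimed. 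In particular, $\dim \kcochainspacedomain{k}{D}=p=\dim \kchainspacedomain{k}{D}$.

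There is no real obstacle here — the statement is the standard fact that a finite-dimensional vector space and its algebraic dual carry dual bases, transplanted to the chain/cochain setting. The only point requiring mild care is to verify that the extension of $\tau^{(k),i}$ from the basis to all chains by linearity actually defines a linear functional on $\kchainspacedomain{k}{D}$, which is immediate from the freeness of $\kchainspacedomain{k}{D}$ on $\{\tau_{(k),j}\}$ noted in Definition~\ref{def::kchain}.
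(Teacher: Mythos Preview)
Your argument is correct and is precisely the standard dual-basis construction one finds in any algebraic topology or linear algebra text. The paper does not actually supply its own proof of this proposition; it simply remarks that the proof can be found in any book on algebraic topology (citing Munkres), so your write-up is entirely in line with what the authors had in mind.
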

		The proof of this proposition can be found in any book on algebraic topology, for instance, \cite{munkres1984elements}.
	With the duality relation between chains and cochains, we can define the formal adjoint of the boundary operator which constitutes a sequence on the spaces of $k$-cochains in the cell complex.
	
	\begin{definition}[\textbf{Coboundary operator}]\label{def:coboundary_operator}
		\cite{hatcher2002algebraic} The coboundary operator, $\delta:\kcochainspacedomain{k}{D}\spacemap\kcochainspacedomain{k+1}{D}$, is defined as the formal adjoint of the boundary operator:
		\begin{equation}
			\duality{\delta\kcochain{c}{k}}{\kchain{c}{k+1}} := \duality{\kcochain{c}{k}}{\partial\kchain{c}{k+1}}, \quad\forall\kcochain{c}{k}\in\kcochainspacedomain{k}{D} \text{ and  } \,\forall\kchain{c}{k+1}\in\kchainspacedomain{k+1}{D} \;. \label{eq::algTop_codifferential_dual}
		\end{equation}
	\end{definition}
	
	\begin{proposition}\label{coboundary_coboundary_is_zero}
		The coboundary operator is nilpotent,
		\begin{equation}
			\delta\delta\kcochain{c}{k} = 0, \quad \forall\kcochain{c}{k}\in\kcochainspacedomain{k}{D}\;.\label{algtop::stokes}
		\end{equation}
	\end{proposition}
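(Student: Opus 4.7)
The plan is to derive nilpotency of $\delta$ from nilpotency of $\partial$ (Theorem~\ref{algebraic_topology::boundary_boundary}) by exploiting the fact that $\delta$ was introduced precisely as the formal adjoint of $\partial$ under the chain/cochain duality pairing in Definition~\ref{def:coboundary_operator}. The duality pairing $\duality{\cdot}{\cdot}\,:\,\kcochainspacedomain{k}{D}\times\kchainspacedomain{k}{D}\spacemap\mathbb{R}$ is non-degenerate in its second argument (since the basis cochains separate points by \eqref{eq:Canonical_basis_cochains}), so a cochain vanishes if and only if it annihilates every chain of the appropriate degree.

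First I would take an arbitrary $(k+2)$-chain $\kchain{c}{k+2}\in\kchainspacedomain{k+2}{D}$ and evaluate $\duality{\delta\delta\kcochain{c}{k}}{\kchain{c}{k+2}}$. Two successive applications of \eqref{eq::algTop_codifferential_dual} move both coboundary operators across the pairing onto the chain side, yielding
\[
\duality{\delta\delta\kcochain{c}{k}}{\kchain{c}{k+2}}
= \duality{\delta\kcochain{c}{k}}{\partial\kchain{c}{k+2}}
= \duality{\kcochain{c}{k}}{\partial\partial\kchain{c}{k+2}}.
\]
By Theorem~\ref{algebraic_topology::boundary_boundary}, $\partial\partial\kchain{c}{k+2} = \kchain{0}{k}$, so the right-hand side is zero. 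Since $\kchain{c}{k+2}$ was arbitrary and the pairing is non-degenerate in its chain argument, we conclude $\delta\delta\kcochain{c}{k} = 0$ as an element of $\kcochainspacedomain{k+2}{D}$.

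There is no real obstacle: the result is essentially a transposition of Theorem~\ref{algebraic_topology::boundary_boundary} across the duality pairing, in complete analogy with how Proposition~\ref{prop:nilpotency_codifferential} was obtained at the continuous level from $\ederiv\ederiv = 0$ via the adjoint relation \eqref{eq::diffGeom_codiff_adjoint}. The only point worth remarking on is the non-degeneracy of $\duality{\cdot}{\cdot}$, which follows immediately from Proposition~\ref{Canonical_basis_k_cochains}: if $\kcochain{b}{k+2}$ satisfies $\duality{\kcochain{b}{k+2}}{\kchain{c}{k+2}} = 0$ for every basis element $\tau_{(k+2),i}$, then every expansion coefficient of $\kcochain{b}{k+2}$ in the dual basis $\{\tau^{(k+2),i}\}$ vanishes. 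Equivalently, in matrix language (using Proposition~\ref{prop:boundary_incidence_matrix}), the statement is the transposed identity $\bigl(\incidenceboundary{k}{k+1}\incidenceboundary{k+1}{k+2}\bigr)^{T} = \incidenceboundary{k+1}{k+2}^{T}\,\incidenceboundary{k}{k+1}^{T} = 0$, which is Proposition~\ref{prop:Incidence_incidence_is_zero} read on the cochain side.
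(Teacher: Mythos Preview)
Your proof is correct and takes essentially the same approach as the paper: apply the adjoint relation \eqref{eq::algTop_codifferential_dual} twice to transfer both coboundaries onto the chain side, then invoke $\partial\partial = 0$ from Theorem~\ref{algebraic_topology::boundary_boundary}. You are somewhat more explicit than the paper about the non-degeneracy of the pairing and add the incidence-matrix reformulation, but the argument is the same.
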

	\begin{proof}
		The proof follows directly from \theoremref{algebraic_topology::boundary_boundary}: $\forall\kcochain{c}{k}\in\kcochainspacedomain{k}{D} \text{ and  } \,\forall\kchain{c}{k}\in\kchainspacedomain{k}{D}$
		\[
			\duality{\delta\delta\kcochain{c}{k}}{\kchain{c}{k}} \stackrel{(\ref{eq::algTop_codifferential_dual})}{=} \duality{\kcochain{c}{k}}{\partial\partial\kchain{c}{k}} \stackrel{(\ref{eq::algTop_double_boundary})}{=} 0\;\; \Longrightarrow \;\; \delta\delta\kcochain{c}{k} = 0 \;.
		\]
%		\[ \Longrightarrow \delta\delta\kcochain{c}{k} = 0 \;. \]
		%Which implies the proposition.
	\end{proof}
	
\begin{definition}[\textbf{Cochain complex}]
\cite{hatcher2002algebraic} A cochain complex is a family $(\kcochainspacedomain{k}{D},\delta )$ of $k$-cochains and coboundary operators, that constitutes a sequence
% 		\begin{diagram}
% 			\cdots & \rTo^{\delta} &  \kcochainspacedomain{k-1}{D} & \rTo^{\delta} & \kcochainspacedomain{k}{D} & \rTo{\delta} & \kcochainspacedomain{k+1}{D} & \rTo{\delta} & \cdots 
% 		\end{diagram}
\begin{equation}
\cdots \stackrel{\delta}{\longrightarrow}  \kcochainspacedomain{k-1}{D} \stackrel{\delta}{\longrightarrow} \kcochainspacedomain{k}{D} \stackrel{\delta}{\longrightarrow} \kcochainspacedomain{k+1}{D} \stackrel{\delta}{\longrightarrow} \cdots.
\end{equation}
\end{definition}
	
%	\begin{definition} (\textbf{Cocoface operator}) \textcolor{red}{How shall we call the formal adjoint of the coface operator? "Cocoface"? Tonti uses the word "coboundary" but we -- and many others -- have a different use for this word.} The {\em cocoface operator}, $\bar{\delta}$, is a mapping
%	\[ \bar{\delta}: \kcochainspacedomain{k+1}{D}\spacemap\kcochainspacedomain{k}{D} \;,
%		\]
%		that is the formal adjoint of the face operator:
%		\begin{equation}
%			\duality{\bar{\delta}\kcochain{c}{k+1}}{\kchain{c}{k}} := \duality{\kcochain{c}{k+1}}{\bar{\partial}\kchain{c}{k}}, \quad\forall\kcochain{c}{k+1}\in\kcochainspacedomain{k+1}{D} \text{ and  } \,\forall\kchain{c}{k}\in\kchainspacedomain{k}{D} \;. \label{eq::algTop_coface_operator}
%		\end{equation}
%	\end{definition}
%	
%	\begin{proposition}\label{prop:cocoface_cocoface=0} The cocoface operators, $\bar{\delta}$ satisfies
%	\[ \bar{\delta} \bar{\delta} \kcochain{c}{k} = 0\;,\;\;\; \forall \kcochain{c}{k} \in \kcochainspacedomain{k}{D} \;.\]
%	\end{proposition}
%	
%	\begin{proof}
%	\[ \duality{\bar{\delta} \bar{\delta} \kcochain{c}{k}}{\kchain{c}{k-2}} = \duality{ \kcochain{c}{k}}{\bar{\partial}\bar{\partial} \kchain{c}{k-2}} \stackrel{Lemma~\ref{lem:coface_coface=0}}{=} 0\;.\]
%	\end{proof}
%	
%	\textcolor{red}{This is the point I reached today. I will continue tomorrow. Any comments from you will then be incorporated as well}
	Analogous to Definition~\ref{def:coeff_vector_for_chains}, we have
	\begin{definition}[\textbf{Coefficient vector for cochains}]\label{def:coeff_vector_for_cochains}
%Since the space of $k$-cochains, $\kcochainspacedomain{k}{\ccomplex{D}}$, is a free Abelian group it is a \textcolor{red}{linear vector space} and as such its elements can be represented by a column vector containing only the coefficients of the chain. That is, there is an isomorphism
The space of $k$-cochains, $\kcochainspacedomain{k}{\ccomplex{D}}$, can be represented by a column vector containing only the coefficients of the chain. That is, there is an isomorphism $\bar{\psi}$:
	\begin{equation}
		\bar{\psi}\,:\, \kcochainspacedomain{k}{\ccomplex{D}}\mapsto\mathbb{R}^{p}, \quad p=\mathrm{rank}(\kchainspacedomain{k}{\ccomplex{D}})\;,\label{eq::algTop_isomorphism_map_cochains}
	\end{equation}
	defined by
	\begin{equation}
		\bar{\psi}(\kcochain{c}{k}) = \bar{\psi}(\sum_{i}c_{i}\tau^{(k),i}) =  [c_{1} \cdots c_{p}]^{T}, \quad p=\text{rank}(\kcochainspacedomain{k}{\ccomplex{D}})\;,\label{eq::algTop_isomorphism_definition_cochains}
	\end{equation}
	where the rank of $\kcochainspacedomain{k}{\ccomplex{D}}$ is the number of basis $k$-cochains in the cell complex $D$ and the $c_i$ are the coefficients of the vector $\mathbf{c}^{(k)}$ in Proposition~\ref{Canonical_basis_k_cochains}. The $k$-cochain, $\kcochain{c}{k}$, is printed in boldface, whereas the vector $c^{(k)}$ of coefficients is printed in regular face.
	\end{definition}
	
	\begin{proposition}[\textbf{Duality pairing in terms of coefficients}]\label{prop:Dual_pair_coeff}
	Duality pairing of $k$-cochains with $k$-chains, (\ref{algtop::duality_pairing}), in terms of the coefficients, $\bar{\psi}(\kcochain{c}{k})=c^{(k)}$ and $\psi(\kchain{c}{k})=c_{(k)}$, is given by
	\begin{equation}
	\left \langle \kcochain{c}{k},\kchain{c}{k} \right \rangle = \left ( c^{(k)} \right )^T c_{(k)} \;.
	\end{equation}
	\end{proposition}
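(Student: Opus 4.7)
The plan is to unfold both sides into their respective basis expansions and then invoke the dual basis relation~\eqref{eq:Canonical_basis_cochains} from Proposition~\ref{Canonical_basis_k_cochains}. Since the statement is ultimately a linear algebra identity, no deep topology is needed; the only subtlety is being careful about the identification provided by the isomorphisms $\psi$ and $\bar\psi$ of Definitions~\ref{def:coeff_vector_for_chains} and~\ref{def:coeff_vector_for_cochains}.

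First I would write $\kchain{c}{k}=\sum_{j}c^{j}\tau_{(k),j}$ and $\kcochain{c}{k}=\sum_{i}c_{i}\tau^{(k),i}$, so that by definition of $\psi$ and $\bar\psi$ the coefficient vectors are $c_{(k)}=[c^{1},\dots,c^{p}]^{T}$ and $c^{(k)}=[c_{1},\dots,c_{p}]^{T}$, where $p=\mathrm{rank}(\kchainspacedomain{k}{\ccomplex{D}})$. Next I would use the definition of the duality pairing~\eqref{algtop::duality_pairing} together with bilinearity (which follows from the fact that cochains are linear functionals, Definition~\ref{def:cochains}, combined with the distributive expansion of the chain) to obtain
\[
\duality{\kcochain{c}{k}}{\kchain{c}{k}} \;=\; \kcochain{c}{k}(\kchain{c}{k}) \;=\; \sum_{i,j} c_{i}\,c^{j}\,\tau^{(k),i}(\tau_{(k),j}) \;.
\]

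Then I would apply the canonical dual basis property~\eqref{eq:Canonical_basis_cochains}, namely $\tau^{(k),i}(\tau_{(k),j})=\delta^{i}_{j}$, which collapses the double sum to the diagonal:
\[
\sum_{i,j} c_{i}\,c^{j}\,\delta^{i}_{j} \;=\; \sum_{i} c_{i}\,c^{i} \;=\; \left(c^{(k)}\right)^{T} c_{(k)} \;,
\]
the last equality being the usual definition of the Euclidean inner product of the two coefficient column vectors.

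Since each step is algebraic and relies only on already-established structures, I do not expect any real obstacle; the only thing to be careful about is the placement of indices (sub- versus super-script) to match the convention that cochain coefficients carry lower indices while chain coefficients carry upper ones, so that the identification with $c^{(k)}$ and $c_{(k)}$ in the column-vector isomorphisms is consistent. This proposition is essentially a bookkeeping result that justifies performing all subsequent computations directly on the coefficient vectors, and it is the discrete analogue in this framework of identifying integration of a differential form over a manifold with a matrix-vector product, paralleling Remark~\ref{rem:similarity_duality_algtop_and_diffgeom}.
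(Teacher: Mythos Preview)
Your proof is correct and follows essentially the same argument as the paper: expand both chain and cochain in their canonical bases, use bilinearity of the pairing, apply the dual basis relation $\tau^{(k),i}(\tau_{(k),j})=\delta^{i}_{j}$, and identify the resulting single sum with the Euclidean inner product of the coefficient vectors. The only difference is that you include more surrounding commentary; the computational steps match line for line.
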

	\begin{proof}
	\[
	\left \langle \kcochain{c}{k},\kchain{c}{k} \right \rangle = \sum_i \sum_j c_i c^j \left \langle \tau^{(k),i},\tau_{(k),j} \right \rangle \stackrel{(\ref{eq:Canonical_basis_cochains})}{=} \sum_i \sum_j c_i c^j \delta^i_j = \sum_i c_i c^i = \left ( c^{(k)} \right )^T c_{(k)}\;.
	\]
	\end{proof}
	\begin{proposition}[\textbf{Incidence matrix for coboundary operator}]\label{prop:Incidence_matrix_coboundary}
	Let $\bar{\psi}(\kcochain{c}{k}) = c^{(k)}$, then $\bar{\psi}\left ( \delta \kcochain{c}{k} \right ) = \incidencederivative{k+1}{k}c^{(k)}$, where
	\begin{equation} \incidencederivative{k+1}{k} = (\incidenceboundary{k}{k+1})^{T}\;.\end{equation}
	\end{proposition}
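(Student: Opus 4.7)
The plan is to derive the matrix representation of $\delta$ directly from its definition as the formal adjoint of $\partial$, using the coefficient representations introduced in Definitions~\ref{def:coeff_vector_for_chains} and \ref{def:coeff_vector_for_cochains}. The key observation is that Proposition~\ref{prop:Dual_pair_coeff} converts the duality pairing into a simple inner product of coefficient vectors, while Proposition~\ref{prop:boundary_incidence_matrix} converts $\partial$ into multiplication by $\incidenceboundary{k}{k+1}$. Hence the adjoint relation \eqref{eq::algTop_codifferential_dual} becomes an identity between two bilinear forms in coefficient vectors, from which the matrix identity follows by the standard argument that equality of bilinear forms on all inputs forces equality of the defining matrices.

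Concretely, I would proceed in three steps. First, fix an arbitrary $\kchain{c}{k+1} \in \kchainspacedomain{k+1}{D}$ with coefficient vector $c_{(k+1)} = \psi(\kchain{c}{k+1})$ and compute the right-hand side of \eqref{eq::algTop_codifferential_dual}:
\[
\langle \kcochain{c}{k}, \partial \kchain{c}{k+1} \rangle \stackrel{\text{Prop.~\ref{prop:boundary_incidence_matrix}}}{=} \langle \kcochain{c}{k}, \psi^{-1}(\incidenceboundary{k}{k+1} c_{(k+1)}) \rangle \stackrel{\text{Prop.~\ref{prop:Dual_pair_coeff}}}{=} \bigl(c^{(k)}\bigr)^T \incidenceboundary{k}{k+1} c_{(k+1)}.
\]
Second, express the left-hand side in the same way, letting $d^{(k+1)} := \bar{\psi}(\delta \kcochain{c}{k})$:
\[
\langle \delta \kcochain{c}{k}, \kchain{c}{k+1} \rangle \stackrel{\text{Prop.~\ref{prop:Dual_pair_coeff}}}{=} \bigl(d^{(k+1)}\bigr)^T c_{(k+1)}.
\]

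Third, equate the two expressions. Since $\kchain{c}{k+1}$ was arbitrary, $c_{(k+1)}$ ranges over all of $\mathbb{R}^{\mathrm{rank}(C_{k+1}(D))}$, so
\[
\bigl(d^{(k+1)}\bigr)^T c_{(k+1)} = \bigl(c^{(k)}\bigr)^T \incidenceboundary{k}{k+1} c_{(k+1)} \qquad \forall\, c_{(k+1)},
\]
which forces $d^{(k+1)} = (\incidenceboundary{k}{k+1})^T c^{(k)}$. Defining $\incidencederivative{k+1}{k} := (\incidenceboundary{k}{k+1})^T$ yields the claim $\bar{\psi}(\delta \kcochain{c}{k}) = \incidencederivative{k+1}{k} c^{(k)}$.

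There is no real obstacle here; the statement is essentially the bookkeeping identity that the matrix of the adjoint operator with respect to dual bases is the transpose. The only subtlety is the consistent handling of the two isomorphisms $\psi$ and $\bar{\psi}$ (chains versus cochains) and the fact that the canonical dual basis relation \eqref{eq:Canonical_basis_cochains} ensures the duality pairing collapses to an ordinary dot product without any metric. Once Propositions~\ref{prop:Dual_pair_coeff} and \ref{prop:boundary_incidence_matrix} are invoked, the conclusion is immediate and the nilpotency $\incidencederivative{k+2}{k+1} \incidencederivative{k+1}{k} = 0$ follows as a corollary by transposing Proposition~\ref{prop:Incidence_incidence_is_zero}.
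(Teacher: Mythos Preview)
Your proposal is correct and follows essentially the same approach as the paper: both invoke the adjoint relation \eqref{eq::algTop_codifferential_dual}, convert each side of the pairing to coefficient form via Proposition~\ref{prop:Dual_pair_coeff} and Proposition~\ref{prop:boundary_incidence_matrix}, and then use the arbitrariness of the $(k{+}1)$-chain to identify $\bar{\psi}(\delta\kcochain{c}{k})$ with $(\incidenceboundary{k}{k+1})^T c^{(k)}$. Your closing remark about the nilpotency corollary is also exactly what the paper records immediately afterward.
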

	\begin{proof} For all $\kcochain{c}{k} \in C^k(D)$ and for all $\kchain{c}{k+1} \in C_{k+1}(D)$ we have
	\begin{eqnarray*}
	\left ( \bar{\psi} (\delta \kcochain{c}{k}) \right )^T c_{(k+1)} & \stackrel{\rm Prop.~\ref{prop:Dual_pair_coeff}}{=} & \langle \delta \kcochain{c}{k},\kchain{c}{k+1} \rangle  \stackrel{\eqref{eq::algTop_codifferential_dual}}{=}  \langle \kcochain{c}{k},\partial \kchain{c}{k+1} \rangle \stackrel{\rm Prop.~\ref{prop:Dual_pair_coeff}}{=} \\
	\left (c^{(k)} \right )^T \left ( \incidenceboundary{k}{k+1}c_{(k+1)} \right )
	& = & \left [ \left ( \incidenceboundary{k}{k+1} \right )^T c^{(k)}\right ]^T c_{(k+1)}:= \left ( \incidencederivative{k+1}{k} c^{(k)} \right )^T c_{(k+1)}
	\end{eqnarray*}
	\[ \Longrightarrow \;\; \bar{\psi} (\delta \kcochain{c}{k}) = \incidencederivative{k+1}{k} c^{(k)} \;.\]
	\end{proof}
%	$k$-chains can be represented by column vectors and therefore it is possible to represent  $k$-cochains as column vectors. The coboundary operator has a matrix representation. Since the boundary operator can be represented by the incidence matrix $\incidenceboundary{k-1}{k}$, the coboundary operator is represented by the transpose of the incidence matrix $(\incidenceboundary{k-1}{k})^{T} := \incidencederivative{k}{k-1}$:
%	\begin{equation}
%		\delta\kcochain{c}{k} = \delta(\sum_{i}c_{i}\tau^{(k),i}) := \incidencederivative{k}{k-1}[c_{1} \cdots c_{p}]^{T}, \quad \text{with } p=\text{rank}(\kchainspacedomain{k}{\ccomplex{D}}) \;.
%		\label{incidence_matrix_coboundary_operator}
%	\end{equation}
\begin{proposition}
Let $\incidencederivative{k+1}{k}$ be the incidence matrices for the coboundary operator, then
\begin{equation} \incidencederivative{k+1}{k}\incidencederivative{k}{k-1}= 0\;.\end{equation}
\end{proposition}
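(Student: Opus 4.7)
The plan is to reduce this identity to the nilpotency of the coboundary operator (Proposition~\ref{coboundary_coboundary_is_zero}) via the matrix representation established in Proposition~\ref{prop:Incidence_matrix_coboundary}. The strategy mirrors exactly how Proposition~\ref{prop:Incidence_incidence_is_zero} was derived from $\partial \circ \partial = 0$, only now on the cochain side.

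Concretely, I would pick an arbitrary $(k-1)$-cochain $\kcochain{c}{k-1}$ with coefficient vector $c^{(k-1)} = \bar{\psi}(\kcochain{c}{k-1})$. Applying Proposition~\ref{prop:Incidence_matrix_coboundary} once gives $\bar{\psi}(\delta \kcochain{c}{k-1}) = \incidencederivative{k}{k-1} c^{(k-1)}$, and applying it a second time to the cochain $\delta \kcochain{c}{k-1}$ yields
\[
\bar{\psi}(\delta \delta \kcochain{c}{k-1}) = \incidencederivative{k+1}{k}\,\bar{\psi}(\delta \kcochain{c}{k-1}) = \incidencederivative{k+1}{k}\incidencederivative{k}{k-1} c^{(k-1)}.
\]
By Proposition~\ref{coboundary_coboundary_is_zero} the left-hand side vanishes, so $\incidencederivative{k+1}{k}\incidencederivative{k}{k-1} c^{(k-1)} = 0$ for every coefficient vector $c^{(k-1)} \in \mathbb{R}^{\mathrm{rank}(C^{k-1}(D))}$. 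Since $\bar\psi$ is an isomorphism, $c^{(k-1)}$ ranges over the whole vector space, and so the matrix $\incidencederivative{k+1}{k}\incidencederivative{k+1}{k}$ must be the zero matrix.

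As a sanity check (and an alternative one-line proof), the transposition identity $\incidencederivative{k+1}{k} = (\incidenceboundary{k}{k+1})^T$ stated in Proposition~\ref{prop:Incidence_matrix_coboundary} lets one compute directly
\[
\incidencederivative{k+1}{k}\incidencederivative{k}{k-1} = (\incidenceboundary{k}{k+1})^T (\incidenceboundary{k-1}{k})^T = (\incidenceboundary{k-1}{k}\incidenceboundary{k}{k+1})^T = 0,
\]
invoking Proposition~\ref{prop:Incidence_incidence_is_zero} at the last step. There is no real obstacle here; the only thing to be careful about is that the quantification over all cochains in the first approach is genuinely needed to pass from $M c^{(k-1)} = 0$ for all $c^{(k-1)}$ to $M = 0$, which is immediate from $\bar\psi$ being an isomorphism onto $\mathbb{R}^{\mathrm{rank}(C^{k-1}(D))}$.
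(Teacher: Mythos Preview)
Your proposal is correct; in fact, your ``sanity check'' alternative is exactly the paper's proof, which simply transposes the identity $\incidenceboundary{k-1}{k}\incidenceboundary{k}{k+1}=0$ via $\incidencederivative{k+1}{k}=(\incidenceboundary{k}{k+1})^T$. Your first argument via $\delta\delta=0$ is also valid (modulo the typo $\incidencederivative{k+1}{k}\incidencederivative{k+1}{k}$, which should read $\incidencederivative{k+1}{k}\incidencederivative{k}{k-1}$).
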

\begin{proof}
\[ \incidencederivative{k+1}{k}\incidencederivative{k}{k-1} \stackrel{\rm Prop.~\ref{prop:Incidence_matrix_coboundary}}{=} \left ( \incidenceboundary{k}{k+1} \right )^T \left ( \incidenceboundary{k-1}{k} \right )^T = \left ( \incidenceboundary{k-1}{k} \incidenceboundary{k}{k+1} \right )^T \stackrel{\rm Prop.~\ref{prop:Incidence_incidence_is_zero}}{=}0. \]
\end{proof}
	
%	And again, $\incidencederivative{k+1}{k}\incidencederivative{k}{k-1}=0$.
	
%	An important piece that brings together co-chains as a discrete counterpart of differential forms is to realize that the integration of a $k$-form over $k$-chains defines a linear map on chains and, hence, it is a $k$-cochain:
%	\[
%		\forall\kdifform{\alpha}{k}\in\kformspace{k} \quad \exists \kcochain{a}{k}\in\kcochainspacedomain{k}{\ccomplex{D}}: \kcochain{a}{k}(\tau^{k}_{i}) = a^{i} = \int_{\tau^{k}_{i}}\alpha^{k}
%	\]
	
	Recalling the duality pairing of differential forms and manifolds \eqref{diffgeom::duality_pairing} and the duality pairing of chains and cochains \eqref{algtop::duality_pairing} one clearly notices the algebraic similarity between the continuous and the discrete worlds. Expression \eqref{eq::algTop_codifferential_dual} is nothing but a discrete Stokes' theorem.
	
	\begin{definition}[\textbf{Cocycles, $k$-coboundaries and cohomologous cochains}] \cite{tonti1975formal}
The cochains $\kcochain{c}{k}$ for which $\delta \kcochain{c}{k} = \kcochain{0}{k+1}$ are called {\em cocycles}. The set of all $k$-cocycles is denoted by $Z^k(D)$. A $k$-cochain that can be written as the coboundary of a $(k-1)$-cochain, $\kcochain{c}{k} = \delta \kcochain{d}{k-1}$, is called a {\em $k$-coboundary}. The space of all $k$-coboundaries is denoted by $B^k(D)$.
\end{definition}

\begin{corollary}[\textbf{Cochain space decomposition}]\label{cochainspacedecomposition}
From Proposition~\ref{coboundary_coboundary_is_zero} is follows that $B^k(D) \subset Z^k(D)$. We therefore consider the {\em cohomology group} 
\begin{equation} H^k(D) = \frac{Z^k(D)}{B^k(D)} \;,\end{equation}
of all $k$-cocycles which are not $k$-coboundaries. The space $H^k(D)$ is the topological equivalent of the space of harmonic forms ${\mathcal H}^k({\mathcal M})$ defined in Section~\ref{subsec:HodgeDecomposition}.
The space of $k$-cochains can be decomposed as
\[ C^k = B^k \oplus H^k \oplus \left  ( Z^k \right )^c \;.\]
Compare this decomposition with the Hodge decomposition given in (\ref{eq:generic_Hodge_decomposition}).
\end{corollary}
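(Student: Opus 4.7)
The plan is to prove the two claims separately and then combine them: first the inclusion $B^k(D) \subset Z^k(D)$, and then the three-term direct sum decomposition of $C^k(D)$. Throughout, I would emphasize that $C^k(D)$ is a finite-dimensional real vector space (being the dual of the finite-dimensional space $C_k(D)$), so algebraic complements always exist, even though they are not canonical.

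For the first step, I would take any $\kcochain{b}{k} \in B^k(D)$. By definition there exists $\kcochain{c}{k-1} \in C^{k-1}(D)$ with $\kcochain{b}{k} = \delta \kcochain{c}{k-1}$. Applying $\delta$ and using Proposition~\ref{coboundary_coboundary_is_zero}, $\delta \kcochain{b}{k} = \delta \delta \kcochain{c}{k-1} = 0$, so $\kcochain{b}{k} \in Z^k(D)$. This inclusion is exactly what makes the quotient $H^k(D) = Z^k(D) / B^k(D)$ well-defined as a vector space.

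For the decomposition, I would proceed in two stages. First, since $Z^k(D)$ is a linear subspace of the finite-dimensional vector space $C^k(D)$, I can choose an algebraic complement $(Z^k(D))^c$ such that
\[
C^k(D) = Z^k(D) \oplus (Z^k(D))^c.
\]
Second, since $B^k(D)$ is a linear subspace of $Z^k(D)$ (by the inclusion just proved), I can choose an algebraic complement $H^k(D)$ of $B^k(D)$ inside $Z^k(D)$, giving
\[
Z^k(D) = B^k(D) \oplus H^k(D).
\]
The natural projection $H^k(D) \hookrightarrow Z^k(D) \twoheadrightarrow Z^k(D)/B^k(D)$ is a linear isomorphism, which justifies identifying this choice of $H^k(D)$ with the cohomology group of Corollary~\ref{cochainspacedecomposition}. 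Substituting the second decomposition into the first yields the claimed three-term sum
\[
C^k(D) = B^k(D) \oplus H^k(D) \oplus (Z^k(D))^c.
\]

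The main conceptual point — rather than a technical obstacle — is that the choice of $H^k(D)$ and $(Z^k(D))^c$ is not canonical at the purely algebraic-topological level; only the isomorphism class of $H^k(D) \cong Z^k(D)/B^k(D)$ is intrinsic. In the differential-geometric analogue (Section~\ref{subsec:HodgeDecomposition}) the inner product on differential forms singles out the orthogonal complement uniquely, and a fully parallel uniqueness statement for cochains would require endowing $C^k(D)$ with a suitable inner product (for instance via a discrete Hodge operator pairing $C^k$ with a dual cochain space). Since the corollary only asserts the existence of the splitting, this subtlety does not obstruct the proof, but it is worth flagging as the point where the algebraic-topological decomposition differs from its continuous counterpart.
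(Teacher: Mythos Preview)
Your proof is correct and is exactly the argument the paper has in mind: the corollary is stated without proof in the paper, and your two-step use of algebraic complements in the finite-dimensional space $C^k(D)$ --- first splitting off $(Z^k)^c$, then splitting $Z^k = B^k \oplus H^k$ --- is the standard justification the authors leave implicit. Your remark about the non-canonical choice of complements is apt and mirrors the paper's own comment that the continuous Hodge decomposition (\ref{eq:generic_Hodge_decomposition}) is the relevant analogue rather than the inner-product version (\ref{eq:Hodge_decomp_without_boundary}).
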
	

\begin{proposition}[\textbf{Cochain well-posedness}]\label{uniquecochain}
Consider the following cochain relation, $\delta\kcochain{a}{k}=\kcochain{f}{k+1}$, with $\kcochain{a}{k}\in (Z^k)^c$ and $\kcochain{f}{k+1}\in B^{k+1}$. Then there exists a solution $\kcochain{a}{k}$ and the solution is determined up to a cochain in $Z^k$.
\end{proposition}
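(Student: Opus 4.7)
The plan is to exploit the cochain decomposition $C^k = B^k \oplus H^k \oplus (Z^k)^c$ established in Corollary~\ref{cochainspacedecomposition}, together with the fact that $\delta$ annihilates precisely the cocycle subspace $Z^k = B^k \oplus H^k$. Once one observes that the restriction $\delta\big|_{(Z^k)^c}\colon (Z^k)^c \to B^{k+1}$ is bijective, both the existence and the uniqueness-modulo-$Z^k$ claims follow immediately.

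For existence, I would start from the hypothesis $\kcochain{f}{k+1}\in B^{k+1}$, which by definition of $B^{k+1}$ means there is some $\kcochain{b}{k}\in C^k$ with $\delta\kcochain{b}{k}=\kcochain{f}{k+1}$. Decompose this preimage using Corollary~\ref{cochainspacedecomposition} as $\kcochain{b}{k} = \kcochain{b}{k}_B + \kcochain{b}{k}_H + \kcochain{b}{k}_{Z^c}$ with $\kcochain{b}{k}_B\in B^k$, $\kcochain{b}{k}_H\in H^k$ and $\kcochain{b}{k}_{Z^c}\in (Z^k)^c$. Since $B^k\subset Z^k$ and $H^k\subset Z^k$ (elements of $H^k$ are by definition cocycles that are not coboundaries), the first two summands lie in the kernel of $\delta$ by Proposition~\ref{coboundary_coboundary_is_zero} and the definition of $H^k$. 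Consequently $\delta\kcochain{b}{k}=\delta\kcochain{b}{k}_{Z^c}=\kcochain{f}{k+1}$, so setting $\kcochain{a}{k}:=\kcochain{b}{k}_{Z^c}\in (Z^k)^c$ yields a solution of the required type.

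For the uniqueness claim, suppose $\kcochain{a}{k}_1$ and $\kcochain{a}{k}_2$ are two solutions of $\delta\kcochain{a}{k}=\kcochain{f}{k+1}$. Linearity of $\delta$ gives $\delta(\kcochain{a}{k}_1-\kcochain{a}{k}_2)=\kcochain{0}{k+1}$, hence $\kcochain{a}{k}_1-\kcochain{a}{k}_2\in Z^k$. Thus any two solutions differ by a cocycle, i.e.\ by an element of $Z^k$. Note that restricting further to $(Z^k)^c$ pins down the solution uniquely, because if both $\kcochain{a}{k}_1,\kcochain{a}{k}_2\in (Z^k)^c$ then their difference lies in $(Z^k)^c\cap Z^k=\{0\}$.

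There is no substantial obstacle; the proof is a direct bookkeeping exercise with the direct-sum decomposition, and the only point that deserves emphasis is that both $B^k$ and $H^k$ are contained in $Z^k$, which is what allows us to discard those components of an arbitrary preimage and retain only the $(Z^k)^c$-part. This argument is the discrete counterpart of solving a Poisson-type equation using the Hodge decomposition at the continuous level, and it sets up the algebraic infrastructure that later sections will lean on when transferring results between differential forms and cochains.
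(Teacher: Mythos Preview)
Your proof is correct and follows essentially the same approach as the paper. The paper's proof is terser: it simply asserts existence from $\kcochain{f}{k+1}\in B^{k+1}$ and then runs the identical uniqueness argument via $\delta(\kcochain{a}{k}_1-\kcochain{a}{k}_2)=\kcochain{0}{k+1}$, whereas you additionally show explicitly how to land the preimage in $(Z^k)^c$ by projecting along the decomposition of Corollary~\ref{cochainspacedecomposition}---a useful clarification that the paper leaves implicit.
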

\begin{proof}
There exists a solution $\kcochain{a}{k}$, because $\kcochain{f}{k+1}\in B^{k+1}$. Now, suppose there exists two solutions, $\kcochain{a}{k}_1$ and $\kcochain{a}{k}_2$, such that $\delta\kcochain{a}{k}_1=\kcochain{f}{k+1}$ and $\delta\kcochain{a}{k}_2=\kcochain{f}{k+1}$. Then $\delta(\kcochain{a}{k}_1-\kcochain{a}{k}_2)=\kcochain{0}{k+1}$. Since $\kcochain{a}{k}_1-\kcochain{a}{k}_2\in Z^k$, the solution for $\delta\kcochain{a}{k}=\kcochain{f}{k+1}$ is unique for $\kcochain{a}{k}\in (Z^k)^c$.
\end{proof}
\begin{example}[\textbf{Example~\ref{ex:cell_complex_with_hole} continued}]\label{ex:domain_with_hole_continued}
The dimension of the cohomology is equal to the dimension of the homology due to the duality pairing. The harmonic cochain corresponding to \figref{fig:hole} is given by
\[
\kcochain{h}{1}=\alpha(1,-1,0,0,1,1,-1,1,-1,0,0,-1)^T,\quad\alpha\in\mathbb{R}.
\]
This harmonic cochain models a circulation around the hole in \figref{fig:hole}, with strength $\alpha$.
\end{example}

\begin{remark}
Note that the harmonic cochain is \emph{global}. Only global considerations lead to the determination of the harmonic cochains, which is the main reason these solutions cannot be represented by local methods, such as finite difference, finite volume or finite element methods.
\end{remark}

\begin{definition}[\textbf{Induced cochain map}]\label{Induced_cochain_map} 
Let $\Phi_\sharp\,:\, C_k(D_\manifold{M}) \rightarrow C_k(D_\manifold{N})$, $k=0,1,\ldots,n$, then for every $k$-cochain. $\kcochain{c}{k}_n \in C^k(D_\manifold{N})$,there exists a $k$-cochain $\kcochain{c}{k}_m \in C^k(D_\manifold{M})$ such that
\[
\langle \kcochain{c}{k}_n,\Phi_\sharp \kchain{c}{k} \rangle = \langle \kcochain{c}{k}_m,\kchain{c}{k} \rangle := \langle \Phi^\sharp \kcochain{c}{k}_n \;,\kchain{c}{k} \rangle\;,\quad \forall \kchain{c}{k} \in C_k(D_\manifold{M})\;,
\]
with the {\em cochain map} $\Phi^\sharp$ defined by $\kcochain{c}{k}_m = \Phi^\sharp \kcochain{c}{k}_n$.
\end{definition}
\begin{remark}\label{rem:relation_f_sharp_and_pullback}
The cochain map $\Phi^\sharp$ satisfies $\langle \kcochain{c}{k}_m,\Phi_\sharp \kchain{c}{k} \rangle = \langle \Phi^\sharp \kcochain{c}{k}_n, \kchain{c}{k} \rangle$. Compare this relation with the duality between a space map $\Phi$ and its pullback $\Phi^\star$ as discussed in Proposition~\ref{prop::pullback_integral}.
\end{remark}

\begin{corollary}
The cochain map $\Phi^\sharp$ commutes with the coboundary operator $\delta$,
\begin{equation}
\delta \circ \Phi^\sharp = \Phi^\sharp \circ \delta,
\end{equation}
\[
\begin{CD}
C^{k-1}(D_\mathcal{N})@>\delta>>C^k(D_\mathcal{N})\\
@V\Phi^\sharp VV @V\Phi^\sharp VV \\
C^{k-1}(D_\mathcal{M})@>\delta>>C^k(D_\mathcal{M}).
\end{CD}
\]
\end{corollary}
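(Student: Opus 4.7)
The plan is a straightforward duality chase: since both $\delta$ and $\Phi^\sharp$ are defined as formal adjoints of geometric operations on chains ($\partial$ and $\Phi_\sharp$ respectively), the desired commutation on cochains should follow by bouncing everything across the duality pairing until it reduces to the already-established commutation $\partial \circ \Phi_\sharp = \Phi_\sharp \circ \partial$ from \eqref{chainmapboundary}.

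Concretely, I would fix an arbitrary cochain $\kcochain{c}{k-1} \in C^{k-1}(D_\mathcal{N})$ and an arbitrary test chain $\kchain{c}{k} \in C_k(D_\mathcal{M})$, and compute $\langle \delta \Phi^\sharp \kcochain{c}{k-1}, \kchain{c}{k} \rangle$ by the following five-step rewrite: (i) move $\delta$ across the pairing using \eqref{eq::algTop_codifferential_dual} to produce $\langle \Phi^\sharp \kcochain{c}{k-1}, \partial \kchain{c}{k} \rangle$; (ii) move $\Phi^\sharp$ across via Definition~\ref{Induced_cochain_map} to obtain $\langle \kcochain{c}{k-1}, \Phi_\sharp \partial \kchain{c}{k} \rangle$; (iii) invoke the chain-map property \eqref{chainmapboundary}, $\Phi_\sharp \partial = \partial \Phi_\sharp$; (iv) reverse step (i) to pull the boundary back across as a coboundary, giving $\langle \delta \kcochain{c}{k-1}, \Phi_\sharp \kchain{c}{k} \rangle$; and finally (v) reverse step (ii) to arrive at $\langle \Phi^\sharp \delta \kcochain{c}{k-1}, \kchain{c}{k} \rangle$. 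Since $\kchain{c}{k}$ was arbitrary, nondegeneracy of the duality pairing between $C^k(D_\mathcal{M})$ and $C_k(D_\mathcal{M})$ (Proposition~\ref{Canonical_basis_k_cochains}) forces $\delta \Phi^\sharp \kcochain{c}{k-1} = \Phi^\sharp \delta \kcochain{c}{k-1}$, and arbitrariness of $\kcochain{c}{k-1}$ completes the identity.

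There is essentially no hard step: the only substantive input is the chain-map/boundary commutation \eqref{chainmapboundary}, and everything else is bookkeeping with the two adjoint definitions. The closest thing to a subtlety worth flagging is the conclusion from ``pairing equals pairing for all test chains'' to ``cochains are equal'' — this uses that the canonical basis of Proposition~\ref{Canonical_basis_k_cochains} gives a perfect pairing, so it is legitimate to test against basis chains $\tau_{(k),j}$ individually and read off equality of the coefficient vectors via Proposition~\ref{prop:Dual_pair_coeff}. The resulting commutative square is the discrete analogue of Proposition~\ref{prop:commutation_pullback_ederiv} (the pullback–exterior-derivative commutation), in line with the parallel between $\Phi^\sharp$ and $\pullback$ noted in Remark~\ref{rem:relation_f_sharp_and_pullback}.
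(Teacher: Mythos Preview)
Your proof is correct and essentially identical to the paper's own argument: the paper performs exactly the same five-step duality chase, invoking Definition~\ref{def:coboundary_operator}, Definition~\ref{Induced_cochain_map}, and \eqref{chainmapboundary} in the same order. Your indexing ($k-1$ for the cochain, $k$ for the test chain) is actually cleaner than the paper's, which has a minor degree mismatch in its notation.
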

\begin{proof}
\begin{gather*}
\langle \delta \Phi^\sharp \kcochain{c}{k},\kchain{c}{k} \rangle \stackrel{\rm Def.~\ref{def:coboundary_operator}}{=} \langle \Phi^\sharp \kcochain{c}{k},\partial \kchain{c}{k}
\rangle \stackrel{\rm Def.~\ref{Induced_cochain_map}}{=} \langle \kcochain{c}{k},\Phi_\sharp \partial \kchain{c}{k} \rangle \stackrel{\eqref{chainmapboundary}}{=} \langle \kcochain{c}{k},\partial \Phi_\sharp  \kchain{c}{k} \rangle\\
\stackrel{\rm Def.~\ref{def:coboundary_operator}}{=} \langle \delta \kcochain{c}{k},\Phi_\sharp  \kchain{c}{k} \rangle \stackrel{\rm Def.~\ref{Induced_cochain_map}}{=} \langle \Phi^\sharp \delta \kcochain{c}{k},\kchain{c}{k} \rangle\;,\;\;\; \forall \kcochain{c}{k} \in C^k(D_\manifold{N})\;, \forall \kchain{c}{k} \in C_k(D_\manifold{M}) \;.
\end{gather*}
\end{proof}

\begin{remark}
The commutative property $\delta \circ \Phi^\sharp = \Phi^\sharp \circ \delta$ is the discrete analogue of $\ederiv \circ \Phi^\star = \Phi^\star \circ \ederiv$, see Proposition~\ref{prop:commutation_pullback_ederiv}.
\end{remark}

Since the cochain map $\Phi^\sharp$ commutes with the coboundary operator, $\Phi^\sharp$ maps cocycles onto cocycles and $k$-coboundaries onto $k$-coboundaris, for all $k=0,\ldots,n$. This property ensures that the discrete Hodge decomposition retains its structure when we deform the cell complex with a chain map.

\subsection{Dual complex}\label{sec:dualcomplex}
With every cell complex $D$ one can associate a compatible dual cell-complex, $\tilde{D}$, \cite{tonti1975formal}. Before we can define a dual cell-complex, the adjoint of the faces should be defined.
\begin{definition}[\textbf{Faces and cofaces}] \cite{tonti1975formal} The faces of a $k$-cell are those $(k-1)$-cells that form the boundary of the $k$-cell. The \emph{cofaces} of a $k$-cell are those $(k+1)$-cells which have the $k$-cell as common face.
\end{definition}
\begin{example}
Consider two neighboring rooms in a hotel. The two neighboring rooms have the wall in between them as common face. Therefore, the cofaces of the wall between the rooms are the two rooms.
\end{example}
\begin{definition}[\textbf{Dual cell}]\label{def:dualcell} With every $k$-cell, $\tau_{(k)}$, in $D$ there corresponds a $(n-k)$-cell, $\tilde{\tau}_{(n-k)}$, in the dual complex $\tilde{D}$, such that the dual cells of the boundary of a $k$-cell are the cofaces of the dual cell $\tilde{\tau}_{(n-k)}$.
\end{definition}
\begin{definition}[\textbf{Dual cell-complex}]\label{def:dualcellcomplex}
A dual cell-complex, $\tilde{D}$, is the smallest cell complex according to \defref{cellcomplex}, that contains all dual cells $\tilde{\tau}_{(n-k)}$ as defined in \defref{def:dualcell}.
\end{definition}

If we denote the map, which associates a cell $\tau_{(k)}$ with its dual $\tilde{\tau}_{(n-k)}$, by $*$, i.e. $*\tau_{(k)}=\tilde{\tau}_{(n-k)}$ and if we denote the coface operator by $\partial^*$ then \defref{def:dualcell} states that the following diagram commutes.
\[
\begin{CD}
\tau_{(p)}@>\partial>>\partial\tau_{(p)} \\
@VV*V @VV*V\\
\tilde{\tau}_{(n-p)}@>\partial^*>>\partial^*\tilde{\tau}_{(n-p)}
\end{CD}
\]
Therefore we have $*\partial=\partial^**$. An example of these relations is shown in \figref{fig:marcDualCell}.
\begin{figure}[htb]
\centering
\includegraphics[width=0.3\textwidth]{./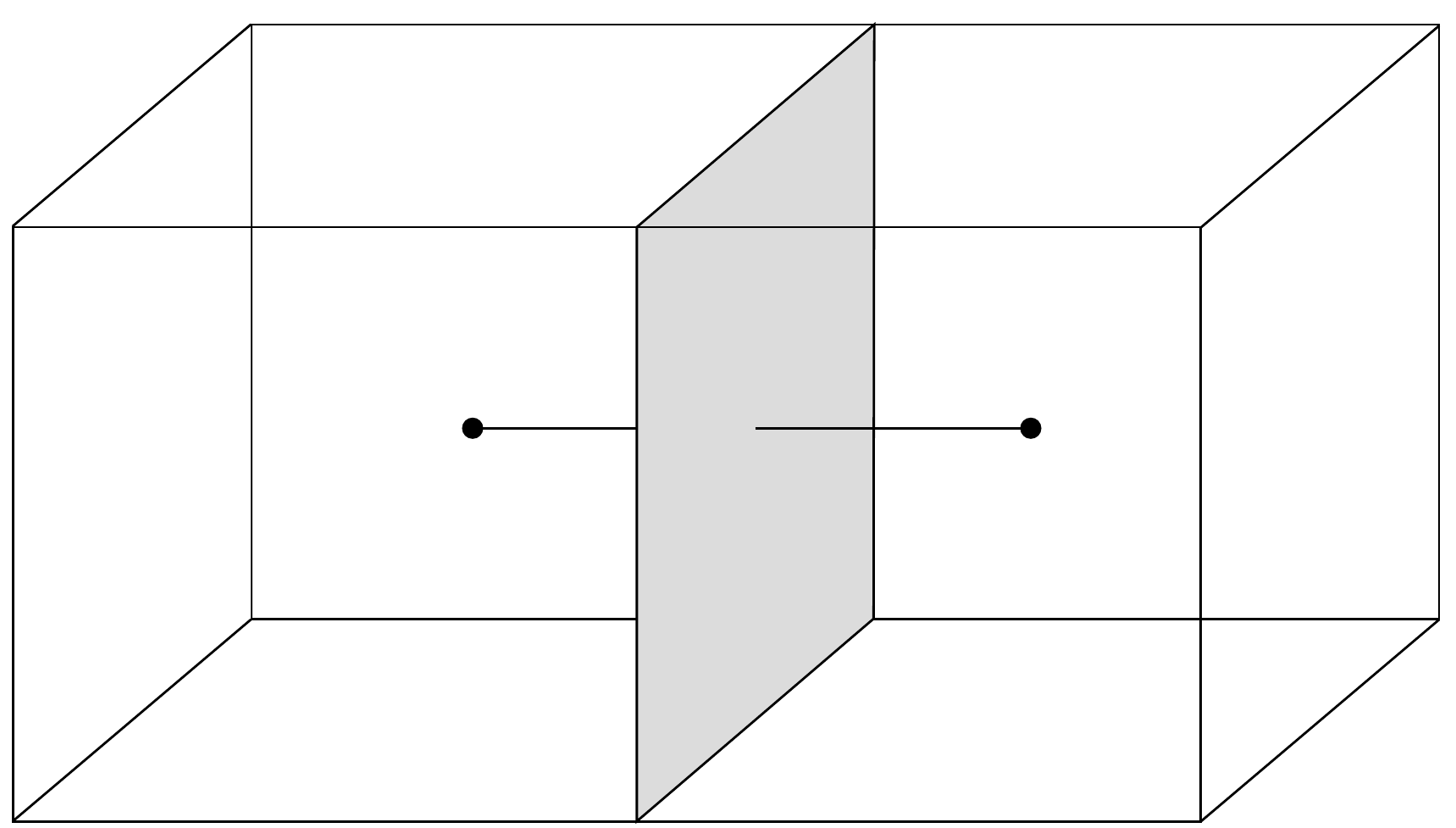}
\caption{Pictorial view of \defref{def:dualcell}: The dual of a line segment in $D\subset\mathbb{R}^3$ is gray shaded surface in $\tilde{D}$. The cofaces of this gray surface are the two adjoining volumes in $\tilde{D}$. The boundary of the line segment are the two endpoints in $D$. The dual of the two endpoints are again the volumes in $\tilde{D}$ surrounding these points.}
\label{fig:marcDualCell}
\end{figure}

We make a distinction in cell-complexes with and without boundary.
\begin{definition}[\textbf{Dual cell complex without boundary}]\label{def:dualgridwithout}
If the collection of all dual cells of $D$, denoted by $*D$, constitutes a cell complex according to \defref{cellcomplex}, then the dual cell complex $\tilde{D}\equiv *D$ and both the cell complex $D$ and its dual $\tilde{D}$ are cell complexes \emph{without boundary}.
\end{definition}

\begin{corollary}
For a cell complex without boundary the cell-complex $\tilde{D}$ is dual to the cell complex $D$ and the cell complex $D$ is dual to the cell complex $\tilde{D}$ modulo orientation. As a consequence $*^{-1}$ exists and is $*^{-1}=\pm *$. This allows us to express the coface operator in terms of the dual operator $*$ and the boundary operator $\partial$ by
\begin{equation}
 *\partial=\partial^**\quad\Rightarrow\quad\partial^*=*\partial *^{-1}=\pm *\partial *.
\end{equation}
\end{corollary}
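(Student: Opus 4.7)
The plan is to unwrap the three assertions in turn, using only the commutative diagram that precedes the corollary together with Definitions~\ref{def:dualcell} and \ref{def:dualgridwithout}.

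First I would establish the symmetry of the duality. By Definition~\ref{def:dualcell} the map $*$ sends each $k$-cell $\tau_{(k)} \in D$ to an $(n-k)$-cell $\tilde{\tau}_{(n-k)} \in \tilde{D}$ in such a way that the dual-cells of $\partial \tau_{(k)}$ are exactly the cofaces of $\tilde{\tau}_{(n-k)}$. Because in a cell complex without boundary $\tilde{D}=*D$ is itself a cell complex (Definition~\ref{def:dualgridwithout}), I can apply the construction a second time: $*\tilde{\tau}_{(n-k)}$ is an $(n-(n-k))$-cell, i.e.\ a $k$-cell, and since the incidence pattern around the original $\tau_{(k)}$ is recovered (cofaces of cofaces being the original faces), the underlying cell must coincide with $\tau_{(k)}$. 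The only freedom left is the choice of orientation, which is a global bookkeeping convention depending on $k$ and $n$; hence $**\tau_{(k)} = \pm \tau_{(k)}$, establishing that $D$ is dual to $\tilde{D}$ modulo orientation.

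From this I immediately get the second assertion: $*$ is a bijection between the $k$-cells of $D$ and the $(n-k)$-cells of $\tilde{D}$, so $*^{-1}$ exists, and the identity $**=\pm \mathrm{id}$ yields $*^{-1}=\pm *$, with the sign being the same $\pm$ that appears in $**$.

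Third, the commutative square stated just before the corollary reads
\[
* \partial = \partial^{*} * \;,
\]
as a consequence of Definition~\ref{def:dualcell}. Composing with $*^{-1}$ on the right gives $\partial^{*} = * \partial *^{-1}$, and substituting $*^{-1} = \pm *$ yields $\partial^{*} = \pm * \partial *$, which is the claimed formula.

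The only real obstacle is the sign. It is not determined by the incidence pattern alone but by a convention for compatibly orienting primal and dual cells (analogous to the sign $(-1)^{k(n-k)}$ in $\star\star$ for differential forms). For the purposes of this corollary I would simply record it as $\pm$, note that it depends on $k$ and $n$, and defer a precise accounting to wherever orientation of the dual complex is fixed; every other step is bookkeeping once $**=\pm \mathrm{id}$ is in hand.
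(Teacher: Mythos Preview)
Your proposal is correct and follows essentially the same line of reasoning that the paper itself sketches. In fact the paper does not supply a separate proof for this corollary: the argument is embedded in the statement via the implication $*\partial=\partial^{*}*\ \Rightarrow\ \partial^{*}=*\partial *^{-1}=\pm *\partial *$, with the invertibility of $*$ and the identity $*^{-1}=\pm *$ taken as immediate consequences of Definitions~\ref{def:dualcell} and~\ref{def:dualgridwithout}. Your write-up simply makes these implicit steps explicit, including the observation that $**=\pm\mathrm{id}$ because applying the duality twice returns the same cell up to orientation.
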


\begin{figure}[htp]
\centering
\includegraphics[width=1.0\textwidth]{./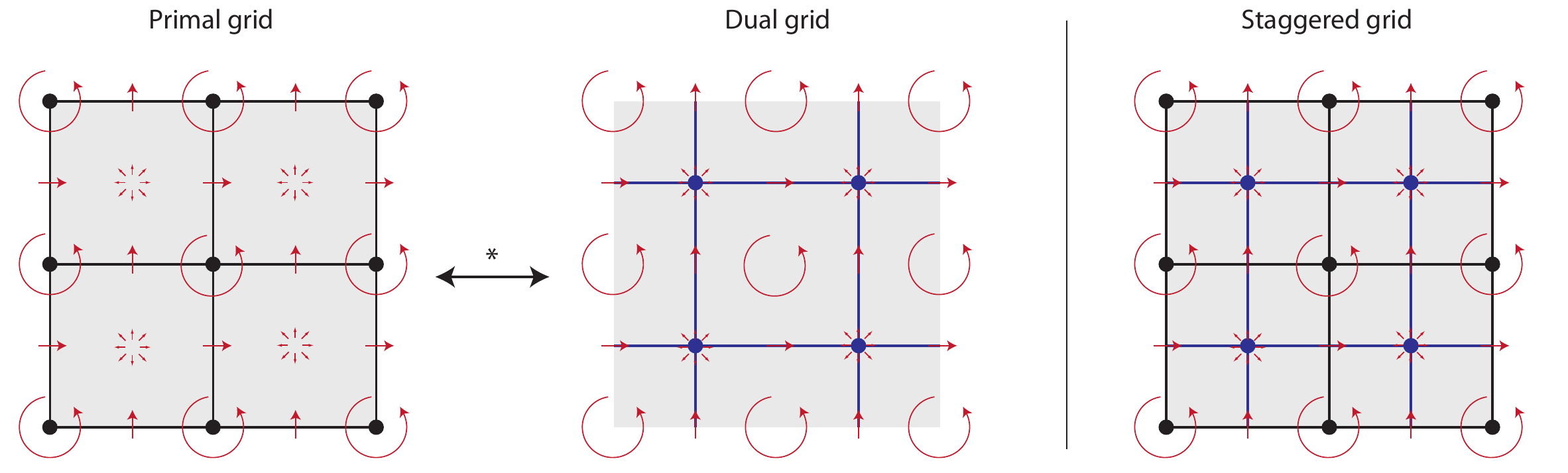}
\caption{Example of a primal grid and dual grid is shown (not necessarily a cell complex), with for every cell $\tau_{(k)}$ the associated dual cell $\tilde{\tau}_{(n-k)}$ and corresponding orientation. On the right the staggered grid, an overlap of both grids, is shown.}
\label{fig:algTopology_primal_dual_grids}
\end{figure}
%\figref{fig:algTopology_primal_dual_grids} gives an example of a primal cell complex and dual grid. On the primal grid we see that we have {\em outer orientation}, whereas on this dual grid, the same orientation is now {\em inner oriented}. Recall the remarks on inner- and outer-oriented variables in the Introduction and the two distinct types of orientation discussed in \secref{sec:manifolds}.
\begin{remark}
The dual grid in \figref{fig:algTopology_primal_dual_grids} is only a cell complex in case the top side is connected to the bottom side and the left side to the right side. This makes this cell complex on the surface of a torus.
\end{remark}
\begin{remark}
Note that the primal cell complex, $D$, is chosen outer oriented. Then by duality, the dual cell complex, $\tilde{D}$, is inner oriented. In fact, the orientation itself does not change, only the corresponding cell changes. This was shown before in \figref{fig:diffGeom_orientation}.
\end{remark}
\begin{remark}
If we equip the cell complex, $D$, with an outer orientation, the the dual complex, $\tilde{D}$, models geometric objects with inner orientation. Alternatively, inner orientation could be represented on the cell complex $D$, in which case the outer representation is modeled on the dual cell complex $\tilde{D}$. In this respect, dual cell complexes are able to model the inner- and outer-orientation as discussed in \secref{sec:manifolds}.
\end{remark}

\subsection{The boundary of cell complexes}\label{sec:AT_boundary_cell_complex}

A collection of $k$-cells forms a cell complex if the boundary of these $k$-cells are also in the cell complex. Therefore, cell complexes consist of an interior $D_i$ and a boundary part $D_b$, where $D=D_i\cup D_b$. The boundary part contains cells up to degree $(n-1)$. Since $\tilde{D}$ is dual $D$, we have that $\tilde{D}_i\cup\tilde{D}_b$ is dual to $D_i\cup D_b$. More precise, $\tilde{D}_i=\ast D_i$ and $\tilde{D}_b = \ast D_b$. The individual parts do not need to be cell complexes themselves. In case $D_i$ is a cell complex on a manifold with boundary, then $\tilde{D}_i$ is not a cell complex, because not all faces of the $k$-cells in $\tilde{D}_i$, $k=1,\hdots,n$, are also in $\tilde{D}_i$. 
Examples can be found in \cite{thorpe1976lecture} and Examples~\ref{ex:1D_boundary_of_cell_complex} and \ref{ex:2D_boundary_of_cell_complex} below.

\begin{definition}\textbf{(The boundary of cell complexes)}\label{def:Boundary_of_cell_complexes}
Let $D$ be a cell complex. If $\ast D$ is not a cell complex, let $\tilde{D}$ be the smallest cell complex which contains all the $k$-cells of $*D$. All the $k$-cells in $\tilde{D} \setminus *D$ form a $(n-1)$-dimensional cell complex, $\tilde{D}_b:=\partial \tilde{D}$ called the {\em boundary of $\tilde{D}$}. The dual cells of $\partial \tilde{D}$ with respect to the $(n-1)$-dimensional embedding space, $D_b:=\partial D = \ast ( \partial \tilde{D} )$ form a $(n-1)$-dimensional subcomplex of $D$, called the boundary of $D$. A dual cell in $\tilde{D}_b$ is given by $\tilde{\tau}_{(n-1-k)}=*\tau_{(k)}$. The boundary and its dual are a cell complexes, because the boundary of the boundary is empty and Definition~\ref{def:dualgridwithout}.
\end{definition}

\begin{example}[\textbf{1D primal- and dual- cell complex}]\label{ex:1D_boundary_of_cell_complex}
Let the interior part of the primal cell complex, $D_i$, be given by three 0-cells, connected by two 1-cells, see \figref{fig:dualcellcomplex1D}. Then its dual, $\tilde{D}_i$, consists of two 0-cells and three 1-cells. The outer two 1-cells are open ended, and therefore $\tilde{D}_i$ is not a cell complex. To make it a cell complex, $\tilde{D}_i$ must be closed by adding the boundary cells, $\tilde{D}_b$. The boundary part $\tilde{D}_b$ consists of two 0-cells. The dual of $\tilde{D}_b$ are the boundary cells of the primal cell complex. In this one-dimensional example, $D_b$ also consists of two 0-cells. Note that for the primal cell complex, the boundary cells in $D_b$ coincide with the cells in $D_i$. Although the boundary points in $D_b$ were already contained in $D_i$, the addition of {\em orientation} requires us to treat the boundary points as distinct points from $D_i$. This is because outer orientation of a point in a 1D embedding space differs from outer orientation of a point in a 0D embedding space, see \exampleref{ex:outer_orientation_points}. So $D_i$ and $D_b$ together form the primal cell complex $D$, and $\tilde{D}_i$ and $\tilde{D}_b$ together form the dual cell complex $\tilde{D}$.
\begin{figure}[htp]
\centering
\includegraphics[width=0.7\textwidth]{./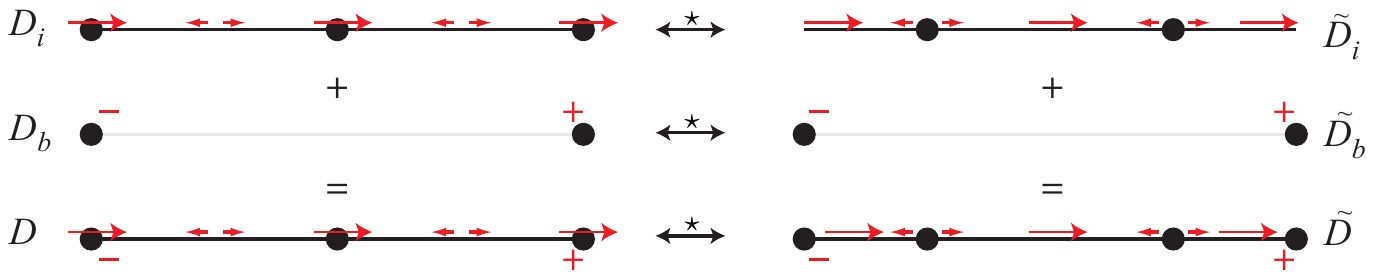}
\caption{Primal and dual cell complexes, split into their interior and boundary part. The corresponding orientation to all $k$-cells are indicated. Note that the end-points in $D$ have two types of orientation.}
\label{fig:dualcellcomplex1D}
\end{figure}
\end{example}

\begin{example}[\textbf{2D primal- and dual cell complex}]\label{ex:2D_boundary_of_cell_complex}
Now consider in two dimensions the interior part of the primal cell complex, $D_i$, as given in \figref{fig:dualcellcomplex2D}. The corresponding dual is the interior part $\tilde{D}_i$, of the dual cell complex $\tilde{D}$. Then $\tilde{D}$ becomes a cell complex by adding the boundary cells $\tilde{D}_b$. Note that there do \emph{not} exist 0-cells at the corners. The dual of $\tilde{D}_b$ are the boundary cells $D_b$. They again coincide with the cells in $D_i$. Although $D_b$ is already contained in $D_i$ as points and lines, the role of the points and line segments is completely different. This is indicated by the orientations in \figref{fig:dualcellcomplex2D}. The outer orientation along the boundary -- a 1D cell complex -- differs from the outer orientation of these same geometric objects considered as elements from $D_i$ embedded in 2D.
\begin{figure}[htb]
\centering
\includegraphics[width=0.6\textwidth]{./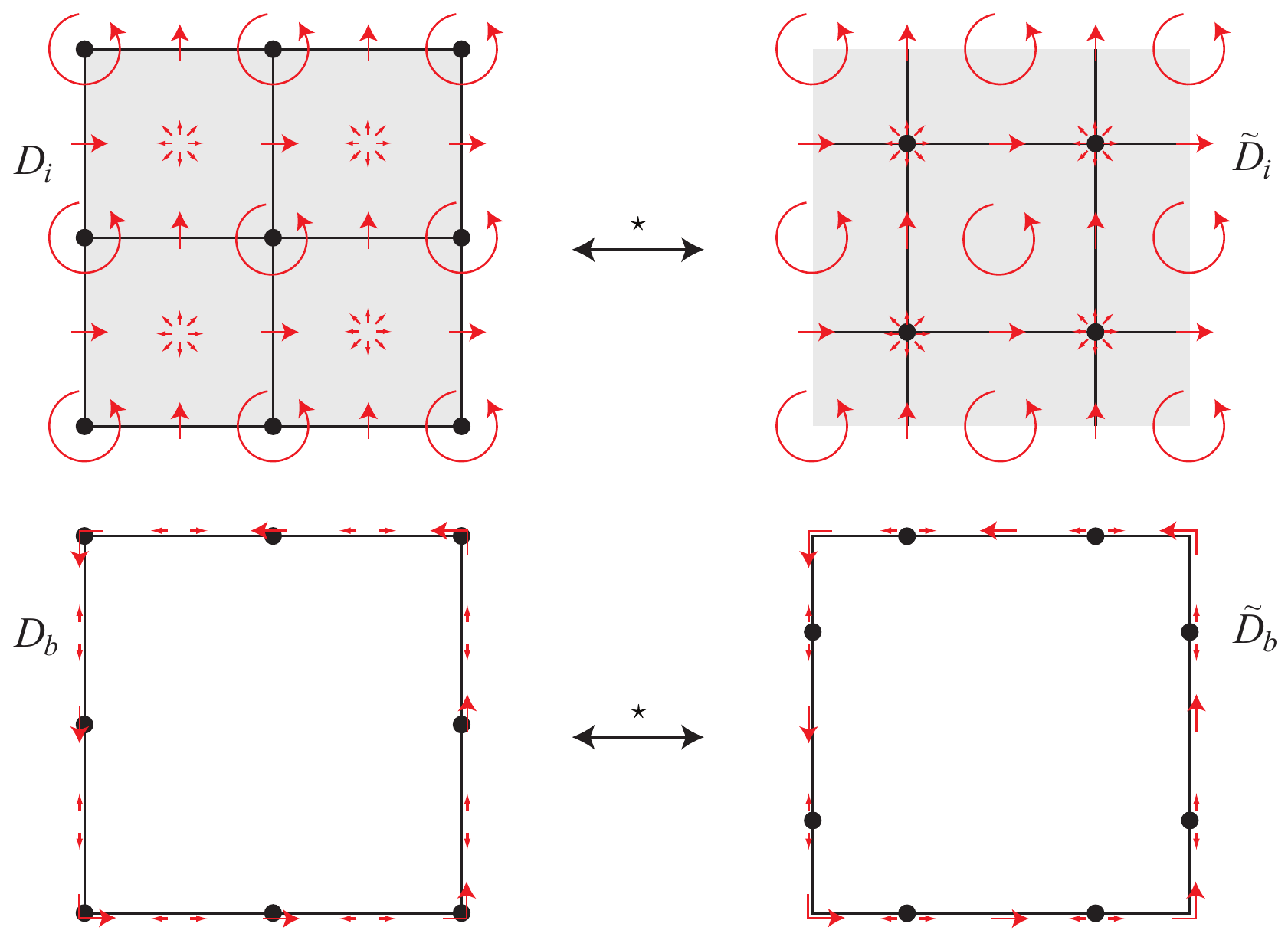}
\caption{Interior and boundary parts of the primal and dual cell complexes, $D=D_i\cup D_b$ and $\tilde{D}=\tilde{D}_i\cup\tilde{D}_b$, respectively. Orientation of all cells are included.}
\label{fig:dualcellcomplex2D}
\end{figure}
\end{example}

\begin{remark}
The boundaries $D_b$ and $\tilde{D}_b$ are boundaryless cell complexes.
\end{remark}

%\subsubsection{Inclusion of the boundary of a cell complex}
The construction of the boundary of a cell complex and its dual given above, indicates that the boundary of a cell complex is disjoint from the cell complex itself. In order to formally 'glue' the boundary to its cell complex, we introduce the chain inclusions $\imath_\sharp\,:\, C_k\left (\partial D \right ) \rightarrow C_k \left ( D \right )$ and $\tilde{\imath}_\sharp\,:\, \partial C_k(\tilde{D})\rightarrow C_k(\tilde{D})$, such that, for $k=0,\dots, n-1,$
\begin{equation}
\imath_\sharp \left ( \tau_{(k)} \in C_k \left ( \partial D \right ) \right ) = \tau_{(k)} \in C_k \left ( D \right )\;,
\quad\mathrm{and}\quad
\tilde{\imath}_\sharp \left ( \tilde{\tau}_{(k)} \in C_k ( \partial \tilde{D} )  \right ) = \tilde{\tau}_{(k)} \in C_k (\tilde{D})\;.
\end{equation}
Chain inclusions $\imath_\sharp$ and $\tilde{\imath}_\sharp$ are the discrete analogues of the continuous inclusion map defined in Definition~\ref{def:inclusion_map}. Using duality pairing between chains and cochains, we can define the associated cochain maps $\imath^\sharp\,:\, C^k\left ( D \right )\rightarrow C^k \left (\partial D \right )$ and $\tilde{\imath}^{\,\sharp}: C^k(\tilde{D})\rightarrow C^k( \partial \tilde{D})$ of the inclusion maps.
\begin{definition}[\textbf{Discrete trace operator}]
For all $c^{(k)}\in C^k( D)$, there must exist $b^{(k)} \in C^k(\partial D)$, such that
\begin{equation}
\left \langle b^{(k)}, c_{(k)} \right \rangle = \left \langle c^{(k)},\imath_\sharp ( c_{(k)} ) \right \rangle \;,\;\;\; \forall c_{(k)} \in C_k\left ( \partial D \right ) \;.
\end{equation}
The map $\imath^\sharp$ is then defined by
\begin{equation}
\imath^\sharp \left ( c^{(k)} \right ) = b^{(k)} \,.
\end{equation}
The cochain map $\tilde{\imath}^{\,\sharp}$ is defined similarly.
The cochain maps $\imath^\sharp$ and $\tilde{\imath}^{\,\sharp}$ are the discrete analogues of the of the pullback of the inclusion map given in Definition~\ref{def::diffGeometry_trace_operator}. We therefore will write $\tr$ and $\tilde{\tr}$ instead of $\imath^\sharp$ and $\tilde{\imath}^{\,\sharp}$.
\end{definition}

\begin{remark}
Note that the inclusion chain maps can be defined more generally: Let $K$ be a sub-cell complex in $D$, then we can define the map $\imath_\sharp : C_k(K) \rightarrow C_k(D)$ and its associated dual operation on cochains. Although useful in some applications, in this paper we restrict ourselves to $K=\partial D$.
\end{remark}

If we associate the primal complex $D$ with a cell complex endowed with outer orientation and the dual complex $\tilde{D}$ with a cell complex endowed with inner orientation, we have

\begin{definition} \textbf{(Tangent $k$-cochains)}
A $k$-cochain $\kcochain{c}{k} \in \kcochainspacedomain{k}{D}$ is called {\em parallel} or {\em tangent} to the cell complex $D$, if $\tr\left ( \kcochain{c}{k} \right ) = \kcochain{0}{k}$. We will denote the set of all tangent $k$-cochains on a given cell complex $D$ by $C_t^k(D)$.
\end{definition}

\begin{definition} \textbf{(Normal $k$-cochains)}
A $k$-cochain $\tilde{c}^{(k)} \in \kcochainspacedomain{k}{\tilde{D}}$ is called {\em perpendicular} or {\em normal} to the cell complex $\tilde{D}$, if $\tr \left ( \tilde{c}^{(k)} \right ) = \tilde{0}^{(k)}$. We will denote the set of all normal $k$-cochains on the cell complex $\tilde{D}$ by $C_n^k(\tilde{D})$.
\end{definition}

%\subsection{Incidence matrices}
If the dual cells are labeled with the same number as the associated primal cells, and if the orientations of primal and dual cells are in agreement with the orientation of the embedding space, then the incidence matrix $\tilde{\mathsf{E}}_{(p-1,p)}$ on the dual complex is related to the incidence matrices on the primal complex by
\begin{equation}
\tilde{\mathsf{E}}_{(p-1,p)}=\mathsf{E}^T_{(n-p,n-p+1)},\quad p=1,\hdots,n.
\label{Relation_incidence_on_dual_and_primal}
\end{equation}
\begin{remark}
This seemingly uninteresting relation has quite some consequences for numerical operators to be developed. The incidence matrices encode the boundary operator, $\partial$, but by Stokes equation, also the coboundary operator, $\delta$. The coboundary operator is the discrete analogue of the exterior derivative. Take for instance $p=1$ in \eqref{Relation_incidence_on_dual_and_primal}, then $\tilde{\mathsf{E}}^{(1,0)}=\tilde{\mathsf{E}}^T_{(0,1)}$ is the discrete analogue of the exterior derivative acting on 0-forms. We identified this operator with the gradient operator.
\begin{equation}
\mathsf{E}^{(n,n-1)}=\left(\mathsf{E}_{(n-1,n)}\right)^T\stackrel{\eqref{Relation_incidence_on_dual_and_primal}}{=}\tilde{\mathsf{E}}_{(0,1)}=\left(\tilde{\mathsf{E}}^{(1,0)}\right)^T.
\label{symmetry_vector_operations}
\end{equation}
The coboundary operator acting on $(n-1)$-cochains discretely represents the exterior derivative acting on $(n-1)$-forms, i.e. the divergence operator. Therefore, \eqref{symmetry_vector_operations}, states that the discrete divergence operator (on the primal complex) is the transpose of the discrete gradient (on the dual complex). In vector calculus it says $\mathrm{div}=-\mathrm{grad}^T$. The minus sign is a consequence of the fact that orientation is not taken consistently into account in vector calculus. Furthermore, vector calculus does not reveal that these two operators act on spaces with a different type of orientation (inner and outer). The notion of inner- and outer-oriented complexes will lead to staggered grids.
\end{remark}

\begin{remark}
Consider in 2D the discrete gradient operator, applied to the dual cell-complex. Then the incidence matrix $\tilde{\mathsf{E}}^{(1,0)}$ relates the 0- and 1-cells as shown in \figref{fig:gradientdual2D}. Note that the discrete gradient does not give rise to 1-cells between the 0-cells on the boundary.
\begin{figure}[htb]
\centering
\includegraphics[width=0.2\textwidth]{./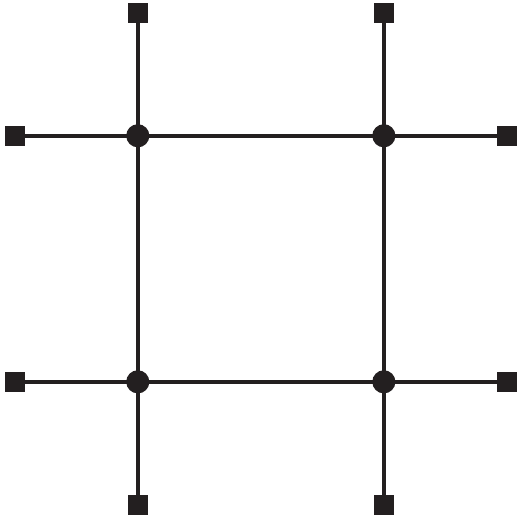}
\caption{A sub-cell-complex showing all 0- and 1-cells that are involved in the discrete gradient on the dual cell-complex in 2D. It consists of four internal 0-cells (\textbullet), eight boundary 0-cells (\tiny$\blacksquare$\normalsize) and twelve line segments.}
\label{fig:gradientdual2D}
\end{figure}
\end{remark}

\section{Mimetic operators}
\label{mimeticoperators}
%==============================================================================
The general philosophy in this section is to develop projections and coprojections, denoted by $\pi$ and $\pi^\star$, respectively, onto a finite dimensional space of differential forms, $\Lambda_h^k(\Omega;C_k)$, such that an operation $T$ at the finite dimensional space $\Lambda_h^k(\Omega;C_k)$, behaves in the same way as in the continuous level, $\Lambda^k(\Omega)$. By `behaves in the same way' we mean that either $\pi \circ T = T \circ \pi$ or $\pi^\star \circ T = T \circ \pi^\star$, i.e. when one of the following diagrams commute
\[
\begin{CD}
\Lambda^k(\Omega) @>T>> \Lambda^{l}(\Omega)\\
@V\pi VV @V\pi VV\\
\Lambda_h^k(\Omega;C_k) @>T>> \Lambda_h^{l}(\Omega;C_l)
\end{CD}
\quad\quad\quad\quad\quad\quad\quad
\begin{CD}
\Lambda^k(\Omega) @>T>> \Lambda^{l}(\Omega)\\
@V\pi^\star VV @V\pi^\star VV\\
\Lambda_h^k(\Omega;C_k) @>T>> \Lambda_h^{l}(\Omega;C_l).
\end{CD}
\]
%Since we are working towards a finite dimensional representation, the $n$-dimensional manifold $\Omega$ is considered to be the computational domain.
In the previous sections, we denoted a manifold with either $\manifold{M}$ or $\manifold{N}$. In computational engineering it is customary to denote the manifold -- or computational domain -- by $\Omega$. Therefore, from now on we will refer to the manifold as $\Omega$.

\subsection{Reduction, reconstruction and projection operator}
A discretization involves a projection operator, $\pi$, from the complete space $\Lambda^k(\Omega)$ to a subspace $\Lambda^k_h(\Omega;C_k)\subset\Lambda^k(\Omega)$. In this subspace we are able to express differential forms in terms of $k$-cochains defined on $k$-chains, and corresponding $k$-cochain interpolation forms (often called basis functions or basis forms). There is a clear relation between the continuous representation using differential geometry and the discrete expressions in algebraic topology. There exists a reduction operator, $\reduction$, that integrates the $k$-forms on $k$-chains to get $k$-cochains. On the other hand, there is a reconstruction operator, $\reconstruction$, to reconstruct $k$-forms from $k$-cochains using appropriate basis forms. These mimetic operators were already introduced before in the work of Hyman and Scovel \cite{HymanScovel90} and in Bochev and Hyman \cite{bochev2006principles}. A composition of the two operators gives the projection operator $\pi=\reconstruction\circ\reduction$ as illustrated below.
\begin{diagram}
\Lambda^k(\Omega)&\rTo^{\quad\pi\quad}& \Lambda^k_h(\Omega;C_k)\\
\dTo^{\reduction} & \ruTo_{\reconstruction} & \\
C^k(D) & &
\end{diagram}
These three operators together set up the mimetic framework. The coprojections cannot be written directly in terms of reduction and reconstruction. However coprojections are expressed in terms of Hodge-$\star$ operators and projections. The latter are written in terms of reduction and reconstruction.
% In this section we focus on the two subjects: 1). the discretization of the solution, involving the reduction, reconstruction and projection, and 2). the discretization of the differential geometry operators discussed in \secref{sec:DifferentialGeometry}, where the operators act on finite dimensional differential forms in subspace $\Lambda^k_h(\Omega;C_k)$.
In this section we focus on the discretization of the differential forms, involving the reduction, reconstruction and projection operators.

\begin{definition}
\label{def:reduction}
The \emph{reduction operator} $\reduction:\kformspacedomain{k}{\Omega}\rightarrow \kcochainspacedomain{k}{D}$ is a homomorphism that maps differential forms to cochains. This linear map is also called the {\em De Rham map} and is defined by integration as
\begin{equation}
\duality{\reduction \kdifform{a}{k}}{\tau_{(k)}}:=\int_{\tau_{(k)}}\kdifform{a}{k},\quad \forall\tau_{(k)}\in C_k(D).
\label{reduction}
\end{equation}
Then for all $\kchain{c}{k}\in C_k(D)$, the reduction of the $k$-form, $\kdifform{a}{k}\in\Lambda^{k}(\Omega)$, to the $k$-cochain, $\kcochain{a}{k}\in C^k(D)$, is given by
% \begin{equation}
% \duality{\reduction\kdifform{a}{k}}{\kchain},
% \end{equation}
\begin{equation}
\kcochain{a}{k}(\kchain{c}{k}):=\duality{\reduction\kdifform{a}{k}}{\kchain{c}{k}}\stackrel{\rm Def.~\ref{def::kchain}}{=}\sum_ic^i\duality{\reduction \kdifform{a}{k}}{\tau_{(k),i}}\stackrel{\eqref{reduction}}{=}\sum_ic^i\int_{\tau_{(k),i}}\kdifform{a}{k}=\int_{\kchain{c}{k}}\kdifform{a}{k}.
\end{equation}
\end{definition}
\noindent
It is the integration of a $k$-form over all $k$-cells in a $k$-chain that results in a $k$-cochain. Note that since $\tau_{(k),i}$ is compact and $C^k(D)$ is a finite dimensional free Abelian group, the reduction operator $\mathcal{R}$ is well-defined for $a^{(k)}=a(x_1,\hdots,x_n)\ederiv x^{i_1}\wedge\hdots\wedge\ederiv x^{i_k}$ with $a(x_1,\hdots,x_n)\in L_{\mathrm{loc}}(\Omega)$. 
\begin{definition}[\textbf{Integration over the domain $\Omega$}]
Let $\kdifform{a}{n}$ be any $n$-form, then the integral
\[
\int_\Omega\kdifform{a}{n}:=\duality{\reduction\kdifform{a}{n}}{\boldsymbol\omega_{(n)}}\;,
\]
where the chain $\boldsymbol\omega_{(n)}=\sum_i\tau_{(n),i}$ (so all $c^i=+1$) covers the entire computational domain $\Omega$.
\end{definition}
We can list the following properties of the reduction map. First of all, the reduction operator is a non-injective and surjective map.
\begin{definition}
\label{def:equivalence}
Consider two $k$-forms $\kdifform{a}{k}_1,\kdifform{a}{k}_2\in\kformspace{k}(\Omega)$ and $\kdifform{a}{k}_1\neq\kdifform{a}{k}_2$, they are in the same equivalence class if
\begin{equation}
\reduction\kdifform{a}{k}_1=\reduction\kdifform{a}{k}_2\quad\Leftrightarrow\quad\reduction\left(\kdifform{a}{k}_1-\kdifform{a}{k}_2\right)=0.
\end{equation}
\end{definition}
\noindent
Secondly, the reduction map commutes with respect to differentiation.
\begin{lemma}
The reduction map has a commuting property with respect to continuous and discrete differentiation,
\begin{equation}
\reduction\ederiv=\dederiv\reduction\quad\mathrm{on}\ \Lambda^k(\Omega).
\label{cdp1}
\end{equation}
This commutation can be illustrated as
\[\begin{CD}
\Lambda^k @>\ederiv>> \Lambda^{k+1}\\
@VV\reduction V @VV\reduction V  \\
C^k @>\delta>> C^{k+1}
\end{CD}\]
\begin{proof}
This property can be proven using Stokes' Theorem \eqref{eq::difGeom_stokes_theorem} and the duality property \eqref{eq::algTop_codifferential_dual},
\begin{equation}
\langle\reduction\ederiv \kdifform{a}{k},\kchain{c}{k}\rangle\stackrel{\eqref{reduction}}{=}\int_{\kchain{c}{k}}\ederiv \kdifform{a}{k}\stackrel{\eqref{eq::difGeom_stokes_theorem}}{=}\int_{\partial\kchain{c}{k}}\kdifform{a}{k}\stackrel{\eqref{reduction}}{=}\langle\reduction \kdifform{a}{k},\partial\kchain{c}{k}\rangle\stackrel{\eqref{eq::algTop_codifferential_dual}}{=}\langle\dederiv\reduction \kdifform{a}{k},\kchain{c}{k}\rangle.
\end{equation}
\end{proof}
\end{lemma}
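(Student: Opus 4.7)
The plan is to exploit the three pillars introduced earlier in the excerpt: the integral definition of the reduction map (De Rham map), the generalized Stokes' Theorem for the continuous exterior derivative, and the duality-based definition of the coboundary operator as the formal adjoint of the boundary operator. Since $C^k(D)$ is finite dimensional and a cochain is determined by its action on chains, it suffices to show that $\langle\reduction\ederiv \kdifform{a}{k},\kchain{c}{k}\rangle = \langle\dederiv\reduction \kdifform{a}{k},\kchain{c}{k}\rangle$ for every $\kchain{c}{k} \in \kchainspace{k}(D)$, and then invoke non-degeneracy of the duality pairing between chains and cochains.

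First I would fix an arbitrary $k$-chain $\kchain{c}{k}\in\kchainspace{k}(D)$ and any $\kdifform{a}{k}\in\kformspace{k}(\Omega)$, and unfold the left-hand side using \defref{def:reduction}:
\[
\langle\reduction\ederiv\kdifform{a}{k},\kchain{c}{k}\rangle = \int_{\kchain{c}{k}}\ederiv\kdifform{a}{k},
\]
where the integral over the chain $\kchain{c}{k}=\sum_i c^i \tau_{(k),i}$ is understood as the signed sum $\sum_i c^i \int_{\tau_{(k),i}}\ederiv\kdifform{a}{k}$ so that linearity of integration is transparent. Next, I would apply the generalized Stokes' Theorem (\theoremref{theor::difGeom_stokes_theorem}) cell-by-cell to transfer the exterior derivative onto the boundary operator of the chain:
\[
\int_{\kchain{c}{k}}\ederiv\kdifform{a}{k} = \sum_i c^i \int_{\partial\tau_{(k),i}}\kdifform{a}{k} = \int_{\partial\kchain{c}{k}}\kdifform{a}{k}.
\]
Here I implicitly use that $\partial$ distributes over a chain (\defref{algebraic::boundary_operator}) and that the boundary orientations are consistent with the front/back face definitions of \defref{def:Boundary_on_k_cube}, which is precisely what makes Stokes' Theorem assemble correctly.

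Then I would recognize the right-hand side as the reduction of $\kdifform{a}{k}$ paired with the boundary of $\kchain{c}{k}$, and apply the defining property of the coboundary operator as the formal adjoint of $\partial$ (\defref{def:coboundary_operator}):
\[
\int_{\partial\kchain{c}{k}}\kdifform{a}{k} = \langle\reduction\kdifform{a}{k},\partial\kchain{c}{k}\rangle = \langle\dederiv\reduction\kdifform{a}{k},\kchain{c}{k}\rangle.
\]
Since $\kchain{c}{k}$ was arbitrary, the cochains $\reduction\ederiv\kdifform{a}{k}$ and $\dederiv\reduction\kdifform{a}{k}$ agree on every basis element of $\kchainspace{k}(D)$, and therefore agree as elements of $\kcochainspace{k}(D)$. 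The argument proceeds verbatim for every $\kdifform{a}{k}\in\kformspace{k}(\Omega)$.

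There is no genuine obstacle: the result is essentially Stokes' Theorem translated through the De Rham pairing. The only subtle point is verifying that the orientations used in the definition of the singular-cube boundary (\defref{def:boundary_k_cell}) are compatible with the induced boundary orientation demanded by Stokes' Theorem; this is exactly the content of \propref{prop:boundary_indep_chart} applied to the inclusion maps $A_i$ and $B_i$, so it comes for free from the setup.
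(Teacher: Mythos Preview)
Your proof is correct and follows essentially the same approach as the paper: both arguments consist of the single chain of equalities
\[
\langle\reduction\ederiv\kdifform{a}{k},\kchain{c}{k}\rangle
= \int_{\kchain{c}{k}}\ederiv\kdifform{a}{k}
= \int_{\partial\kchain{c}{k}}\kdifform{a}{k}
= \langle\reduction\kdifform{a}{k},\partial\kchain{c}{k}\rangle
= \langle\dederiv\reduction\kdifform{a}{k},\kchain{c}{k}\rangle,
\]
using the definition of $\reduction$, Stokes' Theorem, and the adjointness of $\partial$ and $\dederiv$. Your additional remarks on the cell-by-cell linearity and on orientation compatibility are sound elaborations but do not change the argument.
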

\noindent
Thirdly, the reduction map commutes with the pullback, $\Phi^\star$, and the cochain map, $\Phi^\sharp$.
\begin{lemma}
Let $\Phi:\Omega_\mathcal{M}\rightarrow\Omega_\mathcal{N}$ be a continuous map between two manifolds, let $\Phi_\sharp:C_k(D_\mathcal{M})\rightarrow C_k(D_\mathcal{N})$ be the associated chain map of $k$-chains between two cell complexes and let $D_\mathcal{M}$ be a covering of the manifold $\Omega_\mathcal{M}$ and $D_\mathcal{N}$ the covering of $\Omega_\mathcal{N}$, according to \defref{cellcomplex}. Then the reduction map commutes with the continuous and discrete pullback,
\begin{equation}\label{commutation_reduction_map}
\reduction \pullback = \Phi^\sharp \reduction \quad \mathrm{on}\ \Lambda^k(\Omega_\mathcal{N}).
\end{equation}
This commutation is illustrated by
\[\begin{CD}
\Lambda^k(\Omega_\mathcal{N}) @>\pullback>> \Lambda^k(\Omega_\mathcal{M})\\
@VV\reduction V @VV\reduction V  \\
\kcochainspacedomain{k}{D_\mathcal{N}} @>\Phi^\sharp>> \kcochainspacedomain{k}{D_\mathcal{M}}
\end{CD}\]
where $\pullback$ is defined by Definition~\ref{def:pullback} and $\Phi^\sharp$ is defined in Definition~\ref{Induced_cochain_map}.
\begin{proof} For all $\kchain{c}{k} \in C_k(D_\mathcal{M})$ and for all $\kdifform{a}{k}\in\Lambda^k(\Omega_\mathcal{N})$ we have
\[
\langle \reduction \pullback \kdifform{a}{k},\kchain{c}{k}\rangle \stackrel{\eqref{reduction}}{=} \int_{\kchain{c}{k}} \pullback a^{(k)}\stackrel{\eqref{eq::diffGeom_pullback_integral_manifolds}}{=} \int_{\Phi_\sharp \kchain{c}{k}} \kdifform{a}{k} \stackrel{\eqref{reduction}}{=} \langle \reduction \kdifform{a}{k}, \Phi_\sharp \kchain{c}{k} \rangle \stackrel{\mathrm{Def.}\;\ref{Induced_cochain_map}}{=} \langle \Phi^\sharp \reduction \kdifform{a}{k},  \kchain{c}{k} \rangle.
\]
\end{proof}
\end{lemma}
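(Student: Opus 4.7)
The plan is to verify the cochain identity $\reduction \pullback \kdifform{a}{k} = \Phi^\sharp \reduction \kdifform{a}{k}$ by testing both sides against an arbitrary $k$-chain $\kchain{c}{k} \in C_k(D_\mathcal{M})$. Since a cochain in $C^k(D_\mathcal{M})$ is completely determined by its values on basis $k$-cells (equivalently, by its duality pairing with every $k$-chain), it suffices to show
\[
\langle \reduction \pullback \kdifform{a}{k}, \kchain{c}{k} \rangle = \langle \Phi^\sharp \reduction \kdifform{a}{k}, \kchain{c}{k} \rangle, \qquad \forall \kchain{c}{k} \in C_k(D_\mathcal{M}),\ \forall \kdifform{a}{k} \in \Lambda^k(\Omega_\mathcal{N}).
\]

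For the left-hand side, I would expand using the definition of the reduction operator (Definition~\ref{def:reduction}) to rewrite $\langle \reduction \pullback \kdifform{a}{k}, \kchain{c}{k}\rangle = \int_{\kchain{c}{k}} \pullback \kdifform{a}{k}$. The essential step is then to invoke Proposition~\ref{prop::pullback_integral}, which expresses the formal adjointness between the mapping $\Phi$ and its pullback under the duality pairing, so that $\int_{\kchain{c}{k}} \pullback \kdifform{a}{k} = \int_{\Phi_\sharp \kchain{c}{k}} \kdifform{a}{k}$; this is where the chain map defined in Definition~\ref{def:chain_map_f_sharp} enters, since $\Phi_\sharp$ acts on each $k$-cube $\tau_{(k),i}$ exactly by composition with $\Phi$. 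For the right-hand side, I would apply the defining property of the induced cochain map (Definition~\ref{Induced_cochain_map}) to transfer the action across the duality pairing, giving $\langle \Phi^\sharp \reduction \kdifform{a}{k}, \kchain{c}{k}\rangle = \langle \reduction \kdifform{a}{k}, \Phi_\sharp \kchain{c}{k}\rangle$, which is again $\int_{\Phi_\sharp \kchain{c}{k}} \kdifform{a}{k}$ by the definition of reduction. Matching the two yields the claim.

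The main obstacle is not conceptual but bookkeeping: Proposition~\ref{prop::pullback_integral} is stated on a single submanifold, whereas here the chain $\kchain{c}{k} = \sum_i c^i \tau_{(k),i}$ is a formal linear combination of oriented $k$-cubes, so one must rely on linearity of integration together with the fact that $\Phi_\sharp \tau_{(k),i} = \Phi \circ \tau_{(k),i}$ to justify the identity on a cell-by-cell basis before summing. Once that compatibility is granted, the argument is essentially the commutative diagram being traversed in the two possible orders, with all horizontal arrows reducing to integration of $\kdifform{a}{k}$ over the pushed-forward chain $\Phi_\sharp \kchain{c}{k}$.
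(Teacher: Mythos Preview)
Your proposal is correct and follows exactly the same route as the paper's proof: test against an arbitrary chain, unwind the reduction pairing as an integral, apply the pullback--integration identity from Proposition~\ref{prop::pullback_integral}, and then use Definition~\ref{Induced_cochain_map} to recognize the result as $\Phi^\sharp \reduction \kdifform{a}{k}$. Your remark about extending Proposition~\ref{prop::pullback_integral} from a single submanifold to a formal chain by linearity is a fair point that the paper leaves implicit.
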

The operator acting in opposite direction to the reduction operator is the reconstruction operator, $\reconstruction$, which maps $k$-cochains onto finite dimensional $k$-forms, and is defined as follows.
\begin{definition} \label{reconstructionoperator}
The \emph{reconstruction operator} $\reconstruction:\kcochainspacedomain{k}{D}\rightarrow\kformspace{k}_h(\Omega;C_k)$, also called the \emph{Whitney map}, is an isomorphism 
% bijective, linear operator 
that maps cochains back to differential forms. The reconstructed differential forms belong to the space $\Lambda^k_h(\Omega;C_k)$, which is a proper subset of the complete $k$-form space $\Lambda^k(\Omega)$. While the reduction step is clearly defined in \defref{def:reduction}, in the choice of interpolation forms there exists some freedom. Although the choice of a reconstruction method allows for some freedom, $\reconstruction$ must satisfy the following properties:
\begin{itemize}
\item Reconstruction $\reconstruction$ must be the right inverse of $\reduction$, so it returns identity ({\it consistency property}),
\begin{equation}
\reduction\reconstruction=Id\quad\mathrm{on}\ C^k(D).
\label{consistency}
\end{equation}
\item Like $\reduction$, also the reconstruction operator $\reconstruction$ has to possess a commuting property with respect to differentiation. A properly chosen reconstruction operator $\reconstruction$ must satisfy a commuting property with respect to the exterior derivative and coboundary operator,
\begin{equation}
\ederiv\reconstruction=\mathcal{I}\dederiv\quad\mathrm{on}\ C^k(D).
\label{cdp2}
\end{equation}
This commutation can be illustrated as
\[\begin{CD}
\kformspaceh{k} @>\ederiv>> \kformspaceh{k+1}\\
@AA\reconstruction A @AA\reconstruction A  \\
\kcochainspace{k} @>\delta>> \kcochainspace{k+1}
\end{CD}\]
\item The reduction should commute with continuous and discrete pullback, i.e.
\begin{equation} \label{commutation_reconstruction_map}
\reconstruction \Phi^\sharp=\pullback\reconstruction\quad\mathrm{on}\ C^k(D_\mathcal{N}).
\end{equation}
This commutation relation can be represented by
\[\begin{CD}
\kformspacedomainh{k}{\Omega_\mathcal{N};C_k} @>\Phi^\star >> \kformspacedomainh{k}{\Omega_\mathcal{M};C_k}\\
@AA\reconstruction A @AA\reconstruction A  \\
\kcochainspacedomain{k}{D_\mathcal{N}} @>\Phi^\sharp>> \kcochainspacedomain{k}{D_\mathcal{M}}
\end{CD}\]
\item Let $\tilde{D}_i=*D_i$ be as defined in Definition~\ref{def:Boundary_of_cell_complexes}. For all $\kcochain{\tilde{c}}{n-k}\in C^{n-k}(\tilde{D}_i)$ there should exist a $\kcochain{c}{k}\in C^k(D_i)$, such that
\begin{equation}
\tilde{\reconstruction}\kcochain{\tilde{c}}{n-k}=\reconstruction\kcochain{c}{k},
\label{eq:tildeIc_n-k=Ick}
\end{equation}
where $\tilde{\reconstruction}$ reconstructs cochains on $\tilde{D}_i$ and $\reconstruction$ reconstructs cochains on the cell complex $D_i$. The same must hold in the opposite direction.

Both should also hold for the boundary part of the primal and dual cell complexes, i.e. for $D_b$ and $\tilde{D}_b$. Let $\tilde{D}_b=*D_b$ be as defined \defref{def:Boundary_of_cell_complexes}. For all $\kcochain{\tilde{c}}{n-1-k}\in C^{n-1-k}(\tilde{D}_b)$, there should exist a $\kcochain{c}{k}\in C^k(D_b)$, such that
\[
\tilde{\reconstruction}\kcochain{\tilde{c}}{n-1-k}=\reconstruction\kcochain{c}{k},
\]
where $\tilde{\reconstruction}$ reconstructs cochains on $\tilde{D}_b$ and $\reconstruction$ reconstructs cochains on $D_b$. Again the same must hold in opposite direction.
% The recontruction should commute with continuous and discrete pullback, i.e.
% \begin{equation} \label{commutation_reconstruction_map}
% \reconstruction f^\sharp = f^\star \reconstruction \quad \mbox{on } C^k(\tilde{\Omega}) \;.
% \end{equation}
% This commutation relation can be represented by
% \[\begin{CD}
% \kchainspacedomain{k}{\tilde{\Omega}} @>f^\sharp >> \kchainspacedomain{k}{\Omega}\\
% @AA\reconstruction A @AA\reconstruction A  \\
% \kformspacedomainh{k}{\tilde{\Omega}} @>f^\star>> \kformspacedomainh{k}{\Omega}
% \end{CD}\]
\end{itemize}
\end{definition}
\begin{remark}
Moreover, we want $\reconstruction$ to be an approximate left inverse of $\reduction$, so the result is close to identity ({\it approximation property}),
\begin{equation}
\reconstruction\reduction=Id+\mathcal{O}\left(h^p\right)\quad \mathrm{in}\ \Lambda^k(\Omega),
\label{approximation}
\end{equation}
where $\mathcal{O}(h^p)$ indicates a truncation error in terms of a measure of the grid size, $h$, and a polynomial order $p$.
\end{remark}

The composition in the last remark gives rise to the definition of a new operator, $\pi_h$, which is a projection operator.
\begin{definition}\label{def:projection}
Define the operator $\pi_h:\Lambda^k(\Omega)\rightarrow\Lambda^k_h(\Omega;C_k)$ as the composition $\reconstruction\circ\reduction$. It allows for an approximate continuous representation of a $k$-form $\kdifform{a}{k}$,
\begin{equation}
\kdifformh{a}{k}=\projection\kdifform{a}{k}=\reconstruction\reduction\kdifform{a}{k}, \quad\projection\kdifform{a}{k}\in\kformspace{k}_h(\Omega)\subset\kformspace{k}(\Omega).
\label{projection}
\end{equation}
where $\reconstruction\reduction \kdifform{a}{k}$ is expressed as a combination of $k$-cochains and interpolating $k$-forms.
\end{definition}
\begin{proposition}\label{prop:projection}
The operator $\pi_h:\Lambda^k(\Omega)\rightarrow\Lambda^k_h(\Omega;C_k)$ is a projection operator.
\begin{proof}%\label{doubleprojection}
For $\projection$ to be a projection, it must be a homomorphism, which is true since both the reduction and the reconstruction operators are homomorphisms, therefore for all $a^{(k)},b^{(k)}\in\Lambda^k(\Omega)$ it holds
\begin{equation}
\projection(\kdifform{a}{k}+\kdifform{b}{k})=\projection \kdifform{a}{k}+\projection \kdifform{b}{k},
\label{linearityprojection}
\end{equation}
and the projection operator must be idempotent. Let $\kdifformh{a}{k}\in\Lambda^k_h(\Omega;C^k)$, then $\projection\kdifformh{a}{k}=\kdifformh{a}{k}$, and so $\projection\equiv Id$ on $\Lambda^k_h(\Omega;C_k)$. For all $\kdifformh{a}{k}\in\Lambda^k_h(\Omega;C_k)$, there exists $\kdifform{a}{k}\in\Lambda^k(\Omega)$ such that $\kdifformh{a}{k}=\projection\kdifform{a}{k}=\reconstruction\reduction\kdifform{a}{k}$, so
\begin{equation}
\reconstruction\reduction\kdifformh{a}{k}=\reconstruction\reduction(\reconstruction\reduction\kdifform{a}{k})\stackrel{\eqref{consistency}}{=}\reconstruction\reduction\kdifform{a}{k}=\kdifformh{a}{k}.
\end{equation}
\end{proof}
\end{proposition}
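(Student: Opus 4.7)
My plan is to verify the two defining requirements of a projection operator: linearity (being a homomorphism) and idempotency. Both follow almost immediately from the properties already established for $\reduction$ and $\reconstruction$, so the proof will be short.

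First I would handle linearity. The reduction map $\reduction$ is a homomorphism by \defref{def:reduction}, since integration is linear in the integrand. The reconstruction map $\reconstruction$ is an isomorphism by \defref{reconstructionoperator}, and in particular a homomorphism. Because the composition of two homomorphisms is itself a homomorphism, $\projection = \reconstruction \circ \reduction$ satisfies $\projection(\kdifform{a}{k} + \kdifform{b}{k}) = \projection \kdifform{a}{k} + \projection \kdifform{b}{k}$ for all $\kdifform{a}{k},\kdifform{b}{k} \in \kformspace{k}(\Omega)$, which is precisely \eqref{linearityprojection}.

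Next I would establish idempotency, namely $\projection \circ \projection = \projection$. Take any $\kdifform{a}{k} \in \kformspace{k}(\Omega)$ and compute
\[
\projection(\projection \kdifform{a}{k}) = \reconstruction \reduction (\reconstruction \reduction \kdifform{a}{k}) = \reconstruction (\reduction \reconstruction)(\reduction \kdifform{a}{k}) \stackrel{\eqref{consistency}}{=} \reconstruction \, Id \, \reduction \kdifform{a}{k} = \reconstruction \reduction \kdifform{a}{k} = \projection \kdifform{a}{k},
\]
where the crucial step uses the consistency property \eqref{consistency} of \defref{reconstructionoperator}, which tells us $\reduction \reconstruction = Id$ on $\kcochainspace{k}(D)$, applied to the intermediate cochain $\reduction \kdifform{a}{k} \in \kcochainspace{k}(D)$.

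There is no real obstacle here: the entire argument is contained in the consistency axiom imposed on the reconstruction in \defref{reconstructionoperator}, together with the elementary observation that a composition of linear maps is linear. The content of the proposition is essentially to record that the axioms for $\reduction$ and $\reconstruction$ were chosen precisely so that $\projection = \reconstruction \circ \reduction$ deserves the name ``projection,'' so the proof is merely a direct verification.
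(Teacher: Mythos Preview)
Your proof is correct and follows essentially the same approach as the paper: both establish linearity from the homomorphism property of $\reduction$ and $\reconstruction$, and both obtain idempotency from the consistency property \eqref{consistency}. The only cosmetic difference is that the paper phrases idempotency as $\projection = Id$ on the image $\Lambda^k_h(\Omega;C_k)$, whereas you compute $\projection\circ\projection = \projection$ directly on $\Lambda^k(\Omega)$; the underlying calculation $\reconstruction(\reduction\reconstruction)\reduction = \reconstruction\reduction$ is identical.
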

\begin{definition}[\textbf{Bounded projection}]\label{def:boundedprojection}
For a projection operator $\pi_h$ to be a useful operator, we require it to be a bounded projection operator, i.e. for $C_1<\infty$,
\begin{equation}
\Vert\pi_h\Vert_{\mathcal{L}(\Lambda^k,\Lambda^k_h)}:=\sup_{\kdifform{a}{k}\in\Lambda^k}\frac{\Vert\projection \kdifform{a}{k}\Vert_{\Lambda^k}}{\Vert \kdifform{a}{k}\Vert_{\Lambda^k}}\leq C_1,
\end{equation}
where $\| \cdot \|_{\Lambda^k}$ is some norm defined on $\Lambda^k(\Omega)$.
\end{definition}
\begin{corollary}
Alternatively, one can write for the bounded projection operator
\begin{equation}
\Vert\projection \kdifform{a}{k}\Vert_{\Lambda^k_h}\leq C_1\Vert \kdifform{a}{k} \Vert_{\Lambda^k}.
\end{equation}
From the triangle inequality applied to $\Vert \kdifform{a}{k}\Vert_{\Lambda^k}$ it follows that there exists $C_2=C_1+1<\infty$, such that
\begin{equation}
\Vert(Id-\projection)\kdifform{a}{k}\Vert_{\Lambda^k_h}\leq C_2\Vert \kdifform{a}{k}\Vert_{\Lambda^k}.
\end{equation}
\end{corollary}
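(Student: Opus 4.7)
The plan is completely straightforward: both inequalities follow directly from the operator-norm definition of bounded projection together with one application of the triangle inequality, so no new analytical machinery is needed. I would proceed in two short steps.

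First, I would unpack the first displayed inequality as a rewriting of Definition~\ref{def:boundedprojection}. Since $\Vert \projection \Vert_{\mathcal{L}(\Lambda^k,\Lambda^k_h)} \leq C_1$ by hypothesis, the supremum characterisation gives, for every nonzero $\kdifform{a}{k} \in \Lambda^k(\Omega)$, the ratio bound $\Vert \projection \kdifform{a}{k} \Vert_{\Lambda^k_h} / \Vert \kdifform{a}{k}\Vert_{\Lambda^k} \leq C_1$, and clearing the denominator yields the claim (the zero case being trivial by linearity of $\projection$, established in Proposition~\ref{prop:projection}). Implicit here is the inclusion $\Lambda^k_h(\Omega;C_k) \subset \Lambda^k(\Omega)$ from Proposition~\ref{prop:projection}, which ensures the norm $\Vert \cdot \Vert_{\Lambda^k_h}$ is the restriction of $\Vert \cdot \Vert_{\Lambda^k}$ and the two sides are directly comparable.

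Second, to obtain the bound on $(Id - \projection)\kdifform{a}{k}$, I would write $(Id - \projection)\kdifform{a}{k} = \kdifform{a}{k} - \projection \kdifform{a}{k}$ and apply the triangle inequality in $\Vert \cdot \Vert_{\Lambda^k}$, then substitute the inequality from the previous step:
\[
\Vert (Id - \projection)\kdifform{a}{k} \Vert_{\Lambda^k_h} \leq \Vert \kdifform{a}{k} \Vert_{\Lambda^k} + \Vert \projection \kdifform{a}{k} \Vert_{\Lambda^k_h} \leq \Vert \kdifform{a}{k} \Vert_{\Lambda^k} + C_1 \Vert \kdifform{a}{k} \Vert_{\Lambda^k} = (1 + C_1) \Vert \kdifform{a}{k} \Vert_{\Lambda^k}.
\]
Setting $C_2 := C_1 + 1$, which is finite because $C_1 < \infty$ by assumption, completes the derivation.

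There is no genuine obstacle; the only point worth flagging is that the triangle inequality must be applied in a norm defined consistently on the ambient space $\Lambda^k(\Omega)$ and its finite-dimensional subspace $\Lambda^k_h(\Omega;C_k)$, which is guaranteed by the subset relation noted above. The corollary is essentially a bookkeeping statement ensuring that bounded projections automatically yield bounded approximation errors with an explicit constant one larger than the projection norm.
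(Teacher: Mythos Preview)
Your proposal is correct and matches the paper's approach: the paper does not give a separate proof but indicates within the statement itself that the second inequality follows from the triangle inequality, which is exactly what you carry out. Your added remark about the norm on $\Lambda^k_h$ being the restriction of the ambient norm is a useful clarification that the paper leaves implicit.
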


\begin{proposition}\label{prop:projdecomposition}
Using the projection operator, the differential form space $\Lambda^k(\Omega)$ can be decomposed into a projected space and its complement, so $\Lambda^k=\Lambda^k_h\oplus\Lambda^{k,c}_h$, and so any $k$-form can be uniquely decomposed as
\begin{equation}
\kdifform{a}{k}=\kdifformh{a}{k}+a^{(k),c}_h=\projection\kdifform{a}{k}+(Id-\projection)\kdifform{a}{k},\quad\forall\kdifform{a}{k}\in\Lambda^k(\Omega).
\end{equation}
Note that $(Id-\projection)$ is also a projection, but now onto the unresolved part. As a consequence of the direct sum decomposition and \propref{prop:projection}, $\projection\circ(Id-\projection)=(Id-\projection)\circ\projection=0$.
\end{proposition}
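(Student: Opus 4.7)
The plan is to verify the decomposition by invoking the idempotence of $\projection$ already established in Proposition~\ref{prop:projection}, and then showing that $Id - \projection$ inherits the same structural properties. First I would note that $Id - \projection$ is linear (it is a difference of two homomorphisms, since linearity of $\projection$ is established in \eqref{linearityprojection}), and then verify idempotence by the direct computation
\[
(Id - \projection)\circ(Id - \projection) = Id - 2\projection + \projection\circ\projection = Id - 2\projection + \projection = Id - \projection,
\]
where the middle equality uses $\projection \circ \projection = \projection$ from Proposition~\ref{prop:projection}. Hence $Id - \projection$ is itself a projection, which I would define as projecting onto the space $\Lambda^{k,c}_h := (Id - \projection)\Lambda^k(\Omega)$ of ``unresolved'' forms.

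Next I would establish the orthogonality-type relations $\projection \circ (Id - \projection) = 0$ and $(Id - \projection)\circ \projection = 0$ by the same kind of one-line computation, again using idempotence of $\projection$:
\[
\projection\circ(Id - \projection) = \projection - \projection\circ\projection = \projection - \projection = 0,
\]
and symmetrically for the other composition. These two identities are the algebraic content needed to conclude that the ranges of $\projection$ and $Id-\projection$ intersect trivially.

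With these pieces in hand, the decomposition $\Lambda^k(\Omega) = \Lambda^k_h \oplus \Lambda^{k,c}_h$ follows from the standard two-step argument for projections. For the sum I would write, for any $\kdifform{a}{k} \in \Lambda^k(\Omega)$, the trivial identity $\kdifform{a}{k} = \projection\kdifform{a}{k} + (Id - \projection)\kdifform{a}{k}$, so that $\Lambda^k_h + \Lambda^{k,c}_h = \Lambda^k(\Omega)$. For the trivial intersection, I would take $\kdifform{b}{k} \in \Lambda^k_h \cap \Lambda^{k,c}_h$; then $\kdifform{b}{k} = \projection \kdifform{c}{k}$ for some $\kdifform{c}{k}$ and also $\kdifform{b}{k} = (Id - \projection)\kdifform{d}{k}$ for some $\kdifform{d}{k}$, and applying $\projection$ to the latter expression gives $\projection\kdifform{b}{k} = \projection(Id-\projection)\kdifform{d}{k} = 0$; but $\kdifform{b}{k} \in \Lambda^k_h$ means $\projection \kdifform{b}{k} = \kdifform{b}{k}$ by idempotence, forcing $\kdifform{b}{k}=0$. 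Uniqueness of the decomposition then follows automatically from the direct sum structure.

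Honestly there is no real obstacle here: the proposition is an instance of the standard linear-algebraic fact that any projection on a vector space induces a direct sum decomposition into its range and the range of the complementary projection. The only ``non-trivial'' ingredient is the idempotence $\projection\circ\projection = \projection$, which has already been done in Proposition~\ref{prop:projection}, so the entire argument reduces to bookkeeping on a few elementary identities.
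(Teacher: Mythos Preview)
Your proof is correct and is exactly the standard linear-algebra argument. The paper itself does not supply a separate proof for this proposition; it treats the decomposition and the identities $\projection\circ(Id-\projection)=(Id-\projection)\circ\projection=0$ as immediate consequences of the idempotence established in Proposition~\ref{prop:projection}, so you have simply written out the details the paper leaves implicit.
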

The projection is not an orthogonal projection and therefore the complement space is not orthogonal to the projected space, so $\Lambda^{k,c}_h\neq\Lambda^{k,\perp}_h$. This will be demonstrated in \exampleref{galerkinprojection} in the next section. As a consequence the projection, $\pi_h$, is not self-adjoint,
\begin{equation}
\inner{\projection\kdifform{a}{k}}{\kdifform{b}{k}}\neq\inner{\kdifform{a}{k}}{\projection\kdifform{b}{k}},\quad\forall\kdifform{a}{k},\kdifform{b}{k}\in\Lambda^k(\Omega).
\end{equation}

%Despite of the decomposition in \propref{prop:projdecomposition}, both $\kdifform{a}{k}\in\Lambda^k$ and its projected part $\pi_h\kdifform{a}{k}\in\Lambda^k_h$ are in the same equivalence class, as defined in Definition \ref{def:equivalence}, so
Both $\kdifform{a}{k}\in\Lambda^k(\Omega)$ and its projected part $\pi_h\kdifform{a}{k}\in\Lambda^k_h(\Omega;C_k)$ are in the same equivalence class, as defined in Definition \ref{def:equivalence}, so
\begin{equation}
\label{reductionprojection}
\reduction\projection\kdifform{a}{k}=\reduction\kdifform{a}{k}\quad \mathrm{and}\quad \reduction(Id-\projection)\kdifform{a}{k}=0,
\end{equation}
and for the special case of integration of a volume form over the whole domain, this integral preserving property of the projection gives
\[
\int_\Omega\projection\kdifform{a}{n}=\int_\Omega\kdifform{a}{n}.
\]
As for the reduction operator, also the projection operator is non-injective and surjective. Then because of Definitions \ref{def:equivalence} and \ref{def:projection}, there also exists an equivalence class for the projection operator. The projection of $\kdifform{a}{k}$ is not unique, but depends among others on the underlying cell complex, as is illustrated in the following example:
\begin{example}
Consider a uniform and a Gauss-Lobatto grid, then $\pi_{h}^{\rm uni}\kdifform{a}{k},\pi_{h}^{\rm gl}\kdifform{a}{k}\in\Lambda^k_h([-1,1])$, but $\pi_{h}^{\rm uni}\kdifform{a}{k}\neq\pi_{h}^{\rm gl}\kdifform{a}{k}$, see Figure \ref{figure:projection} for an example of a 0-form and a 1-form. The difference between the two projections reduces with grid refinement, since
\begin{align*}
|\projection^{\rm uni}\kdifform{a}{k}-\projection^{\rm gl}\kdifform{a}{k}|
&=|(I-\projection^{\rm gl})\kdifform{a}{k}-(I-\projection^{\rm uni})\kdifform{a}{k}|\\
&\leq|(I-\projection^{\rm gl})\kdifform{a}{k}|+|(I-\projection^{\rm uni})\kdifform{a}{k}|\stackrel{\eqref{approximation}}{=}\mathcal{O}(h^p).
% ||\projection^{\rm uni}\kdifform{a}{k}-\projection^{\rm gl}\kdifform{a}{k}||
% &=||(I-\projection^{\rm gl})\kdifform{a}{k}-(I-\projection^{\rm uni})\kdifform{a}{k}||\\
% &\leq||(I-\projection^{\rm gl})\kdifform{a}{k}||+||(I-\projection^{\rm uni})\kdifform{a}{k}||\\
% &\leq||(I-\projection^{\rm gl})||_{\mathcal{L}(\Lambda^k,\Lambda^k_h)}||\kdifform{a}{k}||+||(I-\projection^{\rm uni})||_{\mathcal{L}(\Lambda^k,\Lambda^k_h)}||\kdifform{a}{k}||\\
% &\stackrel{\eqref{approximation}}{=}\mathcal{O}(h^p)||\kdifform{a}{k}||
\end{align*}
Moreover note that, according to \propref{prop:projection}, it can be observed that $\pi_h^{\rm uni}\pi_h^{\rm gl}\kdifform{a}{k}=\pi_h^{\rm gl}\kdifform{a}{k}$, $\pi_h^{\rm gl}\pi_h^{\rm uni}\kdifform{a}{k}=\pi_h^{\rm uni}\kdifform{a}{k}$ and so $\pi_h^{\rm uni}\pi_h^{\rm gl}\kdifform{a}{k}\neq\pi_h^{\rm gl}\pi_h^{\rm uni}\kdifform{a}{k}$.
\begin{figure}[htb]
\centering
\begin{tabular}{cc}
\subfigure[zero-form]{\includegraphics[width=.45\textwidth]{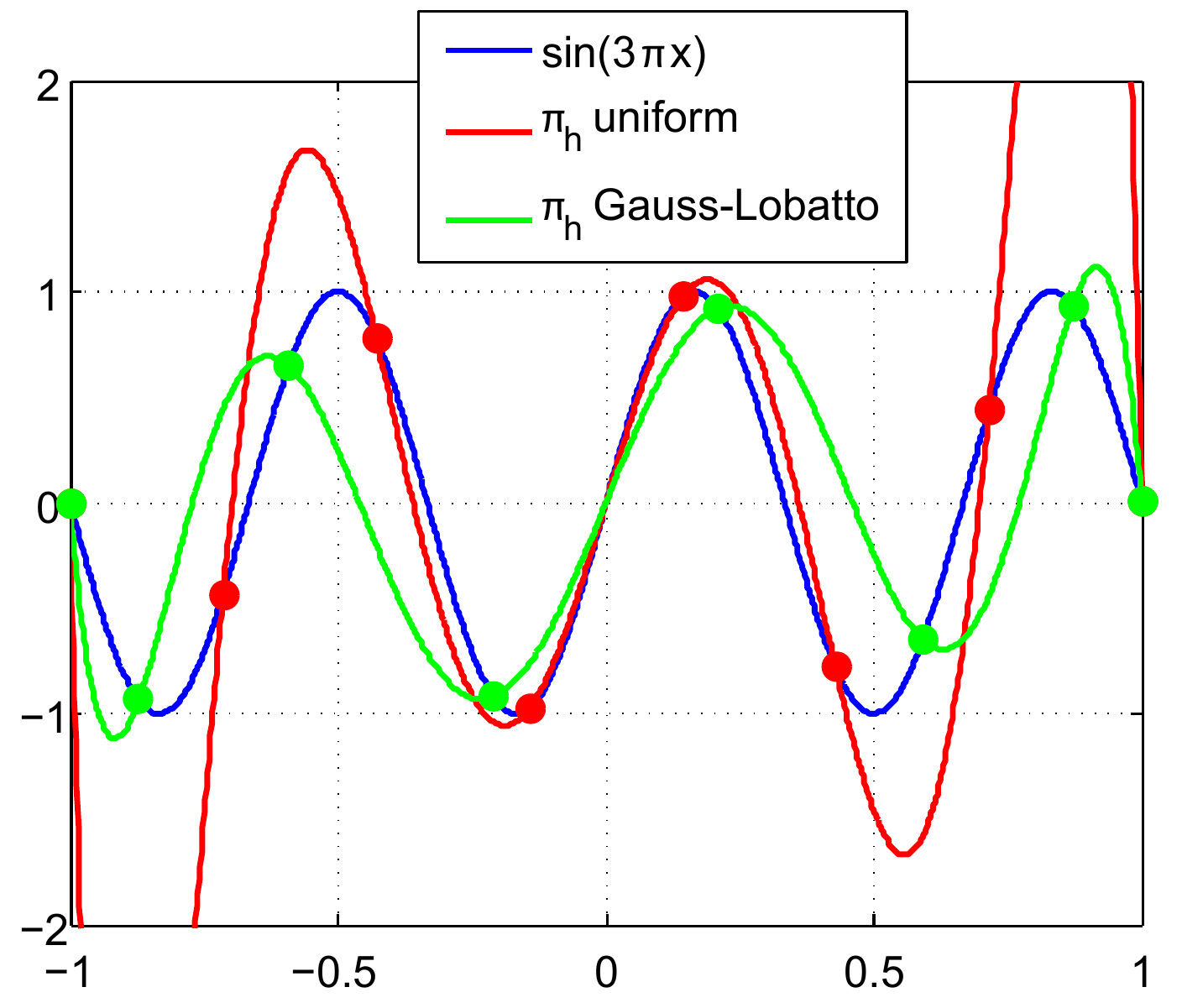}} &
\subfigure[one-form]{\includegraphics[width=.45\textwidth]{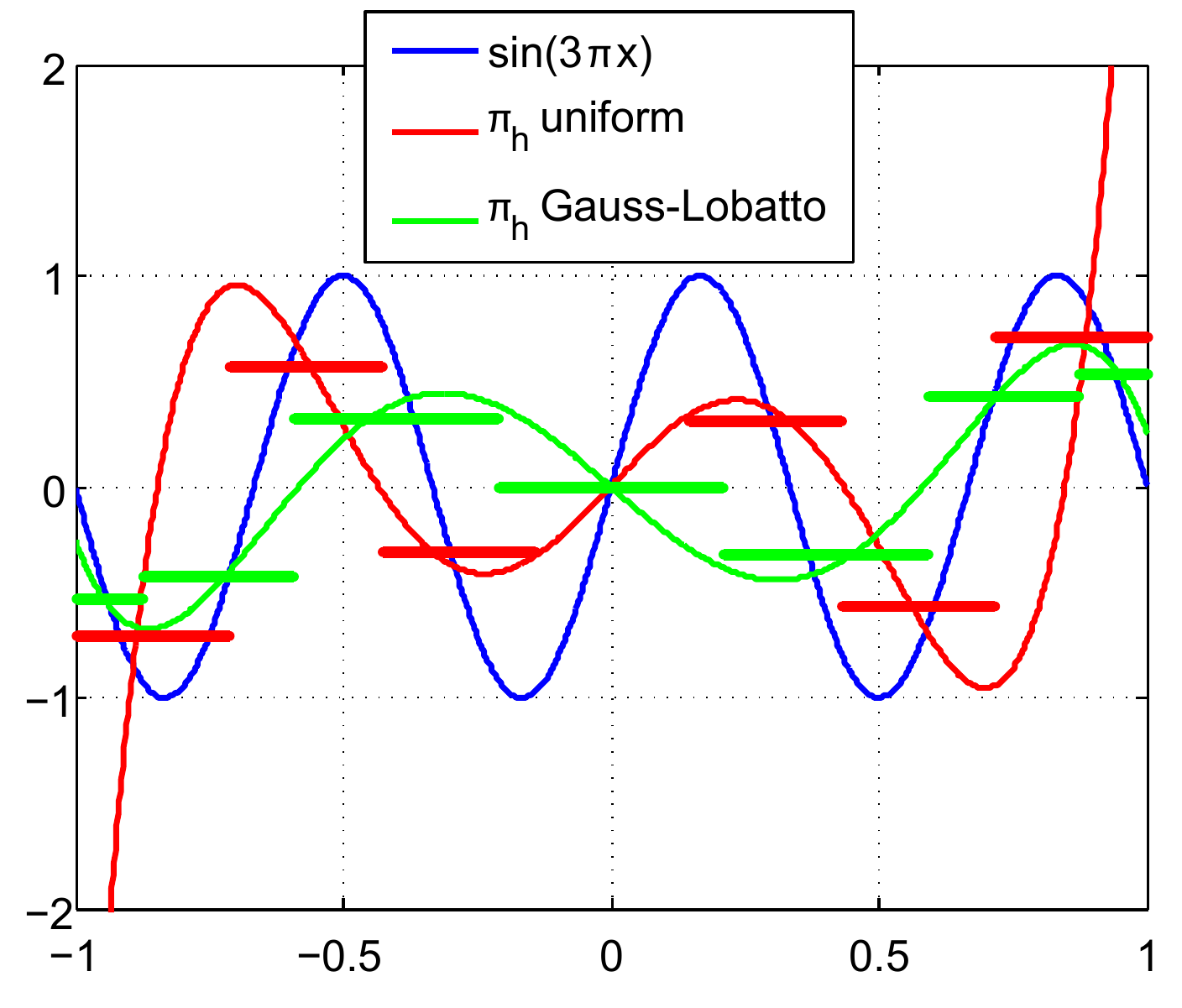}}
\end{tabular}
\caption{These figures illustrate the non-uniqueness of the projection operator for zero-forms (a) and one-forms (b). In (a) the dots indicate the 0-cochains, in (b) the line segments are the 1-cochains. The resolutions are intentionally kept low to show the differences.}
\label{figure:projection}
\end{figure}
\end{example}

In Section~\ref{sec:AlgebraicTopology} staggered cell complexes were introduced. The two complexes are dual with respect to each other as defined in \defref{def:dualcellcomplex} and illustrated in Figures \ref{fig:dualcellcomplex1D} and \ref{fig:dualcellcomplex2D}. This allows us to define two projection operators, one projecting onto the subspace $\Lambda^k(\Omega;C_k)$ and one projecting onto the subspace $\Lambda^k(\Omega;\tilde{C}_k)$, where $C_k=C_k(D)$ and $\tilde{C}_k=C_k(\tilde{D})$.
\begin{definition}[\textbf{Canonical projection operators}]\label{def:canonical_projection_operators}
Given the outer-oriented cell complex $D$ and its dual inner-oriented cell complex $\tilde{D}$, we define two projections:
\begin{itemize}
\item $\projection=\reconstruction\reduction$, where the reduction, $\reduction$, and the reconstruction, $\reconstruction$, are performed on the interior part of the primal cell complex,  i.e. $D_i$. The corresponding subspace $\Lambda^k_h(\Omega;C_k)$ is the space of finite dimensional $k$-forms, with $k$-cochains associated with the \emph{outer}-oriented cells in $D_i$, such that
\[
\Lambda^k_h(\Omega;C_k)\define\projection\Lambda^k(\Omega)=\reconstruction\reduction\Lambda^k(\Omega).
\]
\item $\dualprojection=\tilde{\reconstruction}\tilde{\reduction}$, where the reduction, $\tilde{\reduction}$, and the reconstruction, $\tilde{\reconstruction}$, are performed on the interior part of the dual cell complex, i.e. $\tilde{D}_i$. The space $\Lambda^k_h(\Omega;\tilde{C}_k)$ is the space of discrete $k$-forms, with $k$-cochains associated with the \emph{inner}-oriented cells in $\tilde{D}_i$, such that
\[
\Lambda^k_h(\Omega;\tilde{C}_k)\define\tilde{\pi}_h\Lambda^k(\Omega)=\tilde{\reconstruction}\tilde{\reduction}\Lambda^k(\Omega).
\]
\end{itemize}
\end{definition}
\noindent
To every projection we can define a corresponding coprojection.
\begin{definition}[\textbf{Canonical coprojection operators}]\label{def:canonical_coprojection_operators}
Given projection $\pi_h$ using the outer-oriented cell complex $D_i$ and projection $\tilde{\pi}_h$ using the inner-oriented cells in $\tilde{D}_i$, we define two coprojections:
\begin{itemize}
\item $\coprojection$ is defined as $\coprojection = (-1)^{k(n-k)} \star \dualprojection \star$. This coprojection is defined in terms of a projection of $(n-k)$-forms, $\star \kdifform{a}{k}$, on the inner-oriented $(n-k)$-chains in $\tilde{D}_i$ and then represented in terms of the $k$-form basis functions on the outer-oriented cell complex $D_i$. The corresponding subspace is $\pi^\star_h\Lambda^k(\Omega)$.
\item $\dualcoprojection$ is defined as $\dualcoprojection = (-1)^{k(n-k)} \star \projection \star$. This coprojection is defined in terms of a projection of $(n-k)$-forms, $\star \kdifform{a}{k}$, on the outer-oriented $(n-k)$-chains in $D_i$ and then represented in terms of the $k$-form basis functions on the interior part of the inner-oriented dual cell complex, i.e. $\tilde{D}_i$. The corresponding subspace is $\tilde{\pi}^\star_h\Lambda^k(\Omega)$.
\end{itemize}
\end{definition}
\begin{remark}
A coprojection is also a projection according to \propref{prop:projection}.
\end{remark}
\begin{remark}
Although projections and coprojections in Definitions \ref{def:canonical_projection_operators} and \ref{def:canonical_coprojection_operators} were defined with the interior parts of cell complexes $D$ and $\tilde{D}$. The same projections and coprojections can be defined for the boundary parts of the cell complexes $D$ and $\tilde{D}$, i.e. $D_b$ and $\tilde{D}_b$.
\end{remark}

The projection and coprojection operators possess commutation relations with the operators to be defined in the next subsection. The projected differential forms differ, because the forms are reduced on different chains as illustrated in Example~\ref{figure:projection}. See also \figref{fig:projections} in the next section.

\begin{proposition}[\textbf{Equivalence spaces of differentials forms $\projection \Lambda^k$ and $\coprojection \Lambda^k$}]\label{prop:pispace=pi_starspace}
Define the spaces
\[ \projection \Lambda^k := \left \{ \kdifformh{a}{k}\,| \, \exists \kdifform{a}{k} \in \Lambda^k(\Omega) \mbox{ s.t. } \kdifformh{a}{k} = \projection \kdifform{a}{k}\,\right \} \;,\]
and
\[ \coprojection \Lambda^k := \left \{ \kdifformh{a}{k}\,| \, \exists \kdifform{a}{k} \in \Lambda^k(\Omega) \mbox{ s.t. } \kdifformh{a}{k} = \coprojection \kdifform{a}{k}\,\right \} \;,\]
then $\Lambda^k_h=\projection \Lambda^k \equiv \coprojection \Lambda^k$, but in general $\projection \kdifform{a}{k} \neq \coprojection \kdifform{a}{k}$. 
\begin{proof}
The reduction of $\star \kdifform{a}{k}$ on $(n-k)$-chains in $\tilde{D}_i$ yields a $(n-k)$-cochain in $\tilde{D}_i$ and according to (\ref{eq:tildeIc_n-k=Ick}) in Definition~\ref{reconstructionoperator}, there exists a $k$-cochain on the cell complex $D$ such that $\tilde{\reconstruction}\kcochain{\tilde{c}}{n-k} = \reconstruction \kcochain{c}{k}$, therefore $\coprojection \Lambda^k \subset \projection \Lambda^k$. By a similar argument one can show that $\projection \Lambda^k \subset \coprojection \Lambda^k$, so we have $\projection \Lambda^k \equiv \coprojection \Lambda^k$. That $\projection \kdifform{a}{k} \neq \coprojection \kdifform{a}{k}$ will be shown in Figure~\ref{fig:projections} in Section~\ref{sec:MSEM}.
\end{proof}
\end{proposition}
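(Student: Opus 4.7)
The plan is to establish the set equality $\projection \Lambda^k = \coprojection \Lambda^k$ by two inclusions, using the cross-compatibility condition \eqref{eq:tildeIc_n-k=Ick} of Definition~\ref{reconstructionoperator} which is precisely the bridge between reconstructions on $D_i$ and on $\tilde{D}_i$. The pointwise non-equality $\projection\kdifform{a}{k}\neq\coprojection\kdifform{a}{k}$ will be justified by forward reference to an explicit example.

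First I will unfold the definition of the coprojection: $\coprojection = (-1)^{k(n-k)}\star\dualprojection\star = (-1)^{k(n-k)}\star\tilde{\reconstruction}\tilde{\reduction}\star$. Given any $\kdifformh{a}{k}\in\coprojection\Lambda^k$, write $\kdifformh{a}{k}=\coprojection\kdifform{a}{k}$ with $\kdifform{a}{k}\in\Lambda^k(\Omega)$. The form $\star\kdifform{a}{k}$ lies in $\Lambda^{n-k}(\Omega)$, so $\tilde{\reduction}\star\kdifform{a}{k}\in C^{n-k}(\tilde{D}_i)$. By the compatibility property \eqref{eq:tildeIc_n-k=Ick}, there exists a $k$-cochain $\kcochain{c}{k}\in C^k(D_i)$ whose primal reconstruction coincides with the dual reconstruction of this cochain, so that the full composite $(-1)^{k(n-k)}\star\tilde{\reconstruction}\tilde{\reduction}\star\kdifform{a}{k}$ can be rewritten as $\reconstruction\kcochain{c}{k}\in\Lambda^k_h$. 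Hence $\kdifformh{a}{k}\in\projection\Lambda^k$, since $\projection\reconstruction\kcochain{c}{k}=\reconstruction\reduction\reconstruction\kcochain{c}{k}=\reconstruction\kcochain{c}{k}$ by the consistency relation \eqref{consistency}. This gives $\coprojection\Lambda^k\subset\projection\Lambda^k$.

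For the reverse inclusion $\projection\Lambda^k\subset\coprojection\Lambda^k$, I will run the mirror argument, invoking the ``same must hold in the opposite direction'' clause of \eqref{eq:tildeIc_n-k=Ick}: every primal $k$-cochain's reconstruction can be re-expressed as the reconstruction of some $(n-k)$-cochain on $\tilde{D}_i$, and then wrapping with $\star\,(\cdot)\,\star$ and the sign $(-1)^{k(n-k)}$ repackages it as a $\coprojection$-image. Combining both inclusions yields $\Lambda^k_h=\projection\Lambda^k\equiv\coprojection\Lambda^k$. For the second assertion, pointwise inequality, I will simply defer to Figure~\ref{fig:projections} in Section~\ref{sec:MSEM}, where the same $\kdifform{a}{k}$ reduced on the primal chains in $D_i$ produces a different finite-dimensional representative than when reduced (after a Hodge star) on the dual chains in $\tilde{D}_i$, in direct analogy with the non-uniqueness already illustrated in Figure~\ref{figure:projection}.

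The main obstacle is the bookkeeping of form-degrees and Hodge stars: naively, \eqref{eq:tildeIc_n-k=Ick} relates an $(n-k)$-form $\tilde{\reconstruction}\kcochain{\tilde{c}}{n-k}$ to a $k$-form $\reconstruction\kcochain{c}{k}$, so the identity has to be read in the sense of ``same finite-dimensional data modulo Hodge duality,'' which is exactly what the outer $\star$ in $\coprojection=(-1)^{k(n-k)}\star\dualprojection\star$ supplies. Once this identification is in place, both inclusions become tautological consequences of \eqref{eq:tildeIc_n-k=Ick} and of the idempotency of $\projection$ and $\coprojection$ established in Proposition~\ref{prop:projection}.
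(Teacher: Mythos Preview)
Your proof is correct and follows essentially the same approach as the paper's own proof: establish both inclusions via the compatibility condition \eqref{eq:tildeIc_n-k=Ick} of Definition~\ref{reconstructionoperator}, and defer the pointwise inequality to Figure~\ref{fig:projections}. You are simply more explicit about unfolding the definition of $\coprojection$ and about the degree bookkeeping (the outer $\star$ repairing the mismatch in \eqref{eq:tildeIc_n-k=Ick}), which the paper leaves implicit.
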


%The basis for the equivalence of these space depends strongly on the fact that the number of $k$-cells on the primal grid is equal to the number of $(n-k)$-cells on the dual grid, rank$(C_k)=$rank$(\tilde{C}_{n-k})$. This is the case by the construction in Definition~\ref{def:dualcell}. So the number of cochain values is the same. Whether these spaces are actually the same is also determined by the reconstruction. It will be shown in Section~\ref{sec:MSEM} that the spectral element reconstruction satisfies this property. Therefore $\Lambda^k_h=\projection\Lambda^k=\coprojection\Lambda^k$. That $\projection \kdifform{a}{k} \neq \coprojection \kdifform{a}{k}$ will be shown in Figure~\ref{fig:projections} in Section~\ref{sec:MSEM}.
%\end{proposition}
\begin{proposition}[\textbf{Equivalence projections on subspaces}]\label{prop:equiv_projections}
If $\kdifform{a}{k} \in \Lambda_h^k(\Omega;C_k)$ then $\coprojection \kdifform{a}{k} = \projection \kdifform{a}{k}$ and if $\kdifform{a}{k} \in \Lambda_h^k(\Omega;\tilde{C}_k)$ then $\dualcoprojection \kdifform{a}{k} = \dualprojection \kdifform{a}{k}$.
\begin{proof}
\[ \mbox{If } \kdifformh{a}{k} \in \Lambda_h^k\;\; \stackrel{\mbox{\scriptsize{Prop.~\ref{prop:pispace=pi_starspace}}}}{\Longrightarrow}\;\; \left \{ \begin{array}{ll}
\kdifformh{a}{k} = \projection \kdifformh{a}{k}\quad \quad & \mbox{because } \projection = Id \mbox{ on } \Lambda_h^k = \projection \Lambda^k \\
 & \\
\kdifformh{a}{k} = \coprojection \kdifformh{a}{k}\quad \quad & \mbox{because } \coprojection = Id \mbox{ on } \Lambda_h^k = \coprojection \Lambda^k
\end{array}
\right . \]
Therefore $\kdifformh{a}{k} = \projection \kdifformh{a}{k} = \coprojection \kdifformh{a}{k}$. The proof on the dual complex is the same.
\end{proof}
\end{proposition}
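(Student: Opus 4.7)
The proof looks essentially immediate given Proposition~\ref{prop:pispace=pi_starspace}, so my plan is simply to articulate why both $\projection$ and $\coprojection$ act as the identity on the subspace $\Lambda_h^k(\Omega;C_k)$, and then conclude equality.

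First I would observe that $\coprojection$, by the remark following Definition~\ref{def:canonical_coprojection_operators}, is itself a projection in the sense of Proposition~\ref{prop:projection}, i.e.\ it is a homomorphism and idempotent. A standard consequence of idempotency is that any projection restricts to the identity on its own image: for $\kdifformh{a}{k} \in \coprojection \Lambda^k$ there exists $\kdifform{b}{k} \in \Lambda^k(\Omega)$ with $\kdifformh{a}{k} = \coprojection \kdifform{b}{k}$, and then $\coprojection \kdifformh{a}{k} = \coprojection \coprojection \kdifform{b}{k} = \coprojection \kdifform{b}{k} = \kdifformh{a}{k}$. The same argument applied to $\projection$ gives $\projection = Id$ on $\projection \Lambda^k$.

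Next I would invoke Proposition~\ref{prop:pispace=pi_starspace}, which asserts $\projection \Lambda^k \equiv \coprojection \Lambda^k = \Lambda_h^k(\Omega;C_k)$. Hence for any $\kdifformh{a}{k} \in \Lambda_h^k(\Omega;C_k)$, the element lies simultaneously in the image of $\projection$ and in the image of $\coprojection$, so by the idempotency argument above
\[
\coprojection \kdifformh{a}{k} = \kdifformh{a}{k} = \projection \kdifformh{a}{k},
\]
which yields the first claim. For the second claim, I would repeat the identical argument verbatim with $(\projection, \coprojection, D_i, \Lambda_h^k(\Omega;C_k))$ replaced by $(\dualprojection, \dualcoprojection, \tilde{D}_i, \Lambda_h^k(\Omega;\tilde{C}_k))$; the analogue of Proposition~\ref{prop:pispace=pi_starspace} for the dual pair $(\dualprojection,\dualcoprojection)$ is obtained in exactly the same manner, using the second clause of \eqref{eq:tildeIc_n-k=Ick} in Definition~\ref{reconstructionoperator} (the ``same must hold in the opposite direction'' statement).

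There is no real obstacle here: the substance of the proposition is entirely contained in Proposition~\ref{prop:pispace=pi_starspace}; the present statement is a corollary that records the useful fact that, although $\projection$ and $\coprojection$ disagree on a generic $\kdifform{a}{k} \in \Lambda^k(\Omega)$, they coincide on the already-discrete subspace $\Lambda_h^k$. The only point I would double-check is that the statement ``$\coprojection$ is a projection'' is used in the strong sense of Proposition~\ref{prop:projection} (homomorphism plus idempotency), which is justified because $\coprojection = (-1)^{k(n-k)}\star\dualprojection\star$ is a composition of a projection with $\star$ and $\star^{-1} = (-1)^{k(n-k)}\star$, so idempotency of $\coprojection$ is inherited directly from idempotency of $\dualprojection$.
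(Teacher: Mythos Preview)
Your proposal is correct and follows essentially the same approach as the paper: both arguments invoke Proposition~\ref{prop:pispace=pi_starspace} to identify $\projection\Lambda^k = \coprojection\Lambda^k = \Lambda_h^k$, and then use idempotency to conclude that each projection restricts to the identity on this common image. Your write-up simply spells out the idempotency-implies-identity-on-image step and the reason $\coprojection$ is idempotent in slightly more detail than the paper does.
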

\begin{remark}
Although $\coprojection \kdifform{a}{k} = \projection \kdifform{a}{k}$ holds in the finite dimensional subspace $\Lambda_h^k(\Omega;C_k)$, it does not holds on the entire space $\Lambda^k(\Omega)$. The difference between these two projections could give rise to {\em natural} and {\em derived} operators, \cite{bochev2006principles}. However, from our point of view projections $\projection$ and coprojections $\coprojection$ are essentially different operators.
\end{remark}

\begin{definition}\label{def:general_pi}
Whenever we refer to the projection $\pi$ it can be any of the projections from Definition~\ref{def:canonical_projection_operators}.
\end{definition}

% Although $\projection=Id$ on $\Lambda^k_h$, it will be shown that it can be a very useful operation when expressing a \alert{finite-dimensional $k$-form} using different interpolation forms. For this special case we introduce a separate, bijective, projection operator, $\pi_M$.

Although $\pi_h=Id$ on $\Lambda^k_h$, (Proposition~\ref{prop:projection}), it will be shown that it can be a very useful operation when changing the expression of a finite dimensional $k$-form. A finite dimensional $k$-form $a^{(k)}_h$ is usually expressed in terms of a $k$-cochain $\kcochain{a}{k}\in C^k(D)$ and interpolating $k$-forms. If this is not the case, but the $k$-form $a_h^{(k)}$ is expressed in terms of an $l$-cochain corresponding to a chain in $\hat{C}_l:=C_l(\hat{D})$, where $\hat{D}$ is either the primal cell complex $D$ or the dual cell complex $\tilde{D}$, then a projection is used to express the finite dimensional $k$-form in terms of its corresponding $k$-cochains. For this special case we introduce a separate projection operator, $\pi_M$.
\begin{definition}
\label{def:piM}
Define a special, bijective projection $\pi_M:\Lambda^k_h(\Omega;\hat{C}_{l})\rightarrow\Lambda^k_h(\Omega;C_k)$, such that
\begin{equation}
\pi_M=Id\quad on\ \Lambda^k_h(\Omega;\hat{C}_l).
\end{equation}
So $\pi_Ma_h=a_h$, but its expression changes in terms of cochains, from an $l$-cochain in $C^l(\hat{D})$ to a $k$-cochain in $C^k(D)$, and its expression changes with respect to the basis functions. In terms of reduction and reconstruction this is
\begin{equation}
a_h=\hat{\reconstruction}\hat{\reduction}a=\reconstruction\reduction(\hat{\reconstruction}\hat{\reduction}a)=\pi_Ma_h.
\end{equation}
% This is true under the assumption that $\reduction$ and $\hat{\reduction}$ act over an equal number of $k$-cells and that $\reconstruction$ and $\hat{\reconstruction}$ interpolate using the same function-type, for example a polynomial of degree $n$, $P_n$. \jasper{Is this clear for everybody, or does it need an example?}
\end{definition}
\noindent
%This projection will especially be used after an operation, to express the finite dimensional $k$-form in terms of the corresponding $k$-cochains 
%and interpolation forms. For example after applying the exterior derivative or a Hodge-$\star$. More details are given below and in Section \ref{sec:MSEM}.
The seemingly redundant projection $\pi_M$ will reappear almost everywhere, but we will not explicitly mention this in this section. It will be more instructive to show its action with concrete reconstruction operators in Section~\ref{sec:MSEM}.

\subsection{Discrete operators}\label{discreteoperators}
% \alert{The general discrete form of the problem to solve is $\mathcal{L}_hu_h=f_h$, with $u_h\in\Lambda^k_h$, where $f_h=\projection f$. If $\mathcal{L}_h$ consists of a combination of linear operators, $T_h$, we will show in this section that every linear operator can be written in the general form,
% \begin{equation}
% T_h=\pi_MT\quad\mathrm{on}\ \Lambda^k_h.
% \end{equation}
% Since $\pi_M=Id$, it follows that every linear operator $T_h$ is exact on $\Lambda^k_h$, and so $\mathcal{L}_hu_h=\mathcal{L}u_h=f_h$. Because $u_h$ is in the subspace $\Lambda^k_h$, a closer look is needed into the interaction with other operators and it requires to define operators acting on $k$-forms in $\Lambda^k_h$, whose result is either $\Lambda^l_h$ ($0\leq l\leq n$) or $\mathbb{R}$.}

In this section we discuss operations of the operators discussed in \secref{sec:DifferentialGeometry}, restricted to the set of finite dimensional subspaces $\Lambda^k_h$, i.e. $\ederiv$, $\star$, $\coderiv$, $\pullback$, $\wedge_h$ and $(\cdot,\cdot)_h$.

%While some operators can be performed exact, like the exterior derivative, others and need an additional projection step into the approximation space $\Lambda^k_h$.

\subsubsection{The exterior derivative}
First consider the exterior derivative. With \eqref{cdp1} and \eqref{cdp2} a commuting property of the projection with respect to the exterior derivative can be shown.
\begin{lemma}\label{Lem:projectionextder}
The projections $\projection= \reconstruction \reduction$ and $\dualprojection = \tilde{\reconstruction}\tilde{\reduction}$ commute with the exterior derivative,
\begin{equation}
\label{projectionextder}
\ederiv\projection=\projection\ederiv\quad\mathrm{and}\quad \ederiv\dualprojection=\dualprojection\ederiv \quad \mathrm{on}\ \Lambda^k(\Omega).
\end{equation}
This can be illustrated as
\[
\begin{CD}
\Lambda^k @>\ederiv>> \Lambda^{k+1}\\
@VV\projection V @VV\projection V\\
\Lambda_h^k @>\ederiv>> \Lambda_h^{k+1}.
\end{CD} \quad\quad \quad 
\begin{CD}
\Lambda^k @>\ederiv>> \Lambda^{k+1}\\
@VV\dualprojection V @VV\dualprojection V\\
\tilde{\Lambda}_h^k @>\ederiv>> \tilde{\Lambda}_h^{k+1}.
\end{CD}
\]
\begin{proof}
Express the projection in terms of the reduction and reconstruction operator, then for all $a\in\Lambda^k(\Omega)$,
\[
\ederiv\projection\difform{a}=\ederiv\reconstruction\reduction\difform{a}\stackrel{\eqref{cdp2}}{=}\reconstruction\delta\reduction\difform{a}\stackrel{\eqref{cdp1}}{=}\reconstruction\reduction\ederiv\difform{a}=\projection\ederiv\difform{a}.
\]
The proof for $\dualprojection = \tilde{\reconstruction}\tilde{\reduction}$ is the same.
\end{proof}
\end{lemma}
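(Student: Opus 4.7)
The plan is very short because the two commutation relations \eqref{cdp1} and \eqref{cdp2} do all the work. The strategy is simply to unfold the definition $\projection = \reconstruction\reduction$ and then chain the two commutation properties in sequence.

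More concretely, starting from an arbitrary $\kdifform{a}{k} \in \Lambda^k(\Omega)$, I would write $\ederiv \projection \kdifform{a}{k} = \ederiv \reconstruction \reduction \kdifform{a}{k}$. Then I apply \eqref{cdp2}, which states that reconstruction intertwines $\delta$ and $\ederiv$, to move the exterior derivative past the reconstruction and replace it by a coboundary acting on the cochain $\reduction \kdifform{a}{k}$. This gives $\reconstruction \delta \reduction \kdifform{a}{k}$. Next I apply \eqref{cdp1}, which states that reduction intertwines $\ederiv$ and $\delta$, to pull the coboundary back through the reduction and replace it with an exterior derivative on the original form. The result is $\reconstruction \reduction \ederiv \kdifform{a}{k} = \projection \ederiv \kdifform{a}{k}$, which is exactly what we want.

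The argument for $\dualprojection = \tilde{\reconstruction}\tilde{\reduction}$ is identical in structure: the commutation properties \eqref{cdp1} and \eqref{cdp2} hold for any reduction/reconstruction pair satisfying the axioms of Definitions~\ref{def:reduction} and \ref{reconstructionoperator}, and in particular for $\tilde{\reduction}$ and $\tilde{\reconstruction}$ associated with the dual cell complex $\tilde{D}_i$. So the same three-line computation yields $\ederiv \dualprojection = \dualprojection \ederiv$.

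There is no real obstacle here: the content of the lemma is already fully packaged into \eqref{cdp1} and \eqref{cdp2}. The only thing worth double-checking is that the spaces line up, namely that the intermediate object $\delta \reduction \kdifform{a}{k}$ lives in $C^{k+1}(D)$ so that $\reconstruction$ can be applied to it, and that the final $\ederiv \kdifform{a}{k}$ lies in $\Lambda^{k+1}(\Omega)$ in the domain of $\reduction$; both are immediate from the definitions. The lemma could even be phrased as a general diagram-chasing fact: whenever two commutative squares $(\reduction, \delta, \ederiv)$ and $(\reconstruction, \delta, \ederiv)$ are glued along $C^k$ and $C^{k+1}$, the outer square $(\projection, \ederiv, \ederiv)$ commutes.
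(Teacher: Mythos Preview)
Your proof is correct and is essentially identical to the paper's own proof: unfold $\projection = \reconstruction\reduction$, apply \eqref{cdp2} then \eqref{cdp1}, and note that the same chain works verbatim for $\dualprojection = \tilde{\reconstruction}\tilde{\reduction}$.
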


% \begin{remark}
% Note that the projections $\projection$ in \eqref{projectionextder} are not the same as can be seen from the commuting diagram. One projection maps onto $\Lambda^k_h$ while the other one maps on $\Lambda^{k+1}_h$. This distinction is generally clear from the context.
% \end{remark}

%Since \eqref{projectionextder} is valid on $\Lambda^k(\Omega)$, it also holds for the subspace $\Lambda^k_h(\Omega;C_k)$. Because $\Lambda^k_h(\Omega;C_k)$ has its corresponding $k$-chains and interpolation $k$-forms, we introduce the discrete exterior derivative $\ederiv_h:\Lambda^k_h(\Omega;C_k)\rightarrow\Lambda^{k+1}_h(\Omega;C_{k+1})$ for the lower horizontal mapping in the commuting diagram in Lemma \ref{Lem:projectionextder} as,
%\begin{equation}
%\ederiv_h\define\pi_M\ederiv\quad\mathrm{on}\ \Lambda^k_h(\Omega;C_k).
%\end{equation}
% From this, it directly follows that integration by parts is given by
% \begin{equation}
% \int_{\partial\Omega_{k+l+1}}a_h\wedge b_h=\int_{\Omega_{k+l+1}}\left(\ederiv a_h\wedge b_h+(-1)^ka_h\wedge\ederiv b_h\right).
% \end{equation}
%\alert{Marc, kijk maar wat je met het inner product en de Hodge doet. Dit hangt denk ik af van wat je met de projecties doet.}

\subsubsection{The Hodge-$\star$ operator}
The projection and coprojection possess a commuting diagram property with the Hodge-$\star$.
\begin{proposition}\label{prop:commutation_Hodge_projections}
We have for all $\kdifform{a}{k} \in \Lambda^k(\Omega)$ the following commutation relations,
\[ \coprojection \star = \star \dualprojection \;\;\; \mbox{and}\;\;\; \star \coprojection  = \dualprojection \star. \]
\end{proposition}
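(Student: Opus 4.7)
The plan is to derive both identities by direct algebraic manipulation of the definition of the coprojection, with careful bookkeeping of the sign $(-1)^{k(n-k)}$ that arises from $\star\star$. Since Definition~\ref{def:canonical_coprojection_operators} gives $\coprojection = (-1)^{k(n-k)} \star \dualprojection \star$ when acting on a $k$-form, the identities should fall out once one correctly tracks the degree of the form at each step and invokes the involutive property $\star \star = (-1)^{k(n-k)} Id$ on $\Lambda^k(\Omega)$ (equation \eqref{eq::diffGeom_double_hodge}).

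First I would prove $\coprojection \star = \star \dualprojection$. For $\kdifform{a}{k} \in \Lambda^k(\Omega)$, the form $\star \kdifform{a}{k}$ lies in $\Lambda^{n-k}(\Omega)$. Applying $\coprojection$ (now with degree $n-k$), its definition produces the factor $(-1)^{(n-k)(n-(n-k))} = (-1)^{k(n-k)}$, giving
\[
\coprojection \star \kdifform{a}{k} = (-1)^{k(n-k)} \star \dualprojection \star \star \kdifform{a}{k} = (-1)^{k(n-k)} (-1)^{k(n-k)} \star \dualprojection \kdifform{a}{k} = \star \dualprojection \kdifform{a}{k},
\]
where the middle equality uses $\star\star = (-1)^{k(n-k)} Id$ on $\Lambda^k(\Omega)$ and the last equality uses $(-1)^{2k(n-k)} = 1$.

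Next I would prove $\star \coprojection = \dualprojection \star$. Applying $\star$ to the definition of $\coprojection$ on a $k$-form gives
\[
\star \coprojection \kdifform{a}{k} = (-1)^{k(n-k)} \star \star \dualprojection \star \kdifform{a}{k} = (-1)^{k(n-k)} (-1)^{(n-k)(n-(n-k))} \dualprojection \star \kdifform{a}{k} = \dualprojection \star \kdifform{a}{k},
\]
again by $\star\star = (-1)^{(n-k)k} Id$ on $\Lambda^{n-k}(\Omega)$ (since $\dualprojection \star \kdifform{a}{k} \in \Lambda_h^{n-k}$).

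There is no real obstacle here; the entire content of the proposition is the algebraic consistency of Definition~\ref{def:canonical_coprojection_operators} with $\star\star = (-1)^{k(n-k)} Id$. The only thing to watch is that the exponent $k(n-k)$ in the definition of $\coprojection$ refers to the degree of the input form, so when $\coprojection$ is applied to $\star\kdifform{a}{k}$ one must use $n-k$ in that exponent. Note also that these identities are expected from the structural point of view: the second identity can be read off from the first by pre- and post-multiplying by $\star$ and using involutivity, so effectively only one of them carries independent content.
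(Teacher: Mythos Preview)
Your proof is correct and follows essentially the same route as the paper: both derive the commutation relations directly from the definition $\coprojection = (-1)^{k(n-k)} \star \dualprojection \star$ together with the involutive property $\star\star = (-1)^{k(n-k)} Id$. Your version is in fact more explicit than the paper's, which simply asserts the equivalence without writing out the sign bookkeeping; your care in tracking that the exponent in the definition of $\coprojection$ refers to the degree of its input (hence $n-k$ when applied to $\star\kdifform{a}{k}$) is exactly the point that makes the cancellation work.
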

\begin{proof} From Definition~\ref{def:canonical_projection_operators} we have
\[ \coprojection = (-1)^{k(n-k)} \star \dualprojection \star \quad \Longleftrightarrow \quad  \coprojection\star =  \star\dualprojection \;\;\; \mbox{and}\;\;\; \star \coprojection  = \dualprojection \star  .\]
So the Hodge-$\star $ operator at the continuous level commutes with the Hodge-$\star$ in the finite dimensional setting with respect to the projections $\dualprojection$ and $\coprojection$.
\[\begin{CD}
\kformspace{k} @>\star>> \kformspace{n-k}\\
@VV\dualprojection V @VV\coprojection V  \\
\tilde{\Lambda}_h^k @>\star>> \kformspaceh{n-k}
\end{CD}\;\;\;\quad \quad \quad \quad
\begin{CD}
\kformspace{k} @>\star>> \kformspace{n-k}\\
@VV\coprojection V @VV\dualprojection V  \\
\kformspaceh{k} @>\star>> \tilde{\Lambda}_h^{n-k}
\end{CD}\]
\end{proof}
\begin{remark}
Similar commutation relations can be set up on the dual cell complex and this gives: for all $\kdifform{a}{k} \in \Lambda^k(\Omega)$
\[ \dualcoprojection \star = \star \projection \;\;\; \mbox{and}\;\;\; \star \dualcoprojection  = \projection \star \;,\]
with the associated commutating diagrams.
\end{remark}
\begin{corollary}\label{rem:projectionhodge}
Proposition~\ref{prop:commutation_Hodge_projections} combined with Proposition~\ref{prop:equiv_projections} gives that for $\kdifform{a}{k} \in \Lambda_h^k$
\[ \dualprojection \star \stackrel{\rm Prop.~\ref{prop:commutation_Hodge_projections}}{=} \star \coprojection \stackrel{\rm Prop.~\ref{prop:equiv_projections}}{=} \star \projection \;.\]
\end{corollary}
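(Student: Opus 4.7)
The plan is essentially to read off the claim by chaining the two cited results, so there is almost nothing to do beyond applying them in the correct order. The work is already packaged in the statement: the first equality is a direct specialization of Proposition~\ref{prop:commutation_Hodge_projections}, and the second equality is a direct application of Proposition~\ref{prop:equiv_projections} after a Hodge-$\star$.

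Concretely, I would fix an arbitrary $\kdifformh{a}{k} \in \Lambda_h^k$ and proceed in two steps. First, Proposition~\ref{prop:commutation_Hodge_projections} gives the identity $\dualprojection \star = \star \coprojection$ on all of $\Lambda^k(\Omega)$ (no hypothesis on the input form is needed), so in particular
\[
\dualprojection \star \kdifformh{a}{k} \;=\; \star \coprojection \kdifformh{a}{k}.
\]
Second, since $\kdifformh{a}{k} \in \Lambda_h^k = \projection \Lambda^k \equiv \coprojection \Lambda^k$ by Proposition~\ref{prop:pispace=pi_starspace}, Proposition~\ref{prop:equiv_projections} applies and yields $\coprojection \kdifformh{a}{k} = \projection \kdifformh{a}{k}$. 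Applying the Hodge-$\star$ (which is a well-defined linear operator) to both sides gives $\star \coprojection \kdifformh{a}{k} = \star \projection \kdifformh{a}{k}$. Composing the two equalities produces the desired chain
\[
\dualprojection \star \kdifformh{a}{k} \;=\; \star \coprojection \kdifformh{a}{k} \;=\; \star \projection \kdifformh{a}{k},
\]
and since $\kdifformh{a}{k}$ was arbitrary in $\Lambda_h^k$, the operator identity follows.

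There is no real obstacle here: no commutator must be computed, no sub-lemma established, and no regularity argument invoked. The only subtlety worth flagging is that the middle equality \emph{requires} the restriction $\kdifformh{a}{k} \in \Lambda_h^k$, since Proposition~\ref{prop:equiv_projections} only asserts $\coprojection = \projection$ on this finite-dimensional subspace (and not on all of $\Lambda^k(\Omega)$, where the two projections are genuinely distinct, as the remark following Proposition~\ref{prop:equiv_projections} emphasizes). This distinction is what justifies presenting the result as a corollary rather than folding it back into Proposition~\ref{prop:commutation_Hodge_projections}.
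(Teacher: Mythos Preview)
Your proposal is correct and follows exactly the approach encoded in the paper's statement: the chain of equalities with the propositions annotated above each equals sign \emph{is} the paper's proof, and you have simply unpacked it at the level of an arbitrary $\kdifformh{a}{k} \in \Lambda_h^k$. Your closing remark about why the restriction to $\Lambda_h^k$ is essential for the second equality is a useful clarification that the paper leaves implicit.
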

In Definition \ref{def::diffGeom_hodge} the Hodge-$\star$ was defined as a mapping from $k$-forms to $(n-k)$-forms. As a consequence, not only the dimension of the chain over which they are integrated changes, but also the orientation changes from inner to outer orientation or vice versa. This corresponds to the vertical relations in \figref{fig:diffGeom_orientation}. On finite dimensional subspaces, the Hodge-$\star$ operator is a mapping from $\Lambda^k_h(\Omega;C_k)$ to $\Lambda^{n-k}_h(\Omega;\tilde{C}_{n-k})$, as defined in \defref{def:canonical_projection_operators}.

With the Hodge-$\star$ and exterior derivative, the finite dimensional double De Rham complex can be set up similar to \eqref{eq::diffGeom_deRham_Complex} as
%\[
%\begin{matrix}
%\mathbb{R} \longrightarrow&\Lambda^0_h(\Omega;C_0)
%&\stackrel{\ederiv_h}{\longrightarrow}& \Lambda^1_h(\Omega;C_1)
%&\stackrel{\ederiv_h}{\longrightarrow}& \hdots \;
%&\stackrel{\ederiv_h}{\longrightarrow}\; &\Lambda^n_h(\Omega;C_n) \;
%&\stackrel{\ederiv_h}{\longrightarrow}\; &0  \\
%&\star_h\updownarrow & & \star_h\updownarrow &&  &
%&\star_h\updownarrow & &   \\
%0 \stackrel{\ederiv_h}{\longleftarrow}&\Lambda^n_h(\Omega;\tilde{C}_{n})
%&\stackrel{\ederiv_h}{\longleftarrow}& \Lambda^{n-1}_h(\Omega;\tilde{C}_{n-1})
%&\stackrel{\ederiv_h}{\longleftarrow}& \hdots \;
%&\stackrel{\ederiv_h}{\longleftarrow}\; &\Lambda^0_h(\Omega;\tilde{C}_{0}) \;
%&\stackrel{}{\longleftarrow}\; &\mathbb{R}.
%\end{matrix}
%\]
\[
\begin{matrix}
\mathbb{R} \longrightarrow&\Lambda^0_h(\Omega;C_0)
&\stackrel{\ederiv}{\longrightarrow}& \Lambda^1_h(\Omega;C_1)
&\stackrel{\ederiv}{\longrightarrow}& \hdots \;
&\stackrel{\ederiv}{\longrightarrow}\; &\Lambda^n_h(\Omega;C_n) \;
&\stackrel{\ederiv}{\longrightarrow}\; &0  \\
&\star\updownarrow & & \star\updownarrow &&  &
&\star\updownarrow & &   \\
0 \stackrel{\ederiv}{\longleftarrow}&\Lambda^n_h(\Omega;\tilde{C}_{n})
&\stackrel{\ederiv}{\longleftarrow}& \Lambda^{n-1}_h(\Omega;\tilde{C}_{n-1})
&\stackrel{\ederiv}{\longleftarrow}& \hdots \;
&\stackrel{\ederiv}{\longleftarrow}\; &\Lambda^0_h(\Omega;\tilde{C}_{0}) \;
&\stackrel{}{\longleftarrow}\; &\mathbb{R}.
\end{matrix}
\]

\subsubsection{The codifferential}
With \defref{def:canonical_projection_operators} a commuting property of the coprojection with respect to the codifferential can be shown.
\begin{proposition}[\textbf{Commutation coprojections with codifferential}]\label{Commutation_coproj_codiff} For all $\kdifform{a}{k} \in \Lambda^k(\Omega)$ we have
\[ \coprojection \coderiv = \coderiv \coprojection \quad \mbox{and} \quad \dualcoprojection \coderiv = \coderiv \dualcoprojection\;.\]
\begin{proof}
\begin{gather*}
\coprojection \coderiv \stackrel{{\rm Def.}~\ref{def:canonical_coprojection_operators},\ {\rm Prop.}~\ref{prop::diffgeom_codifferential_adjoint}}{=} (-1)^{k(n-k)}\star \dualprojection \star (-1)^{n(k+1)+1} \star \ederiv \star \stackrel{\eqref{eq::diffGeom_double_hodge}}{=}  (-1)^{n+k+1} \star \dualprojection \ederiv \star \quad\quad\quad\quad\quad  \\
\quad\quad\quad\quad\stackrel{\rm Lem.~\ref{Lem:projectionextder}}{=} (-1)^{n+k+1} \star  \ederiv \dualprojection \star \stackrel{\eqref{eq::diffGeom_double_hodge}}{=} (-1)^{n(k+1)+1} \star \ederiv \star (-1)^{k(n-k)} \star \dualprojection \star \stackrel{{\rm Def.}~\ref{def:canonical_coprojection_operators},\ {\rm Prop.}~\ref{prop::diffgeom_codifferential_adjoint}}{=} \coderiv \coprojection \;.
\end{gather*}
The proof for $\dualcoprojection$ is the same.
\end{proof}
\end{proposition}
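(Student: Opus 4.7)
My plan is to reduce the statement to the already-established commutation of the dual projection with the exterior derivative (Lemma~\ref{Lem:projectionextder}) by rewriting both $\coprojection$ and $\coderiv$ in terms of $\dualprojection$, $\ederiv$, and the Hodge-$\star$. Specifically, by Definition~\ref{def:canonical_coprojection_operators} we have $\coprojection = (-1)^{k(n-k)} \star \dualprojection \star$, and by Proposition~\ref{prop::diffgeom_codifferential_adjoint} we have $\coderiv = (-1)^{n(k+1)+1}\star\ederiv\star$. Composing these expressions produces a product in which an inner $\star \star$ appears between $\dualprojection$ and $\ederiv$, which can be collapsed using the double-Hodge identity \eqref{eq::diffGeom_double_hodge} to a scalar sign times $\dualprojection \ederiv$.

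At that point, the heart of the proof is a single application of $\dualprojection \ederiv = \ederiv \dualprojection$ from Lemma~\ref{Lem:projectionextder}. After commuting, I would reverse the algebraic manipulations: reinsert a $\star \star$ (picking up another sign), regroup the factors, and recognize that one grouping yields $\star \ederiv \star = \pm \coderiv$ while the other yields $\star \dualprojection \star = \pm \coprojection$. The outcome is $\coderiv \coprojection$, giving the desired identity. The argument for $\dualcoprojection$ is structurally identical, swapping the roles of $\projection$ and $\dualprojection$ and invoking the other half of Lemma~\ref{Lem:projectionextder}.

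The main obstacle, as usual for this kind of manipulation, is bookkeeping of the signs $(-1)^{k(n-k)}$ and $(-1)^{n(k+1)+1}$ as $k$ shifts by one (since $\coderiv$ lowers the form degree). I would therefore work carefully with an arbitrary $\kdifform{a}{k} \in \Lambda^k(\Omega)$, tracking the degree of the form each operator acts on, and verify that the total sign picked up from the two Hodge pairs and the shifted definition of $\coderiv$ collapses to $+1$. Once the signs are checked on one side, the same verification yields the identity on the dual side, so no new content beyond careful arithmetic is required.
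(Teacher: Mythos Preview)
Your proposal is correct and follows essentially the same route as the paper's proof: expand both $\coprojection$ and $\coderiv$ via Definition~\ref{def:canonical_coprojection_operators} and Proposition~\ref{prop::diffgeom_codifferential_adjoint}, collapse the inner $\star\star$ with \eqref{eq::diffGeom_double_hodge}, apply Lemma~\ref{Lem:projectionextder} to swap $\dualprojection$ and $\ederiv$, then reinsert $\star\star$ and regroup. The paper carries out exactly this chain, including the sign bookkeeping you flag as the main obstacle.
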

This means that the codifferential is exact with respect to the coprojection, i.e. the following diagram commutes
\[\begin{CD}
\kformspace{k+1} @>\coderiv>> \kformspace{k}\\
@VV\coprojection V @VV\coprojection V  \\
\kformspaceh{k+1} @>\coderiv>> \kformspaceh{k}
\end{CD} \quad \quad \quad \quad
\begin{CD}
\kformspace{k+1} @>\coderiv>> \kformspace{k}\\
@VV\dualcoprojection V @VV\dualcoprojection V  \\
\kformspaceh{k+1} @>\coderiv>> \kformspaceh{k}
\end{CD}
\]
%Next consider the codifferential operator acting on an element in $\Lambda^k_h$. Introduce the discrete codifferential such that $\coderiv_h:\Lambda^k_h(\Omega;C_k)\rightarrow\Lambda^{k-1}_h(\Omega;C_{k-1})$. The discrete codifferential is defined as the formal adjoint of the exterior derivative,
%\begin{equation}
%\label{discretecodifferentialadjoint}
%\inner{\ederiv_h a_h}{b_h}=\inner{a_h}{\coderiv_hb_h}.
%\end{equation}

\begin{corollary}
Combining Proposition~\ref{Commutation_coproj_codiff} and Proposition~\ref{prop:equiv_projections} shows that for $\kdifformh{a}{k} \in \Lambda_h^k(\Omega)$ we have
\[ \projection \coderiv \stackrel{\rm Prop.~\ref{prop:equiv_projections}}{=} \coprojection \coderiv \stackrel{\rm Prop.~\ref{Commutation_coproj_codiff}}{=} \coderiv \coprojection \stackrel{\rm Prop.~\ref{prop:equiv_projections}}{=} \coderiv \projection \;.\]
\end{corollary}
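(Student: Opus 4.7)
The plan is to follow the chain of three equalities written in the corollary, supplying the justification the paper elides. For an element $\kdifformh{a}{k}\in\Lambda_h^k(\Omega)$, Proposition~\ref{prop:equiv_projections} gives $\coprojection \kdifformh{a}{k}=\projection \kdifformh{a}{k}=\kdifformh{a}{k}$, so the rightmost equality $\coderiv\coprojection \kdifformh{a}{k}=\coderiv\projection \kdifformh{a}{k}$ is immediate: the two operators agree on the input, so their images under $\coderiv$ agree. The middle equality $\coprojection\coderiv \kdifformh{a}{k}=\coderiv\coprojection \kdifformh{a}{k}$ is a direct instance of Proposition~\ref{Commutation_coproj_codiff}, which holds on all of $\Lambda^k(\Omega)$ and in particular at $\kdifformh{a}{k}$.

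The leftmost equality $\projection\coderiv \kdifformh{a}{k}=\coprojection\coderiv \kdifformh{a}{k}$ is the subtle one, since Proposition~\ref{prop:equiv_projections} only gives $\projection=\coprojection$ on elements that already lie in $\Lambda_h^{k-1}(\Omega)$. So the key intermediate step I would insert is the observation that $\coderiv \kdifformh{a}{k}\in\Lambda_h^{k-1}(\Omega)$. This follows from the two equalities already established: using $\coprojection \kdifformh{a}{k}=\kdifformh{a}{k}$ and then commutation,
\[
\coderiv \kdifformh{a}{k}=\coderiv\coprojection \kdifformh{a}{k}=\coprojection\coderiv \kdifformh{a}{k}\in\coprojection\Lambda^{k-1}(\Omega)=\Lambda_h^{k-1}(\Omega),
\]
where the last identification uses Proposition~\ref{prop:pispace=pi_starspace} (equivalence of the images of $\projection$ and $\coprojection$). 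Once $\coderiv \kdifformh{a}{k}$ is known to lie in $\Lambda_h^{k-1}(\Omega)$, Proposition~\ref{prop:equiv_projections} applies and yields $\projection\coderiv \kdifformh{a}{k}=\coprojection\coderiv \kdifformh{a}{k}$.

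The only obstacle worth flagging is precisely this point: one must not read Proposition~\ref{prop:equiv_projections} as an unconditional identity $\projection=\coprojection$ of operators, since they generally disagree outside the finite dimensional subspace. The argument succeeds because the commutation property of Proposition~\ref{Commutation_coproj_codiff} together with the invariance of $\kdifformh{a}{k}$ under $\coprojection$ propagates membership in the finite dimensional space through the codifferential. Putting the three links together gives the asserted identity $\projection\coderiv=\coderiv\projection$ on $\Lambda_h^k(\Omega)$.
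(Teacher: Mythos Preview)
Your proof is correct and follows the same chain of equalities the paper indicates; the paper offers no argument beyond labeling each equals sign with the proposition invoked. You are in fact more careful than the paper at the leftmost step: the paper simply tags $\projection\coderiv=\coprojection\coderiv$ with Proposition~\ref{prop:equiv_projections}, but as you observe, that proposition only applies to inputs already in $\Lambda_h^{k-1}$, and your intermediate deduction $\coderiv\kdifformh{a}{k}=\coprojection\coderiv\kdifformh{a}{k}\in\Lambda_h^{k-1}$ (via Proposition~\ref{Commutation_coproj_codiff} and Proposition~\ref{prop:pispace=pi_starspace}) is exactly what is needed to close that gap.
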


\subsubsection{The pullback} The projection commutes with the pullback as follows.
\begin{lemma}
Let $\pi$ be either $\pi_h$ or $\tilde{\pi}_h$ as in Definition~\ref{def:general_pi}. For all $\kdifform{a}{k}\in\Lambda^k(\Omega_\manifold{N})$ and for $\Phi:\Omega_\manifold{M}\rightarrow\Omega_\manifold{N}$, there exists a commuting property between the projection operator $\pi$ and the pullback $\pullback$, such that
\begin{equation}
\label{pullbackprojection}
\pullback\pi=\pi\pullback\quad\mathrm{on}\ \Lambda^k(\Omega_\manifold{N}).
\end{equation}
This commutation can be illustrated as
\[
\begin{CD}
\Lambda^k(\Omega_\manifold{M}) @>\pullback>> \Lambda^k(\Omega_\manifold{N})\\
@VV\pi V @VV\pi V\\
\Lambda^k_h(\Omega_\manifold{M},\kchain{c}{k}) @>\pullback>> \Lambda^k_h(\Omega_\manifold{N},\kchain{c}{k})
\end{CD}
\]
\begin{proof} The proof is based on the commuting properties of the discrete pullback, $\Phi^\sharp$:
\[ \reconstruction \reduction \pullback a^{(k)} \stackrel{\eqref{commutation_reduction_map}}{=} \reconstruction \Phi^\sharp \reduction a^{(k)} \stackrel{\eqref{commutation_reconstruction_map}}{=} \pullback \reconstruction \reduction a^{(k)}. \]
An alternative proof can be given, based on \eqref{eq::diffGeom_pullback_integral_manifolds} and \eqref{reductionprojection}. Integrate over a $k$-cell $\tau_{(k)}$, then
\[
\int_{\tau_{(k)}}\pullback\pi\kdifform{a}{k}\stackrel{\eqref{eq::diffGeom_pullback_integral_manifolds}}{=}\int_{\Phi(\tau_{(k)})}\pi\kdifform{a}{k}\stackrel{\eqref{reductionprojection}}{=}\int_{\Phi(\tau_{(k)})}\kdifform{a}{k}\stackrel{\eqref{eq::diffGeom_pullback_integral_manifolds}}{=}\int_{\tau_{(k)}}\pullback\kdifform{a}{k}\stackrel{\eqref{reductionprojection}}{=}\int_{\tau_{(k)}}\pi\pullback\kdifform{a}{k}.
\]
\end{proof}
\end{lemma}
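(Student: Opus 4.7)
The plan is to exploit the compositional structure $\pi = \reconstruction \circ \reduction$ and reduce the commutation $\pullback \pi = \pi \pullback$ to the two commutation relations already established for the building blocks, namely \eqref{commutation_reduction_map} for the reduction and \eqref{commutation_reconstruction_map} for the reconstruction. Since $\Phi^\sharp$ is the discrete analogue that bridges the chain/cochain sides of the two diagrams, the strategy is essentially to insert $\Phi^\sharp$ in the middle and cancel it out by applying the two intertwining identities in sequence.

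More concretely, I would write, for an arbitrary $\kdifform{a}{k} \in \Lambda^k(\Omega_\mathcal{N})$,
\[
\pi \pullback \kdifform{a}{k} = \reconstruction \reduction \pullback \kdifform{a}{k} \stackrel{\eqref{commutation_reduction_map}}{=} \reconstruction \Phi^\sharp \reduction \kdifform{a}{k} \stackrel{\eqref{commutation_reconstruction_map}}{=} \pullback \reconstruction \reduction \kdifform{a}{k} = \pullback \pi \kdifform{a}{k},
\]
and then observe that the identical argument applies verbatim to $\tilde{\pi}_h = \tilde{\reconstruction}\tilde{\reduction}$ on the dual complex, which covers both cases of Definition~\ref{def:general_pi}. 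The two commutative squares for $\reduction$ and $\reconstruction$ combine to give a single commutative square for $\pi$, so the diagram included in the statement follows immediately.

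As an independent sanity check, and because the excerpt already hints at it, I would also give the alternative integral-based derivation: for every $k$-cell $\tau_{(k)} \in C_k(D_\mathcal{M})$,
\[
\int_{\tau_{(k)}} \pullback \pi \kdifform{a}{k} \stackrel{\eqref{eq::diffGeom_pullback_integral_manifolds}}{=} \int_{\Phi(\tau_{(k)})} \pi \kdifform{a}{k} \stackrel{\eqref{reductionprojection}}{=} \int_{\Phi(\tau_{(k)})} \kdifform{a}{k} \stackrel{\eqref{eq::diffGeom_pullback_integral_manifolds}}{=} \int_{\tau_{(k)}} \pullback \kdifform{a}{k} \stackrel{\eqref{reductionprojection}}{=} \int_{\tau_{(k)}} \pi \pullback \kdifform{a}{k},
\]
so $\reduction ( \pullback \pi \kdifform{a}{k} - \pi \pullback \kdifform{a}{k}) = 0$ on every $k$-chain. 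Since both sides lie in the finite dimensional subspace $\Lambda_h^k(\Omega_\mathcal{M};C_k)$ on which $\reduction$ is injective (by the consistency property \eqref{consistency} and the fact that $\reconstruction$ is a right inverse), equality of the forms themselves follows.

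There is really no hard part here: both proofs are one-line consequences of previously established facts. The only delicate point worth flagging explicitly is the tacit compatibility assumption that the cell complex $D_\mathcal{M}$ on $\Omega_\mathcal{M}$ is mapped by the induced chain map $\Phi_\sharp$ into $D_\mathcal{N}$ on $\Omega_\mathcal{N}$, so that the reductions $\reduction$ and $\tilde{\reduction}$ in the two expressions above are the correct ones to pair with the reconstructions on their respective sides; once this is stated, the rest is pure bookkeeping of the identities \eqref{commutation_reduction_map} and \eqref{commutation_reconstruction_map}.
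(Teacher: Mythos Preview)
Your proposal is correct and follows exactly the paper's approach: the same one-line chain $\reconstruction\reduction\pullback = \reconstruction\Phi^\sharp\reduction = \pullback\reconstruction\reduction$ via \eqref{commutation_reduction_map} and \eqref{commutation_reconstruction_map}, together with the same integral-based alternative argument. Your added remarks on the injectivity of $\reduction$ on $\Lambda_h^k$ and on the compatibility of the cell complexes under $\Phi_\sharp$ are useful clarifications that the paper leaves implicit.
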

\noindent
This commutation does not hold for the coprojections.
Combining the commutation relations \eqref{projectionextder} and \eqref{pullbackprojection} gives
\[
\pi\pullback\ederiv=\pullback\pi\ederiv=\pullback\ederiv\pi=\ederiv\pullback\pi=\ederiv\pi\pullback=\pi\ederiv\pullback.
\]

\subsubsection{The wedge product}
The next operator to be considered in the subspaces $\Lambda^k_h\subset\Lambda^k$, $0\leq k\leq n$, is the wedge product. A product of a $k$- and $l$-form from subspaces $\Lambda^k_h$ and $\Lambda^l_h$ gives a $(k+l)$-form that is not in the subspace $\Lambda^{k+l}_h$. It therefore requires again a projection step.
\begin{definition}\label{discretewedge}
A discrete wedge product is introduced such that $\wedge_h:\Lambda^k_h\times\Lambda^l_h\rightarrow\Lambda^{k+l}_h$, given by
\begin{equation}
\kdifformh{a}{k}\wedge_h \kdifformh{b}{l}\define\pi\left(\kdifformh{a}{k}\wedge\kdifformh{b}{l}\right),
\end{equation}
where $\pi$ is either $\pi_h$ or $\tilde{\pi}_h$, as defined in Definition~\ref{def:general_pi}.
\end{definition}
As a consequence the discrete wedge product, $\wedge_h$, approximates the wedge product, $\wedge$, because
\[
\kdifformh{a}{k}\wedge \kdifformh{b}{l}-\kdifformh{a}{k}\wedge_h \kdifformh{b}{l}=(Id-\pi)(\kdifformh{a}{k}\wedge \kdifformh{b}{l})\stackrel{\eqref{approximation}}{=}\mathcal{O}(h^p).
\]
Let us verify that the discrete wedge product satisfies the same properties as the original wedge product. Let $\kdifformh{a}{k},\kdifformh{c}{k}\in\Lambda^k_h,\ \kdifformh{b}{l}\in\Lambda^l_h$, with $2k+l\leq n$. Using the linearity of the projection it is straightforward to show that
\[
\left(\kdifformh{a}{k}+\kdifformh{c}{k}\right)\wedge_h \kdifformh{b}{l}=\kdifformh{a}{k}\wedge_h \kdifformh{b}{l}+\kdifformh{c}{k}\wedge_h \kdifformh{b}{l}.
\]
Also the skew-symmetry follows from the linearity of the projection,
\[
\kdifformh{a}{k}\wedge_h \kdifformh{b}{l}=\pi\left(\kdifformh{a}{k}\wedge \kdifformh{b}{l}\right)=(-1)^{kl}\pi\left(\kdifformh{b}{l}\wedge \kdifformh{a}{k}\right)=(-1)^{kl}\kdifformh{b}{l}\wedge_h \kdifformh{a}{k}.
\]
The third property of the wedge product is associativity \eqref{wedgeassociativity}. As stated already in \cite{desbrun2005discrete}, the associativity property is in general not satisfied. Now let $a\in\Lambda^k_h,\ b\in\Lambda^l_h,\ c\in\Lambda^m_h$, with $k+l+m\leq n$, then\footnote{Both sub- and superscripts are intentionally suppressed for readability.}
\begin{align*}
&(a\wedge_h b)\wedge_h c-a\wedge_h(b\wedge_h c)\\
&\ \ =\pi\left(\pi(a\wedge b)\wedge c\right)-\pi\left(a\wedge\pi(b\wedge c)\right)\\
&\ \ =\pi\left[(a\wedge b)\wedge c-(I-\pi)(a\wedge b)\wedge c - a\wedge (b\wedge c) + a\wedge(I-\pi)(b\wedge c)\right]\\
&\ \ =\pi\Big[ a\wedge\underbrace{(I-\pi)(b\wedge c)}_{\mathcal{O}(h^p)} - \underbrace{(I-\pi)(a\wedge b)}_{\mathcal{O}(h^p)}\wedge c \Big]=\mathcal{O}(h^p).
\end{align*}
% \begin{align*}
% &(\kdifformh{a}{k}\wedge_h \kdifformh{b}{l})\wedge_h \kdifformh{c}{m}-\kdifformh{a}{k}\wedge_h(\kdifformh{b}{l}\wedge_h \kdifformh{c}{m})\\
% &\ \ =\pi\left(\pi(\kdifformh{a}{k}\wedge \kdifformh{b}{l})\wedge \kdifformh{c}{m}\right)-\pi\left(\kdifformh{a}{k}\wedge\pi(\kdifformh{b}{l}\wedge \kdifformh{c}{m})\right)\\
% &\ \ =\pi\left[(\kdifformh{a}{k}\wedge \kdifformh{b}{l})\wedge \kdifformh{c}{m}-(I-\pi)(\kdifformh{a}{k}\wedge \kdifformh{b}{l})\wedge \kdifformh{c}{m} - \kdifformh{a}{k}\wedge (\kdifformh{b}{l}\wedge \kdifformh{c}{m}) + \kdifformh{a}{k}\wedge(I-\pi)(\kdifformh{b}{l}\wedge \kdifformh{c}{m})\right]\\
% &\ \ =\pi\Big[ \kdifformh{a}{k}\wedge\underbrace{(I-\pi)(\kdifformh{b}{l}\wedge \kdifformh{c}{m})}_{\mathcal{O}(h^p)} - \underbrace{(I-\pi)(\kdifformh{a}{k}\wedge \kdifformh{b}{l})}_{\mathcal{O}(h^p)}\wedge \kdifformh{c}{m} \Big]=\mathcal{O}(h^p).
% \end{align*}
When considering bilinear products, the normal wedge product and discrete wedge product are in the same equivalence class,
\begin{equation}
\label{reductionwedge}
\reduction\left(\kdifformh{a}{k}\wedge_h \kdifformh{b}{l}\right)=\reduction(\kdifformh{a}{k}\wedge \kdifformh{b}{l}).
\end{equation}
In case $k+l=n$, we get
\[
\int_\Omega \kdifformh{a}{k}\wedge_h \kdifformh{b}{l}=\int_\Omega \kdifformh{a}{k}\wedge \kdifformh{b}{l}.
\]
% Properties of the discrete wedge product and the discrete Hodge-$\star$ operator. Let us take for the projection $\pi=\projection$, then we have that
% \begin{eqnarray*} \star \projection ( \projection \kdifform{f}{0} \wedge \projection \kdifform{a}{k} + \projection \kdifform{g}{0} \wedge \projection \kdifform{b}{k} ) & = & \coprojection \star ( \projection \kdifform{f}{0} \wedge \projection \kdifform{a}{k} + \projection \kdifform{g}{0} \wedge \projection \kdifform{b}{k} ) \\
%  & = & \coprojection \left ( \projection \kdifform{f}{0} \wedge \star \projection \kdifform{a}{k} + \projection \kdifform{g}{0} \wedge \star  \projection \kdifform{b}{k} \right ) \\
%  & = & \coprojection \left ( \projection \kdifform{f}{0} \wedge \coprojection \star \kdifform{a}{k} + \projection \kdifform{g}{0} \wedge \coprojection \star \kdifform{b}{k} \right )
% \end{eqnarray*}
The discrete wedge product in combination with the exterior derivative gives the Leibniz rule \eqref{eq::dif_and_wedge},
\begin{equation}
\begin{aligned}
\ederiv(\kdifformh{a}{k}\wedge_h\kdifformh{b}{l})&=\ederiv\pi(\kdifformh{a}{k}\wedge \kdifformh{b}{l})=\pi\ederiv(\kdifformh{a}{k}\wedge \kdifformh{b}{l})\\
&=\pi\left(\ederiv \kdifformh{a}{k}\wedge \kdifformh{b}{l}+(-1)^k\kdifformh{a}{k}\wedge\ederiv \kdifformh{b}{l}\right)\\
&=\ederiv \kdifformh{a}{k}\wedge_h \kdifformh{b}{l}+(-1)^k\kdifformh{a}{k}\wedge_h\ederiv \kdifformh{b}{l}.
\end{aligned}
\end{equation}
Since the pullback operator $\Phi^\star$ commutes with the projection, the discrete algebra homomorphism is satisfied
\[
\pullback(\kdifformh{a}{k}\wedge_h \kdifformh{b}{l})=\pullback\pi(\kdifformh{a}{k}\wedge \kdifformh{b}{l})=\pi\pullback(\kdifformh{a}{k}\wedge \kdifformh{b}{l})=\pi(\pullback \kdifformh{a}{k}\wedge\pullback \kdifformh{b}{l})=\pullback \kdifformh{a}{k}\wedge_h\pullback \kdifformh{b}{l}.
\]

\subsubsection{The discrete inner product}
Last operator to be defined is the inner product restricted to the finite dimensional subspace $\Lambda^k_h$.
\begin{definition}
Define a discrete inner product $\inner{\cdot}{\cdot}_h:\Lambda^k_h\times\Lambda^k_h\rightarrow\Lambda^n_h$ as
\begin{equation}
\innerspace{\kdifformh{a}{k}}{\kdifformh{b}{k}}{h}\define\pi\left\{\inner{\kdifformh{a}{k}}{\kdifformh{b}{k}}\omega^{(n)}\right\},
\label{discreteinnerproduct}
\end{equation}
with either $\pi=\pi_h$ or $\pi=\tilde{\pi}_h$. This discrete inner product is bilinear, symmetric, positive definite.
\end{definition}
% and
%\[
%\innerspace{a}{a}{h}=0\quad\Leftrightarrow\quad a=0,\qquad\forall a\in\Lambda^k_h.
%\]
\begin{corollary}[\textbf{Discrete $L^2$ inner product}]
The corresponding $L_2$ inner product $\innerspace{\cdot}{\cdot}{L^2\Omega,h}:\Lambda^k_h\times\Lambda^k_h\rightarrow\mathbb{R}$ is essentially the same as \eqref{eq::diffGeom_L2_inner}, because of \eqref{reductionprojection} and that $\Lambda^k_h\subset\Lambda^k$. Let $\kdifformh{a}{k},\kdifformh{b}{k}\in\Lambda^k_h$, then
\begin{equation}
\label{L2discreteinnerproduct}
\begin{gathered}
\innerspace{\kdifformh{a}{k}}{\kdifformh{b}{k}}{L^2\Omega,h}:=\int_\Omega\innerspace{\kdifformh{a}{k}}{\kdifformh{b}{k}}{h}\stackrel{\eqref{discreteinnerproduct}}{=}\int_\Omega\pi\left\{\left(\kdifformh{a}{k},\kdifformh{b}{k}\right)\omega^{(n)}\right\}\\
\stackrel{\eqref{reductionprojection}}{=}\int_\Omega\inner{\kdifformh{a}{k}}{\kdifformh{b}{k}}\kdifform{\omega}{n}=\innerspace{\kdifformh{a}{k}}{\kdifformh{b}{k}}{L^2\Omega}.
\end{gathered}
\end{equation}
\end{corollary}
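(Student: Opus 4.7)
The plan is to unfold the definition of the discrete $L^2$ inner product and reduce it to the continuous $L^2$ inner product by invoking the integral-preserving property of the projection, which is exactly the property needed to discard the outer $\pi$ when we integrate over the whole domain $\Omega$.

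First I would write out $\innerspace{a_h^{(k)}}{b_h^{(k)}}{L^2\Omega,h} := \int_\Omega \innerspace{a_h^{(k)}}{b_h^{(k)}}{h}$ and then substitute the definition \eqref{discreteinnerproduct} of the discrete pointwise inner product, yielding $\int_\Omega \pi\bigl\{\inner{a_h^{(k)}}{b_h^{(k)}}\,\omega^{(n)}\bigr\}$. Note that the integrand $\inner{a_h^{(k)}}{b_h^{(k)}}\omega^{(n)}$ is an $n$-form on $\Omega$, so this step has the right type.

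The key step is then to invoke \eqref{reductionprojection}, specifically the corollary noted just after it in the text: for any $n$-form $c^{(n)}$, $\int_\Omega \pi c^{(n)} = \int_\Omega c^{(n)}$. This follows because $\reduction \pi = \reduction$ and, when the chain in question is the full top-dimensional chain $\boldsymbol{\omega}_{(n)}$ covering $\Omega$, reduction on this chain is precisely integration over $\Omega$. Applying this with $c^{(n)} = \inner{a_h^{(k)}}{b_h^{(k)}}\omega^{(n)}$ eliminates the projection, giving $\int_\Omega \inner{a_h^{(k)}}{b_h^{(k)}}\omega^{(n)}$.

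Finally I would use Definition~\ref{def::diffGeom_hodge} of the Hodge-$\star$ operator, which says $a^{(k)} \wedge \star b^{(k)} = \inner{a^{(k)}}{b^{(k)}}\omega^{(n)}$, to rewrite the integrand as $a_h^{(k)} \wedge \star b_h^{(k)}$, and then recognize the result as $\innerspace{a_h^{(k)}}{b_h^{(k)}}{L^2\Omega}$ by \eqref{eq::diffGeom_L2_inner}. This completes the chain of equalities already displayed in the statement, and the main obstacle — if there is one — is purely the verification that the integral-preserving corollary applies to $n$-forms of the form $\inner{a_h^{(k)}}{b_h^{(k)}}\omega^{(n)}$, which it does since that property holds for arbitrary $n$-forms and the finite-dimensional restriction $a_h^{(k)},b_h^{(k)} \in \Lambda_h^k$ only ensures the integrand is a well-defined $n$-form on $\Omega$.
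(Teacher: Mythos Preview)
Your proposal is correct and follows exactly the same approach as the paper: the corollary's proof is already the displayed chain of equalities, and you have correctly identified the justification for each step — the definition \eqref{discreteinnerproduct}, the integral-preserving consequence of \eqref{reductionprojection} applied to the $n$-form $\inner{a_h^{(k)}}{b_h^{(k)}}\omega^{(n)}$, and the definition \eqref{eq::diffGeom_L2_inner} of the continuous $L^2$ inner product.
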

The Hodge-$\star$ was defined in Definition \ref{def::diffGeom_hodge} as a combination of the inner product and the wedge product. Now when using the discrete wedge product and discrete inner product, the Hodge-$\star$ remains unchanged, and therefore
\begin{equation}
\inner{\kdifformh{a}{k}}{\kdifformh{b}{k}}_h=\kdifformh{a}{k}\wedge_h\star \kdifformh{b}{k}.
\label{discretehodge2}
\end{equation}
In weak formulations, integrals over $\Omega$ are considered. In that case \eqref{discretehodge2} reduces, according to \eqref{reductionwedge} and \eqref{L2discreteinnerproduct}, to
\begin{equation}
\innerspace{\kdifformh{a}{k}}{\kdifformh{b}{k}}{L^2\Omega}=\int_\Omega \kdifformh{a}{k}\wedge\star \kdifformh{b}{k}.
\end{equation}
This result shows that in a weak formulation, the original inner product, wedge product and Hodge-$\star$ operator can be used.
%\alert{\subsubsection{Discrete Laplace operator}
%With the discrete exterior derivative $\ederiv_h$ and the discrete codifferential $\coderiv_h$ the discrete Laplacian $\Delta_h:\Lambda^k_h(\Omega;C_k)\rightarrow\Lambda^k_h(\Omega;C_k)$ can be defined as
%\begin{equation}
%\begin{aligned}
%\Delta_h&=\coderiv_h\ederiv_h+\ederiv_h\coderiv_h\\
%&=\coderiv\ederiv+\ederiv\coderiv=\Delta.
%\end{aligned}
%\end{equation}
%Since the Laplace operator maps the space $\Lambda^k_h(\Omega;C_k)$ into itself, the discrete Laplace operator equals the normal Laplace operator acting on $\Lambda^k_h(\Omega;C_k)$.
%% Also for the Laplace operator we can present a commuting diagram, as
%% \[
%% \begin{CD}
%% \Lambda^k @>\Delta>> \Lambda^k\\
%% @V\projection VV @VV\projection V\\
%% \Lambda^k_h @>\Delta_h>> \Lambda^k_h 
%% \end{CD}
%% \]
%Consider homogeneous boundary conditions, then
%\[
%\innerspace{\Delta_ha_h}{b_h}{L^2\Omega}=\innerspace{\ederiv a_h}{\ederiv b_h}{L^2\Omega}+\innerspace{\coderiv a_h}{\coderiv b_h}{L^2\Omega}
%\]
%Since on $\ederiv_h$ and $\coderiv_h$ both commute with the projection in $\Lambda^k_h(\Omega;C_k)$, so does the discrete Laplace operator.
%}

\subsection{Discrete Hodge decomposition}
Let $\Lambda^k_h\subset\Lambda^k$ be the space of finite dimensional differerential forms, and let $(\Lambda_h,\ederiv)$ be a finite-dimensional subcomplex with $\ederiv\Lambda^k_h\subset\Lambda^{k+1}_h$. Since the exterior derivative commutes with the projection operator, it follows that $\mathcal{B}(\ederiv,\Lambda^k_h)\subset\mathcal{B}(\ederiv,\Lambda^k)$, $\mathcal{Z}(\ederiv,\Lambda^k_h)\subset\mathcal{Z}(\ederiv,\Lambda^k)$ and that $\mathcal{B}(\ederiv,\Lambda^k_h)\subseteq\mathcal{Z}(\ederiv,\Lambda^k_h)$. We can make the following decomposition of the space of discrete differential forms,
\[
\Lambda^k_h=\mathcal{B}(\ederiv,\Lambda^{k-1}_h)\oplus\mathcal{B}^{c}(\ederiv,\Lambda^{k-1}_h).
\]
The space of discrete harmonic forms is defined as $\mathcal{H}_h^k=\mathcal{N}(\ederiv,\Lambda^k_h)\cap\mathcal{B}^c(\ederiv,\Lambda^{k-1}_h)$. This gives the following decomposition, $\mathcal{B}^c(\ederiv,\Lambda^{k-1}_h)=\mathcal{H}^k_h\oplus\mathcal{Z}^c(\ederiv,\Lambda^{k}_h)$. Although $\mathcal{B}^c(\ederiv,\Lambda^{k}_h)\subset\mathcal{B}^c(\ederiv,\Lambda^{k})$, in general, $\mathcal{H}^k_h\not\subset\mathcal{H}^k$ and $\mathcal{Z}^c(\ederiv,\Lambda^{k}_h)\not\subset\mathcal{Z}^c(\ederiv,\Lambda^{k})$. They both depend on $\coderiv$, since $\mathcal{Z}^c(\ederiv,\Lambda^{k})=\mathcal{B}(\coderiv,\Lambda^{k+1})$, of which we know that it does not commute with the projection $\pi_h$ on $\Lambda^k$, see \remarkref{rem:projectionhodge}. Both harmonic spaces $\mathcal{H}^k$ and $\mathcal{H}^k_h$ are finite dimensional spaces and their dimension depends on the topology of the domain $\Omega$. Because the dimension only depends on the topology (its Bettie number), we have $\mathrm{dim}\;\mathcal{H}^k_h=\mathrm{dim}\;\mathcal{H}^k$. The gap between $\mathcal{H}^k_h$ and $\mathcal{H}^k$ vanishes as $h\rightarrow0$. See Theorem 3.5 in  \cite{arnold2010finite} for the definition and details about the gap between the two harmonic form spaces. Substituting the decomposition into the previous decomposition gives the discrete Hodge decompostion,
\[
\Lambda^k_h=\mathcal{B}(\ederiv,\Lambda^{k-1}_h)\oplus\mathcal{H}^k_h\oplus\mathcal{B}(\coderiv,\Lambda^{k+1}_h).
\]

\begin{example}\label{ex:discretehodgedecomposition}
Consider the Hodge decomposition, Corollary~\ref{cor:hodgedecomposition}, of a $k$-form, $\kdifform{a}{k}\in\Lambda^k$, in terms of $\kdifform{b}{k-1}\in\Lambda^{k-1}$, $\kdifform{h}{k}\in\mathcal{H}^k$ and $\kdifform{c}{k+1}\in\Lambda^{k+1}$,
\[
\kdifform{a}{k}=\ederiv \kdifform{b}{k-1}+\kdifform{h}{k}+\coderiv \kdifform{c}{k+1}.
\]
Then apply the projection operator, $\pi_h$,
\[
\pi_h\kdifform{a}{k}
% =\pi_h\ederiv \kdifform{b}{k-1}+\pi_h(\kdifform{h}{k}+\coderiv \kdifform{c}{k+1})
=\ederiv\pi_h\kdifform{b}{k-1}+\pi_h(\kdifform{h}{k}+\coderiv \kdifform{c}{k+1})=\underbrace{\ederiv \kdifformh{b}{k-1}}_{\in\mathcal{B}(\ederiv,\Lambda^k_h)}+\underbrace{(\kdifformh{h}{k}+\coderiv \kdifformh{e}{k+1})}_{\in\mathcal{B}^c(\ederiv,\Lambda^{k}_h)},
\]
where $\kdifformh{h}{k}\in\mathcal{H}^k_h\not\subset\mathcal{H}^k$ and $\kdifformh{e}{k+1}\in\Lambda^{k+1}_h$, but $\kdifformh{e}{k+1}\neq\pi_h\kdifform{c}{k+1}$.

If we apply the coprojection operator, $\coprojection$, to $\kdifform{a}{k}$, we obtain
\[
\coprojection \kdifform{a}{k}
% =\coprojection\ederiv b+\coprojection h+\coprojection\coderiv c
=\coprojection\ederiv \kdifform{b}{k-1}+\coprojection \kdifform{h}{k}+\coderiv\coprojection \kdifform{c}{k+1}
=\underbrace{\ederiv \kdifformh{r}{k-1}+h_{h^*}^{(k)}}_{\in\mathcal{B}^c(\coderiv,\Lambda^k_h)}+\underbrace{\coderiv \kdifformh{c}{k+1}}_{\in\mathcal{B}(\coderiv,\Lambda^{k}_h)},
\]
where $h_{h^*}^{(k)}\in\mathcal{H}^k_{h^*}\not\subset\mathcal{H}^k$ and $\kdifformh{r}{k-1}\in\Lambda^{k-1}_h$, but $\kdifformh{r}{k-1}\neq\coprojection \kdifform{b}{k-1}$.
\end{example}

The continuous Hodge decomposition, discrete Hodge decomposition and the cochain space decompostion, are related to each other by means of the reduction and reconstructino operators.
\begin{proposition}\label{reductionreconstructionhodgedecomposition}
From the definition of the reduction operator $\reduction$, the Hodge decomposition for $k$-forms and the cochain space decomposition, it follows that
\begin{equation}
\mathcal{B}(\ederiv;\Lambda^{k-1})\stackrel{\reduction}{\longrightarrow}B^k,\quad
\mathcal{H}^k\stackrel{\reduction}{\longrightarrow}H^k,\quad
\mathcal{Z}^c(\ederiv;\Lambda^{k})\stackrel{\reduction}{\longrightarrow}(Z^{k})^c.
\end{equation}
From the definition of the reconstruction operator $\reconstruction$, the Hodge decomposition for finite dimensional $k$-forms and the cochain space decomposition, it follows that
\begin{equation}
B^k\stackrel{\reconstruction}{\longrightarrow}\mathcal{B}(\ederiv;\Lambda^{k-1}_h),\quad
H^k\stackrel{\reconstruction}{\longrightarrow}\mathcal{H}^k_h,\quad
(Z^{k})^c\stackrel{\reconstruction}{\longrightarrow}\mathcal{Z}^c(\ederiv;\Lambda^{k}_h).
\end{equation}
\end{proposition}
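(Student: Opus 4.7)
The proof plan hinges on three structural facts already established in the paper: the commutation $\reduction\ederiv=\dederiv\reduction$ from \eqref{cdp1}, its mirror image $\ederiv\reconstruction=\reconstruction\dederiv$ from \eqref{cdp2}, and the consistency relation $\reduction\reconstruction=Id$ from \eqref{consistency}. The plan is to treat each of the six arrows component-by-component, handling the $\mathcal{B}$-piece by direct application of a commuting square, the harmonic piece by a short equivalence-class argument, and the $\mathcal{Z}^c$ piece by a contrapositive that combines the commutation with the consistency. I will first do the three reduction arrows, then do the three reconstruction arrows as dual computations.

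For $\mathcal{B}(\ederiv;\Lambda^{k-1})\xrightarrow{\reduction}B^k$, I would pick any $\kdifform{a}{k}=\ederiv\kdifform{b}{k-1}$ and compute $\reduction\kdifform{a}{k}=\reduction\ederiv\kdifform{b}{k-1}=\dederiv\reduction\kdifform{b}{k-1}$, which lies in $B^k$ by definition. For $\mathcal{H}^k\xrightarrow{\reduction}H^k$, I would first note that $\kdifform{h}{k}\in\mathcal{H}^k\subset\mathcal{Z}(\ederiv;\Lambda^k)$ gives $\dederiv\reduction\kdifform{h}{k}=\reduction\ederiv\kdifform{h}{k}=0$, so $\reduction\kdifform{h}{k}\in Z^k$; then I would argue that $\reduction\kdifform{h}{k}\notin B^k$ modulo the identification of $\mathcal{H}^k$ as the complement of $\mathcal{B}(\ederiv;\Lambda^{k-1})$ inside $\mathcal{Z}(\ederiv;\Lambda^k)$, using the dimension coincidence $\mathrm{dim}\,\mathcal{H}^k=\mathrm{dim}\,H^k$ noted after Example~\ref{ex:discretehodgedecomposition}; the induced map on equivalence classes $\mathcal{Z}(\ederiv;\Lambda^k)/\mathcal{B}(\ederiv;\Lambda^{k-1})\to Z^k/B^k$ is then an isomorphism. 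For $\mathcal{Z}^c(\ederiv;\Lambda^k)\xrightarrow{\reduction}(Z^k)^c$, I would argue contrapositively: if $\kdifform{a}{k}\in\mathcal{Z}^c$ then $\ederiv\kdifform{a}{k}\neq 0$, and since $\mathcal{Z}^c(\ederiv;\Lambda^k)$ may be chosen compatibly with $\reconstruction$ (so that $\reduction$ restricted there is injective on the image of $\ederiv$), $\reduction\ederiv\kdifform{a}{k}=\dederiv\reduction\kdifform{a}{k}\neq 0$, forcing $\reduction\kdifform{a}{k}\in(Z^k)^c$.

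The three reconstruction arrows are handled analogously, swapping the roles of \eqref{cdp1} and \eqref{cdp2}. For $B^k\xrightarrow{\reconstruction}\mathcal{B}(\ederiv;\Lambda^{k-1}_h)$, if $\kcochain{c}{k}=\dederiv\kcochain{d}{k-1}$ then $\reconstruction\kcochain{c}{k}=\reconstruction\dederiv\kcochain{d}{k-1}=\ederiv\reconstruction\kcochain{d}{k-1}$. For $H^k\xrightarrow{\reconstruction}\mathcal{H}^k_h$, a cocycle $\kcochain{c}{k}$ satisfies $\ederiv\reconstruction\kcochain{c}{k}=\reconstruction\dederiv\kcochain{c}{k}=0$, so $\reconstruction\kcochain{c}{k}$ is closed; to rule out that it is exact, I would assume $\reconstruction\kcochain{c}{k}=\ederiv\kdifformh{e}{k-1}$ for some finite-dimensional $(k-1)$-form, apply $\reduction$, and use the consistency \eqref{consistency} to obtain $\kcochain{c}{k}=\dederiv\reduction\kdifformh{e}{k-1}\in B^k$, contradicting $\kcochain{c}{k}\in H^k$. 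For $(Z^k)^c\xrightarrow{\reconstruction}\mathcal{Z}^c(\ederiv;\Lambda^k_h)$, from $\dederiv\kcochain{c}{k}\neq 0$ I would invoke injectivity of $\reconstruction$ (which is an isomorphism onto its image by Definition~\ref{reconstructionoperator}) so that $\ederiv\reconstruction\kcochain{c}{k}=\reconstruction\dederiv\kcochain{c}{k}\neq 0$, placing $\reconstruction\kcochain{c}{k}$ outside $\mathcal{Z}(\ederiv;\Lambda^k_h)$.

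The main obstacle will be the treatment of the complements $\mathcal{Z}^c$ and $(Z^k)^c$, which are not canonically defined by the decomposition but only determined up to choice. The proof must either fix a compatible pair of complements (so that $\reduction$ and $\reconstruction$ interchange them cleanly) or state the conclusion as an inclusion modulo the already-handled $\mathcal{B}$ and $\mathcal{H}$ pieces. Once that subtlety is settled, the remaining computations are short algebraic manipulations that merely chase the two commuting squares and use $\reduction\reconstruction=Id$ whenever one needs to transfer an equation from the finite-dimensional side back to the cochain side.
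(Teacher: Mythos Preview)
The paper states this proposition without proof, so there is no reference argument to compare against; your approach via the commuting relations \eqref{cdp1}, \eqref{cdp2} and consistency \eqref{consistency} is the natural one, and for the $\mathcal{B}$-arrows and all three reconstruction arrows your arguments are clean and correct.

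The genuine weak point, which you yourself flag, is the reduction arrow $\mathcal{Z}^c(\ederiv;\Lambda^{k})\xrightarrow{\reduction}(Z^{k})^c$. Your contrapositive relies on the implication ``$\ederiv\kdifform{a}{k}\neq 0\Rightarrow \reduction\ederiv\kdifform{a}{k}\neq 0$'', but $\reduction$ is non-injective (Definition~\ref{def:equivalence}), so there certainly exist non-closed forms whose exterior derivative integrates to zero on every $(k{+}1)$-cell; for such forms $\dederiv\reduction\kdifform{a}{k}=0$ and the image lands in $Z^k$, not in $(Z^k)^c$. The phrase ``$\mathcal{Z}^c$ may be chosen compatibly with $\reconstruction$'' does not rescue this: any algebraic complement of $\mathcal{Z}(\ederiv;\Lambda^k)$ in $\Lambda^k$ is infinite-dimensional, while $(Z^k)^c$ is finite-dimensional, so $\reduction$ cannot be injective on it regardless of the choice. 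The honest statement is that $\reduction$ induces a map of \emph{quotients} $\Lambda^k/\mathcal{Z}(\ederiv;\Lambda^k)\to C^k/Z^k$, and this is all that is actually needed downstream (cf.\ the use of the proposition in the proof of the discrete well-posedness theorem, where only the cochain-level decomposition matters). For the harmonic arrow $\mathcal{H}^k\xrightarrow{\reduction}H^k$, your dimension argument is essentially the de~Rham isomorphism the paper alludes to after the definition of $H^k$; citing that directly would tighten the step from ``$\reduction\kdifform{h}{k}\in Z^k$'' to ``$\reduction\kdifform{h}{k}\notin B^k$''.
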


A consequence of this proposition is discrete well-posedness.
\begin{theorem}[\textbf{Discrete well-posedness}]
For $f^{(k+1)}\in\mathcal{B}(\ederiv;\Lambda^k)$ and $a^{(k)}\in\mathcal{Z}^c(\ederiv;\Lambda^k)$, the equation $\ederiv a^{(k)}=f^{(k+1)}$ is well-posed in the sense that there \emph{exists} a solution and the solution is \emph{unique}. Then the discrete equation $\ederiv\kdifformh{a}{k}=\kdifformh{f}{k+1}$ is also well-posed if and only if the projection $\pi$ is either $\pi_h$ or $\tilde{\pi}_h$.
\end{theorem}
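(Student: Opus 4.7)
The plan is to transfer the well-posedness of the discrete equation down to the cochain level via $\reduction$, invoke the cochain well-posedness result (Proposition~\ref{uniquecochain}), and then lift back with $\reconstruction$. Concretely, I would apply $\reduction$ to both sides of $\ederiv \kdifformh{a}{k} = \kdifformh{f}{k+1}$ and use the commutation \eqref{cdp1}, $\reduction\ederiv = \dederiv\reduction$, to obtain the cochain equation $\dederiv \kcochain{a}{k} = \kcochain{f}{k+1}$ with $\kcochain{a}{k} := \reduction \kdifformh{a}{k}$ and $\kcochain{f}{k+1} := \reduction \kdifformh{f}{k+1}$. By Proposition~\ref{reductionreconstructionhodgedecomposition}, the hypotheses $\kdifformh{f}{k+1} \in \mathcal{B}(\ederiv;\Lambda^{k}_h)$ and $\kdifformh{a}{k} \in \mathcal{Z}^c(\ederiv;\Lambda^{k}_h)$ map under $\reduction$ onto $\kcochain{f}{k+1} \in B^{k+1}$ and $\kcochain{a}{k} \in (Z^k)^c$, so Proposition~\ref{uniquecochain} yields existence and uniqueness of $\kcochain{a}{k}$. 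Finally, $\kdifformh{a}{k} = \reconstruction \kcochain{a}{k}$ is recovered by reconstruction; since $\pi_h = \reconstruction\reduction$ is the identity on $\Lambda_h^k$ (Proposition~\ref{prop:projection}) and $\reduction$ is injective on $\Lambda_h^k(\Omega;C_k)$ by the consistency property \eqref{consistency}, this lift is unique. The argument for $\tilde{\pi}_h = \tilde{\reconstruction}\tilde{\reduction}$ is identical, just working on the dual cell complex $\tilde{D}$.

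For the "only if" direction, the crucial structural ingredient used above is the commutation of $\pi$ with the exterior derivative (Lemma~\ref{Lem:projectionextder}), which is precisely what makes the discrete Hodge pieces $\mathcal{B}(\ederiv;\Lambda^{k-1}_h)$, $\mathcal{H}^k_h$, $\mathcal{Z}^c(\ederiv;\Lambda^k_h)$ isomorphic images of their cochain counterparts $B^k,H^k,(Z^k)^c$ via $\reduction$/$\reconstruction$. The coprojections $\coprojection$ and $\dualcoprojection$, by Proposition~\ref{Commutation_coproj_codiff}, commute with $\coderiv$ rather than with $\ederiv$. Consequently, as illustrated in Example~\ref{ex:discretehodgedecomposition}, the coprojected image of a closed-complement form $\kdifform{a}{k}\in \mathcal{Z}^c(\ederiv;\Lambda^k)$ no longer lies in $\mathcal{Z}^c(\ederiv;\Lambda^k_h)$ but rather in $\mathcal{B}^c(\coderiv;\Lambda^k_h)$, a different component of the discrete space. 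Thus the hypothesis on $\kdifformh{a}{k}$ cannot be reduced to the cochain condition $(Z^k)^c$ in a way compatible with Proposition~\ref{uniquecochain}, and the chain of identifications collapses. I would make this concrete by showing that $\reduction$ restricted to $\coprojection \Lambda^k$, composed with $\dederiv$, does not produce the split $B^{k+1}\oplus H^{k+1}\oplus (Z^{k+1})^c$ needed for the solvability statement.

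The main obstacle will be the converse direction: the equivalence of spaces $\projection \Lambda^k \equiv \coprojection \Lambda^k = \Lambda_h^k$ (Proposition~\ref{prop:pispace=pi_starspace}) means one cannot distinguish well-posedness simply by counting dimensions. The failure must be pinned down at the level of \emph{which subspace} the solution inhabits: for $\pi \in \{\coprojection,\dualcoprojection\}$ one must produce either non-existence of $\kdifformh{a}{k}$ (a datum $\kdifformh{f}{k+1}$ that lies in $\mathcal{B}(\ederiv;\Lambda^k_h)$ but whose preimage is forced out of the coprojected subspace) or non-uniqueness (a nontrivial element of $\mathcal{Z}(\ederiv;\Lambda^k_h)\cap \coprojection\Lambda^k$ not accounted for by $\mathcal{Z}^c(\ederiv;\Lambda^k_h)$). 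Establishing this rigorously in the general setting, rather than by a single dimensional counterexample, is the subtle part, and would most naturally be done by chasing the decompositions of Example~\ref{ex:discretehodgedecomposition} through $\reduction$ to obtain a cochain obstruction that violates the hypotheses of Proposition~\ref{uniquecochain}.
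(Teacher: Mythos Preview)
Your ``if'' direction is essentially the paper's argument: reduce to cochains via $\reduction$ (using $\reduction\ederiv=\delta\reduction$), invoke Proposition~\ref{reductionreconstructionhodgedecomposition} to land in $(Z^k)^c$ and $B^{k+1}$, apply Proposition~\ref{uniquecochain}, and lift back with the bijective $\reconstruction$. The paper frames it by first projecting the continuous equation, $0=\pi(\ederiv a^{(k)}-f^{(k+1)})=\reconstruction(\delta\kcochain{a}{k}-\kcochain{f}{k+1})$, but the content is identical.

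On the ``only if'' direction you actually go further than the paper, which does not supply a proof of the converse at all. Your diagnosis---that $\coprojection,\dualcoprojection$ commute with $\coderiv$ rather than $\ederiv$, so the reduction step no longer carries $\mathcal{Z}^c(\ederiv;\Lambda^k_h)$ to $(Z^k)^c$---is the right structural observation, and your honest acknowledgment that Proposition~\ref{prop:pispace=pi_starspace} ($\projection\Lambda^k\equiv\coprojection\Lambda^k$) blocks a pure dimension count is well taken. But be aware that this space equality is exactly what makes a genuine ``only if'' delicate: since $\Lambda_h^k$ is the same set regardless of which projection produced it, the discrete equation $\ederiv a_h^{(k)}=f_h^{(k+1)}$ posed in $\Lambda_h^k$ is literally the same problem. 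The failure, if it is to be exhibited, must live in how $f_h^{(k+1)}$ and the constraint subspace for $a_h^{(k)}$ are \emph{defined} via the coprojection, not in the solvability of the resulting linear system. The paper leaves this unaddressed, so your sketch is a reasonable attempt at something the authors did not complete.
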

\begin{proof}
For $\kdifform{f}{k+1}\in\mathcal{B}(\ederiv;\Lambda^k)$ and $\kdifform{a}{k}\in\mathcal{Z}^c(\ederiv;\Lambda^k)$ there exists a unique solution by the Hodge decomposition \eqref{eq:generic_Hodge_decomposition}. The discrete equation follows by projection,
% Therefore also the discrete equation has a unique solution for $\kdifformh{u}{k}$, because
\[
0=\pi(\ederiv\kdifform{a}{k}-\kdifform{f}{k+1})=\ederiv\pi\kdifform{a}{k}-\pi\kdifform{f}{k+1}=\ederiv\kdifformh{a}{k}-\kdifformh{f}{k+1}.
\]
Moreover we can write
\[
\pi\ederiv\kdifform{a}{k}-\pi\kdifform{f}{k+1}=\reconstruction(\delta\reduction\kdifform{a}{k}-\reduction\kdifform{f}{k+1})=\reconstruction(\delta\kcochain{a}{k}-\kcochain{f}{k+1}).
\]
By \propref{reductionreconstructionhodgedecomposition}, $\reduction\kdifform{a}{k}=\kcochain{a}{k}\in(Z^k)^c$ and $\reduction\kdifform{f}{k+1}=\kcochain{f}{k+1}\in B^{k+1}$. Because $\reconstruction$ is bijective, the discrete problem is also well-posed, since the cochain relation $\delta\kcochain{a}{k}=\kcochain{f}{k+1}$ is well-posed, see \propref{uniquecochain}. Furthermore, it follows that $\kdifformh{a}{k}$ has a unique solution, because $\kcochain{a}{k}$ is a unique cochain of the problem $\delta\kcochain{a}{k}=\kcochain{f}{k+1}$.
\end{proof}

Example~\ref{ex:discretehodgedecomposition} showed the decomposition of the finite dimensional $k$-form $\kdifformh{a}{k}\in\Lambda^k_h$. The following example explains how to extract the harmonic part from this $k$-form.
\begin{example}[\textbf{Discrete harmonic forms}]
Following \exampleref{ex:discretehodgedecomposition}, let $\pi_h\kdifform{a}{k}\in\Lambda^k_h$ and $h_h^{(k)}\in\mathcal{H}^k_h$ its harmonic part by the discrete Hodge decomposition.
In order to find the harmonic form $h^{(k)}_h$, we first determine the harmonic chains, $\kchain{h}{k}$ in the cell complex, as described in Example~\ref{ex:cell_complex_with_hole}, and the  harmonic cochains, $\kcochain{h}{k}$, as described in Example~\ref{ex:domain_with_hole_continued}. Both the harmonic chains and cochains can be obtained by purely topological considerations and the calculations are performed through the incidence matrices. For $dim(\mathcal{H}^k_h)=1$, we set $\kdifformh{h}{k}=\reconstruction\,(\alpha\kcochain{h}{k})$, where $\alpha$ is obtained from the definition of reduction,
\begin{equation}
\left \langle \reduction \kdifform{a}{k}, \kchain{h}{k} \right \rangle = \left \langle \alpha\kcochain{h}{k},\kchain{h}{k} \right \rangle \quad \Longrightarrow \quad \alpha = \frac{\left \langle \reduction \kdifform{a}{k}, \kchain{h}{k} \right \rangle}{\left \langle \kcochain{h}{k},\kchain{h}{k} \right \rangle}\;.
\end{equation}
When dim$({\mathcal H}_h^k)>1$, we repeat this process for all harmonic chain-cochain pairs and set
\begin{equation}
h_h^{(k)} = \sum_{j=1}^{\mathrm{dim}({\mathcal H}_h^k)} \alpha_j\, \reconstruction\,\kcochain{h}{k}_i\;.
\end{equation}
The harmonic form $h_{h^*}^{(k)}$ can be found in a similar way as in the case with $\projection$. Find the $(n-k)$-harmonic chain-cochain pairs, $(\tilde{\mathbf{h}}_{(n-k),j},\tilde{\mathbf{h}}^{(n-k)}_j)$, $j=1,2,\hdots,dim(\mathcal{H}_h^k)$, on the dual grid $\tilde{D}_i$. Then the harmonic form $h_{h^*}^{(k)}$ becomes
\begin{equation}
h_{h^*}^{(k)} = \sum_{j=1}^{\mathrm{dim}(\mathcal{H}^k_h)} \star \tilde{\reconstruction}\,(\tilde{\alpha}_j\kcochain{\tilde{h}}{n-k}_j)\;,\quad\mathrm{with}\quad
\tilde{\alpha}_j = \frac{\left \langle \tilde{\reduction} (\star \kdifform{a}{k}),\tilde{\mathbf{h}}_{(n-k),j} \right \rangle}{\left \langle \tilde{\mathbf{h}}^{(n-k)}_j,\tilde{\mathbf{h}}_{(n-k),j} \right \rangle}\;.
\end{equation}
% \begin{equation}
% \tilde{\alpha}_i = \frac{\left \langle \tilde{\reduction} \star \kdifform{a}{k},\tilde{\mathbf{h}}_{(n-k),i} \right \rangle}{\left \langle \tilde{\mathbf{h}}^{(n-k)}_i,\tilde{\mathbf{h}}_{(n-k),i} \right \rangle}\;,
% \end{equation}
% 
Note that, by construction, $\ederiv h_h^{(k)} \equiv 0$ and $\coderiv h_{h^*}^{(k)} \equiv 0$.
\end{example}

\subsection{Discussion}
Although the reduction operator $\reduction$ and the reconstruction operator $\reconstruction$ are also introduced in \cite{bochev2006principles}, we have chosen to work in $\Lambda^k_h=\reconstruction\reduction\Lambda^k$ instead of $C^k=\reduction\Lambda^k$. The reason is that metric concepts like an inner product, the wedge product and the Hodge operator are metric concepts which cannot be modeled in topology. Furthermore, these metric concepts are independent of the reduction operator and depend explicitly on the reconstruction operator $\reconstruction$.

% Mimetic Spectral Element Method
%==============================================================================
\section{The Mimetic Spectral Element Method}\label{sec:MSEM}
%==============================================================================
Now that a mimetic framework is formulated using differential geometry, algebraic topology and the relations between those, using the mimetic operators, we seek a projection that satisfies the properties of the mimetic framework. We restrict ourselves to piecewise polynomial reconstructions, as is common in most finite element/spectral element methods.

First the polynomials with mimetic properties are described. Next the discretization is explained using different kinds of sample problems.

\subsection{Domain partitioning.}
Instead of solving in the infinite-dimensional space $\Lambda^k(\Omega)$ we restrict our search to a proper subspace $\Lambda^k_h(\Omega;C_k)\subset\Lambda^k(\Omega)$. Although there already exists several methods for the subspace $\Lambda^k_h(\Omega;C_k)$, (see for example \cite{arnold2010finite,BuffaSangalliRivasVazquez2011,hiptmair2001,Rapetti2009}), here we focus on a spectral element based method for curvilinear quadrilaterals. In spectral element methods a domain decomposition of $\Omega$ is performed into $M$ non-overlapping curvilinear quadrilateral closed sub-domains $\Omega_m$:
\[
\Omega=\bigcup_{m=1}^M\Omega_m,\quad \Omega_m\cap\Omega_l=\partial\Omega_m\cap\partial\Omega_l,\ m\neq l,
\]
where in each sub-domain a Gauss-Lobatto grid is constructed, see \figref{fig:curvedmesh}. The collection of Gauss-Lobatto grids in all elements $\Omega_m$ constitutes the cell complex $D$.
%From the Gauss-Lobatto grid, to corresponding cell complex $D$ is formed. 
For each spectral element there exists a sub cell complex, $D_m$, i.e. the subcell complex in the spectral element $\Omega_m$. Note that $D_m\cap D_l,\ m\neq l$, is not an empty set in case they are neighboring elements, but contains all $k$-cells, $k<n$, of the common boundary, see \defref{cellcomplex}.

Each sub-domain $\Omega_m$ is obtained from the map $\Phi_m:\Omega_{\rm ref}\rightarrow\Omega_m$, where the reference domain $\Omega_{\rm ref}$ is a unit cube $\Omega_{\rm ref}=[-1,1]^n$, with $n ={\rm dim}(\Omega)$. All differential forms defined on $\Omega_m$ are pulled back onto this reference element using the following pullback operation, $\pullback_m:\Lambda^k(\Omega_m)\rightarrow\Lambda^k(\Omega_{\rm ref})$. In three dimensions the reference element is given by
\begin{equation}\label{referencedomainR3}
\Omega_{\rm ref}\define\{(\xi,\eta,\zeta)\;|\;-1\leq\xi,\eta,\zeta\leq1\}.
\end{equation}
The solution within each sub-domain is expanded with respect to a polynomial basis, corresponding to the chains in that element.

\begin{figure}[h]
\centering
\includegraphics[width=0.35\textwidth]{./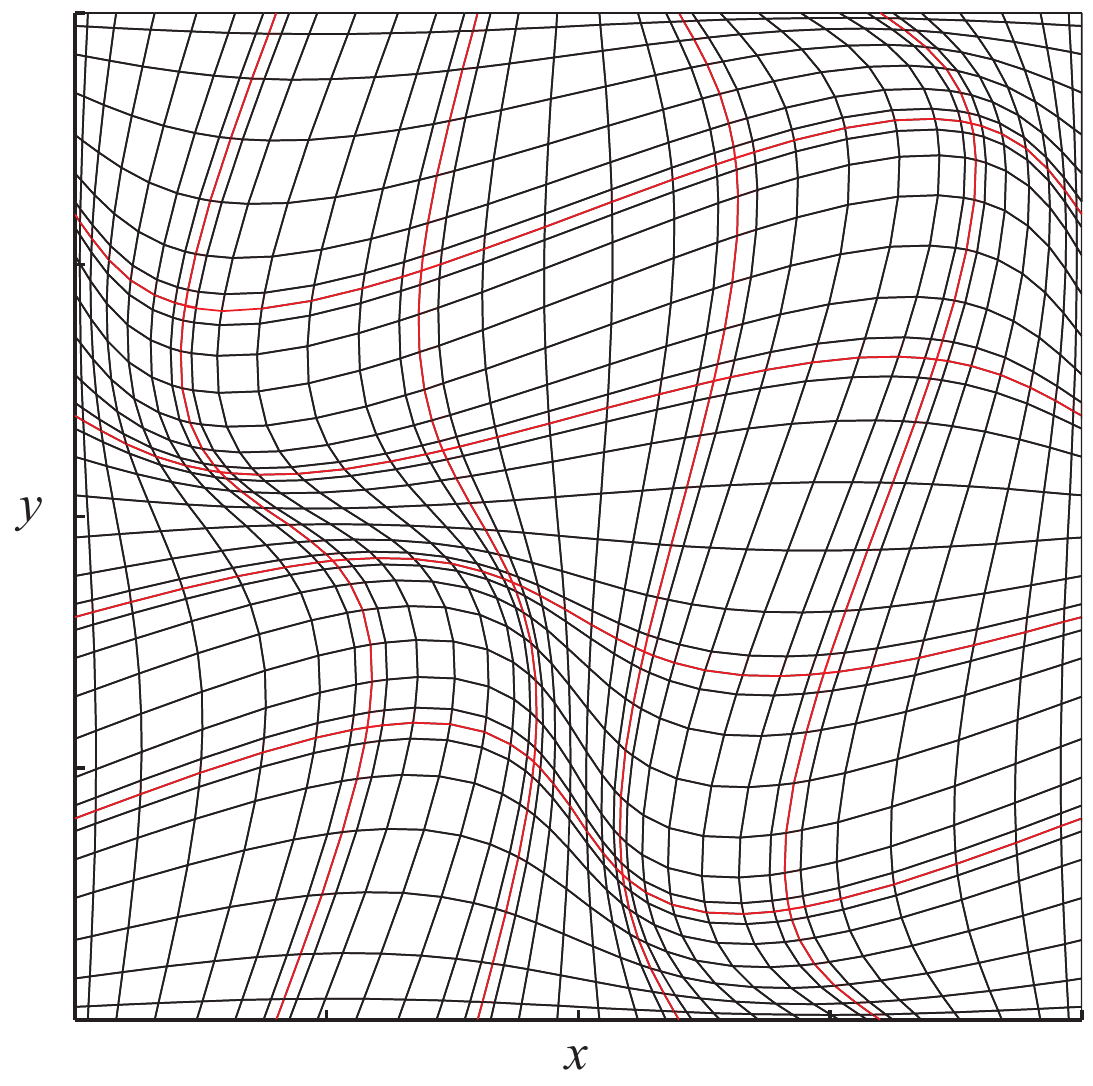}
\caption{A $5\times5$ curvilinear multi-element Gauss-Lobatto grid.}
\label{fig:curvedmesh}
\end{figure}

\begin{remark}
The map $\Phi_m =\varphi_m^{-1}$ is essentially the inverse of a chart from the computational manifold $\Omega$ to the unit cube in $\mathbb{R}^n$. The map from the cells formed by the Gauss-Lobatto grid in $[-1,1]^n$ to the cells in $\Omega$ are the singular $n$-cubes.
\end{remark}

\subsection{Reduction, mimetic basis-functions and projections}
In the previous section four projections -- two direct projection and two coprojections -- were introduced.
In this section we derive mimetic basis-functions, that are cardinal basis-functions of arbitrary polynomial order, that are capable of reconstructing $k$-cochains according to \defref{reconstructionoperator}. Because we consider only tensor product based mimetic spectral elements for the reconstruction, it is sufficient to do the derivation and analysis in one dimension only.

\subsubsection{Projection $\projection$ using $D$}

In spectral element methods the differential forms on $\Omega_m$ are approximated using piecewise polynomial expansions. 
The domain is given by $\Omega=\Omega_{\rm ref}\define\xi\in[-1,1]$. 
We start with the projection operator $\projection$, which is formed by the reconstruction of the reduction of a $k$-form on the interior cell complex $D_i$, see Section~\ref{sec:AT_boundary_cell_complex}.
On $\Omega$ a cell complex $D$ is defined according to \defref{cellcomplex}, that consists of $N+1$ nodes (0-cells), $\xi_i$, where $-1\leq \xi_0<\hdots<\xi_N\leq 1$, and $N$ line segments (1-cells), $[\xi_{i-1},\xi_i]$, of which the nodes constitute the boundary, see \figref{fig:dualcellcomplex1D}. Consider a 0-form $a^{(0)}=a(\xi)\in\Lambda^0(\Omega)$. Corresponding to this set of nodes (0-chain) there exists a projection, $\pi_h$, using $N^{\rm th}$ order \emph{Lagrange polynomials}, $l_i(\xi)$, to approximate a $0$-form, as
\begin{equation}
\projection\kdifform{a}{0}(\xi)=\sum_{i=0}^N a_il_i(\xi),\quad\mathrm{where}\quad a_i=\bar{\psi}_i\left(\kcochain{a}{0}\right)=\bar{\psi}_i\left(\reduction(a^{(0)})\right)=a(\xi_i),
\label{nodalapprox}
\end{equation}
where $\bar{\psi}$ is the isomorphism which associates a $k$-cochain to its coefficients in the expansion in terms of canonical basis $k$-cochains, see Definition~\ref{def:coeff_vector_for_cochains}. The $i$th coefficient is referred to as $\bar{\psi}_i$.
Lagrange polynomials have the property that they interpolate nodal values and are therefore suitable to reconstruct the cochain $\kcochain{a}{0}=\reduction\kdifform{a}{0}(\xi)$ containing the set $a_i=a(\xi_i)$ for $i=0,\hdots,N$. So these polynomials can be used to reconstruct a 0-form from a 0-cochain. Lagrange polynomials are in fact 0-forms themselves, $\kdifform{l_i}{0}(\xi)\in\Lambda^0_h(\Omega_{\rm ref};C_0)$.
They are constructed such that its value is one in the corresponding point and zero in all other grid points,
\begin{equation}
 \reduction l^{(0)}_i(\xi)=l^{(0)}_i(\xi_p)=\left\{
\begin{aligned}
&1&{\rm if}\ i=p\\ &0&{\rm if}\ i\neq p
\end{aligned}
\right..
 \label{nodalproperty}
\end{equation}
\begin{figure}[h]
\centering
\includegraphics[width=0.6\textwidth]{./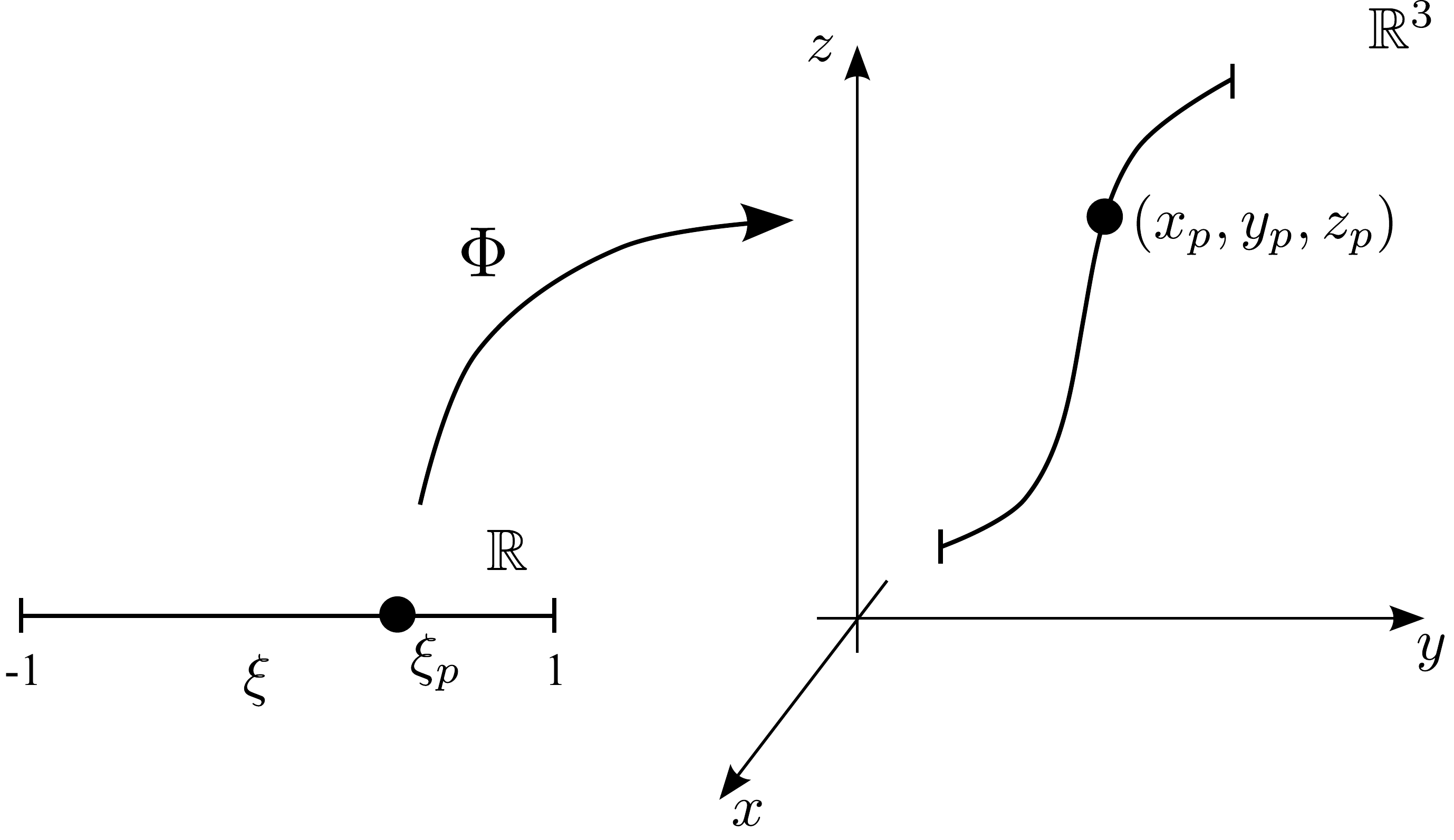}
\caption{The map from $\xi \in [-1,1]$ into a $1$-dimensional sub-manifold in $\mathbb{R}^2$.}
\label{fig:1D-manifold_map}
\end{figure}
This satisfies \eqref{consistency}, where in this case $\reconstruction=l^{(0)}_i(\xi)$. It is straightforward to show that Lagrange polynomials are invariant under a coordinate transformation. If $\bar{l}_i(x,y,z)$ is a Lagrange polynomial defined on a curvilinear 1-manifold embedded in a three-dimensional domain, then on that manifold there exist 0-cells $\tau_{(0),p}(x,y,z)$, associated to each node $p$, $(x_{p}, y_{p}, z_{p})$ , of the mesh of that manifold. In this case $(x,y,z)=\Phi(\xi)$ and $(x_{p},y_{p},z_{p})=\Phi(\xi_{p})$, see Figure~\ref{fig:1D-manifold_map} for a similar map in 2D. As a consequence $\pullback\bar{l}_i(x,y,z)=l_i(\xi)$, and so:
\[
\left.\bar{l}_i(x,y,z)\right|_{(x,y,z)=(x_{p},y_{p},z_{p})}=\left.\bar{l}_i(\Phi(\xi))\right|_{\xi=\xi_p}=\left.(\pullback\bar{l}_i)(\xi)\right|_{\xi=\xi_p}=\left.l_i(\xi)\right|_{\xi=\xi_p}=\delta_{i,p}.
\]

Other useful properties of Lagrange polynomials are
\begin{equation}
\sum_{i=0}^Nl_i(\xi)=1,\quad\quad \sum_{i=0}^N\ederiv l_i(\xi)=0.
\label{lagrangeproperties}
\end{equation}
Gerritsma \cite{gerritsma::edge_basis} and Robidoux \cite{robidoux-polynomial} independently derived the same projection for 1-forms, consisting of $1$-cochains and $1$-form polynomials, that is called the \textit{edge polynomial}, $e_i^{(1)}(\xi)$.
\begin{lemma}
\label{lemma:edge}
Following Definitions \ref{def:reduction} and \ref{reconstructionoperator}, application of the exterior derivative $d$ to $\kdifformh{a}{0}(\xi)$, gives the 1-form $\kdifformh{u}{1}(\xi)=\ederiv\kdifformh{a}{0}(\xi)$ given by
\begin{equation}
\projection\kdifform{u}{1}(\xi)=\kdifformh{u}{1}(\xi)=\sum_{i=1}^Nu_i\kdifform{e}{1}_i(\xi),
\end{equation}
with 1-cochain $\kcochain{u}{1}$, where
\begin{equation}
\label{ucochain}
\begin{aligned}
u_i&=\bar{\psi}_i\left(\kcochain{u}{1}\right)=\bar{\psi}_i\left(\reduction\kdifform{u}{1}\right)\\
&=\int_{\tau_{(1),i}}\kdifform{u}{1}(\xi)=\int_{\tau_{(1),i}}\ederiv a^{(0)}(\xi)\stackrel{\rm Th.~\ref{theor::difGeom_stokes_theorem}}{=}\int_{\partial\tau_{(1),i}}a^{(0)}(\xi)\\
&=\kdifform{a}{0}(\xi_i)-\kdifform{a}{0}(\xi_{i-1})=a_i-a_{i-1},
\end{aligned}
\end{equation}
and with the edge polynomial defined as
\begin{align}
\label{edge}
\kdifform{e}{1}_i(\xi)&=-\sum_{k=0}^{i-1}\ederiv\kdifform{l_k}{0}(\xi)=\sum_{k=i}^{N}\ederiv\kdifform{l_k}{0}(\xi)=\tfrac{1}{2}\sum_{k=i}^{N}\ederiv\kdifform{l_k}{0}(\xi)-\tfrac{1}{2}\sum_{k=0}^{i-1}\ederiv\kdifform{l_k}{0}(\xi).
\end{align}

\begin{proof} First take the exterior derivative of \eqref{nodalapprox}:
\[
\ederiv\kdifformh{a}{0}(\xi)=\sum_{i=0}^Na_i\ederiv l^{(0)}_i(\xi).
\]
Now the $1$-form $\ederiv \kdifformh{a}{0}$ is expanded in terms of the $0$-cochain $\kcochain{a}{0}$. Then a change of basis is applied to rewrite this expansion in terms of the $1$-cochain $\delta \kcochain{a}{0}$ using the projection $\pi_M$, see Definition~\ref{def:piM}. The trick is to subtract $a_k\sum_{i=0}^N\ederiv l^{(0)}_i(\xi)$, being equal to zero, and rewrite it such that we retrieve the coboundary operator and an edge polynomial,
\begin{displaymath}
\begin{aligned}
\kdifformh{u}{1}(\xi)&=\ederiv\kdifformh{a}{0}(\xi)=\pi_M\ederiv\kdifformh{a}{0}(\xi)\stackrel{\eqref{lagrangeproperties}}{=}\sum_{i=0}^N(a_i-a_k)\ederiv l^{(0)}_i(\xi)\\
&=\sum_{i=0}^N\left[-\sum_{j=i+1}^k(a_j-a_{j-1})+\sum_{j=k+1}^i(a_j-a_{j-1})\right]\ederiv l^{(0)}_i(\xi)\\
&\!\!\!\stackrel{\eqref{ucochain}}{=}-\sum_{i=0}^{k-1}\ederiv l^{(0)}_i(\xi)\sum_{j=i+1}^ku_j+\sum_{i=k+1}^N\ederiv l^{(0)}_i(\xi)\sum_{j=k+1}^iu_j\\
&=-\sum_{j=1}^k\left(\sum_{i=0}^{j-1}\ederiv l^{(0)}_i(\xi)\right)u_j+\sum_{j=k+1}^N\left(\sum_{i=j}^N\ederiv l^{(0)}_i(\xi)\right)u_j\\
&\!\!\!\stackrel{\eqref{lagrangeproperties}}{=}\sum_{i=1}^Nu_i\left(-\sum_{k=0}^{i-1}\ederiv l_k^{(0)}(\xi)\right)\stackrel{\eqref{edge}}{=}\sum_{i=1}^Nu_ie_i^{(1)}(\xi).
\end{aligned}
\end{displaymath}
\end{proof}
\end{lemma}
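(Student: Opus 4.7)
My plan is to prove the identity by performing a change of basis at the level of coefficients, going from the nodal $0$-cochain $\kcochain{a}{0}$ with entries $a_i$ to the edge $1$-cochain $\kcochain{u}{1} = \delta \kcochain{a}{0}$ with entries $u_i = a_i - a_{i-1}$. The point is that $\ederiv \kdifformh{a}{0}$ already lives in the finite-dimensional space $\Lambda^1_h(\Omega_{\rm ref};C_0)$ (since $\ederiv$ commutes with $\projection$ by Lemma~\ref{Lem:projectionextder}), but is expressed relative to the ``wrong'' cochain space; the operator $\pi_M$ of Definition~\ref{def:piM} is what rewrites it on the $1$-cells without changing the form itself.

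First I would verify the formula for $u_i$. Since $u_i = \bar{\psi}_i(\reduction \ederiv \kdifformh{a}{0})$ is the integral of $\ederiv \kdifformh{a}{0}$ over the $1$-cell $\tau_{(1),i} = [\xi_{i-1},\xi_i]$, the generalized Stokes theorem (Theorem~\ref{theor::difGeom_stokes_theorem}) together with the interpolation property \eqref{nodalproperty} immediately gives $u_i = \kdifformh{a}{0}(\xi_i) - \kdifformh{a}{0}(\xi_{i-1}) = a_i - a_{i-1}$. This is the discrete analogue of Stokes and is essentially the statement that $\reduction \ederiv = \delta \reduction$.

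Next, starting from $\ederiv \kdifformh{a}{0}(\xi) = \sum_{i=0}^N a_i\,\ederiv \kdifform{l_i}{0}(\xi)$, I would exploit the identity $\sum_{i=0}^N \ederiv \kdifform{l_i}{0}(\xi) \equiv 0$ from \eqref{lagrangeproperties}. Choosing any reference index $k$, subtract the vanishing quantity $a_k \sum_{i=0}^N \ederiv \kdifform{l_i}{0}(\xi)$ to obtain
\[
\ederiv \kdifformh{a}{0}(\xi) = \sum_{i=0}^N (a_i - a_k)\,\ederiv \kdifform{l_i}{0}(\xi).
\]
Then I would write each coefficient as the telescoping sum
\[
a_i - a_k = -\sum_{j=i+1}^{k} u_j \quad (i<k), \qquad a_i - a_k = \sum_{j=k+1}^{i} u_j \quad (i>k),
\]
and swap the order of summation. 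Collecting the coefficient of each $u_j$ produces the partial sums $-\sum_{i=0}^{j-1} \ederiv \kdifform{l_i}{0}(\xi)$ for $j \le k$ and $\sum_{i=j}^{N}\ederiv \kdifform{l_i}{0}(\xi)$ for $j > k$, which by \eqref{lagrangeproperties} are equal. This produces the expression
\[
\ederiv \kdifformh{a}{0}(\xi) = \sum_{j=1}^N u_j \Bigl(-\sum_{i=0}^{j-1}\ederiv \kdifform{l_i}{0}(\xi)\Bigr) = \sum_{j=1}^N u_j\, \kdifform{e}{1}_j(\xi),
\]
which matches the definition \eqref{edge} of the edge polynomial.

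The main obstacle is purely bookkeeping: the telescoping-and-swap step must be carried out carefully so that the coefficient of $u_j$ lands on exactly the right truncated sum of $\ederiv \kdifform{l_i}{0}$, and one must check that the two expressions obtained for $j\le k$ and $j > k$ agree (they do, thanks to $\sum_{i=0}^N \ederiv \kdifform{l_i}{0} = 0$), so the answer is independent of the auxiliary index $k$. Once that is verified, the three equivalent forms of $\kdifform{e}{1}_j(\xi)$ in \eqref{edge} follow from the same identity, and the proof is complete.
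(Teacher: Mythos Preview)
Your proposal is correct and follows essentially the same route as the paper: subtract the vanishing combination $a_k\sum_i \ederiv l_i^{(0)}$, telescope $a_i-a_k$ into sums of $u_j$, and swap the order of summation to read off the edge polynomials. Your write-up is slightly more explicit about why the coefficient of $u_j$ is independent of the auxiliary index $k$ (via $\sum_i \ederiv l_i^{(0)}=0$), which the paper leaves implicit in its final step.
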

The cochain corresponding to line segment (1-cell) $\tau_{(1),i}$ is given by $u_i=a_i-a_{i-1}$ and so $\kcochain{u}{1}=\delta\kcochain{a}{0}$ is the discrete derivative operator in 1D. This operation is purely topological; no metric is involved. It satisfies \eqref{cdp2}, since $\ederiv\reconstruction\kcochain{a}{0}=\reconstruction\delta\kcochain{a}{0}$. Note that according to \eqref{eq:double_dif}, we have $\ederiv e_i(\xi)=\sum\ederiv\!\circ\!\ederiv  l^{(0)}_i(\xi)=0$.
The polynomial 1-form can be decomposed into a polynomial and the canonical basis for differential forms (see \propref{diffGeometry::basis_elements}),
\begin{displaymath}
e^{(1)}_i(\xi)=\ve_i(\xi)\ederiv \xi,\quad\mathrm{with}\quad\ve_i(\xi)=-\sum_{k=0}^{i-1}\frac{\ederiv l_k}{\ederiv \xi}.
\end{displaymath}
Similar to \eqref{nodalproperty}, the edge functions are constructed such that when integrating $e^{(1)}_i(\xi)$ over a line segment it gives one for the corresponding element and zero for any other line segment, so
\begin{equation}
 \reduction e_i^{(1)}(\xi)=\int_{\xi_{p-1}}^{\xi_p}e^{(1)}_i(\xi)=\left\{
\begin{aligned}
&1&{\rm if}\ i=p\\ &0&{\rm if}\ i\neq p
\end{aligned}
\right..
 \label{intedge}
\end{equation}
This also satisfies \eqref{consistency}, where in this case $\reconstruction=e^{(1)}_i(\xi)$. The last property to verify is invariance under transformations. If $\bar{e}_i(x,y,z)$ is an edge function defined on a curvilinear 1-manifold embedded in a three dimensional domain, then on that manifold there exist 1-cells $\tau_{(1),p}(x,y,z)$ , associated to each edge, $p$, of the mesh of that manifold. In this case $(x,y,z)=\Phi(\xi)$, Figure~\ref{fig:1D-manifold_map}, and $\tau_{(1),p}(x,y,z)=\Phi(\tau_{(1)}^p(\xi))$. As a consequence $\pullback\bar{e}_i(x,y,z)=e_i(\xi)$ and so:
\[
\int_{\tau_{(1)}^p(x,y,z)}\bar{e}_i(x,y,z)=\int_{\Phi(\tau_{(1)}^p(\xi))}(\Phi^{-1})^{\star}(e_i(\xi))
=\int_{(\Phi^{-1}\circ\Phi)(\tau_{(1)}^p(\xi))}e_i(\xi)
=\int_{\tau_{(1)}^p(\xi)}e_i(\xi).
\]

This is what could be expected, since the generalized Stokes Theorem \eqref{eq::difGeom_stokes_theorem} is purely topological and does not depend on the particular coordinate system or polynomial representation.

The following example shows the commutation property between projection and exterior derivative.
\begin{example}
In Figure~\ref{fig:comm_reduction_exterior_der} a graphical representation of the commutating diagram for the projection and the exterior derivative is given. This figure illustrates Lemma~\ref{Lem:projectionextder}.
\begin{figure}[h]
\centering
\includegraphics[width=0.7\textwidth]{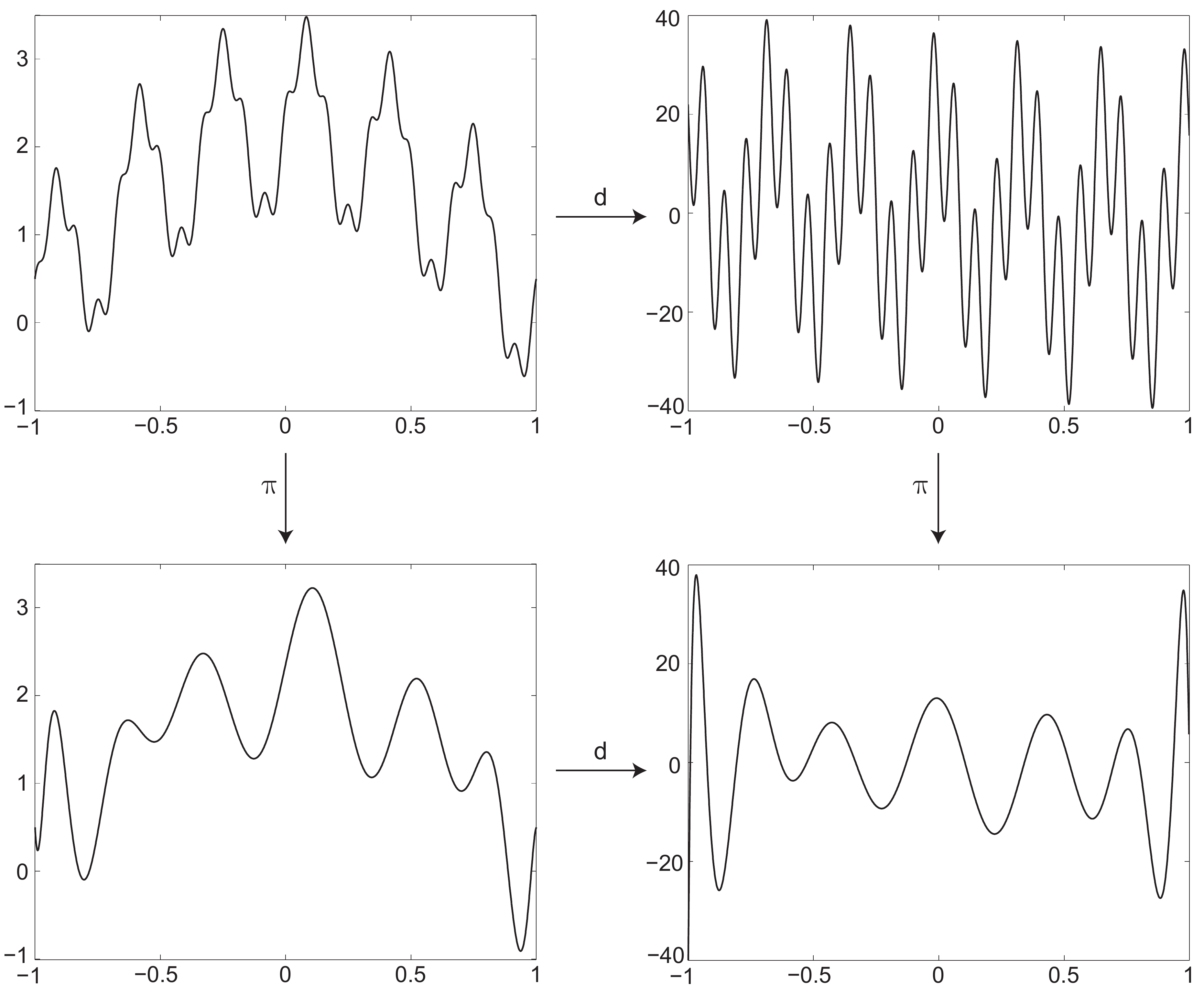}
\caption{An oscillatory $0$-form (Top left) is projected onto a polynomial $0$-form (Bottom left) using Lagrange functions. The exterior derivative of the oscillatory $0$-form (Top right) is projected onto polynomials using edge functions (Bottom right). This diagram commutes.}
\label{fig:comm_reduction_exterior_der}
\end{figure}
\end{example}

Now that the mimetic basis-functions are defined it can be proven by example that the projection, $\pi_h$, is in general not a Galerkin projection.
\begin{example}\label{galerkinprojection}
Let $\Pi_h$ be a Galerkin projection, and $\Pi_h\kdifform{a}{1}$ be expanded using edge basis-functions as $\Pi_h\kdifform{a}{1}=\sum_{i=1}^N\bar{a}_ie_i(\xi),$ then the coefficients $\bar{a}_i$ are determined by
\[
\left(\Pi_h\kdifform{a}{1},e_i(\xi)\right)=\left(\kdifform{a}{1},e_i(\xi)\right).
\]
In general, $[\bar{a}_1\ \hdots\ \bar{a}_N]^T\neq\reduction\kdifform{a}{1}$, and therefore $[\bar{a}_1\ \hdots\ \bar{a}_N]^T$ is not a cochain, and $\Pi_h$ is not a cochain projection. As an example, let $\kdifform{a}{1}=x^3\ederiv x$, then for $N=1$, $[\bar{a}_1\ \bar{a}_2]^T=[-\tfrac{3}{10}\ \tfrac{3}{10}]^T\neq\reduction\kdifform{a}{1}=[-\tfrac{1}{4}\ \tfrac{1}{4}]^T$.
\end{example}

\begin{figure}[t]
      \begin{minipage}[t]{0.49\linewidth}
            \centering\includegraphics[width=0.9\linewidth]{./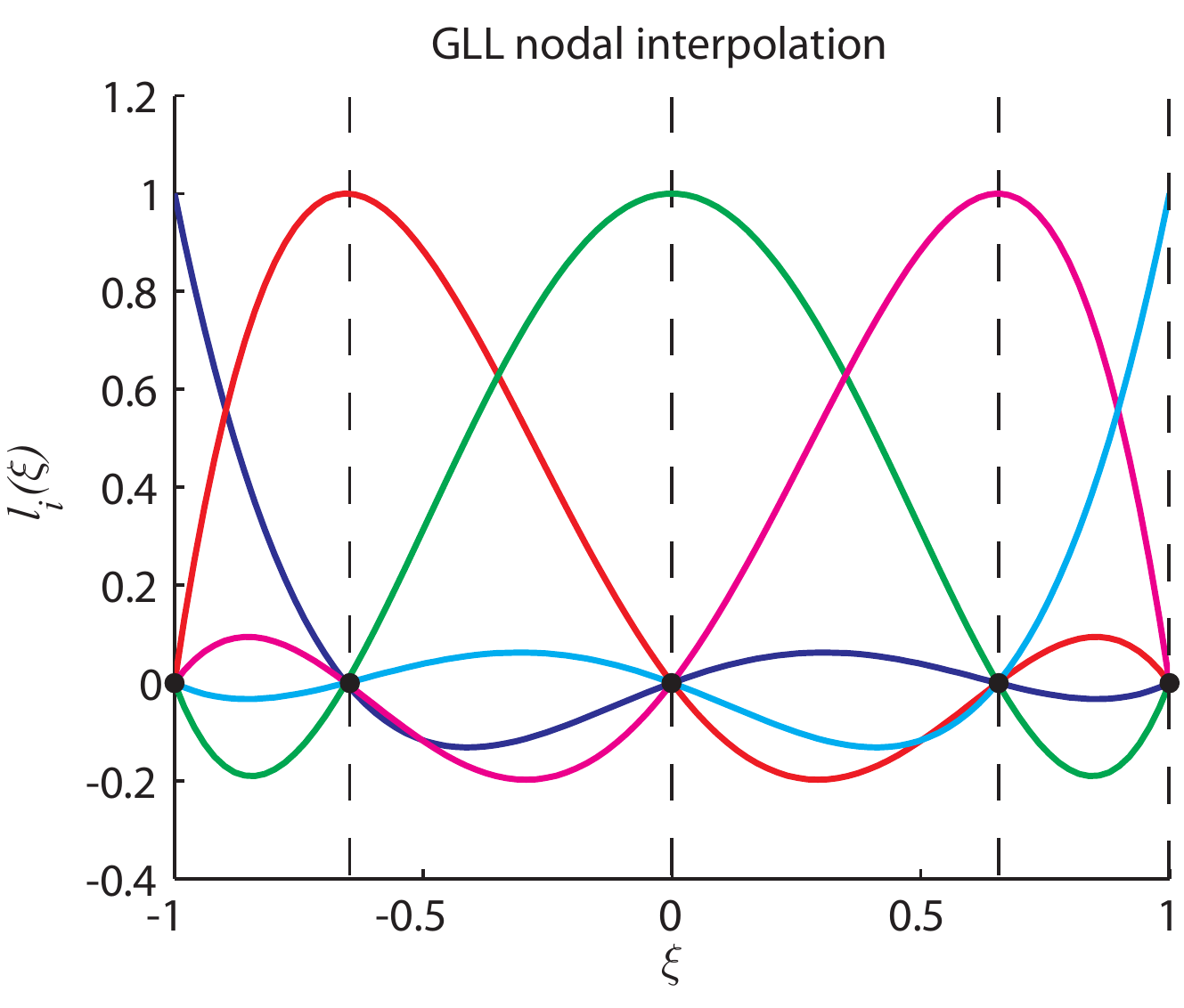}
	\caption{Lagrange polynomials on Gauss-Lobatto-Legendre grid.}
	\label{fig:lagrange}
    \end{minipage}\hfill
    \begin{minipage}[t]{0.49\linewidth}
            \centering\includegraphics[width=0.9\linewidth]{./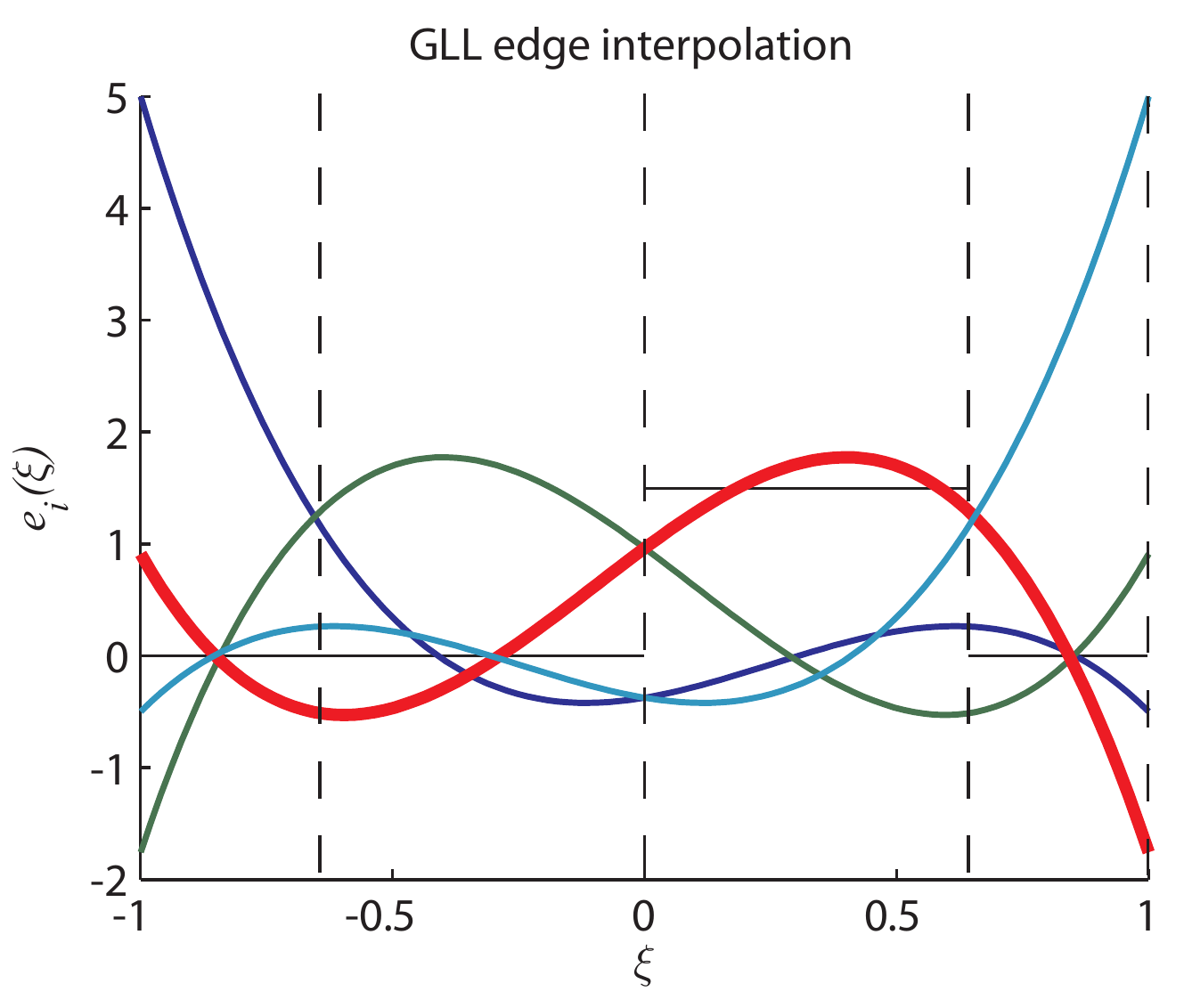}
	\caption{Edge polynomials on Gauss-Lobatto-Legendre grid.}
	\label{fig:edgepoly}
    \end{minipage}
\end{figure}

Again, let $N$ be the number of line segments in a spectral element. Then Lagrange basis interpolating $N+1$ 0-cells with fourth-order polynomials, corresponding to a Gauss-Lobatto grid, is shown in \figref{fig:lagrange}. The edge basis interpolating $N$ line segments with third-order polynomials, corresponding to a Gauss-Lobatto grid, is shown in \figref{fig:edgepoly}.

\subsubsection{Bounded linear projections} As mentioned in \defref{def:boundedprojection}, boundedness of the projection is a requirement, and is therefore shown for the projections introduced above.

The mimetic framework uses Lagrange, $l_i(\xi)\in H\Lambda^0(\Omega_{\rm ref})$, and edge functions, $e_i(\xi)\in L^2\Lambda^1(\Omega_{\rm ref})$, for the reconstruction, $\mathcal{I}$, where the latter is constructed using the former; i.e., from the finite dimensional 0-form $\pi_ha^{(0)}=\sum_{i=0}^N a_il_i(\xi)\in\Lambda^0_h(\Omega_{\rm ref};C_0)$, we define $\pi_hb^{(1)}\in\Lambda^1_h(\Omega_{\rm ref};C_1)$, such that
\[
\pi_hb^{(1)}=\pi_h\ederiv a^{(0)}=\sum_{i=1}^Nv_ie_i(\xi).
\]
Because we consider tensor products to construct higher-dimensional interpolation, it is sufficient to show that the projection operator is bounded in one dimension. A similar approach was used in \cite{BuffaSangalliRivasVazquez2011}. Due to the way the edge functions are constructed, there exists a commuting diagram property between projection and exterior derivative,
\[
\begin{CD}
\mathbb{R} @>>> H\Lambda^0 @>\ederiv>> L^2\Lambda^1 @>>>0 \\
@.  @VV \pi_h V @VV\pi_hV  @. \\
\mathbb{R} @>>> \Lambda_h^0 @>\ederiv>> \Lambda_h^1 @>>>0,
\end{CD}
\]
which gives, for $a^{(0)}\in H\Lambda^0(\Omega_{\rm ref})$, the $1$-form
\begin{equation}
\label{commutation}
\ederiv\pi_ha^{(0)}=\pi_h\ederiv a^{(0)}, \quad \mathrm{in}\ \Lambda^1_h(\Omega_{\rm ref}).
\end{equation}
Lagrange interpolation by itself does not guarantee a convergent approximation, \cite{Fejer}, but it requires a suitably chosen set of points, $-1\leq\xi_0<\xi_1<\hdots<\xi_N\leq1$. Here, (extended)-Gauss and Gauss-Lobatto distributions are proposed, because of their superior convergence behaviour. The a priori error estimate for these kind of interpolants in the $H\Lambda^0$-norm is given by, \cite{canuto1}
\begin{equation}
\label{hpconvergenceestimate}
\Vert a^{(0)}-\pi_ha^{(0)}\Vert_{H\Lambda^0}\leq C\frac{h^{l-1}}{p^{m-1}}|a^{(0)}|_{H^m\Lambda^0},\quad l=\mathrm{min}(p+1,m).
\end{equation}
Equation \eqref{hpconvergenceestimate} also implies that the projection of zero-forms is bounded in the $H\Lambda^0(\Omega_{\rm ref})$, as is shown in the following proposition.

\begin{proposition}\label{boundedh}
For $a^{(0)}\in H\Lambda^0(\Omega_{\rm ref})$ and the projection $\pi_h:H\Lambda^0\rightarrow \Lambda^0_h$, there exists the following two stability estimates in $H\Lambda^0$-norm and $H\Lambda^0$-semi-norm:
\begin{align}
\Vert\pi_ha^{(0)}\Vert_{H\Lambda^0}&\leq C\Vert a^{(0)}\Vert_{H\Lambda^0},\label{H1normstability}\\
|\pi_ha^{(0)}|_{H\Lambda^0}&\leq C|a^{(0)}|_{H\Lambda^0}.\label{H1seminormstability}
\end{align}
\begin{proof}
The $H\Lambda^0$-norm of $\pi_ha^{(0)}$ can be bounded from above using triangle inequality,
\[
\Vert\pi_ha^{(0)}\Vert_{H\Lambda^0}\leq\Vert a^{(0)}\Vert_{H\Lambda^0}+\Vert a^{(0)}-\pi_ha^{(0)}\Vert_{H\Lambda^0}.
\]
For $a^{(0)}\in H\Lambda^0$, we use Poincar\'e inequality, \lemmaref{poincareinequality}, to bound the first term,
% \[
% \Vert u\Vert_{H\Lambda^0}^2=\Vert u\Vert_{L^2\Lambda^0}^2+|u|^2_{H\Lambda^0}\leq(c_P+1)|u|^2_{H\Lambda^0}.
% \]
\[
\Vert a^{(0)} \Vert_{H\Lambda^0}\leq c_P \Vert \ederiv a^{(0)} \Vert_{L^2\Lambda^0}= c_P\vert a^{(0)}\vert_{H\Lambda^0}.
\]
Substituting this inequality and the interpolation estimate \eqref{hpconvergenceestimate} into the triangle inequality gives the following result,
\[
\Vert\pi_ha^{(0)}\Vert_{H\Lambda^0}\leq C|a^{(0)}|_{H\Lambda^0}.
\]
Then \eqref{H1normstability} and \eqref{H1seminormstability} follow directly
\[
|\pi_ha^{(0)}|_{H\Lambda^0}\leq\Vert\pi_ha^{(0)}\Vert_{H\Lambda^0}\leq C|a^{(0)}|_{H\Lambda^0}\leq C\Vert a^{(0)}\Vert_{H\Lambda^0}.
\]
\end{proof}
\end{proposition}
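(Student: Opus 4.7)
My plan would be to prove the norm inequality \eqref{H1normstability} first, and then derive the semi-norm inequality \eqref{H1seminormstability} either as a corollary or via a parallel argument using the commuting property.

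For \eqref{H1normstability}, I would start with the triangle inequality:
\[
\|\pi_h a^{(0)}\|_{H\Lambda^0} \leq \|a^{(0)}\|_{H\Lambda^0} + \|a^{(0)} - \pi_h a^{(0)}\|_{H\Lambda^0}.
\]
The second term is exactly what the a priori estimate \eqref{hpconvergenceestimate} controls. The trick is to choose the lowest admissible Sobolev regularity: since $a^{(0)} \in H\Lambda^0$, we may take $m = 1$, and then $l = \min(p+1, 1) = 1$ for any polynomial degree $p \geq 0$, so that \eqref{hpconvergenceestimate} degenerates to the $h$- and $p$-independent bound
\[
\|a^{(0)} - \pi_h a^{(0)}\|_{H\Lambda^0} \leq C\, |a^{(0)}|_{H\Lambda^0} \leq C\, \|a^{(0)}\|_{H\Lambda^0}.
\]
Combining gives \eqref{H1normstability} with constant $1 + C$.

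For the sharper semi-norm estimate \eqref{H1seminormstability}, the natural route is through the commuting property \eqref{commutation}:
\[
|\pi_h a^{(0)}|_{H\Lambda^0} = \|\ederiv \pi_h a^{(0)}\|_{L^2\Lambda^1} = \|\pi_h \ederiv a^{(0)}\|_{L^2\Lambda^1}.
\]
Now $\ederiv a^{(0)} \in L^2\Lambda^1$ and I would invoke an $L^2$-stability estimate for the edge interpolant $\pi_h\,:\, L^2\Lambda^1 \to \Lambda_h^1$, which is the analogue of \eqref{hpconvergenceestimate} for 1-forms at the lowest regularity level $m = 0$. This yields $\|\pi_h \ederiv a^{(0)}\|_{L^2\Lambda^1} \leq C\, \|\ederiv a^{(0)}\|_{L^2\Lambda^1} = C\, |a^{(0)}|_{H\Lambda^0}$, giving \eqref{H1seminormstability}.

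The main obstacle is the second estimate: to get a semi-norm bound on the projected 0-form in terms of the semi-norm of $a^{(0)}$, one cannot simply invoke \eqref{hpconvergenceestimate} directly, since that estimate controls full norms. The commuting diagram is the essential tool that transfers the problem to a stability statement for the edge projection of a 1-form in $L^2$. Establishing that $L^2$-stability is a separate interpolation-theoretic fact about the edge basis, independent of (but analogous to) the Lagrange stability used for \eqref{H1normstability}. An alternative tactic I might fall back on, if the direct $L^2$ bound is not readily available, is to invoke Poincaré's inequality (Lemma~\ref{poincareinequality}) on the subspace $\mathcal{Z}^c(\ederiv, L^2\Lambda^0)$ of 0-forms orthogonal to constants, which collapses $\|\cdot\|_{H\Lambda^0}$ and $|\cdot|_{H\Lambda^0}$ into equivalent quantities, and then handle the constant part trivially since $\pi_h$ reproduces constants exactly by \eqref{lagrangeproperties}.
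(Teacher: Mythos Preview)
Your argument for \eqref{H1normstability} via the triangle inequality and the interpolation estimate \eqref{hpconvergenceestimate} at $m=1$ is exactly what the paper does. The difference lies in how \eqref{H1seminormstability} is reached.

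Your primary route for the semi-norm bound---commuting $\ederiv$ with $\pi_h$ and then invoking an $L^2$-stability of the edge projection $\pi_h: L^2\Lambda^1 \to \Lambda^1_h$---is circular in this paper's logical structure: that $L^2$-stability is precisely Proposition~\ref{boundede}, and its proof rests on \eqref{H1seminormstability} itself (it rewrites $\|\pi_h b^{(1)}\|_{L^2\Lambda^1}$ as $|\pi_h a^{(0)}|_{H\Lambda^0}$ and then appeals to the bound you are trying to prove). There is no independent $L^2$-stability result for the edge interpolant available here.

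Your fallback is essentially the paper's actual argument, but the paper executes it more economically. Rather than splitting $a^{(0)}$ into a constant and its complement, the paper simply applies the Poincar\'e inequality (Lemma~\ref{poincareinequality}) to the first term of the triangle inequality, obtaining $\|a^{(0)}\|_{H\Lambda^0} \leq c_P |a^{(0)}|_{H\Lambda^0}$. Combined with the interpolation estimate on the second term (which is already controlled by $|a^{(0)}|_{H\Lambda^0}$), this yields the single strong bound $\|\pi_h a^{(0)}\|_{H\Lambda^0} \leq C|a^{(0)}|_{H\Lambda^0}$, from which both \eqref{H1normstability} and \eqref{H1seminormstability} drop out in one line via $|\cdot|_{H\Lambda^0} \leq \|\cdot\|_{H\Lambda^0}$. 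Your observation about constants is the right fix for the implicit caveat that Poincar\'e only holds on $\mathcal{Z}^c(\ederiv,L^2\Lambda^0)$, but the paper glosses over this.
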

Now that we have a bounded linear projection\footnote{also referred to as \emph{bounded cochain projection}, \cite{arnold2006finite,arnold2010finite}.} of $0$-forms in one dimension, we can also proof boundedness of the projection for $1$-forms.
\begin{proposition}\label{boundede}
Let $a^{(0)}\in H\Lambda^0$ and $b^{(1)}\in L^2\Lambda^1$, then the projection $\pi_h:L^2\Lambda^1\rightarrow\Lambda^1_h$ given by Lemma~\ref{lemma:edge} is bounded
\begin{equation}
\Vert\pi_hb^{(1)}\Vert_{L^2\Lambda^1}\leq C\Vert b^{(1)}\Vert_{L^2\Lambda^1}.
\end{equation}
\begin{proof}
Because $L^2\Lambda^1=\mathcal{R}(\ederiv;H\Lambda^0)$, we can write $b^{(1)}=\ederiv a^{(0)}$. Then proof follows from the result of the previous proposition and the commutation between projection and derivative, \lemmaref{Lem:projectionextder},
\[
\begin{aligned}
\Vert\pi_hb^{(1)}\Vert_{L^2\Lambda^1}&=|\pi_h\ederiv a^{(0)}|_{L^2\Lambda^1}=|\ederiv\pi_ha^{(0)}|_{L^2\Lambda^1}=|\pi_ha^{(0)}|_{H\Lambda^0}\\
&\leq C|a^{(0)}|_{H\Lambda^0}=C|\ederiv a^{(0)}|_{L^2\Lambda^1}=C\Vert b^{(1)}\Vert_{L^2\Lambda^1}.
\end{aligned}
\]
\end{proof}
\end{proposition}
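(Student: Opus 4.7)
The plan is to exploit the commutation property $\pi_h \ederiv = \ederiv \pi_h$ established in Lemma~\ref{Lem:projectionextder} (or equivalently, how the edge polynomials in Lemma~\ref{lemma:edge} are constructed from Lagrange polynomials) to reduce the bound on $1$-forms to the bound on $0$-forms already proved in Proposition~\ref{boundedh}. The key observation is that on the one-dimensional reference element $\Omega_{\rm ref}=[-1,1]$, which is contractible, the range of $\ederiv : H\Lambda^0 \to L^2\Lambda^1$ is all of $L^2\Lambda^1$: given any $b^{(1)} = b(\xi)\,\ederiv\xi \in L^2\Lambda^1(\Omega_{\rm ref})$, define the absolutely continuous antiderivative $a^{(0)}(\xi) := \int_{-1}^\xi b(\zeta)\,d\zeta$, which lies in $H\Lambda^0$ and satisfies $\ederiv a^{(0)} = b^{(1)}$.

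Given such $a^{(0)}$, I would carry out the following short chain. First, invoke commutation to rewrite the projection of the $1$-form:
\[
\pi_h b^{(1)} \;=\; \pi_h\ederiv a^{(0)} \;\stackrel{\eqref{commutation}}{=}\; \ederiv \pi_h a^{(0)}.
\]
Next, translate the $L^2\Lambda^1$-norm into an $H\Lambda^0$-seminorm using the definition of the seminorm on $H\Lambda^0$:
\[
\Vert \pi_h b^{(1)} \Vert_{L^2\Lambda^1} \;=\; \Vert \ederiv \pi_h a^{(0)} \Vert_{L^2\Lambda^1} \;=\; |\pi_h a^{(0)}|_{H\Lambda^0}.
\]
Then apply the seminorm bound \eqref{H1seminormstability} from Proposition~\ref{boundedh} to obtain
\[
|\pi_h a^{(0)}|_{H\Lambda^0} \;\leq\; C\,|a^{(0)}|_{H\Lambda^0} \;=\; C\,\Vert \ederiv a^{(0)} \Vert_{L^2\Lambda^1} \;=\; C\,\Vert b^{(1)} \Vert_{L^2\Lambda^1},
\]
and concatenate the inequalities to conclude $\Vert \pi_h b^{(1)} \Vert_{L^2\Lambda^1} \leq C\,\Vert b^{(1)} \Vert_{L^2\Lambda^1}$.

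The hard part is not in the computation itself, which is essentially a two-line chase, but in being precise about the surjectivity step: one must verify that every $L^2$ one-form on $[-1,1]$ is indeed the exterior derivative of an $H\Lambda^0$ zero-form, since Proposition~\ref{boundedh} only controls $\pi_h$ on $H\Lambda^0$. The antiderivative argument above handles this, and also shows that the resulting $a^{(0)}$ is not unique (it is defined up to a constant), but this ambiguity is harmless because only $|a^{(0)}|_{H\Lambda^0}$ enters the bound, and this seminorm is invariant under the addition of constants. A secondary subtlety is that we are working with the mimetic (interpolatory) projection, not an orthogonal one, so boundedness is not automatic and genuinely requires the commuting-diagram structure between $\pi_h$ and $\ederiv$ that the Lagrange/edge basis pair was engineered to provide.
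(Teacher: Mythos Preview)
Your proof is correct and follows essentially the same route as the paper: write $b^{(1)}=\ederiv a^{(0)}$ using surjectivity of $\ederiv:H\Lambda^0\to L^2\Lambda^1$ on the interval, commute $\pi_h$ with $\ederiv$, and invoke the seminorm stability \eqref{H1seminormstability} from Proposition~\ref{boundedh}. Your additional remarks on the antiderivative construction and the harmlessness of the additive constant only make the surjectivity step more explicit than the paper's one-line assertion $L^2\Lambda^1=\mathcal{R}(\ederiv;H\Lambda^0)$.
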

Propositions \ref{boundedh} and \ref{boundede} show that the projection $\pi_h$ is a \emph{bounded linear projection}, based on Lagrange functions and edge functions. Just like for $0$-forms using Lagrange interpolation, we can also give an estimate for the interpolation error of $1$-forms, interpolated using edge functions.
\begin{proposition}\label{convergenceedge}
Let $a^{(0)}\in H\Lambda^0$ and $b^{(1)}=\ederiv a^{(0)}\in L^2\Lambda^1$, the interpolation error $b^{(1)}-\pi_hb^{(1)}\in L^2\Lambda^1$ is given by
\begin{equation}
\label{edgeinterpolationerror}
\Vert b^{(1)}-\pi_hb^{(1)}\Vert_{L^2\Lambda^1}\leq C\frac{h^{l-1}}{p^{m-1}}|b^{(1)}|_{H^{m-1}\Lambda^1},
\end{equation}
with $l=\mathrm{min}(p+1,m)$.
\begin{proof}
\[
\Vert b^{(1)}-\pi_hb^{(1)}\Vert_{L^2\Lambda^1}=\Vert\ederiv(a^{(0)}-\pi_ha^{(0)})\Vert_{L^2\Lambda^1}=|a^{(0)}-\pi_ha^{(0)}|_{H\Lambda^0}
\]
and
\[
|a^{(0)}-\pi_ha^{(0)}|_{H\Lambda^0}\leq\Vert a^{(0)}-\pi_ha^{(0)}\Vert_{H\Lambda^0}.
\]
Then \eqref{edgeinterpolationerror} follows from \eqref{hpconvergenceestimate}, with the semi-norm rewritten as
\[
|a^{(0)}|_{H^m\Lambda^0}=|\ederiv a^{(0)}|_{H^{m-1}\Lambda^1}=|b^{(1)}|_{H^{m-1}\Lambda^1}.
\]
\end{proof}
\end{proposition}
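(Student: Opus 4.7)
The plan is a direct reduction to the already-established estimate for $0$-forms, exploiting the commuting diagram between $\pi_h$ and $\ederiv$. The key observation is that since $b^{(1)} = \ederiv a^{(0)}$, I can rewrite the edge-function interpolation error entirely in terms of the Lagrange interpolation error for $a^{(0)}$, and then invoke Proposition~\ref{boundedh}'s underlying estimate \eqref{hpconvergenceestimate}.

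First I would apply the commutation property of Lemma~\ref{Lem:projectionextder}, i.e. $\pi_h \ederiv = \ederiv \pi_h$, to conclude
\[
b^{(1)} - \pi_h b^{(1)} = \ederiv a^{(0)} - \pi_h \ederiv a^{(0)} = \ederiv(a^{(0)} - \pi_h a^{(0)})\;.
\]
Taking $L^2\Lambda^1$-norms of both sides and recognizing that $\Vert \ederiv u \Vert_{L^2\Lambda^1} = |u|_{H\Lambda^0}$ by the very definition of the $H\Lambda^0$ semi-norm gives
\[
\Vert b^{(1)} - \pi_h b^{(1)} \Vert_{L^2\Lambda^1} = |a^{(0)} - \pi_h a^{(0)}|_{H\Lambda^0}\;.
\]

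Next I would bound the semi-norm by the full norm, $|a^{(0)} - \pi_h a^{(0)}|_{H\Lambda^0} \leq \Vert a^{(0)} - \pi_h a^{(0)} \Vert_{H\Lambda^0}$, and apply the standard Gauss--Lobatto interpolation estimate \eqref{hpconvergenceestimate} used in Proposition~\ref{boundedh}. This yields
\[
\Vert b^{(1)} - \pi_h b^{(1)} \Vert_{L^2\Lambda^1} \leq C \frac{h^{l-1}}{p^{m-1}} |a^{(0)}|_{H^m\Lambda^0},\quad l=\min(p+1,m)\;.
\]
Finally I would rewrite the right-hand side in terms of $b^{(1)}$ via $|a^{(0)}|_{H^m\Lambda^0} = |\ederiv a^{(0)}|_{H^{m-1}\Lambda^1} = |b^{(1)}|_{H^{m-1}\Lambda^1}$, giving the claimed estimate.

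I do not anticipate any real obstacle: every ingredient is available in the excerpt. The only subtlety is verifying that $|a^{(0)}|_{H^m\Lambda^0} = |b^{(1)}|_{H^{m-1}\Lambda^1}$, which holds because the top-order weak derivatives of $a^{(0)}$ are exactly the $(m-1)$-th order weak derivatives of $\ederiv a^{(0)}$ (in one dimension this is transparent, and tensor-product extension is standard). Thus the bound is inherited directly from the Lagrange interpolation estimate, one order lower in $m$ as reflected by the semi-norm appearing on the right-hand side.
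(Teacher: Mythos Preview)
Your proof is correct and follows essentially the same route as the paper: use the commutation $\pi_h\ederiv=\ederiv\pi_h$ to write the $L^2\Lambda^1$ error as the $H\Lambda^0$ semi-norm of the Lagrange interpolation error, bound the semi-norm by the full norm, apply \eqref{hpconvergenceestimate}, and then rewrite $|a^{(0)}|_{H^m\Lambda^0}=|b^{(1)}|_{H^{m-1}\Lambda^1}$. The only difference is that you make the invocation of Lemma~\ref{Lem:projectionextder} explicit, which the paper leaves implicit in its first displayed equality.
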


\begin{corollary}
Although Propositions \ref{boundedh}, \ref{boundede} and \ref{convergenceedge} were derived for the 0- and 1-forms on the reference domain $\Omega_{\rm ref}$. Due to the commuting property between the pullback and exterior derivative, \propref{prop:commutation_pullback_ederiv}, these propositions also hold on any curvilinear domain $\Omega$, where $\Phi:\Omega_{\rm ref}\rightarrow\Omega$.
\end{corollary}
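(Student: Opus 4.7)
The plan is to transfer the three estimates from the reference domain $\Omega_{\rm ref}$ to the physical domain $\Omega$ by pulling back $k$-forms via $\Phi^\star$. The key structural ingredients are already in place: first, $\Phi^\star$ commutes with the exterior derivative (\propref{prop:commutation_pullback_ederiv}), and second, $\Phi^\star$ commutes with the projection (equation \eqref{pullbackprojection}), i.e.\ $\pi_h \Phi^\star = \Phi^\star \pi_h$. Together these imply that the whole discrete cochain/reconstruction machinery is intertwined by $\Phi^\star$. So if $a^{(k)}\in\Lambda^k(\Omega)$, then $\Phi^\star a^{(k)}\in\Lambda^k(\Omega_{\rm ref})$ and
\[
\pi_h a^{(k)} - a^{(k)} = (\Phi^\star)^{-1}\!\left(\pi_h \Phi^\star a^{(k)} - \Phi^\star a^{(k)}\right),
\quad
\ederiv \pi_h a^{(k)} = (\Phi^\star)^{-1}\!\left(\ederiv \pi_h \Phi^\star a^{(k)}\right).
\]

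Next I would establish Sobolev norm equivalences under the pullback. Because each spectral element map $\Phi:\Omega_{\rm ref}\to\Omega$ is assumed to be a smooth diffeomorphism with Jacobian bounded above and below (this is the standard non-degeneracy hypothesis on spectral element maps), there exist constants $c_1,c_2,c_3,c_4>0$, depending only on $\Phi$ and its derivatives up to order $m$, such that
\[
c_1\,\Vert\Phi^\star a^{(k)}\Vert_{L^2\Lambda^k(\Omega_{\rm ref})}\leq \Vert a^{(k)}\Vert_{L^2\Lambda^k(\Omega)} \leq c_2\,\Vert\Phi^\star a^{(k)}\Vert_{L^2\Lambda^k(\Omega_{\rm ref})},
\]
and analogous two-sided bounds for the $H\Lambda^k$-norm, the $H\Lambda^k$-semi-norm, and the higher $H^m\Lambda^k$-seminorm that appears on the right-hand side of \eqref{hpconvergenceestimate} and \eqref{edgeinterpolationerror}. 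These bounds follow from the local coordinate expression \eqref{eq::diffGeometry_pullback_one_form} for the pullback and the change of variables formula for $L^2$ integrals, combined with chain rule bounds for $k$-th derivatives of $\Phi$.

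With these two ingredients the corollary follows by a straightforward sandwich argument. For instance, for the analogue of \propref{boundedh},
\[
\Vert\pi_h a^{(0)}\Vert_{H\Lambda^0(\Omega)}
\leq c_2\,\Vert\Phi^\star\pi_h a^{(0)}\Vert_{H\Lambda^0(\Omega_{\rm ref})}
= c_2\,\Vert\pi_h \Phi^\star a^{(0)}\Vert_{H\Lambda^0(\Omega_{\rm ref})}
\leq c_2 C\,\Vert\Phi^\star a^{(0)}\Vert_{H\Lambda^0(\Omega_{\rm ref})}
\leq \tfrac{c_2 C}{c_1}\,\Vert a^{(0)}\Vert_{H\Lambda^0(\Omega)},
\]
and likewise for the seminorm version and for \propref{boundede}. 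For the interpolation estimate of \propref{convergenceedge}, the same sandwich is applied, this time also bounding the higher seminorm $|\Phi^\star b^{(1)}|_{H^{m-1}\Lambda^1(\Omega_{\rm ref})}$ in terms of $|b^{(1)}|_{H^{m-1}\Lambda^1(\Omega)}$ and absorbing the Jacobian-dependent constants into $C$; the powers of $h$ and $p$ are unaffected since they come entirely from the reference-domain estimate.

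The main obstacle here is purely technical bookkeeping in the norm equivalence step: the pullback of a $k$-form mixes derivatives of $\Phi$ with the coefficient functions, so to obtain the $H^{m-1}\Lambda^1$ seminorm bound one has to control up to $m$ derivatives of $\Phi^{-1}$ on $\Omega$ and of $\Phi$ on $\Omega_{\rm ref}$. In the spectral-element setting this is not really a difficulty because each element map is smooth and fixed, but a rigorous write-up requires acknowledging the implicit dependence of the constant $C$ on the geometric regularity of $\Phi$. Everything else is essentially transport along commuting squares.
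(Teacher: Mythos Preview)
The paper does not actually supply a proof for this corollary; it is stated without argument, citing only the commutation of the pullback with the exterior derivative. Your proposal correctly fills in the details the paper omits, and in fact you identify an ingredient the paper's statement leaves implicit: the commutation $\pi_h\Phi^\star=\Phi^\star\pi_h$ from \eqref{pullbackprojection}, without which the transfer of the boundedness and error estimates would not go through. The sandwich argument via norm equivalences under a smooth diffeomorphism is the standard and correct way to move the reference-domain bounds to the curvilinear element, and your caveat about the constant $C$ absorbing geometric regularity of $\Phi$ is exactly right.
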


\subsubsection{Projection $\dualprojection$ using $\tilde{D}$}
Given a cell complex $D_i$, a corresponding dual grid $\tilde{D}_i$ was defined in \secref{sec:dualcomplex}.
For simplicity, consider a one-dimensional manifold, on which the two cell complexes are defined (e.g. \figref{fig:dualcellcomplex1D}) . Let the Gauss-Lobatto grid defined above be the primal cell complex, $D_i$,
%$D=D_i\cup D_b$, 
consisting of $N+1$ points and $N$ line segments. The dual of $D_i$, being $\tilde{D}_i$, consists of $N$ points and $N+1$ line segments, according to \defref{def:dualcellcomplex}. Having a Gauss-Lobatto primal cell complex, the points for the dual part could be the Gauss points, $-1<\tilde{\xi}_1<\hdots<\tilde{\xi}_N<1$, \cite{canuto1}. We indicate the Gauss-Lagrange interpolant corresponding to $\tilde{D}_i$ by $\tilde{l}^\mathrm{g}_i(\xi)\in\Lambda^0_h(\Omega_{\rm ref};\tilde{C}_0)$, with $\tilde{C}_0(\tilde{D}_i)$. The projection of a $0$-form, $\kdifform{a}{0}\in\Lambda^0(\Omega_{\rm ref})$, using the $0$-cells in $\tilde{D}_i$, is a polynomial of degree $N-1$, and is given by
\begin{equation}
\tilde{\pi}_h\kdifform{a}{0}(\xi)=\sum_{i=1}^Na_i\tilde{l}^\mathrm{g}_i(\xi).
\end{equation}
The dual complex $\tilde{D}$ is the union of $\tilde{D}_i$ and $\tilde{D}_b$, where the two additional boundary points are given by $\tilde{\xi}_0=-1$ and $\tilde{\xi}_{N+1}=1$. The $0$-cells of the dual cell complex $\tilde{D}$ are interpolated using \emph{extended-Gauss-Lagrange} polynomials, $\tilde{l}^{\mathrm{eg}}_i(\xi)\in\Lambda^0_h(\Omega_{\rm ref};\tilde{C}_0)$, with $\tilde{C}_0=C_0(\tilde{D})$. The projection of a $0$-form, $\kdifform{a}{0}\in\Lambda^0(\Omega)$, using the $0$-cells in cell complex $\tilde{D}$, is a polynomial of degree $N+1$, and is given by
\begin{equation}
\tilde{\pi}_h\kdifform{a}{0}(\xi)=\sum_{i=0}^{N+1}a_i\tilde{l}^\mathrm{eg}_i(\xi).
\end{equation}
%\textcolor{red}{Or do we need to set
%\begin{equation}
%\kdifformh{a}{0}(\xi)=\tilde{\pi}_h\kdifform{a}{0}(\xi)=\sum_{i=1}^{N}a_i\tilde{l}^\mathrm{g}_i(\xi) + a_0 \tilde{l}^\mathrm{eg}_0(\xi) + a_{N+1} \tilde{l}^\mathrm{eg}_{N+1}(\xi).
%\end{equation}
%The difference is that the first expansion maps on polynomials of degree $N+1$, whereas the second expansion maps onto polynomials of degree $N$ supplemented with two basis functions of degree $N+1$. In the last expansion the boundary values are {\em not} $a_0$ and $a_{N+1}$, but $\sum_{i=1}^{N}a_i\tilde{l}^\mathrm{g}_i(-1)+a_0$ and $\sum_{i=1}^{N}a_i\tilde{l}^\mathrm{g}_i(1) + a_{N+1}$. To be discussed!
%}
The \emph{extended Gauss-edge} functions, $\tilde{e}^\mathrm{eg}_i\in\Lambda^1_h(\Omega_{\rm ref};\tilde{C}_1)$ with $\tilde{C}_1\in\tilde{D}$, are found similarly to \lemmaref{lemma:edge}. The projection of a one-form $\kdifform{b}{1}\in\Lambda^1(\Omega)$, using the 1-cells in $\tilde{D}$, is a polynomial of degree $N$, and is given by
\begin{equation}
\tilde{\pi}_h\kdifform{b}{1}(\xi)=\sum_{i=1}^{N+1}b_i\tilde{e}^{\mathrm{eg}}_i(\xi),\quad\mathrm{with}\quad \tilde{e}^\mathrm{eg}_i(\xi)=-\sum_{k=0}^{i-1}\ederiv \tilde{l}^\mathrm{eg}_i(\xi).
\end{equation}
The extended-Gauss polynomials in the context of mimetic discretization were first preliminarily discussed in \cite{gerritsma:lssc2009,bouman::icosahom2009}.

\begin{remark}
The boundary cells in $\tilde{D}_b$ can be seen as connectivity cells. Either they connect the computational domain to the outside world in terms of boundary conditions, or they connect adjacent spectral elements within the computational domain, thus providing $C^0$-continuity for $0$-forms.
\end{remark}
\begin{remark}
Although the boundary cells are part of the solution and thus should be solved for, the integral over the domain $[-1,1]$ of the boundary polynomials is zero, i.e.
\[ \int_{-1}^1 \tilde{l}^\mathrm{eg}_0(\xi)\,d\xi = \int_{-1}^1 \tilde{l}^\mathrm{eg}_{N+1}(\xi)\,d\xi \equiv 0 \;.\]
%their numerical integration weights are zero, $w_0^\mathrm{eg}=w_{N+1}^\mathrm{eg}=0$. 
Therefore they do not contribute to the domain integrals over $\Omega=[-1,1]$. There is a strong analogy with integration by parts. Let $q_h^{(0)}\in\Lambda^0_h(\Omega;C_0)$ be represented using the primal cell complex and $\tilde{a}^{(0)}_h\in\Lambda^0_h(\Omega;\tilde{C}_0)$ be expressed using the dual cell complex. Then integration by parts gives
\[
\int_\Omega\ederiv q^{(0)}_h\wedge\tilde{a}^{(0)}_h=-\int_\Omega q^{(0)}_h\wedge\ederiv\tilde{a}^{(0)}_h+\int_{\partial\Omega}q^{(0)}_h\wedge\tilde{a}^{(0)},\quad\forall q^{(0)}_h\in\Lambda^0_h(\Omega;C_0).
\]
This shows that the derivative on the primal cell complex $D$ is related to the derivative on the interior part of the dual complex, $\tilde{D}_i$, supplemented with boundary part, $\tilde{D}_b$.
\end{remark}

\begin{figure}[t]
      \begin{minipage}[t]{0.49\linewidth}
            \centering\includegraphics[width=0.9\linewidth]{./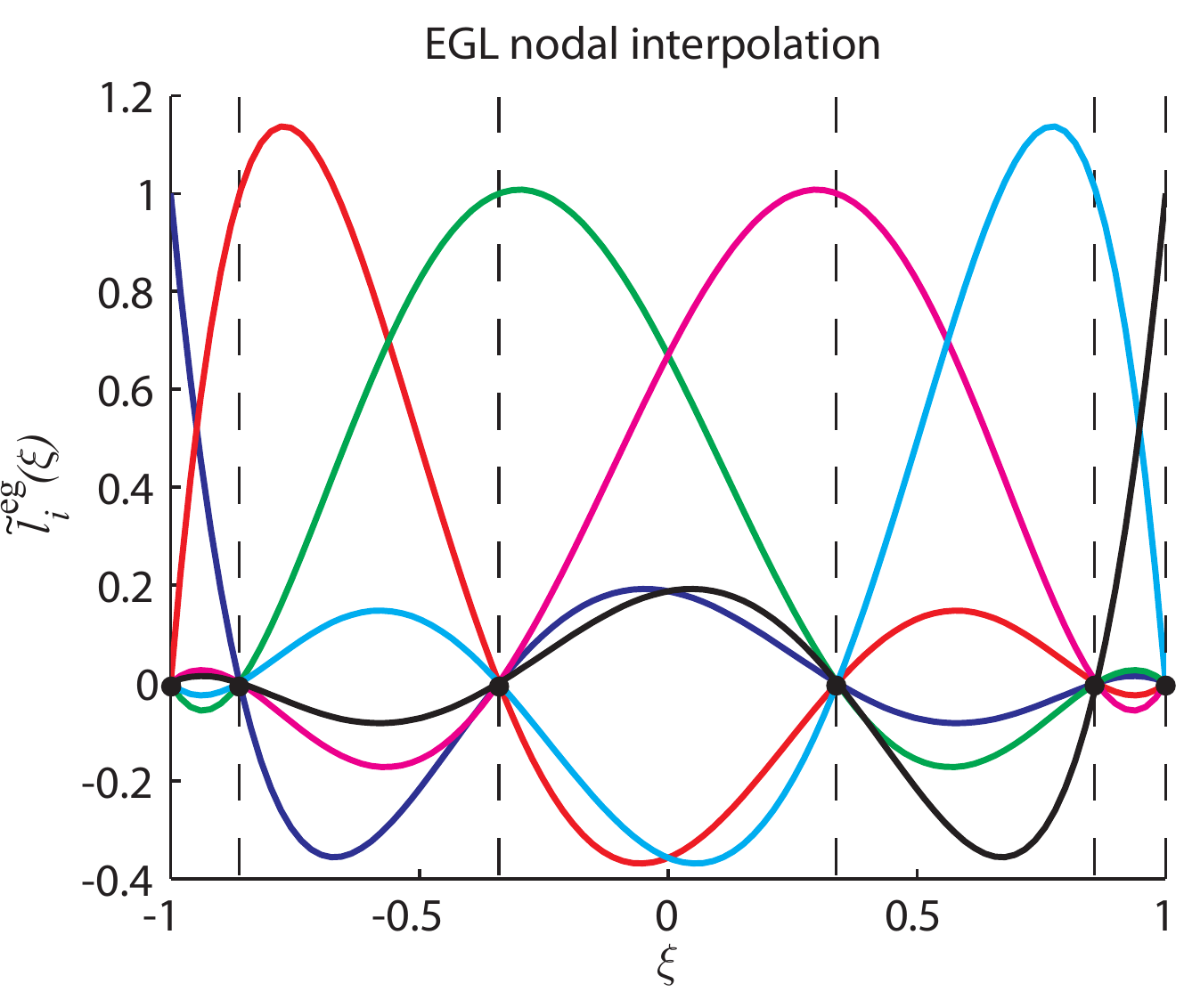}
	\caption{Lagrange polynomials on extended Gauss-Legendre grid.}
	\label{fig:nodal_dual_grid}
    \end{minipage}\hfill
    \begin{minipage}[t]{0.49\linewidth}
            \centering\includegraphics[width=0.9\linewidth]{./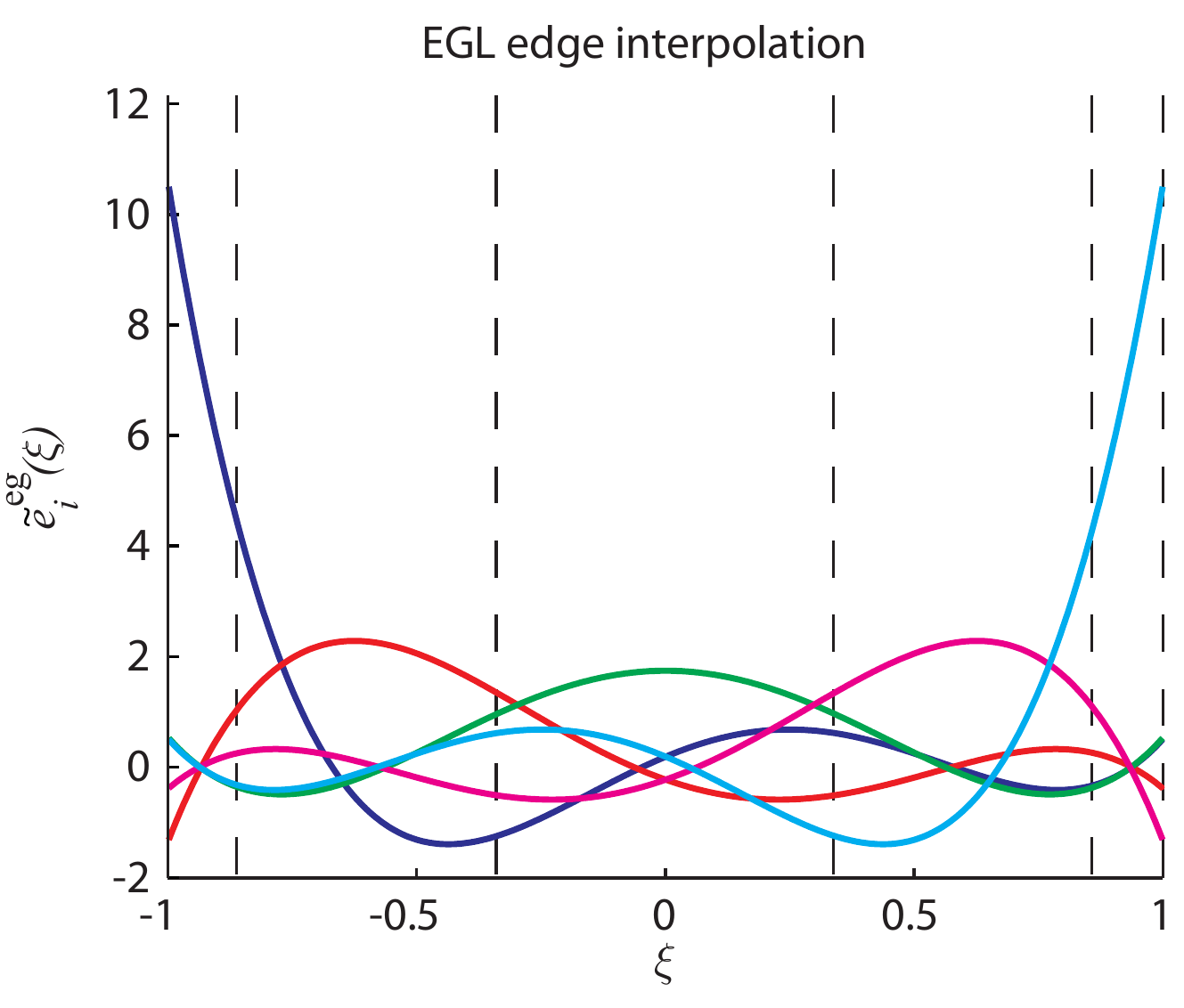}
	\caption{Edge polynomials on extended Gauss-Legendre grid.}
	\label{fig:edge_dual}
    \end{minipage}
\end{figure}

Figure~\ref{fig:nodal_dual_grid} shows the nodal basis functions on the dual grid, while Figure~\ref{fig:edge_dual} shows the edge functions on the dual grid. Together with the basis functions depicted in Figures~\ref{fig:lagrange} and \ref{fig:edgepoly}, these four sets of basis functions make up the entire set of reconstruction operators discussed in Section~\ref{mimeticoperators}.

\subsubsection{The coprojections $\coprojection$ and $\dualcoprojection$}
Now that the reconstruction polynomials on the primal and the dual complex have been introduced, the coprojections can be directly expressed in terms of these polynomials. For $\coprojection$, we take the Hodge-$\star$ of a $k$-form $\kdifform{a}{k}$ at the continuous level, which yields a $(n-k)$-form. This form is projected by $\dualprojection$ with respect to the dual grid. Then the Hodge-$\star$ operator is applied to the projected differential form and multiplied by $(-1)^{k(n-k)}$. This result is then expanded in terms of the $k$-form basis functions on the primal grid. This is possible thanks to property (\ref{eq:tildeIc_n-k=Ick}).

A similar route is followed by the coprojection $\dualcoprojection$, where in this case the projection is with respect to the primal grid and the final result is expanded in terms of the $k$-form basis functions on the dual complex. In \figref{fig:projections}, the four projection operators are applied to a 0-form, $a^{(0)} = \mathrm{sin}(3\pi x+0.08)$, and the resulting projected $0$-forms are plotted. 
%As can be seen, all projections are distinct, although $\pi$ and $\pi^{\star}$ yield closer projections and the same for  $\tilde{\pi}$ and $\tilde{\pi}^{\star}$, as expected.

\begin{figure}[h]
\centering
\includegraphics[width=0.5\textwidth]{./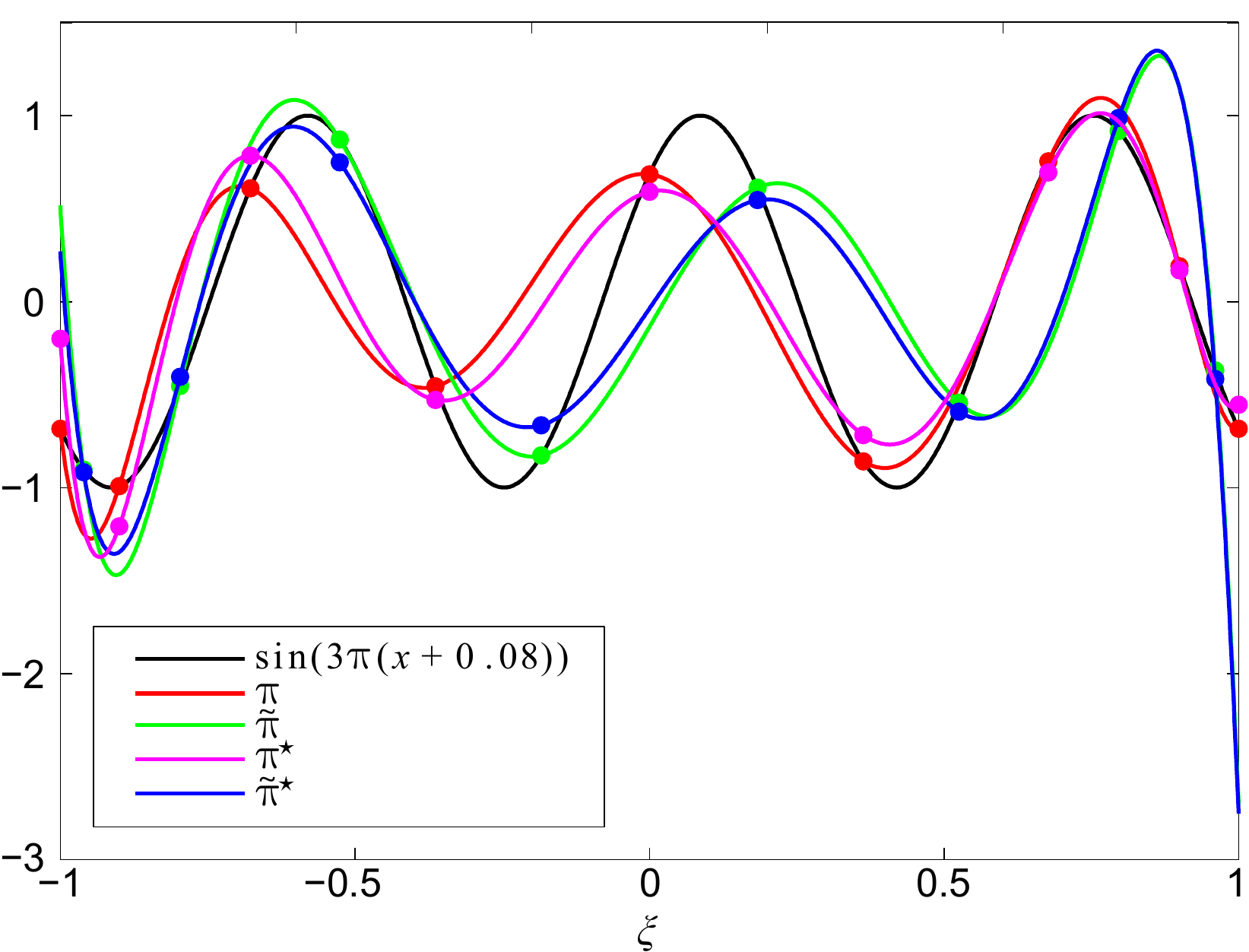}
\caption{Comparison between the four projections, $\pi$, $\tilde{\pi}$, $\pi^{\star}$ and $\tilde{\pi}^{\star}$, for a one-dimensional mesh of order $N=8$.}
\label{fig:projections}
\end{figure}

\subsection{Applications of discrete operators}
In this section we will show some examples of the use of mimetic basis-functions and the action of the operators $\ederiv,\ \star$ and $\coderiv$ described in Section \ref{discreteoperators}. Now that Lagrange polynomials and edge polynomials are defined, we can construct interpolants for all $k$-cochains in $\Omega\subset\mathbb{R}^n$. Because we consider quadrilateral elements only and employ tensor products to form the spectral element basis, the higher-dimensional basis functions are formed naturally by applying tensor products. For instance, in $\Omega\subset\mathbb{R}^3$, the surface element is the tensor product of two edge polynomials and one Lagrange polynomial, whereas the volume basis function is the tensor product of three edge polynomials.\\
 \\
We start with the analog between the exterior derivative and the coboundary operator in 3D, as illustrated in the diagram below.
\begin{diagram}
\Lambda^{0}_h(\Omega;C_0) & \rTo^{\quad\ederiv\quad}_{\rm grad} & \Lambda^{1}_h(\Omega;C_1) & \rTo^{\quad\ederiv\quad}_{\rm curl} & \Lambda^{2}_h(\Omega;C_2) & \rTo^{\quad\ederiv_h\quad}_{\rm div} & \Lambda^{3}_h(\Omega;C_3)\\
\dTo^{\reduction} \uTo_{\reconstruction} &  & \dTo^{\reduction} \uTo_{\reconstruction} & & \dTo^{\reduction} \uTo_{\reconstruction} &  & \dTo^{\reduction} \uTo_{\reconstruction}\\
C^{0}(D) & \rTo^{\delta} & C^{1}(D) & \rTo^{\delta} & C^{2}(D) & \rTo^{\delta} & C^{3}(D)
\end{diagram}
We recognize the continuous and discrete gradient, curl and divergence operator from vector calculus. Again for clarity, we restrict ourselves to $\Omega=\Omega_{\rm ref}$, see \eqref{referencedomainR3}. A similar De Rham complex can be set up for the dual complex using the nodal and edge functions given in the previous subsection.  Note that in the finite dimensional setting there is no need to distinguish between $\projection$ and $\coprojection$, due to Proposition~\ref{prop:pispace=pi_starspace}.

\begin{example}[\textbf{Gradient operator}]\label{ex:discrete_gradient}
Consider $\kdifformh{u}{1}=\ederiv\kdifformh{p}{0}$, where $\kdifformh{p}{0}$ is expanded in standard coordinates $(\xi,\eta,\zeta)$ as
\begin{equation}
\kdifformh{p}{0}(\xi,\eta,\zeta)=\sum_{i=0}^N\sum_{j=0}^N\sum_{k=0}^Np_{i,j,k}l_i(\xi)l_j(\eta)l_k(\zeta).
\end{equation}
Apply the exterior derivative in the same way as in \lemmaref{lemma:edge}, it gives
\begin{equation}
\label{uh1}
\begin{aligned}
\kdifformh{u}{1}=&\sum_{i=1}^N\sum_{j=0}^N\sum_{k=0}^Nu^\xi_{i,j,k}e_i(\xi)l_j(\eta)l_k(\zeta)\\
+&\sum_{i=0}^N\sum_{j=1}^N\sum_{k=0}^Nu^\eta_{i,j,k}l_i(\xi)e_j(\eta)l_k(\zeta)\\
+&\sum_{i=0}^N\sum_{j=0}^N\sum_{k=1}^Nu^\zeta_{i,j,k}l_i(\xi)l_j(\eta)e_k(\zeta),
\end{aligned}
\end{equation}
where
\begin{equation}
u^\xi_{i,j,k}=p_{i,j,k}-p_{i-1,j,k},\ u^\eta_{i,j,k}=p_{i,j,k}-p_{i,j-1,k}\ \mathrm{and}\ u^\zeta_{i,j,k}=p_{i,j,k}-p_{i,j,k-1},
\end{equation}
can compactly be written as $\kcochain{u}{1}=\delta\kcochain{p}{0}$, or in matrix notation as $\bar{\psi}(\kcochain{u}{1})=\incidencederivative{1}{0}\bar{\psi}(\kcochain{p}{0})$. This relation is exact and invariant under transformations. Note that in terms of function spaces the vector proxies of $p_h^{(0)}$ are in $H^1$ and that of $u_h^{(1)}$ are in $H(\mathrm{curl})$.
\end{example}

\begin{example}[\textbf{Curl operator}]\label{ex:discrete_curl_equation}
Let $\kdifformh{u}{1}$ be defined as in \eqref{uh1}, then $\kdifformh{w}{2}=\ederiv\kdifformh{u}{1}$. Apply the exterior derivative as in \lemmaref{lemma:edge} and consider the wedge product property \eqref{awedgea}, it gives
\begin{equation}
\label{wh2}
\begin{aligned}
\kdifformh{w}{2}=&\sum_{i=0}^N\sum_{j=1}^N\sum_{k=1}^Nw^\xi_{i,j,k}l_i(\xi)e_j(\eta)e_k(\zeta)\\
+&\sum_{i=1}^N\sum_{j=0}^N\sum_{k=1}^Nw^\eta_{i,j,k}e_i(\xi)l_j(\eta)e_k(\zeta)\\
+&\sum_{i=1}^N\sum_{j=1}^N\sum_{k=0}^Nw^\zeta_{i,j,k}e_i(\xi)e_j(\eta)l_k(\zeta),
\end{aligned}
\end{equation}
where
\begin{equation}
\begin{aligned}
w^\xi_{i,j,k}&=u^\zeta_{i,j,k}-u^\zeta_{i,j-1,k}-u^\eta_{i,j,k}+u^\eta_{i,j,k-1},\\
w^\eta_{i,j,k}&=u^\zeta_{i,j,k}-u^\zeta_{i-1,j,k}-u^\xi_{i,j,k}+u^\xi_{i,j,k-1}-,\\
w^\zeta_{i,j,k}&=u^\eta_{i,j,k}-u^\eta_{i-1,j,k}-u^\xi_{i,j,k}+u^\xi_{i,j-1,k},
\end{aligned}
\end{equation}
can compactly be written as $\kcochain{w}{2}=\delta\kcochain{u}{1}$, or in matrix notation as $\bar{\psi}(\kcochain{w}{2})=\incidencederivative{2}{1}\bar{\psi}(\kcochain{u}{1})$. This relation is exact and invariant under transformations. Note that in terms of function spaces the vector proxies of $w_h^{(2)}$ are in $H(\mathrm{div})$. If $\kdifformh{u}{1}$ is the gradient of $\kdifformh{p}{0}$, then $w^\xi_{i,j,k}=w^\eta_{i,j,k}=w^\zeta_{i,j,k}=0$ and so $\kdifformh{w}{2}=0$. This is in accordance with \eqref{algtop::stokes} and \eqref{eq:double_dif}. %Note that the minus sign in \eqref{wh2} is a result from the skew-symmetry property of the wedge product, $\ederiv\zeta\wedge\ederiv\xi=-\ederiv\xi\wedge\ederiv\zeta$.
\end{example}

\begin{example}[\textbf{Divergence operator}]\label{ex:discrete_divergence_equation}
Let the 2-form $\kdifformh{q}{2}$ be defined as
\begin{equation}
\label{qh2}
\begin{aligned}
\kdifformh{q}{2}=&\sum_{i=0}^N\sum_{j=1}^N\sum_{k=1}^Nq^\xi_{i,j,k}l_i(\xi)e_j(\eta)e_k(\zeta)\\
+&\sum_{i=1}^N\sum_{j=0}^N\sum_{k=1}^Nq^\eta_{i,j,k}e_i(\xi)l_j(\eta)e_k(\zeta)\\
+&\sum_{i=1}^N\sum_{j=1}^N\sum_{k=0}^Nq^\zeta_{i,j,k}e_i(\xi)e_j(\eta)l_k(\zeta),
\end{aligned}
\end{equation}
then $\kdifformh{v}{3}=\ederiv\kdifformh{q}{2}$. Apply the exterior derivative as in \lemmaref{lemma:edge} and consider the wedge product property \eqref{awedgea}, it gives
\begin{equation}
\label{vh3}
\kdifformh{v}{3}=\sum_{i=1}^N\sum_{j=1}^N\sum_{k=1}^Nv_{i,j,k}e_i(\xi)e_j(\eta)e_k(\zeta),
\end{equation}
where
\begin{equation}
v_{i,j,k}=q^\xi_{i,j,k}-q^\xi_{i-1,j,k}+q^\eta_{i,j,k}-q^\eta_{i,j-1,k}+q^\zeta_{i,j,k}-q^\zeta_{i,j,k-1},
\end{equation}
can compactly be written as $\kcochain{v}{3}=\delta\kcochain{q}{2}$, or in matrix notation as $\bar{\psi}(\kcochain{v}{3})=\incidencederivative{3}{2}\bar{\psi}(\kcochain{q}{2})$. This relation is exact and invariant under transformations. It shows that $\ederiv\kdifformh{q}{2}=\ederiv\projection\kdifform{q}{2}=\ederiv\reconstruction\reduction\kdifform{q}{2}=
\reconstruction\delta\reduction\kdifform{q}{2}=\reconstruction\reduction\ederiv\kdifform{q}{2}=\projection\ederiv\kdifform{q}{2}=\projection\kdifform{v}{3}=\kdifformh{v}{3}$. In case $\kdifformh{q}{2}=\kdifformh{w}{2}=\ederiv\kdifformh{u}{1}$, then it can be shown that $\kdifformh{v}{3}=0$. Note that in terms of function spaces the vector proxy of $v_h^{(3)}$ is in $L^2$.
\end{example}

\begin{remark}
The expansion of $0$-forms $a^{(0)}_h\in\Lambda^0_h(\Omega;C_0)$ has $\mathcal{C}^0$ continuity over the sub-domain boundaries, $\partial \Omega_m$. For $a^{(1)}_h\in\Lambda^1_h(\Omega;C_1)$, the tangential component is $\mathcal{C}^0$ continuous and for $a^{(2)}_h\in\Lambda^2_h(\Omega;C_2)$ the normal component has $\mathcal{C}^0$ continuity over the sub-domain boundaries. The expansion of volume-forms, $a^{(3)}_h\in\Lambda^3_h(\Omega;C_3)$ are discontinuous over the sub-domain boundaries. 
\end{remark}

\begin{example}[\textbf{Trace}] Without loss of generality, consider one-dimensional domain $\Omega_{\rm ref}:\xi\in[-1,1]$. We can apply the trace on a zero-form $\kdifformh{a}{0}\in\Lambda^0_h(\Omega_{\rm ref};C_0)$ and a one-form $\kdifformh{b}{1}\in\Lambda^1_h(\Omega_{\rm ref};C_1)$, then we find:
\[
\begin{aligned}
\mathrm{tr}_{\partial\Omega_L}\kdifformh{a}{0}=\mathrm{tr}_{\partial\Omega_L}\left(\sum_{i=0}^Na_il_i(\xi)\right)&=-a_0,\\ \mathrm{tr}_{\partial\Omega_R}\kdifformh{a}{0}=\mathrm{tr}_{\partial\Omega_R}\left(\sum_{i=0}^Na_il_i(\xi)\right)&=+a_N,
\end{aligned}
\]
and
\[
\begin{aligned}
\mathrm{tr}_{\partial\Omega_L}\kdifformh{b}{1}=\mathrm{tr}_{\partial\Omega_L}\left(\sum_{i=1}^Nb_ie_i(\xi)\right)&=-\sum_{i=1}^Nb_ie_i(-1),\\ \mathrm{tr}_{\partial\Omega_R}\kdifformh{b}{1}=\mathrm{tr}_{\partial\Omega_R}\left(\sum_{i=1}^Nb_ie_i(\xi)\right)&=+\sum_{i=1}^Nb_ie_i(+1).
\end{aligned}
\]
The plus and minus signs in front indicate the corresponding orientation, see left in \figref{fig:dualcellcomplex1D}. So if we evaluate the trace on the left boundary point, where the orientation is {\em negative}, we obtain the {\em positive value} $a_0$ for the $0$-form and $+\sum_{i=1}^N b_i e_i(-1)$ for the $1$-form.
Because we consider tensor products to consider higher-dimensional $k$-forms, the trace on $k$-forms in higher dimensional domains can be constructed straightforward using the above relations.
\end{example}

\begin{example}[\textbf{Hodge-$\star$}] Now consider a 1D domain $\Omega_{\rm ref}: \xi=[-1,1]$. Let $\kdifformh{a}{0}(\xi)=\star\kdifformh{u}{1}(\xi)$. This example shows how to perform the Hodge-$\star$. The action of the Hodge-$\star$ is followed by a projection $\tilde{\pi}_M:\Lambda^0(\Omega_{\rm ref};C_1)\rightarrow\Lambda^1(\Omega_{\rm ref};\tilde{C}_0)$ to write the outcome in the preferred basis. Let $\kdifformh{u}{1}(\xi)$ be expanded in terms of 1-cochains and edge-functions,
\[
\kdifformh{u}{1}(\xi)=\sum_{i=1}^Nu_ie_i(\xi),\quad\mathrm{with}\ \kdifformh{u}{1}\in\Lambda^1_h(\Omega;C_1).
\]
Then apply the Hodge-$\star$ to get $\kdifformh{a}{0}(\xi)\in\Lambda^0_h(\Omega;C_1)$, as follows
\[
\kdifformh{a}{0}=\star_h\kdifformh{u}{1}=\sum_{i=1}^Nu_i\big(\star e_i(\xi)\big)=\sum_{i=1}^Nu_i\varepsilon_i(\xi)\big(\star\ederiv\xi\big)=\sum_{i=1}^Nu_i\varepsilon_i(\xi).
\]
Rewrite the basis using the action of $\tilde{\pi}_M$
\[
\kdifformh{a}{0}=\tilde{\pi}_M\kdifformh{a}{0}=\sum_{j=1}^N\left[\sum_{i=1}^Nu_i\varepsilon_i(\tilde{\xi}_j)\right]\tilde{l}^\mathrm{g}_j(\xi)=\sum_{j=1}^Na_j\tilde{l}^\mathrm{g}_j(\xi).
\]
In this case, what is usually called the Hodge-$\star$ matrix \cite{bochev2006principles,hiptmair2001discrete}, is given by $(\mathsf{H}^{0,1})_{i,j}=\ve_i(\tilde{\xi}_j)$. The action of this Hodge-$\star$ is illustrated in Figure \ref{fig:ex_hodge}.
\begin{figure}[h]
\centering
\includegraphics[width=0.8\textwidth]{./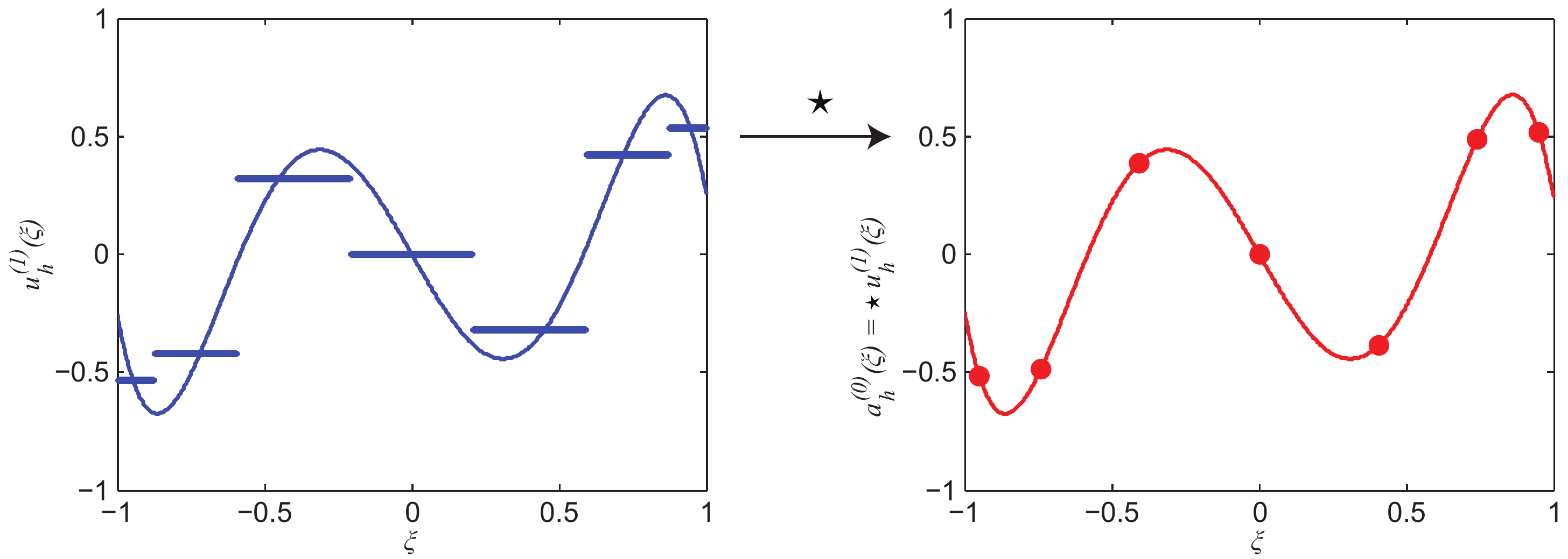}
\caption{Example of the action of the Hodge-$\star$ operator, followed by the projection $\pi_M$, $\pi_M\circ \star$. Left the function and corresponding cochain for $\kdifformh{u}{1}\in\Lambda^1(\Omega;C_1)$. Right the function and corresponding cochain for $\kdifformh{a}{0}\in\Lambda^0(\Omega;\tilde{C}_0)$.}
\label{fig:ex_hodge}
\end{figure}
\end{example}

\begin{example}[\textbf{Codifferential}] Again consider a one-dimensional domain $\Omega_{\rm ref}: \xi=[-1,1]$. We apply the codifferential on $\kdifform{u}{1}_h\in\Lambda^1_h(\Omega_{\rm ref};C_1)$ according to \eqref{eq::diffGeom_codifferential_definition}:
\[
\begin{aligned}
\coderiv\kdifformh{u}{1}&=\pi_M\coderiv\kdifformh{u}{1}=-\pi_M\star\ederiv\star\left(\sum_{i=1}^Nu_i\ve_i(\xi)\ederiv\xi\right)\\
&=-\pi_M\left(\sum_{i=1}^Nu_i\frac{\ederiv\ve_i(\xi)}{\ederiv\xi}\right)=\sum_{j=0}^N\left[\sum_{i=1}^Nu_i\left(-\sum_{k=0}^{i-1}\left.\frac{\ederiv^2l_k}{\ederiv\xi^2}\right|_{\xi=\xi_j}\right)\right]l_j(\xi).
\end{aligned}
\]
The codifferential matrix $\mathsf{D}^*$ is given by
\begin{equation}
\left(\mathsf{D}^*\right)_{i,j} = -\sum_{k=0}^{i-1}\left.\frac{\ederiv^2l_k}{\ederiv\xi^2}\right|_{\xi=\xi_j}.
\label{codifferentialmatrix}
\end{equation}
The coefficients in this codifferential matrix are independent of the location of the dual grid, as would be expected. Even when the codifferential matrix is constructed using two Hodge matrices and an incidence matrix, identically the same codifferential matrix is found. The same is true when retrieving the codifferential matrix from the formal adjoint formulation \eqref{eq::diffGeom_codiff_adjoint} when the $1$-form $kdifformh{u}{1}$ is zero at the boundary. Note that \eqref{codifferentialmatrix} requires that $l_i(\xi)$ is twice differentiable, while for the adjoint formulation it is sufficient that $l_i(\xi)$ is only one time differentiable.
\end{example}

\begin{example}[\textbf{Hodge decomposition}]
In this example we show the result of the discrete Hodge decomposition applied to a velocity field, $\kdifformh{v}{1}$, obtained from a potential flow problem on an annulus. The Hodge decomposition of the velocity field is given in terms of the gradient of the potential, $\kdifformh{\phi}{0}$, and a harmonic function, $h_{h}^{(1)}$,
\[
\kdifformh{v}{1}=\ederiv\kdifformh{\phi}{0}+h_{h}^{(1)}.
\]
For the annulus, consider the domain $(r,\theta) \in [1,R] \times [0,2\pi]$ and the cell complex $D$ which covers this domain as shown in Figure~\ref{fig:pot_flow_around_cylinder}. The topology of the cell complex is equal to the topology of the cell complex shown in Figure~\ref{fig:hole}.
\begin{figure}[h]
\centering
\includegraphics[width=0.55\textwidth]{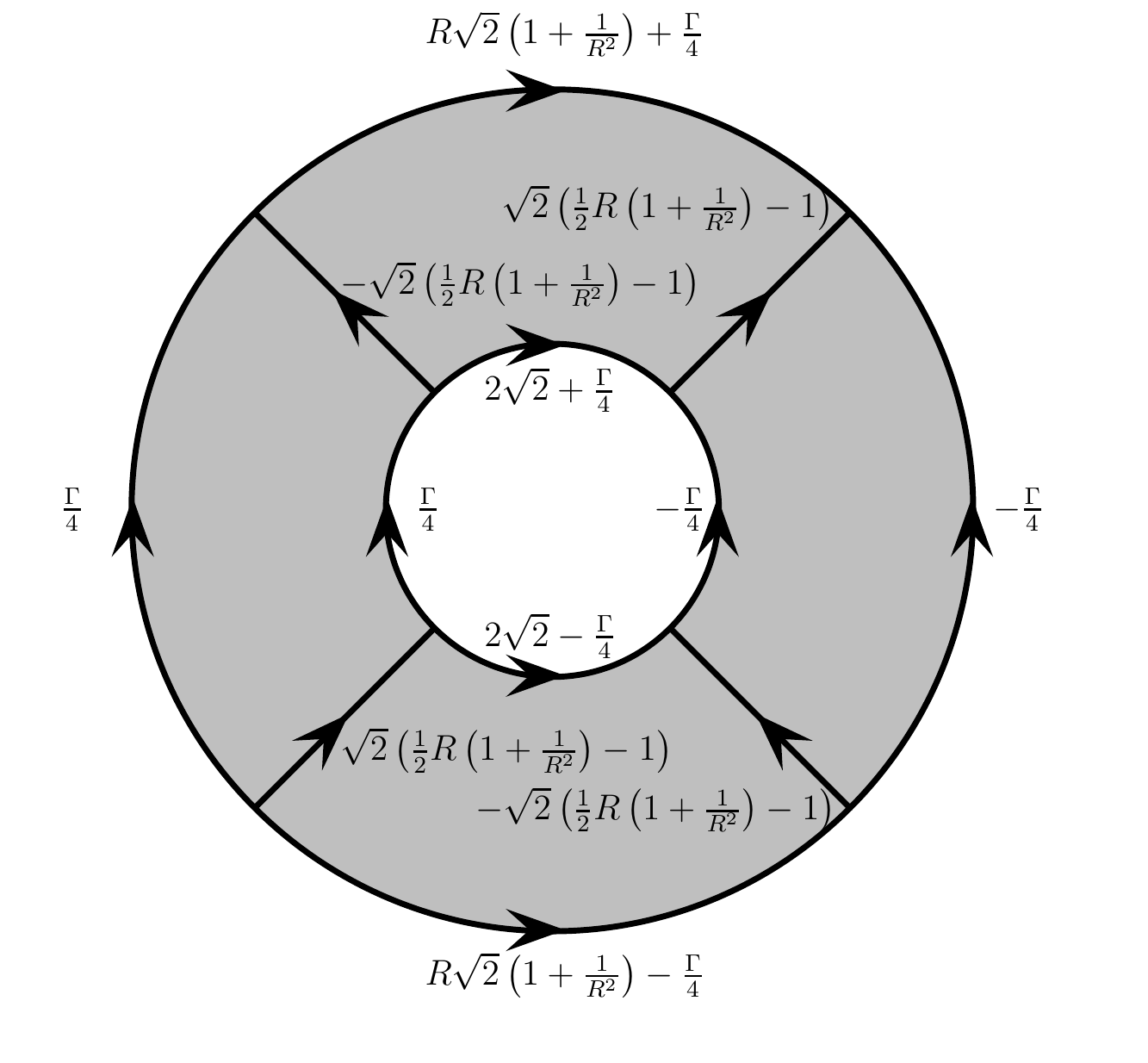}
\caption{The cell complex which covers the $(r,\theta) \in [1,R] \times [0,2\pi]$ and the $1$-cochain values associated with each $1$-cell.}
\label{fig:pot_flow_around_cylinder}
\end{figure}

The velocity 1-form on the annulus is given by
\begin{equation}
\kdifform{v}{1} = r\cos \theta \left (1 - \frac{1}{r^2} \right ) \,dr - \left [ \sin \theta \left ( 1 + \frac{1}{r^2} \right ) + \frac{\Gamma}{2\pi r} \right ] \, d\theta\;.\label{eq:pot_flow_around_cylinder}
\end{equation}
Application of the reduction map, $\reduction$, gives the $1$-cochain values associated with each $1$-chain in the cell complex. Its values are given in \figref{fig:pot_flow_around_cylinder}. If we apply the coboundary to this $1$-cochain we get the zero $2$-chain. Because the topology is the same as in Figure~\ref{fig:hole}, the harmonic $1$-chain is still given by
\[
\kchain{h}{1}=(1,-1,0,0,1,1,-1,1,-1,0,0,-1)^T\;,
\]
and therefore
\[
\left \langle \reduction \kdifform{v}{1} , \kchain{h}{1} \right \rangle = 2\Gamma,\quad\quad \left \langle \alpha\kcochain{h}{1},\kchain{h}{1} \right \rangle = 8\alpha\,,\quad\Rightarrow\quad\alpha=\frac{\Gamma}{4}.
\]
The harmonic cochain $\kcochain{h}{1}$ was determined in Example~\ref{ex:domain_with_hole_continued}. The values of the 1-cochain $\delta\kcochain{\boldsymbol\phi}{0} + \frac{\Gamma}{4}\kcochain{h}{1}$ are given in \figref{fig:pot_flow_around_cylinder}. The projection of the velocity field becomes,
\[ \projection \kdifform{v}{1} = \reconstruction\left(\delta\kcochain{\boldsymbol\phi}{0} + \frac{\Gamma}{4} \kcochain{h}{1}\right) \;.\]
\figref{fig:Phodge_decomposition_cylinder} shows the reconstruction of the velocity field components and the total velocity field, for different vortical strengths and different polynomial order.
\begin{figure}[h]
\centering
\includegraphics[width=1.\textwidth]{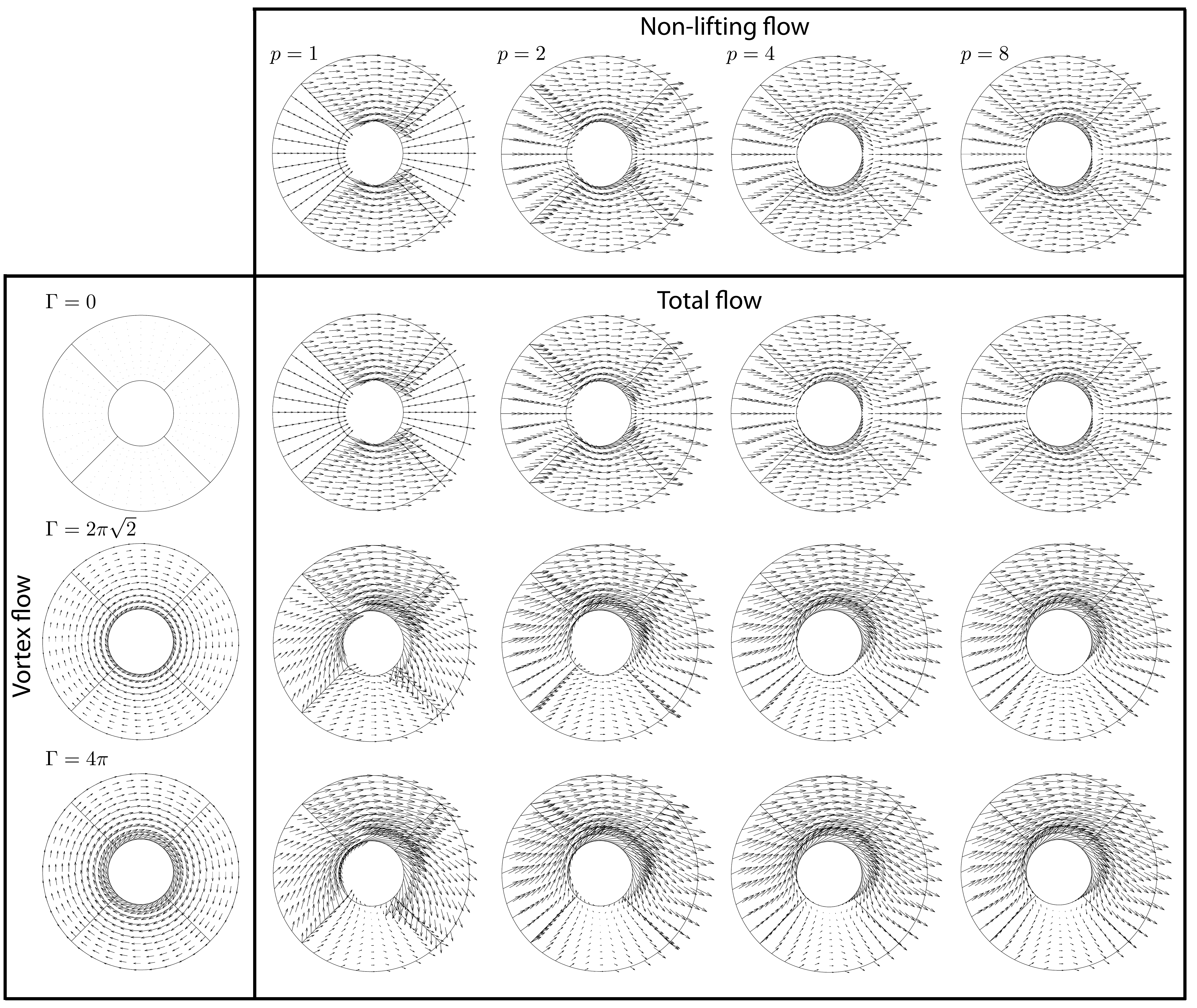}
\caption{Decomposition of velocity field. It shows the curl-free components (left), the harmonic components (top) and the total velocity field, for different vortical strengths and different polynomial order.}
\label{fig:Phodge_decomposition_cylinder}
\end{figure}

\end{example}

\subsection{Discussion}
Closely related to the presented mimetic basis-functions are (higher-order) Whitney forms, \cite{Whitney57,bossavit2002,hiptmair2001}. They share that there exists points, edges, faces and volumes (0,1,2,3-cells) both on the element boundary as well as in the element interior. These $k$-cells have physical relevance \cite{Rapetti2009}. The main difference exists in the way they are constructed.\\
On the other hand in mixed finite elements, Raviart-Thomas \cite{raviart606mixed} ($H({\rm div})$ conforming) and N\'{e}d\'{e}lec \cite{nedelec} ($H({\rm curl})$ conforming) elements are frequently used basis-functions. Main difference with our mimetic basis-functions is that these basis-functions are moment based polynomials, where each $k$-cell may have a higher-order 
moment degree, but the number of $k$-cells remains unchanged when increasing the polynomial order. A consequence is that Raviart-Thomas and N\'{e}d\'{e}lec elements allow only affine mappings, whereas the present mimetic basis-functions allow for curvilinear elements.\\
Where the mimetic basis-functions are constructed especially for quadrilateral and hexahedral elements, the Whitney and mixed finite element basis-functions also work on triangles and tetrahedrals. In case of the lowest order quadrilaterals or hexahedrals, all three types of basis-functions are the same.\\
As shown in Section~\ref{sec:AlgebraicTopology}, we need two cell-complexes to represent inner- and outer oriented variables. The use of such overlapping grids is well-known in staggered finite volume methods. A spectral staggered grid approach was presented by Kopriva and Kolias, \cite{KoprivaKolias}, and Kopriva, \cite{Kopriva1,Kopriva2}.

\section{Coda}\label{sec:Conclusions}
\textbf{Topological notions in PDE's}. In the mimetic framework we presented, building blocks are introduced, which mimic in a finite dimensional setting differential operators, which appear in PDE's as used in physics. By mimicking we mean that operators act similarly in a continuous space, $\Lambda^k$, to the operations in a finite dimensional space, $\Lambda^k_h$, or the discrete space, $C^k$. This is reflected by the many commuting diagrams presented throughout this paper. The representation of a finite dimensional solution is directly related to the underlying mesh. We made a clear distinction between topological relations that are metric free and exact, and metric relations that contain the constitutive relations and approximations. We extensively described the topology of the mesh, consisting of $k$-cells that have either inner or outer orientation, the basis of a cell complex. Topology and orientation are intrinsically related to differentiation as is indicated by the boundary and coboundary operators and their corresponding complexes. Key ingredient for exact differentiation is the generalized Stokes' Theorem. This idea is not new and is commonly referred to as \emph{finite volume methods}, where the discrete unknowns, i.e. the cochains, are integral quantities associated to geometric objects, obtained by the reduction map. In this way the action of differential operators is independent of the reconstruction map. Illustrations of this fact were given in Examples \ref{ex:discrete_gradient}, \ref{ex:discrete_curl_equation} and \ref{ex:discrete_divergence_equation}.

\textbf{Metric concepts in PDE's}. The existence of constitutive relations necessitates us to relate physical quantities associated to geometric objects of different dimension and different orientation. The one-to-one relation between physical quantities connected by constitutive relations motivates the existence of a dual grid, as is known from (staggered) finite volume methods. In dual grid methods, these metric relations are usually treated by a finite dimensional representation of the Hodge-$\star$ operator. This finite dimensional Hodge-$\star$ is not unique, but depends on the choice of reconstruction operator, in our case by arbitrary-order mimetic spectral element interpolation. In contrast, \emph{finite element methods} usually consider $L^2$-inner products as metric operator. The two are in fact intrinsically related using the wedge product according to Definition~\ref{def::diffGeom_hodge}. While in finite volume reconstruction is usually seen as interpolation, in finite elements the reconstruction functions are better known as basis-functions, ignoring the fact that these `functions' are differential forms which make the connection with the associated geometric objects.

\textbf{Single grid methods}. In terms of the framework presented in this paper, in the finite element method the weighting function `lives' on the same cell complex as the complex in which the equation is defined. The Hodge operator takes the weighting functions to the dual complex and then the wedge product is evaluated. So, although in finite element methods, one generally does not construct a dual cell complex, the dual cell complex is implicitly incorporated through the inner product. When we recognize that the weighting functions refer to the dual cell complex, it is also possible to make a more physical interpretation of integration by parts in finite element methods. Integration by parts transfers an equation from the dual complex to the primal complex (and the other way around).

\textbf{Dual boundaries}. Orientation not only introduces primal and dual cell complexes, it introduces primal and dual grids for both the domain $\Omega$ and the boundary $\partial\Omega$. This distinction is directly related to integration by parts. Especially for the dual complex, the boundary dual complex is often not recognized in staggered finite volumes methods, but instead additional degrees of freedom are generated ad hoc which are called `ghost points'.

\textbf{Hodge decomposition}. This framework presented more than just a nice analogy between finite volume and finite element methods. It has more structure. Most important is the Hodge decomposition and its finite dimensional and discrete counterparts. The Hodge decomposition decomposes differential form spaces into separate function spaces related to the exterior derivative. Mimicking the Hodge decomposition is essential for numerical stability. A key ingredient here is the commutation between projection operator and the exterior derivative.

\textbf{Extension to curvilinear grids}. Finally it has been shown that all relations hold for both Cartesian, as well as curvilinear grids, thanks to the preservation of the commutation relations of the pullback and cochain map with the various operators discussed in this paper. The topological relations are insensitive to changes in the topology as long as the grid connectivity remains unaltered, whereas the metric-dependent part (inner-product and wedge) do change when one maps the standard element to a curvilinear domain.\\%by the action of mapping and pullback, and chain map and cochain map as their discrete counterpart.\\
% \\
%\textbf{Applications}. Applications of the techniques described here are not given in this paper, but can be found in \cite{bouman::icosahom2009,gerritsma:lssc2009,kreeft::stokes,jasper::eccomas2010,palha::icosahom2009,palha:lssc2009}. Another possibly interesting application is the use in adjoint methods. Adjoint methods are especially popular in engineering for error estimation and model reduction. The general idea is to solve a model on one mesh and then to solve the adjoint problem on a refined grid. The adjoint variables then give an indication to how well the approximation to the governing equations was on the initial grid, see \cite{PierceGiles2000}. The dual problem needs to be solved on a refined grid, because Galerkin orthogonality will produce the zero solution if the adjoint problem would be solved on the same grid. In the framework presented in this paper, we see that forming the adjoint equations, actually means transferring the variables and operations from the primal complex to its dual. Exterior derivatives then become codifferentials and codifferentials become exterior derivatives. By construction of the dual cell complex, the adjoint problem is solved with almost one polynomial degree higher.

% Appendices

% the bibliography
%\bibstyle{nature}
\bibliographystyle{abbrv}
%\bibliography{/Users/gorkiana/Documents/phd/documents/bibliography/bibliography}
\bibliography{./bibliography}

\begin{thebibliography}{83}
\providecommand{\natexlab}[1]{#1}
\providecommand{\url}[1]{\texttt{#1}}
\expandafter\ifx\csname urlstyle\endcsname\relax
  \providecommand{\doi}[1]{doi: #1}\else
  \providecommand{\doi}{doi: \begingroup \urlstyle{rm}\Url}\fi

\bibitem[Abraham et~al.(2001)Abraham, Marsden, and Ratiu]{abraham_diff_geom}
R.~Abraham, J.~E. Marsden, and T.~Ratiu.
\newblock \emph{{M}anifolds, {T}ensor {A}nalysis, and {A}pplications},
  volume~75 of \emph{Applied Mathematical Sciences}.
\newblock Springer, 2001.

\bibitem[Arnold et~al.(2006)Arnold, Falk, and Winther]{arnold2006finite}
D.~Arnold, R.~Falk, and R.~Winther.
\newblock {F}inite element exterior calculus, homological techniques, and
  applications.
\newblock \emph{Acta Numerica}, 15:\penalty0 1--155, 2006.

\bibitem[Arnold et~al.(2009)Arnold, Falk, and Winther]{arnold2009geometric}
D.~Arnold, R.~Falk, and R.~Winther.
\newblock {G}eometric decompositions and local bases for spaces of finite
  element differential forms.
\newblock \emph{Computer Methods in Applied Mechanics and Engineering},
  198\penalty0 (21-26):\penalty0 1660--1672, 2009.
\newblock ISSN 0045-7825.

\bibitem[Arnold et~al.(2010)Arnold, Falk, and Winther]{arnold2010finite}
D.~Arnold, R.~Falk, and R.~Winther.
\newblock {F}inite element exterior calculus: from {H}odge theory to numerical
  stability.
\newblock \emph{American Mathematical Society}, 47\penalty0 (2):\penalty0
  281--354, 2010.

\bibitem[Arnold et~al.(2005)Arnold, Boffi, and Falk]{arnold:Quads}
D.~N. Arnold, D.~Boffi, and R.~S. Falk.
\newblock {Q}uadrilateral \textbf {H}(div) {F}inite {E}lements.
\newblock \emph{SIAM Journal on Numerical Analysis}, 42\penalty0 (6):\penalty0
  2429--2451, 2005.
\newblock \doi{10.1137/S0036142903431924}.
\newblock URL \url{http://link.aip.org/link/?SNA/42/2429/1}.

\bibitem[Bochev and Hyman(2006)]{bochev2006principles}
P.~Bochev and J.~M. Hyman.
\newblock {P}rinciples of mimetic discretizations of differential operators.
\newblock \emph{IMA Volumes In Mathematics and its Applications}, 142:\penalty0
  89, 2006.

\bibitem[Bossavit(1998)]{bossavit:japanese_01}
A.~Bossavit.
\newblock {O}n the geometry of electromagnetism.
\newblock \emph{Journal of the Japanese Society of Applied Electromagnetics and
  Mechanics}, 6:\penalty0 17--28 (no 1), 114--23 (no 2), 233--40 (no 3),
  318--26 (no 4), 1998.

\bibitem[Bossavit(1999, 2000)]{bossavit:japanese_02}
A.~Bossavit.
\newblock {C}omputational electromagnetism and geometry.
\newblock \emph{Journal of the Japanese Society of Applied Electromagnetics and
  Mechanics}, 7, 8:\penalty0 150--9 (no 1), 294--301 (no 2), 401--408 (no 3),
  102--109 (no 4), 203--209 (no 5), 372--377 (no 6), 1999, 2000.

\bibitem[Bossavit(2002)]{bossavit2002}
A.~Bossavit.
\newblock {G}enerating {W}hitney forms of polynomial degree one higher degree.
\newblock \emph{IEEE Trans. Magn.}, 38:\penalty0 341--344, 2002.

\bibitem[Bouman et~al.(2011)Bouman, Palha, Kreeft, and
  Gerritsma]{bouman::icosahom2009}
M.~Bouman, A.~Palha, J.~Kreeft, and M.~Gerritsma.
\newblock {A} conservative spectral element method for arbitrary domains.
\newblock In J.~H. .~E. R{\o}nquist, editor, \emph{Spectral and High Order
  Methods for Partial Differential Equations}, pages 111--120. Springer, 2011.

\bibitem[Brezzi and Buffa(2010)]{brezzi2010}
F.~Brezzi and A.~Buffa.
\newblock {I}nnovative mimetic discretizations for electromagnetic problems.
\newblock \emph{Journal of Computational and Applied Mathematics},
  234:\penalty0 1980--1987, 2010.

\bibitem[Brezzi et~al.(2009)Brezzi, Buffa, and
  Lipnikov]{BrezziBuffaLipnikov2009}
F.~Brezzi, A.~Buffa, and K.~Lipnikov.
\newblock {M}imetic finite differences for elliptic problems.
\newblock \emph{Mathematical Modelling and Numerical Analysis}, 43\penalty0
  (2):\penalty0 277--296, 2009.

\bibitem[Buffa et~al.(2011{\natexlab{a}})Buffa, de~Falco, and
  Sangalli]{BuffaDeFalcoSangalli2011}
A.~Buffa, D.~de~Falco, and G.~Sangalli.
\newblock {I}sogeometric analysis: Stable elements for the 2{D} {S}tokes
  equation.
\newblock \emph{International Journal for Numerical Methods in Fluids},
  65:\penalty0 1407--1422, 2011{\natexlab{a}}.

\bibitem[Buffa et~al.(2011{\natexlab{b}})Buffa, Sangalli, Rivas, and
  Vazquez]{BuffaSangalliRivasVazquez2011}
A.~Buffa, G.~Sangalli, J.~Rivas, and R.~Vazquez.
\newblock {I}sogeometric discrete differential forms in three dimensions.
\newblock \emph{SIAM Journal of Numerical Analysis}, 118\penalty0 (2):\penalty0
  271--844, 2011{\natexlab{b}}.

\bibitem[Burke(1985)]{burke1985applied}
W.~Burke.
\newblock \emph{{A}pplied differential geometry}.
\newblock Cambridge Univ Pr, 1985.

\bibitem[Canuto et~al.(2006)Canuto, Hussaini, Quarteroni, and Zang]{canuto1}
C.~Canuto, M.~Hussaini, A.~Quarteroni, and T.~Zang.
\newblock \emph{{S}pectral {M}ethods, fundamentals in single domains}.
\newblock Springer-Verlag, 2006.

\bibitem[de~Beauc\'{e} and Sen(2004)]{BeauceSen2004}
V.~de~Beauc\'{e} and S.~Sen.
\newblock {D}iscretizing geometry and preserving topology {I}: {A} discrete
  exterior calculus.
\newblock \emph{arXiv:hep-th/040320v2}, 2:\penalty0 1--26, 2004.

\bibitem[Desbrun et~al.(2005)Desbrun, Hirani, Leok, and
  Marsden]{desbrun2005discrete}
M.~Desbrun, A.~Hirani, M.~Leok, and J.~Marsden.
\newblock {D}iscrete exterior calculus.
\newblock \emph{Arxiv preprint math/0508341}, 2005.

\bibitem[Dieudonn\'{e}(1989)]{Dieudonne}
J.~Dieudonn\'{e}.
\newblock \emph{{A} {H}istory of {A}lgebraic and {D}ifferential {T}opology,
  1900 - 1960}.
\newblock Birkh\"{a}user, 1989.

\bibitem[Dodziuk(1976)]{Dodziuk76}
J.~Dodziuk.
\newblock {F}inite difference approach to the {H}odge theory of harmonic
  functions.
\newblock \emph{American Journal of Mathematics}, 98\penalty0 (1):\penalty0
  79--104, 1976.

\bibitem[Elcott et~al.(2007)Elcott, Tong, Kanso, Schr\"{o}der, and
  Desbrun]{ElcottTongetal2007}
S.~Elcott, Y.~Tong, E.~Kanso, P.~Schr\"{o}der, and M.~Desbrun.
\newblock {S}table, circulation-preserving, simplicial fluids.
\newblock \emph{ACM Transactions on Graphics}, 26\penalty0 (1), 2007.

\bibitem[Evans(2011)]{thesis_Evans}
J.~Evans.
\newblock \emph{{D}ivergence-free {B}-spline discretizations for the {S}tokes
  and {N}avier-{S}tokes equations}.
\newblock PhD thesis, University of Austin, 2011.

\bibitem[Fej\'{e}r(1932)]{Fejer}
L.~Fej\'{e}r.
\newblock {B}estimmung derjenigen {A}bszissen einses {I}ntervalles, f\"{u}r
  welche die {Q}uadratsumme der {G}rundfunktionen der {L}agrangeschen
  {I}nterpolation in {I}ntervalle ein {M}\"{o}glichst kleines {M}aximum
  {B}esitzt.
\newblock \emph{Annali della Scuola Normale Superiore di Pisa, Classe di
  Scienze}, 3:\penalty0 263--267, 1932.

\bibitem[Flanders(1989)]{flanders::diff_forms}
H.~Flanders.
\newblock \emph{{D}ifferential forms with applications to the physical
  sciences}.
\newblock Dover publications, 1989.

\bibitem[Frankel(2004)]{frankel}
T.~Frankel.
\newblock \emph{{T}he {G}eometry of {P}hysics}.
\newblock Cambridge University Press, $2^\mathrm{nd}$ edition, 2004.

\bibitem[Gerritsma(2011)]{gerritsma::edge_basis}
M.~Gerritsma.
\newblock {E}dge functions for spectral element methods.
\newblock In J.~H. .~E. R{\o}nquist, editor, \emph{Spectral and High Order
  Methods for Partial Differential Equations}, pages 199--208. Springer, 2011.

\bibitem[Gerritsma and Palha(2010)]{gerritsma:lssc2009}
M.~Gerritsma and A.~Palha.
\newblock {L}east-squares spectral element method on a staggered grid.
\newblock \emph{Large scale scientific computing, Lecture notes on computer
  science}, 5910:\penalty0 659--666, 2010.

\bibitem[Hatcher(2002)]{hatcher2002algebraic}
A.~Hatcher.
\newblock \emph{{A}lgebraic topology}.
\newblock Cambridge Univ Pr, 2002.
\newblock ISBN 0521795400.

\bibitem[Hiptmair(2001{\natexlab{a}})]{hiptmair2001}
R.~Hiptmair.
\newblock \emph{{PIER}}, volume~42, chapter Higher order Whitney forms, pages
  271--299.
\newblock 2001{\natexlab{a}}.

\bibitem[Hiptmair(2001{\natexlab{b}})]{hiptmair2001discrete}
R.~Hiptmair.
\newblock {D}iscrete hodge operators.
\newblock \emph{Numerische Mathematik}, 90\penalty0 (2):\penalty0 265--289,
  2001{\natexlab{b}}.

\bibitem[Hirani(2003)]{Hirani_phd_2003}
A.~Hirani.
\newblock \emph{{D}iscrete {E}xterior {C}alculus}.
\newblock PhD thesis, California Institute of Technology, 2003.

\bibitem[Hou and Hou(1997)]{hou::differential_geometry_physicists}
B.-Y. Hou and B.-Y. Hou.
\newblock \emph{{D}ifferential geometry for physicists}, volume~6 of
  \emph{Advanced series on theoretical physical science}.
\newblock World Scientific, 1997.

\bibitem[Hyman and Scovel(1990)]{HymanScovel90}
J.~M. Hyman and J.~C. Scovel.
\newblock {D}eriving mimetic difference approximations to differential
  operators using algebraic topology.
\newblock Technical report, Los Alamos National Laboratory, 1990.

\bibitem[Hyman and Steinberg(2004)]{HYmanSteinberg2004}
J.~M. Hyman and S.~Steinberg.
\newblock {T}he convergence of mimetic methods for rough grids.
\newblock \emph{Computers and Mathematics with applications}, 47\penalty0
  (10-11):\penalty0 1565--1610, 2004.

\bibitem[Hyman et~al.(1997)Hyman, Shashkov, and
  Steinberg]{HymanShashkovSteinberg97}
J.~M. Hyman, M.~Shashkov, and S.~Steinberg.
\newblock {T}he numerical solution of diffusion problems in strongly
  heterogeous non-isotropic materials.
\newblock \emph{Journal of Computational Physics}, 132:\penalty0 130--148,
  1997.

\bibitem[Hyman et~al.(2002)Hyman, Morel, Shashkov, and
  Steinberg]{HymanShashkovSteinberg2002}
J.~M. Hyman, J.~Morel, M.~Shashkov, and S.~Steinberg.
\newblock {M}imetic finite difference methods for diffusion equations.
\newblock \emph{Computational Geosciences}, 6\penalty0 (3-4):\penalty0
  333--352, 2002.

\bibitem[Isham(2001)]{isham}
C.~J. Isham.
\newblock \emph{{M}odern differential geometry for physicists}, volume~61 of
  \emph{Lecture Notes in Physics}.
\newblock World Scientific, 2001.

\bibitem[Kopriva(1996)]{Kopriva1}
D.~Kopriva.
\newblock A conservative staggered-grid {C}hebyshev multidomain method for
  compressible flows. ii a semi-structured method.
\newblock \emph{Journal of Computational Physics}, 128:\penalty0 475--488,
  1996.

\bibitem[Kopriva(1998)]{Kopriva2}
D.~Kopriva.
\newblock A staggered-grid multidomain method for the compressible
  {N}avier-{S}tokes equations.
\newblock \emph{Journal of Computational Physics}, 143:\penalty0 125--158,
  1998.

\bibitem[Kopriva and Kolias(1996)]{KoprivaKolias}
D.~Kopriva and J.~Kolias.
\newblock A conservative staggered-grid {C}hebyshev multidomain method for
  compressible flows.
\newblock \emph{Journal of Computational Physics}, 125:\penalty0 244--261,
  1996.

\bibitem[Kreeft and Gerritsma(2011)]{kreeft::stokes}
J.~J. Kreeft and M.~I. Gerritsma.
\newblock {M}ixed mimetic spectral element method for {S}tokes flow: a
  pointwise divergence-free solution.
\newblock \emph{In preparation}, 2011.

\bibitem[Kreeft et~al.(2010)Kreeft, Palha, and Gerritsma]{jasper::eccomas2010}
J.~J. Kreeft, A.~Palha, and M.~I. Gerritsma.
\newblock {M}imetic spectral element method for generalized
  convection-diffusion problems.
\newblock \emph{Proceedings of ECCOMAS CFD}, 2010.

\bibitem[Kubrusly(2001)]{kubrusly2001elements}
C.~Kubrusly.
\newblock \emph{{E}lements of operator theory}.
\newblock Birkhauser, 2001.
\newblock ISBN 0817641742.

\bibitem[Massey(1967)]{Massey1}
W.~S. Massey.
\newblock \emph{{A}lgebraic {T}opology -- {A}n {I}ntroduction}.
\newblock Springer, 1967.

\bibitem[Massey(1991)]{Massey2}
W.~S. Massey.
\newblock \emph{{A} {B}asic {C}ourse in {A}lgebraic {T}opology}.
\newblock Springer, 1991.

\bibitem[Mattiussi(1997)]{mattiussi1997analysis}
C.~Mattiussi.
\newblock {A}n analysis of finite volume, finite element, and finite difference
  methods using some concepts from algebraic topology.
\newblock \emph{Journal of Computational Physics}, 133\penalty0 (2):\penalty0
  289--309, 1997.

\bibitem[Mattiussi(2000)]{mattiussi2000finite}
C.~Mattiussi.
\newblock {T}he finite volume, finite element, and finite difference methods as
  numerical methods for physical field problems.
\newblock \emph{Advances in Imaging and Electron Physics}, 113:\penalty0
  1--147, 2000.

\bibitem[Mattiussi(2002)]{Mattiussi02}
C.~Mattiussi.
\newblock \emph{{A}dvances in {I}maging and {E}lectron {P}hysics}, volume~21.
\newblock Academic Press, 2002.

\bibitem[McMullen and Schulte(2002)]{mcmullen2002abstract}
P.~McMullen and E.~Schulte.
\newblock \emph{{A}bstract regular polytopes}.
\newblock Cambridge Univ Pr, 2002.
\newblock ISBN 0521814960.

\bibitem[Morita(2001)]{morita2001geometry}
S.~Morita.
\newblock \emph{{G}eometry of differential forms}.
\newblock Amer Mathematical Society, 2001.

\bibitem[Mullen et~al.(2009)Mullen, Crane, Pavlov, Tong, and
  Desbrun]{MullenCraneetal2009}
P.~Mullen, K.~Crane, D.~Pavlov, Y.~Tong, and M.~Desbrun.
\newblock {E}nergy-preserving integrators for fluid animation.
\newblock \emph{ACM Transactions on Graphics}, 28\penalty0 (3), 2009.

\bibitem[Munkres(1984)]{munkres1984elements}
J.~Munkres.
\newblock \emph{{Elements of algebraic topology}}.
\newblock Westview Press, 1984.
\newblock ISBN 0201627280.

\bibitem[N\'{e}d\'{e}lec(1980)]{nedelec}
J.~N\'{e}d\'{e}lec.
\newblock {M}ixed finite elements in $\mathbb{R}^3$.
\newblock \emph{Numerische Mathematik}, 35:\penalty0 315--341, 1980.

\bibitem[Olver(1986)]{Olver}
P.~Olver.
\newblock \emph{{A}pplication of {L}ie {G}roups to {D}ifferential {E}quations}.
\newblock Springer Verlag, 1986.

\bibitem[Palha and Gerritsma(2010)]{palha:lssc2009}
A.~Palha and M.~Gerritsma.
\newblock {M}imetic least-squares spectral/$hp$ finite element method for the
  {P}oisson equation.
\newblock \emph{Large scale scientific computing, Lecture notes on computer
  science}, 5910:\penalty0 667--675, 2010.

\bibitem[Palha and Gerritsma(2011)]{palha::icosahom2009}
A.~Palha and M.~Gerritsma.
\newblock {S}pectral element approximation of the {H}odge-$\star$ operator in
  curved elements.
\newblock In J.~H. .~E. R{\o}nquist, editor, \emph{Spectral and High Order
  Methods for Partial Differential Equations}, pages 283--292. Springer, 2011.

\bibitem[Pavlov et~al.(2011)Pavlov, Mullen, Tong, Kanso, Marsden, and
  Desbrun]{PavlovMullenetal2010}
D.~Pavlov, P.~Mullen, Y.~Tong, E.~Kanso, J.~Marsden, and M.~Desbrun.
\newblock {S}tructure preserving discretization of incompressible fluids.
\newblock \emph{Physica D: Nonlinear phenomena}, 240\penalty0 (6):\penalty0
  443--458, 2011.

\bibitem[Perot(2000)]{Perot2000}
J.~B. Perot.
\newblock {C}onservation properties of unstructured staggered mesh schemes.
\newblock \emph{Journal of Computational Physics}, 159\penalty0 (1):\penalty0
  58--89, 2000.

\bibitem[Perot(2011)]{perot43discrete}
J.~B. Perot.
\newblock {D}iscrete {C}onservation {P}roperties of {U}nstructured {M}esh
  {S}chemes.
\newblock \emph{Annual Review of Fluid Mechanics}, 43\penalty0 (1):\penalty0
  299--318, 2011.
\newblock ISSN 0066-4189.

\bibitem[Perot and Subramanian(2007{\natexlab{a}})]{PerotSubramanian2007}
J.~B. Perot and V.~Subramanian.
\newblock {A} discrete calculus analysis of the {K}eller {B}ox scheme and a
  generalization of the method to arbitrary meshes.
\newblock \emph{Journal of Computational Physics}, 226\penalty0 (1):\penalty0
  494--508, 2007{\natexlab{a}}.

\bibitem[Perot and Subramanian(2007{\natexlab{b}})]{PerotSubramanian2007a}
J.~B. Perot and V.~Subramanian.
\newblock {D}iscrete calculus methods for diffusion.
\newblock \emph{Journal of Computational Physics}, 224\penalty0 (1):\penalty0
  59--81, 2007{\natexlab{b}}.

\bibitem[Perot et~al.(2006)Perot, Vidovic, and Wesseling]{perot2006mimetic}
J.~B. Perot, D.~Vidovic, and P.~Wesseling.
\newblock {M}imetic reconstruction of vectors.
\newblock \emph{IMA Volumes in Mathematics and its Applications}, 142:\penalty0
  173, 2006.

\bibitem[R.~Hiemstra and Gerritsma(2011{\natexlab{a}})]{Hiemstra::Harmonic}
R.~H. R.~Hiemstra and M.~Gerritsma.
\newblock {D}iscrete {H}armonic {C}ochain {C}alculation in {I}sogeometric
  {A}nalysis.
\newblock \emph{In preparation}, 2011{\natexlab{a}}.

\bibitem[R.~Hiemstra and
  Gerritsma(2011{\natexlab{b}})]{Hiemstra::grad_curl_div}
R.~H. R.~Hiemstra and M.~Gerritsma.
\newblock {G}rad-, curl- and div-conforming isogeometric methods.
\newblock \emph{In preparation}, 2011{\natexlab{b}}.

\bibitem[Rapetti(2007)]{Rapetti2007}
F.~Rapetti.
\newblock {H}igh order edge elements on simplicial meshes.
\newblock \emph{ESAIM: Mathematical Modelling and Numerical Analysis},
  41:\penalty0 1001--1020, 2007.

\bibitem[Rapetti(2009)]{Rapetti2009}
F.~Rapetti.
\newblock {W}hitney forms of higher order.
\newblock \emph{SIAM Journal Numerical Analysis}, 47:\penalty0 2369--2386,
  2009.

\bibitem[Raviart and Thomas(1977)]{raviart606mixed}
P.~Raviart and J.~Thomas.
\newblock {A} mixed finite element method for 2nd order elliptic problems.
\newblock \emph{Mathematical Aspects of the Finite Element Method, Lecture
  Notes in Mathematics}, 606:\penalty0 292--315, 1977.

\bibitem[Robidoux(1996)]{RobidouxAdjointGradients1996}
N.~Robidoux.
\newblock {A} {N}ew {M}ethod of {C}onstruction of {A}djoint {G}radients and
  {D}ivergences on {L}ogically {R}ectangular {S}mooth {G}rids.
\newblock In F.~Benkaldour and R.~Vilmeier, editors, \emph{Finite Volumes for
  Complex Applications: Problems and Persepctives}, pages 261--272.
  {\'E}ditions Herm{\`e}s, 1996.

\bibitem[Robidoux(2002)]{RobidouxThesis}
N.~Robidoux.
\newblock \emph{{N}umerical solution of the steady diffusion equation with
  discontinuous coefficients}.
\newblock PhD thesis, University of New Mexico, Albuquerque, NM, USA, 2002.

\bibitem[Robidoux(2008)]{robidoux-polynomial}
N.~Robidoux.
\newblock {P}olynomial {H}istopolation, {S}uperconvergent {D}egrees {O}f
  {F}reedom, {A}nd {P}seudospectral {D}iscrete {H}odge {O}perators.
\newblock \emph{Unpublished:
  http://www.cs.laurentian.ca/nrobidoux/prints/super/histogram.pdf}, 2008.

\bibitem[Robidoux and Steinberg(2011)]{RobidouxSteinberg2011}
N.~Robidoux and S.~Steinberg.
\newblock {A} discrete vector calculus in tensor grids.
\newblock \emph{Computational Methods in Applied Mathematics}, 1:\penalty0
  1--44, 2011.

\bibitem[Schutz(1980)]{schutz1980geometrical}
B.~Schutz.
\newblock \emph{{G}eometrical methods of mathematical physics}.
\newblock Cambridge Univ Pr, 1980.
\newblock ISBN 0521298873.

\bibitem[Shashkov(1996)]{bookShashkov}
M.~Shashkov.
\newblock \emph{{C}onservative finite-difference methods on general grids}.
\newblock CRC Press, Boca Raton, FL, USA, 1996.

\bibitem[Shigeyuki(2001)]{morita::geometry_diff_forms}
M.~Shigeyuki.
\newblock \emph{{G}eometry of differential forms}, volume 201 of
  \emph{Translations of mathematical monographs}.
\newblock American mathematical society, 2001.

\bibitem[Singer and Thorpe(1976)]{thorpe1976lecture}
I.~Singer and J.~Thorpe.
\newblock \emph{{L}ecture notes on elementary topology and geometry}.
\newblock Springer, 1976.

\bibitem[Spivak(1998)]{spivak1998calculus}
M.~Spivak.
\newblock \emph{{C}alculus on manifolds}.
\newblock Perseus Books, 1998.

\bibitem[Steinberg(2004)]{Steinberg1996}
S.~Steinberg.
\newblock {A} discrete calculus with applications of higher-order
  discretizations to boundary-value problems.
\newblock \emph{Computational Methods in Applied Mathematics}, 42:\penalty0
  228--261, 2004.

\bibitem[Steinberg and Zingano(2009)]{SteibergZingano2009}
S.~Steinberg and J.~Zingano.
\newblock {E}rror estimates on arbitrary grids for 2nd-order mimetic
  discretization of {S}turm-{L}iouville problems.
\newblock \emph{Computational Methods in Applied Mathematics}, 9:\penalty0
  192--202, 2009.

\bibitem[Tonti(1975)]{tonti1975formal}
E.~Tonti.
\newblock {O}n the formal structure of physical theories.
\newblock \emph{preprint of the Italian National Research Council}, 1975.

\bibitem[Tonti(to appear)]{bookTonti}
E.~Tonti.
\newblock \emph{{T}he {C}ommon {S}tructure of {P}hysical {T}heories}.
\newblock Springer, to appear.

\bibitem[Tu(2010)]{tu2010introduction}
L.~Tu.
\newblock \emph{{A}n introduction to manifolds}.
\newblock Springer Verlag, 2010.

\bibitem[Whithney(1957)]{Whitney57}
H.~Whithney.
\newblock \emph{{G}eometric {I}ntegration {T}heory}.
\newblock Dover Publications, 1957.

\bibitem[Zhang et~al.(2002)Zhang, Schmidt, D., and
  Perot]{ZhangSchmidtPerot2002}
X.~Zhang, Schmidt, D., and J.~B. Perot.
\newblock {A}ccuracy and conservation properties of a three-dimensional
  unstructured staggered mesh scheme for fluid dynamics.
\newblock \emph{Journal of Computational Physics}, 175:\penalty0 764--791,
  2002.

\end{thebibliography}

\appendix

\end{document}